\newcommand{\Hilb}{{\rm{Hilb}}}
\newcommand{\VAPS}{\rm{VAPS_G}}
\newcommand{\VSP}{{\rm{VSP}}}
\newcommand{\QQ}{\mathbb {Q}}
\newcommand{\PP}{{\mathbb P}}
\newcommand{\AAA}{{\bf A}}
\newcommand{\CC}{{\mathbb C}}
\newcommand{\GG}{{\mathbb G}}
\newcommand{\U}{{\mathcal U}}
\newcommand{\ZZ}{{\mathbb Z}}
\newcommand{\oo}{{\mathcal O}}
\newcommand{\V}{{\mathcal V}}
\newtheorem{lemma}{\bf Lemma}[section]
\newtheorem{proposition}[lemma]{\bf Proposition}
\newtheorem{theorem}[lemma]{\bf Theorem}
\newtheorem{corollary}[lemma]{\bf Corollary}
\newtheorem{question}[lemma]{\bf Question}
\newtheorem{conjecture}[lemma]{\bf Conjecture}
\newtheorem{definition}[lemma]{\bf Definition}
\newtheorem{remark}[lemma]{\bf Remark}
\numberwithin{equation}{section}
\begin{document}       
\setcounter{tocdepth}{1}

\title[VAPS to quadric powers]{Variety of apolar schemes to powers of quadrics}

%%%%%%%%%%%%%%%%%%%%%%%%%%%%%%%%%%%%%%%%%%%%%%%

%\author[Grzegorz Kapustka]{Grzegorz Kapustka}
%\address{}
%\email{ } 
%\urladdr{\href{}}
%\thanks{} 
%\author[Michal Kapustka]{Michal Kapustka}
%\address{}
%\email{ } 
%\urladdr{\href{}}
%\thanks{} 
\author[G. Kapustka]{Grzegorz Kapustka}
%\thanks{G. Kapustka supported by Narodowe Centrum Nauki 2018/30/E/ST1/00530.}
\address{G. Kapustka: Faculty of Mathematics and Informatics, Jagiellonian University, ul. Łojasiewicza 6, 30-348 Krak\'ow, Poland}
\email{grzegorz.kapustka@uj.edu.pl}

\author[M. Kapustka]{Micha\l{} Kapustka}
%\thanks{M. Kapustka supported by Narodowe Centrum Nauki 2018/31/B/ST1/02857}  
\address{M. Kapustka: Institute of Mathematics of the Polish Academy of Sciences, ul. Śniadeckich 8, 00-656 Warszawa, Poland}
\email{michal.kapustka@impan.pl}

\author[Kristian Ranestad]{Kristian Ranestad}
\address{K.Ranestad: Matematisk institutt\\
         Universitetet i Oslo\\
         PO Box 1053, Blindern\\
         NO-0316 Oslo\\
         Norway}
\email{ranestad@math.uio.no}
%\urladdr{\href{}{}
%\thanks{}
%%%%%%%%%%%%%%%%%%%%%%%%%%%%%%%%%%%%%%%%%%%%%%%

\date{\today}

\subjclass{14N07, 14J40, 14J45, 14M99, 13E10, 13H10}
\keywords{Fano n-folds, Quadric, polar simplex, syzygies, varieties of sums of powers}
%\date{\today}

\begin{abstract}
%We study, when $(n,r)=(2,3),(3,2)$, the variety $\VAPS(q^r, N(n, r)),$ %$ \subset \Hilb^{N(n, r)} \mathbb{P}^{n}$,
%a Grassmannian compactification of the variety of finite schemes of length $N(n, r) = \binom{n+r}{n}$ apolar to $q^r$, where $\{q = 0\}$ is a quadric hypersurface of rank $n+1$. In particular, we describe the geometry of the variety $\VAPS(q^2, 10)$ for $q$ a quadric in four variables, which can be seen as an analog of the Mukai-Umemura threefold; $\VAPS(q^2, 6)$ with $(n,r)=(2,2)$.
We study the variety $\VAPS(q_2^3, 10)$ (resp.~$\VAPS(q_3^2, 10)),$ %$ \subset \Hilb^{N(n, r)} \mathbb{P}^{n}$,
a Grassmannian compactification of the variety of finite schemes of length $10$ apolar to $q_{2}^3$ (resp. $q_3^2$), where $\{q_{n} = 0\}\subset \PP^{n} $ is a smooth quadric hypersurface. In particular, we show that $\VAPS(q_2^3, 10)$ is the tangent developable of a rational normal curve, while 
 $\VAPS(q_3^2, 10)$ is reducible with three singular $5$-dimensional components.
 %we describe the geometry of the variety $\VAPS(q^2, 10)$ for $q$ a quadric in four variables, which can be seen as an analog of the Mukai-Umemura threefold; $\VAPS(q^2, 6)$ with $q$ a quadric of rank three.
\end{abstract}
\maketitle

\tableofcontents

\section{Introduction}\label{intro} 

Let $f\in S:= \CC[x_0,...,x_n]$ be a homogeneous form and let $f^\perp\subset T:=\CC[y_0,...,y_n]$ be its apolar ideal, the ideal of forms $g(y_0,...,y_n)$ such that % the partial differential 
$$g(\frac{\partial}{\partial x_0},...,\frac{\partial}{\partial x_n})(f)=0.$$  An ideal $I$ is apolar to $F=\{f=0\}$ if $I\subset f^\perp$, and a scheme $\Gamma\subset \PP^{n}$ is apolar to $F$, if its homogeneous ideal $I_\Gamma$ is.
Let $H: \ZZ_{\geq 0}\to \ZZ_{> 0}$ be the Hilbert function of $m$ general points in $\PP^n,$ and 
let ${\rm VAPS}(f,m) \subset \Hilb^H(\PP^n)$, the Variety of APolar Schemes, denote the closure in the multigraded Hilbert scheme $\Hilb^H(\PP^n)$, cf. \cite{HS}, of the locus of saturated ideals $I\subset f^\perp$ such that the Hilbert function of $T/I$ is $H$. %Hdegree $m$ zero-dimensional schemes $\Gamma \subset \PP^{n}$ that are apolar to $F=\{f=0\}$.  
Note that there may be components of the locus of apolar ideals in $\Hilb^H(\PP^n)$ consisting of nonsaturated ideals.  These are of less interest to us.  Being saturated is an open condition (\cite[Theorem 2.6]{JM}) in $\Hilb^H(\PP^n)$, so we restrict our attention to the components whose general member is saturated.  Furthermore, instead of the embedding in this multigraded Hilbert scheme, we consider the image of ${\rm VAPS}(f,m)$ under the forgetful map $$\Hilb^H(\PP^n)\to \GG(k_d,N_d); \quad I\mapsto I_d \subset f^\perp_d,$$
where $$k_d=\dim_\CC I_d=\binom{n+d+1}{n}-m$$ and $N_d=\dim_\CC f^{\perp}_d$.
We denote this image by ${\rm VAPS_G}(f,m)$, when the Grassmannian ${\rm G}=\GG(k_d,N_d)$ and $d$ are understood. 
We choose $m$ to be the minimal length of an apolar scheme to $f$, and  $d$ such that $$\dim_\CC I_d=\binom{n+d+1}{n}-m$$ for any saturated ideal $I$ in ${\rm VAPS}(f,m)$.
If every apolar scheme of length $m$ has the Hilbert function of $m$ general points, 
the variety ${\rm VAPS_G}(f,m)$ will be a compactification of the variety of apolar schemes of length $m$, whether considered in the multigraded Hilbert scheme $\Hilb^H(\PP^n)$ or in the ordinary Hilbert scheme $\Hilb^m(\PP^n)$.
%We shall therefore consider 
%${\rm VAPS_G}(f,H)$ in certain Grassmannians $G$.

An apolar scheme of length $m$ that is smooth, corresponds, by the {\em Apolarity lemma}, \cite[Lemma 1.15]{IK}, to a decomposition of the form $f$  as a sum of pure powers of $m$ linear forms.  A compactification in $\Hilb^m(\PP^n)$ of the set of smooth apolar schemes, is denoted ${\rm VSP}(f,m)$, the {\em Variety of Sums of Powers of $f$}.  In some cases this compactification coincides with ${\rm VAPS_G}(f,m)$.
%Allthough it is more natural to consider the closure in the Hilbert Scheme, or the multigraded Hilbert Scheme, we will consider closures in certain Grassmannians.
Varieties ${\rm VSP}(f,m)$ have allowed new descriptions of several important algebraic varieties:  % that had not be understood by other methods.  

%Let us list some of them, where ${\rm VAPS}(f,m)$ and $\VAPS(f,m)$ coincide with $\VSP(f,m)$, the closure in the Hilbert scheme of the locus of nonsingular apolar schemes of length $m$:
\begin{itemize}
\item  K3 surfaces of degree $38$ are isomorphic to $\VSP(f_6,10)$ with $\{f_6=0\}$ a general sextic curve as described in \cite{Mukai}
\item Fano threefolds of Picard rank one and of degree $22$ are isomorphic to $\VSP(f_4,6)$ for $\{f_4=0\}$ a general quartic curve \cite{Mukai}
 \item  hyper-K\"ahler fourfolds of $K3^{[2]}$-type with Picard rank one and degree $38$ are described in \cite{IR01}
 as $\VSP(f_3,10)$ where $\{f_3=0\}$ is a general cubic fourfold.
\end{itemize}

%Being apolar is not a closed condition in the Hilbert scheme, and the schemes that are not apolar but lie in ${\rm VAPS}(f,m)$ requires a careful analysis.  The set of apolar schemes sometimes have a natural embedding in a Grassmannian that is easier to access.   

%Consider a degree $k$, such that $I_{\Gamma,k}$ has rank $d_k$ and generates the $I_\Gamma$ for every apolar scheme $\Gamma$. Then  $[I_{\Gamma,k}]\in \GG(d_k, f^\perp_k)$, so we let $\VAPS(f,m)\subset \GG(d_k, f^\perp_k)$ be the closure of the set of apolar schemes of length $m$ in this Grassmannian.  On the boundary there are $d_k$-spaces of forms of degree $k$ that generate apolar ideals to $F$ that are not saturated, but are more accessible than the boundary points in the Hilbert scheme.

In this paper we consider $$f=q^r=(x_0^2+x_1^2+...+x_{n-1}^2+x_n^2)^r$$ for $q$ a quadric form of rank $n+1$. We show in Section \ref{sec1} the following proposition that extends to arbitrary $n$ a result, when $n=2$, by Cosimo Flavi \cite{flavi2}.

\begin{proposition}\label{rank}
Let $\{q=0\}\subset\PP^{n}$ be a smooth quadric hypersurface.  The minimal length of an apolar scheme to $q^r$
is $$N(n,r)=\binom{n+r}{n},$$
and coincides with its catalecticant rank, the rank of the space of partials of degree $r$.
The Hilbert function of any apolar scheme of length $N(n,r)$ to $q^r$ coincides with the Hilbert function of $N(n,r)$ general points in $\PP^n$, i.e.
$$H=\left(1,n+1,\binom{n+2}{n},\ldots,\binom{n+r}{n},\binom{n+r}{n},\ldots \right)$$
\end{proposition}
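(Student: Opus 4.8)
The plan is to treat the three assertions in turn. The heart of the matter is the Hilbert function of the Artinian Gorenstein algebra $A:=T/(q^r)^{\perp}$, whose graded piece $A_k$ is isomorphic, via the catalecticant, to the space $P_k\subset S_{2r-k}$ of order-$k$ partials of $q^r$. Writing $S_k$ for the space of all forms of degree $k$ in $x_0,\dots,x_n$, I claim that $P_k=q^{r-k}S_k$ for $0\le k\le r$, so that $\dim A_k=\dim S_k=\binom{n+k}{n}$ there. The case $k=0$ is trivial, and since
\[
\partial_{x_i}\!\left(q^{r-k}g\right)=q^{r-k-1}\bigl(2(r-k)\,x_i g+q\,\partial_{x_i}g\bigr)\qquad(g\in S_k),
\]
the inductive step reduces to showing that for $k\le r-1$ the linear span $V_k$ of $\{\,2(r-k)x_i g+q\,\partial_{x_i}g:\ 0\le i\le n,\ g\in S_k\,\}$ is all of $S_{k+1}$. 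This one computation I would carry out by duality. Write $h(\partial)p$ for the apolar action of $h\in T$ on $p\in S$, and set $\Delta_y:=\sum_i\partial_{y_i}^2$. Using that multiplication by $x_i$ is adjoint to $\partial_{y_i}$, that $\partial_{x_i}$ is adjoint to $y_i$, and that $h(\partial)(q\,\psi)=(\Delta_y h)(\partial)\psi$ in the degrees occurring here, one finds for $h\in T_{k+1}$ that
\[
h(\partial)\bigl(2(r-k)x_i g+q\,\partial_{x_i}g\bigr)=\bigl(2(r-k)\,\partial_{y_i}h+y_i\,\Delta_y h\bigr)(\partial)\,g ,
\]
so $h\in V_k^{\perp}$ if and only if $2(r-k)\partial_{y_i}h+y_i\Delta_y h=0$ for every $i$. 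Applying $\sum_i\partial_{y_i}$ to the $i$-th relation and invoking Euler's identity on the degree-$(k-1)$ form $\Delta_y h$ gives $(2r-k+n)\Delta_y h=0$, whence $\Delta_y h=0$ because $k\le r-1$; the relations then read $2(r-k)\partial_{y_i}h=0$, and $r-k\ge1$ forces $h=0$. Hence $V_k^{\perp}=0$, i.e. $V_k=S_{k+1}$, and the induction closes. By Gorenstein symmetry $\dim A_k=\dim A_{2r-k}$, so $\dim A_r=\binom{n+r}{n}$ is the largest graded dimension of $A$, which is exactly the catalecticant rank of $q^r$.

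Everything else is formal. If $\Gamma\subset\PP^n$ is apolar to $q^r$ then $I_\Gamma\subseteq(q^r)^{\perp}$, so $T/I_\Gamma$ surjects onto $A$ and $\binom{n+r}{n}=\dim A_r\le\dim(T/I_\Gamma)_r\le\length\Gamma$, the last inequality because $I_\Gamma$ is saturated and therefore $(T/I_\Gamma)_r$ injects into $H^0(\oo_\Gamma(r))$. Granting the existence of an apolar scheme of length exactly $\binom{n+r}{n}$ (the third step, below), the minimal apolar length is $N(n,r)=\binom{n+r}{n}$, and it equals the catalecticant rank. For $\Gamma$ of this minimal length, combining $\dim A_k\le\dim(T/I_\Gamma)_k$ with $\dim(T/I_\Gamma)_k\le\dim T_k$ forces $\dim(T/I_\Gamma)_k=\binom{n+k}{n}$ for $0\le k\le r$; and since $T/I_\Gamma$ is a one-dimensional Cohen--Macaulay graded ring, $k\mapsto\dim(T/I_\Gamma)_k$ is nondecreasing and bounded above by $\length\Gamma=\binom{n+r}{n}=\dim(T/I_\Gamma)_r$, hence constant equal to $\binom{n+r}{n}$ for all $k\ge r$. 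This is precisely the Hilbert function $H$ of $\binom{n+r}{n}$ general points in $\PP^n$, which proves the last assertion.

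The genuine obstacle, and the technical heart of Section~\ref{sec1}, is the existence step: producing a length-$\binom{n+r}{n}$ subscheme $\Gamma\subset\PP^n$ with $q^r\in\langle\nu_{2r}(\Gamma)\rangle$, equivalently $I_\Gamma\subseteq(q^r)^{\perp}$. The constraint is rigid, because $(q^r)^{\perp}$ vanishes in all degrees $\le r$, so any such $\Gamma$ must impose independent conditions on forms of degree $r$; this already rules out the crudest candidates. For $n=1$ one may take, for $P$ a point of the conic $\{\sum_i y_i^2=0\}$ dual to $q$, the length-$(r+1)$ subscheme of $\PP^1$ defined by $\ell_P^{\,r+1}$, where $\ell_P\in T_1$ vanishes at $P$: it is apolar to $q^r$ because $P$ lies on the dual conic, so $\ell_P(\partial)$ annihilates one of the two linear factors of $q=(x_0+\sqrt{-1}\,x_1)(x_0-\sqrt{-1}\,x_1)$, and hence $\ell_P(\partial)^{\,r+1}$ kills $q^r$. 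For $n=2$ the correct construction is that of Flavi \cite{flavi2}; for general $n$ one must find and verify its analogue, and the delicate point is that apolarity and the length bound have to be established \emph{simultaneously} — apolarity pushes the scheme to be very special, while length exactly $\binom{n+r}{n}$ forbids it from being too special.
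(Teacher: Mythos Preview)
Your proposal is incomplete: you explicitly leave the existence step open for general $n$, and that is precisely the content of the paper's proof. Without it you have only the inequality $N(n,r)\ge\binom{n+r}{n}$, not the claimed equality.

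The paper's construction is concrete. With $q=x_0^2+\cdots+x_{n-2}^2+x_{n-1}x_n$ and $q_1=x_0^2+\cdots+x_{n-2}^2$, one forms the inverse twist
\[
f_0=\operatorname{tw}\bigl((q_1+x_{n-1})^r\bigr)=\sum_{k=0}^{r}\frac{r!}{(r-k)!}\,q_1^{\,r-k}x_{n-1}^{\,k}
\]
and lets $\Gamma_0$ be the local Gorenstein scheme whose ideal is the homogenization of $f_0^{\perp}$; it is supported at a point of $\{q^{-1}=0\}$. By \cite[Proposition~2.8]{FJM}, $\Gamma_0$ is apolar to $q^r$. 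Its length is $\dim D(f_0)$, which the paper computes by exhibiting an explicit basis of leading terms $q_1^{\,r-k}(x_0,\dots,x_{n-2})^{j}$ for $0\le j\le k\le r$ and summing $\sum_{k=0}^{r}\binom{k+n-1}{k}=\binom{n+r}{n}$. This ``tautological scheme'' construction is the new ingredient beyond Flavi's $n=2$ case, and it is exactly what your proposal lacks.

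Your treatment of the catalecticant rank, on the other hand, is genuinely different from the paper's and cleaner. The paper verifies $P_r=S_r$ by writing out second-order partials of $q^r$ and inducting somewhat informally; you prove the inductive step $V_k=S_{k+1}$ by duality, reducing to $V_k^{\perp}=0$ via the identity $(2r-k+n)\Delta_y h=0$. That argument is correct and worth keeping. Your Hilbert-function argument is essentially the same as the paper's, only phrased more carefully through the Cohen--Macaulay property of $T/I_\Gamma$.
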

The smallest length, $N(n,r)$, of any apolar scheme to $q^r$ is called the scheme length or cactus rank of $q^r$. In the case $n=3$, any finite scheme is smoothable, so this cactus rank coincides with the border rank, as formulated by Flavi (loc. cit.). The minimal length of a smooth apolar scheme is called the rank. For $q^r$, the rank is often strictly larger than the cactus rank and even larger than the rank of a general form of degree $2r$. An upper bound on the rank was found by Reznick \cite{Rez95} and improved by Flavi in \cite{flavi1}. Buczy\'{n}ski et al. showed in \cite{BHMT18} that among forms with higher rank than the general form, the orbit of the powers $q^r$ under $PGL(n+1)$ forms an irreducible component of the smallest dimension. Although we compute the rank of $q^r$ in two cases (see Remark \ref{ranks} and Corollaries \ref{rankq3} and \ref{rankq2}), this is not our main concern in this paper.

%Since the rank is larger than the cactus rank in our main exa, we investigate 
%was first shown by  recovers and extends the lower bounds on the border rank and rank established by Cosimo Flavi \cite{flavi1,flavi2} where the main results concern the rank of \(q^r\), a question we do not discuss here.

We consider the Hilbert function of a general set of $N(n,r)$ points in $\PP^n$, and study the geometry of the Grassmannian compactification $\VAPS(q^r,N(n,r))$, with ${\rm G}=\GG(d_{r+1},f^{\perp}_{r+1})$, where $$d_{r+1}=\binom{n+r+1}{n}-\binom{n+r}{n}=\binom{n+r+1}{n}-H(r+1).$$ 
In particular,
 $d_{r+1}=\dim_\CC I_{\Gamma,r+1}$, where $\Gamma$ is any apolar scheme of length $N(n,r)$.  %So $N(n,r)$ is the smallest integer $m$ such that the variety $\VAPS(f,m)$ is non-empty. 
% We shall show that the apolar ideal $f^{\perp}$ is generated in degree $r+1$, and that the Hilbert function of any apolar scheme of length $N(n,r)$ to $q_r$ equals that of the Hilbert scheme of $N(n,r)$ general points in $\PP^n$. 
 
%In Section \ref{sec1}, we determine \(N(n,r)\), recovering and extending lower bounds on the border rank and rank established by Cosimo Flavi \cite{flavi1,flavi2} where the main results concern the rank of \(f\), a question we do not discuss here.

%\begin{proposition}
%Let $q$ be a quadratic form of rank $n$.  The minimal length of an apolar scheme to $q^r$
%is $$N(n,r)=\binom{n+r}{n}.$$
%\end{proposition}
It is interesting to compare $\VAPS(q^r,N(n,r))$ with $\VSP_{\rm G}(f_t,N(n,r))$, where $f_t$ is a general form of degree $2r$.
%and $VSP(f_t,N(n,r))$ is the closure of nonsingular apolar schemes. 
The cactus rank of $q^r$ equals the rank of $f_t$ only in a few cases: $$(n,r) = (2,2), (2,3), (2,4), (3,2), (4,2), (5,2).$$ Of these, the case \((2,2)\) was treated by Mukai and Umemura \cite{MU}, while in the cases \((2,4)\) and \((5,2)\), the variety \(\VAPS(q^r, N(n,r))\) is of higher dimension than \(\VSP_{\rm G}(f, N(n,r))\) for a general $f$ of degree $2r$. Here, we treat two of the three cases where the dimensions coincide, namely the cases \((n,r) = (2,3)\) and \((n,r) = (3,2)\).

The variety $\VSP(q, N(n, 1))$ was discussed in the paper \cite{JRS}, with reasonably complete results for $n=3$, and partial results for $n>3$. Mukai and Umemura \cite{MU} studied the geometry of $\VSP(q^2, 6)$ for $q$ a ternary quadric and proved that it is isomorphic to a special Fano threefold of Picard rank one and degree $22$, which turned out to be interesting as the one with the largest automorphism group \cite{Pro90, DKK}. This was crucial in the proof that a general threefold among the Fano threefolds of degree $22$ (which are $\VSP(f, 6)$ for $f$ a general ternary quartic form) admits a K\"ahler-Einstein metric \cite{Don07}.

The next cases to study are $\VAPS(q^3,10)$ for a ternary quadric $q$ and $\VAPS(q^2,10)$ for $q$ a quaternary quadric; this is the aim of this paper.
A main part of our arguments is computational. Both the explicit equations and the automorphisms of $\{q=0\}$ are used in our proof. In both cases, the inverse quadric $Q^{-1}$ (that is, the inverse quadric to $\{q=0\}$) plays a role. 

Our computations lead to the following description of the apolar schemes of minimal length.

\begin{proposition}\label{apolar to q3}
Let $q$ be a ternary quadric of rank three. Then any apolar scheme of length ten to $q^3$ is supported on the tangent line to $Q^{-1}$ at some point $p$ and has a component of length at least nine at $p$.

Let $q$ be a quaternary quadric of rank four. Then any apolar scheme of length ten to $q^2$ 
 is supported in the tangent plane to $Q^{-1}$ at some point $p$.  It has a component of length at least eight at $p$, while the residual scheme of length at most two is disjoint from $Q^{-1}$. 
\end{proposition}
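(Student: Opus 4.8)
The plan is to work out, in each case, the structure of the apolar ideal $q^{\perp}$ (really $(q^r)^\perp$) explicitly and then to constrain a saturated ideal $I\subset (q^r)^\perp$ cutting out a length-$N(n,r)$ scheme $\Gamma$. Since $q$ has maximal rank, after a linear change of coordinates we may take $q=x_0^2+\dots+x_n^2$, and then the apolar ideal $(q^r)^\perp$ is classically understood: in degrees $\le r$ it contains no "new" elements beyond what the catalecticant forces, while in degree $r+1$ it is generated by the harmonic polynomials together with $q\cdot T_{r-1}$; the key point, already supplied by Proposition \ref{rank}, is that $\dim (q^r)^\perp_{r+1}=\binom{n+r+1}{n}-N(n,r)=d_{r+1}$, so the ideal of any minimal apolar scheme $\Gamma$ must \emph{equal} $(q^r)^\perp$ in degree $r+1$ and be contained in it in all higher degrees. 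I would first record this and the resulting resolution/Betti data for $(q^r)^\perp$ in the two cases $(n,r)=(2,3)$ and $(3,2)$.

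Next I would exploit apolarity with the dual (inverse) quadric. Writing $Q^{-1}$ for the inverse quadric form, one has the classical fact that $Q^{-1}$ annihilates harmonic forms and that the tangent spaces to $\{Q^{-1}=0\}$ are exactly the rank-drop loci of the relevant catalecticant. Concretely, for a point $p$ the apolar scheme structure at $p$ is governed by the annihilator in $T$ of the "localized" form, and a length count shows a single point can absorb at most a prescribed multiplicity: in the ternary cubic case, the catalecticant $\mathrm{Cat}_{r}(q^3)$ has the property that the span of partials of a point $\ell\in\PP^2$ meets the kernel space of $q^\perp$ maximally precisely when $\ell$ lies on $Q^{-1}$, and the order of tangency of the osculating data of the rational normal curve to $Q^{-1}$ forces the scheme to be concentrated (up to length two residue, in the quaternary case) on a tangent line, resp.\ tangent plane, to $Q^{-1}$. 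I would make this precise by: (i) showing that $\Gamma$ cannot have two distinct points off $Q^{-1}$ contributing independently, via a dimension/rank argument on $(q^r)^\perp_{r+1}$; (ii) showing that any point $p$ of $\Gamma$ of high multiplicity must lie on $Q^{-1}$, because otherwise the local Hilbert function forces $I$ to acquire an element of degree $\le r$ not in $(q^r)^\perp$; and (iii) bounding the length at $p$ by analyzing which fat-point ideals at $p$ are contained in $(q^r)^\perp$ — this is where the tangent line/plane to $Q^{-1}$ enters, since the directions in which a fat point at $p\in Q^{-1}$ stays apolar are exactly the tangent directions to $Q^{-1}$.

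For the ternary case the output should be: the length-$\ge 9$ component sits on the tangent line $T_pQ^{-1}$ and the residual length-$\le 1$ piece, being of length $\le N(2,3)-9=1$, is automatically a (possibly embedded) point on that same line, so the whole scheme lies on $T_pQ^{-1}$. For the quaternary case the residual has length $\le 2$; here I would additionally argue that this residual scheme is disjoint from $Q^{-1}$ — otherwise its support point would itself be forced to absorb extra length by the same apolarity constraint, contradicting minimality — and that the thick part at $p$ has length $\ge 8$ and lies in the tangent plane $T_pQ^{-1}$, again because apolar fat-point schemes at $p$ grow only along $Q^{-1}$.

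The main obstacle I anticipate is step (iii): pinning down exactly which curvilinear or non-curvilinear subschemes of a fat point at $p$ remain apolar to $q^r$. This is essentially a local computation in the Artinian Gorenstein quotient $T/(q^r)^\perp$ localized at $p$, and getting the sharp bounds ("at least nine", "at least eight", "residual at most two") requires understanding the degree-$(r+1)$ piece of $(q^r)^\perp$ well enough to see that no direction transverse to $Q^{-1}$ can be "used" without picking up a harmonic polynomial that vanishes on $\Gamma$ — which would force $\Gamma$ onto a proper subvariety of too small degree. I expect this to come down to an explicit but manageable analysis of the catalecticant matrix and its kernel along $Q^{-1}$, using the explicit equations and the $SO(n+1)$-symmetry of $q$ to reduce to a normal-form point $p$.
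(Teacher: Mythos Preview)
Your proposal contains a fundamental error that undermines the whole strategy. You assert that
\[
\dim (q^r)^\perp_{r+1}=\binom{n+r+1}{n}-N(n,r)=d_{r+1},
\]
and hence that $I_{\Gamma,r+1}=(q^r)^\perp_{r+1}$. This is false. The correct dimension is $\dim (q^r)^\perp_{r+1}=\binom{n+r+1}{n}-\binom{n+r-1}{n}$, since the $(r+1)$-st partials of a degree-$2r$ form span all of $S_{r-1}$. For $(n,r)=(3,2)$ this gives $20-4=16$, not $10$; for $(n,r)=(2,3)$ it gives $15-6=9$, not $5$. Thus $I_{\Gamma,r+1}$ is a proper subspace of $(q^r)^\perp_{r+1}$ (of codimension $6$ and $4$, respectively), and the entire content of the proposition lies in constraining \emph{which} subspaces arise. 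Your plan assumes this choice away.

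Beyond this, the outline is too schematic to qualify as a proof, and the mechanisms you sketch in (i)--(iii) do not match what actually makes the argument work. The paper proceeds quite differently. In the quaternary case it introduces a concrete \emph{quadratic polarity}: for each $[l]\in\PP(S_1)$ there is a quadric $q_l\in T_2$ with $q_l(q^2)=l^2$, and one proves (Lemma~\ref{residual in ql}) that the residual $\Gamma\setminus[l]$ lies on $\{q_l=0\}$. One then computes $q_l$ explicitly in terms of $q^{-1}$ and the polar plane $p_l$ (Lemma~\ref{polar quadrics}), analyzes intersections $\{q_{l_1}=q_{l_2}=0\}$ geometrically, and uses this to show first that any two points of $\Gamma$ off $Q^{-1}$ force the residual length-eight piece to be supported at a single point of $Q^{-1}$ (Proposition~\ref{2+8}), and then, via a substantial combinatorial argument, that no \emph{smooth} apolar scheme of length ten exists (Lemma~\ref{noreducedZ}). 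The final claim that the support on $Q^{-1}$ consists of a single point is completed by explicit computation in Section~\ref{affine deformation}. In the ternary case the paper does not argue abstractly at all: it carries out an explicit affine unfolding of the tautological scheme $\Gamma_0$ using the Buchsbaum--Eisenbud syzygy matrix of $(q^3)^\perp$ and reads the support structure off the resulting equations (Proposition~\ref{main2}).

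Your anticipated obstacle (step (iii), analyzing which fat-point ideals stay apolar) is real, but the paper does not resolve it by the local-algebra route you describe; it uses the global quadratic polarity together with direct computation.
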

\begin{remark}\label{ranks}
    The minimal length of a {\em smooth} apolar scheme to $q^r$, called the rank of $q^r$, is eleven when $q$ is ternary and $r=3$, and also when $q$ is quaternary and $r=2$, see Corollaries \ref{rankq3} and \ref{rankq2}.
\end{remark}
%In the ternary case, any apolar scheme of length ten has a component of length at least nine at some point $p\in Q^{-1}$ and the residual point on the tangent line to $Q^{-1}$ at $p$.  In the quaternary case, any apolar scheme of length ten has a component of length at least eight at some point $p\in Q^{-1}$ and the residual points outside $Q^{-1}$, but in the tangent plane to $Q^{-1}$ at $p$. 
In the ternary case we show:
\begin{theorem}Let $q$ be a ternary quadric of rank three. Then $$\VAPS(q^3,10)\subset \GG(5,(q^3)^\perp)$$ is the tangent developable surface of a rational normal curve of degree $20$ in the Pl\"ucker embedding.
\end{theorem}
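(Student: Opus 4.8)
The plan is to organize everything around the support map furnished by Proposition~\ref{apolar to q3}. To an apolar scheme $\Gamma$ of length ten to $q^3$ that proposition attaches a point $p\in Q^{-1}$ — the unique point carrying a component of length at least nine — and the tangent line $L_p=T_pQ^{-1}$, on which $\Gamma$ is supported. Since a length-ten scheme cannot have two components of length $\ge 9$, this assignment is single valued and varies algebraically with $\Gamma$, so it defines a morphism $\pi\colon\VAPS(q^3,10)\to Q^{-1}\cong\PP^1$. The group $\mathrm{Aut}(\{q=0\})$ acts on all the data and transitively on $Q^{-1}$, so it suffices to analyse one fibre $\pi^{-1}(p_0)$ and then transport the answer around the base.

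First I would fix $p_0$ and $L_0=T_{p_0}Q^{-1}$ in convenient coordinates and enumerate all apolar schemes in $\pi^{-1}(p_0)$. By Proposition~\ref{rank} such a $\Gamma$ has the Hilbert function of ten general points; since $(q^3)^{\perp}_i=0$ for $i\le 3$ and $(q^3)^{\perp}_i=T_i$ for $i\ge 6$, apolarity reduces (for schemes generated in degree four) to the single condition $I_{\Gamma,4}\subset (q^3)^{\perp}_4$, and in particular no line and no conic contains $\Gamma$, so the big component at $p_0$ is genuinely two–dimensional. I expect the outcome to be that these schemes are parametrised by the residual point $p'\in L_0$, giving a $\PP^1$, whose associated family of $5$-planes $I_{\Gamma,4}\subset (q^3)^{\perp}_4\cong\CC^9$ I would write down explicitly and verify to be a line in the Pl\"ucker embedding of $\GG(5,(q^3)^{\perp}_4)$. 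The distinguished member $\Gamma_{p_0}$ obtained as $p'\to p_0$ is the length-ten scheme concentrated at $p_0$; I would check that its $5$-plane $I_{\Gamma_{p_0},4}$ lies in $(q^3)^{\perp}_4$, so that it represents a point of $\VAPS(q^3,10)$ as the limit of the fibre family.

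Letting $p$ vary, the assignment $p\mapsto[I_{\Gamma_p,4}]$ defines a morphism $\gamma\colon Q^{-1}\to\GG(5,(q^3)^{\perp}_4)$, equivalently a rank-five subbundle $\mathcal E\subset\cO_{\PP^1}^{\,9}$. I would then (i) show $\gamma$ is a closed embedding; (ii) compute the degree of $\det\mathcal E^{\vee}$, which I expect to be $20$, so that the composition with the Pl\"ucker embedding has degree $20$; and (iii) check that the Pl\"ucker coordinates of $\gamma$ span the full $21$-dimensional space $H^0(\PP^1,\cO(20))$, i.e.\ that $\gamma(Q^{-1})$ spans a $\PP^{20}$. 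A nondegenerate rational curve of degree $20$ in $\PP^{20}$ is a rational normal curve, so $\gamma(Q^{-1})$ is one. Finally I would identify the fibre $\pi^{-1}(p)$, already known to be a line through $\gamma(p)$, with the embedded tangent line to $\gamma(Q^{-1})$ at $\gamma(p)$: concretely by comparing the explicit parametrisation of the fibre with the span of $\gamma(p)$ and $\gamma'(p)$, equivalently by recognising the collision $p'\to p$ as a first–order deformation. Combining this with Proposition~\ref{apolar to q3}, which guarantees that every point of $\VAPS(q^3,10)$ lies in some fibre, gives
$$\VAPS(q^3,10)=\bigcup_{p\in Q^{-1}}\pi^{-1}(p)=\bigcup_{p}T_{\gamma(p)}\gamma(Q^{-1}),$$
the tangent developable of a rational normal curve of degree $20$.

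I expect the main obstacle to be the second step: the complete and honest classification of the length-ten apolar schemes with prescribed support, including the non–curvilinear structure of the length-$\ge 9$ component at $p$, together with the proof that the resulting $\PP^1$ of schemes maps precisely onto the whole tangent line of $\gamma$ and onto nothing more — in particular the tangency statement and the exclusion of any further apolar schemes. The degree and nondegeneracy computation in points (ii)--(iii) is the other substantial point, but it is a finite explicit calculation once the family $I_{\Gamma_p,4}$ is written down, and the automorphism group cuts its bulk down considerably.
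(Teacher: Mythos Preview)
The central gap is a circularity. Your plan leans on Proposition~\ref{apolar to q3} at the very first step, to define the support map $\pi$ and to know that every length-ten apolar scheme sits on a tangent line to $Q^{-1}$ with a length-$\ge 9$ component at the foot. In the paper, however, the ternary case of that proposition is not proved independently: it is read off from the explicit unfolding computation in Proposition~\ref{main2}, which is precisely where the tangent-developable theorem is established. So within the paper's logic you are assuming a structural fact about apolar schemes whose only proof is the computation you are trying to bypass.

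Even if one grants Proposition~\ref{apolar to q3} as a black box, the steps you flag as obstacles --- the complete classification of apolar schemes in a fibre $\pi^{-1}(p_0)$, the degree-$20$ computation, and the identification of the fibre with the embedded tangent line --- are exactly what the paper's unfolding delivers. There one deforms the tautological ideal $I_{\Gamma_0}$ by $J(a)=D(a)\cdot J$ inside $(q^3)^{\perp}_4$, imposes four linear syzygies via $J(a)^tH(b)=0$, eliminates the $b$'s, and is left with fifteen quadrics in $\PP^8$ cutting out the tangent developable of a rational normal octic $C$; the map $a\mapsto\langle J(a)\rangle$ into $\GG(5,(q^3)^{\perp})$ sends $C$ to a rational normal curve of degree $20$ (by reading off the $5\times 5$ minors of $D(t)$), and the tangent line at $[\Gamma_0]$ is the explicit pencil $\Gamma_{[s:t]}$. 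Your residual-point picture is correct a posteriori --- the fibre over $p$ is that pencil --- but note that the length-$9$ component at $p$ varies nontrivially with $p'$, so showing that each $p'$ determines a unique apolar structure (and no others exist) is not lighter than the direct computation. Your outline repackages the work rather than avoiding it.
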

The theorem is part of Proposition \ref{main2}.
The tangent developable surface of a rational normal curve is a degeneration of a K3 surface, so in fact $\VAPS(q^3,10)$ is a degeneration of $\VSP_G(f_t,10)$ for a general ternary form $f_t$ of degree six, see Remark \ref{rnc of degree 20}.
The method of proof is to consider an affine deformation of a particular apolar scheme, that has an embedding as an open subvariety of  $\VAPS(q^3,10)$, and then to consider the compactification of this variety. 

We approach the quaternary case in a similar way.
The variety $\VAPS(f,k)$ (or rather $\VSP(f,k)$, although they coincide)  for a quaternary quartic form $f$ of rank $k<10$ was studied in \cite{KKRSSY}.
The case $\VAPS(q^2,10)$ is the first example of a quartic of cactus rank $10$ that we are able to describe.
Thus, the main goal of this paper concerns the description and the study of the geometry of this special fivefold.
We find that it has three components, we describe those components as degeneracy loci and compute their degrees in the natural Pl\"ucker embedding.
%Two of those components are isomorphic to each other and we call the third component the main component. 

 More precisely, we denote by $Q^{-1}$ the quadric surface that is the inverse quadric to $\{q=0\}$. In Proposition \ref{mainComp} we consider the Grassmann bundle $\pi \colon \GG(3,\mathcal E)\to Q^{-1}$ for an explicitly constructed rank six bundle $\mathcal E$ on $Q^{-1}$ embedded in $Q^{-1}\times V_{16}$. Here $V_{16}=((q^2)^\bot_3)^*$, where $(q^2)^\bot_3$ is the vector space of dimension $16$ of apolar cubics to $q^2$. Let $\U$ the relative universal subbundle on $\GG(3,\mathcal E)$ and let $(P^1_{Q^{-1}})(3,3)$ be the bundle of principal parts on $Q^{-1}$ so that the projectivisation of $(P^1_{Q^{-1}})^*$ is the projective tangent bundle to $Q^{-1}$. %is a quadric in $\PP^3$. %it is a vector bundle whose projectivisation is a projective tangent bundle to $Q^{-1}$.
Let $$E_6=\oo_{Q^{-1}}(-2,3)\oplus 4\oo_{Q^{-1}}(0,0)\oplus \oo_{Q^{-1}}(2,-3)$$ then a section of $\varphi\in {\rm H}^0(\wedge^2 E_6)$ defines a section of ${\rm H}^0(\wedge^2 \mathcal Q)$ on $\GG(3,E_6)$, where $\mathcal Q$ is the relative quotient bundle, that defines a relative Lagrangian Grassmannian $$\pi\colon LG_{\varphi}(3,E_6)\to Q^{-1}$$ with relative universal subbundle $\mathcal{U}\subset \pi^{\ast}(E_6)$.

\begin{theorem}\label{three components} Let $q$ be a quaternary quadric of rank four. Then the variety $\VAPS(q^2,10)$ has three components. % two of those components are isomorphic to each other and we call the third component the main component.
      One of the components  is isomorphic to a zero locus in $\GG(3,\mathcal E)$ of a section of $$\pi^{\ast}P^1_{Q^{-1}}\otimes \wedge^2 \U^{\ast}.$$
      The degree of this component in its natural embedding in $\GG(6,V_{16})$ is 2560.
  
  Each of the two additional component of $\VAPS(q^2,10)$ is isomorphic to the corank $2$ Lagrangian degeneracy locus between subbundles  $\pi^{\ast}V_3$ and $\mathcal{U}$ of $\pi^{\ast} E_6$ on $LG_{\varphi}(3,E_6)$ where $$V_3\simeq \oo_{Q^{-1}}(-2,3)\oplus 2\oo_{Q^{-1}} (0,-2).$$ The degree of each of the component in its natural embedding in $\GG(6,V_{16})$ is 3392.
 \end{theorem}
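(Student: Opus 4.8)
The argument is computational and is organised around the fibration over $Q^{-1}$ supplied by Proposition~\ref{apolar to q3}: every length-ten apolar scheme $\Gamma$ to $q^2$ is supported in the tangent plane $T_pQ^{-1}$ at a point $p\in Q^{-1}$ uniquely determined by $\Gamma$, with a component of length at least eight at $p$ and a residual scheme of length at most two disjoint from $Q^{-1}$. The plan is therefore to fix $p$, choose coordinates adapted to the action of the automorphism group of $\{q=0\}$ (isomorphic to $(SL_2\times SL_2)/\{\pm1\}$) so that $q$ is a hyperbolic form and $p$ is fixed by a maximal torus, and then to write down explicitly the $16$-dimensional space $(q^2)^\perp_3$, the tangent plane $T_pQ^{-1}$, and the saturated ideal of $\Gamma$ localised at $p$. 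Since $\Gamma$ lies in the plane $T_pQ^{-1}$, the problem becomes one of plane, indeed local, geometry: classify the subschemes of $\PP^2=T_pQ^{-1}$ of length ten, concentrated at $p$ up to a residual of length $\le 2$, that are apolar to $q^2$ and have the Hilbert function $(1,4,10,10,\dots)$ of Proposition~\ref{rank}.

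\textbf{Local classification.} The heart of the proof is to show that this locus of apolar schemes, for fixed $p$, decomposes into three strata distinguished by the local shape of the length-$\ge 8$ component at $p$. I expect the generic stratum to consist of schemes of a curvilinear / first-order (principal-parts) type, described by a choice of $3$-dimensional subspace of a canonically constructed $6$-dimensional subspace $\mathcal E_p\subset V_{16}=((q^2)^\perp_3)^*$ subject to an apolarity/incidence condition with $T_pQ^{-1}$; and two further strata in which the fat scheme degenerates along one of the two rulings of $Q^{-1}\cong\PP^1\times\PP^1$, described by a Lagrangian $3$-plane $\mathcal U_p$ in a symplectic $6$-space $E_{6,p}$ meeting a fixed Lagrangian $V_{3,p}$ in dimension at least two. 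The two non-generic strata are interchanged by the outer automorphism of $SL_2\times SL_2$ swapping the factors --- equivalently, the two rulings of $Q^{-1}$ --- which explains why they occur as a symmetric pair and ultimately why they have equal degree. One then checks that each stratum is irreducible of dimension five and that their closures are pairwise distinct, so that $\VAPS(q^2,10)$ has precisely these three irreducible components.

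\textbf{Globalisation.} Letting $p$ vary and reading off the torus weights of the relevant pieces of $(q^2)^\perp_3$ produces the bidegrees: this is the origin of the summands $\oo_{Q^{-1}}(\pm2,\mp3)$ of $E_6$ and of $V_3\simeq\oo_{Q^{-1}}(-2,3)\oplus 2\oo_{Q^{-1}}(0,-2)$, as well as of the embeddings $\mathcal E\hookrightarrow Q^{-1}\times V_{16}$ and $V_3\subset E_6$ and of the alternating form $\varphi\in{\rm H}^0(\wedge^2E_6)$. The apolarity/incidence conditions of the three local strata then globalise respectively to the vanishing of the tautological section of $\pi^*P^1_{Q^{-1}}\otimes\wedge^2\U^*$ on $\GG(3,\mathcal E)$, and to the corank-$2$ Lagrangian degeneracy locus of $\pi^*V_3$ against $\mathcal U$ inside $\pi^*E_6$ on $LG_\varphi(3,E_6)$. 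It remains to verify that the tautological maps from these three parameter spaces to $\GG(6,V_{16})$ --- sending a point to the $6$-plane $(I_{\Gamma,3})^\perp\subset V_{16}$, where $I_{\Gamma,3}$ is the $10$-dimensional subspace of $(q^2)^\perp_3$ --- are isomorphisms onto the three components, which amounts to checking injectivity (distinct apolar ideals give distinct $6$-planes) and reducedness.

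\textbf{Degrees and main obstacle.} The degree of each component in $\GG(6,V_{16})$ is $\int c_1(\mathcal S^\vee)^5$, where $\mathcal S$ is the universal rank-$6$ subbundle $(I_{\Gamma,3})^\perp$ restricted to the component; on the first component $\mathcal S$ is assembled from $\mathcal E$ and $\U$, on the other two from $E_6$, $V_3$ and $\mathcal U$. These integrals are evaluated by Schubert calculus on the Grassmann bundle $\GG(3,\mathcal E)\to Q^{-1}$ and, for the Lagrangian degeneracy loci, by Thom--Porteous-type formulas on $LG_\varphi(3,E_6)\to Q^{-1}$, using $c(Q^{-1})$ and the Chern classes of the explicit line-bundle summands; this yields $2560$ and $3392$. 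The step I expect to be hardest is the local classification together with the intrinsic construction of $\mathcal E$, $E_6$, $V_3$ and $\varphi$: one must pin down exactly which punctual length-$\ge 8$ schemes in $T_pQ^{-1}$ are apolar to $q^2$ with the correct Hilbert function, recognise the three resulting strata, and organise the linear-algebra data into globally defined bundles rather than merely computing in one torus-fixed chart. Once the bundles and incidence conditions are correctly identified, the degree computation, though lengthy, is routine.
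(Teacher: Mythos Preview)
Your outline follows essentially the same architecture as the paper: fibre over $Q^{-1}$, analyse the fibre at a fixed point $p$, find three local pieces, globalise using the $(SL_2\times SL_2)$-action to build the bundles $\mathcal E$, $E_6$, $V_3$ and the form $\varphi$, and then compute degrees by Schubert calculus on the relevant Grassmann bundles. The intuition that the two non-generic components are swapped by the automorphism exchanging the rulings, and your identification of the hardest step (the local classification plus the intrinsic construction of the bundles), are both on the mark.

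Where your plan diverges from the paper is in the \emph{mechanism} of the local classification. You propose to classify directly the punctual length-$\ge 8$ schemes in $T_pQ^{-1}$ that are apolar to $q^2$. The paper does not attempt this. Instead it fixes one explicit apolar scheme $\Gamma_0$ (the tautological scheme supported at $p$), writes its ideal as the first ten of the sixteen cubic generators of $(q^2)^\perp$, and then performs an \emph{affine unfolding}: one perturbs the ten generators by generic combinations of the remaining six, extends the $15$ linear syzygies by multiples of $y_3$, and imposes flatness via $H(a)\cdot J(a)=0$. The resulting ideal in fifty parameters is analysed with Macaulay~2; its radical cuts out an explicit variety in $\AAA^8$ whose primary decomposition gives the three components $V_p^{\mathrm{aff}}(0),V_p^{\mathrm{aff}}(1),V_p^{\mathrm{aff}}(2)$ with the shapes stated (a cone over a $(2,2,2)$ surface and two affine $3$-spaces). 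Their Grassmannian compactifications $V_p(i)$ are then identified with a singular $V_{22}$ (isotropic $3$-planes for an explicit net $M'$ of skew forms on a $7$-space) and two cones over Veronese surfaces inside a Lagrangian Grassmannian. Your proposal does not supply a replacement for this computation, and ``classify the punctual schemes'' is not a method here: without the unfolding/syzygy trick (or an equivalent explicit parametrisation) there is no way to see that exactly three components appear, nor to read off the linear-algebra data that becomes $M'$ and the symplectic form.

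A few smaller points. First, the apolar schemes live in $\PP^3$, merely \emph{supported} in the tangent plane; the paper works throughout with cubics in four variables and only uses restriction to $T_p$ as an organising device, so the problem is not literally plane geometry. Second, $\mathcal E$ is the rank-$7$ quotient $B_{T_p}^\perp/A_p^\perp$ (third osculating modulo tangent), not a rank-$6$ subbundle of $V_{16}$; the $6$ you have in mind is the rank of $E_6$ for the Lagrangian description of the special components. Third, the globalisation in the paper is not just ``read off torus weights'': one must check that the fibrewise net $K_p\subset\wedge^2 E_p^*$ spreads to a subbundle and is $\mathcal G$-equivariantly isomorphic to $R=(P^1_{Q^{-1}})^*$, which is done by writing both sides as homogeneous bundles $(\mathcal G\times\CC^3)/\mathcal P$ and comparing the $\mathcal P$-actions explicitly.
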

The proof is given in Proposition \ref{three components global}, Proposition \ref{mainComp} and Proposition \ref{addComp}.
%The two remaining components are isomorphic to each other and can be described as Lagrangian degeneracy loci.
%Let $$E_6=\oo_{Q^{-1}}(-2,3)\oplus 4\oo_{Q^{-1}}(0,0)\oplus \oo_{Q^{-1}}(2,-3)$$ then a section of $\wedge^2 \mathcal Q$ on $\GG(3,E_6)$, where $\mathcal Q$ is the relative quotient bundle, defines a relative Lagrangian Grassmannian $\pi\colon LG_{\varphi}(3,E_6)\to Q^{-1}$ with relative universal sub bundle $\mathcal{U}$.
%\begin{theorem}
 %Each of the two additional component of $\VAPS(q^2,10)$ is isomorphic to the corank $2$ Lagrangian degeneracy locus between $\pi^{\ast}V_3 \to \U^{\ast}$ on $LG_{\varphi}(3,E_6)$ where $$V_3\simeq \oo_{Q^{-1}}(-2,3)\oplus 2\oo_{Q^{-1}} (0,-2).$$ The degree of the component in its natural embedding to $\GG(6,V_{16})$ is 3392.
%\end{theorem}

The structure of the paper is the following.
In Section \ref{sec1} we introduce general results and notions. In Section \ref{ternaryquadric} we describe the variety $\VAPS(q^3,10)$ for $\{q=0\}$ a plane conic curve.  In Section \ref{main3} we start the study of $\VAPS(q^2,10)$ for $\{q=0\}$ a smooth quadric surface by a discussion of 
the new phenomena appearing in four variables, in particular ``bad limits", i.e.~unsaturated ideals that are limits of apolar ideals. In section \ref{affine deformation}, using an affine deformation of a special apolar local scheme supported at a point $p$, we describe the subset of $\VAPS(q^3,10)$ parameterizing apolar schemes containing $p$ in their support.  We identify three components and describe their closure in the Grassmannian ${\rm G}$.  In Section \ref{main6} we describe a globalization of these three components in $\VAPS(q^3,10)$. 

\subsection*{Acknowledgements}
We would like to thank Joachim Jelisiejew for fruitful discussions and helpful comments during our work on this paper. M. Kapustka was supported by the Polish National Sciences Center project number 2018/31/B/ST1/02857. G. Kapustka was supported by the Polish National Sciences Center project number 2018/30/E/ST1/00530.
 
\section{The cactus rank of a power of a quadric}\label{sec1}
Let $q=x_0^2+x_1^2+...+x_{n-1}x_n$ and let $N(n,r)$ be the minimal length of any apolar scheme to $q^r$.  
%Let $$\VAPS(q^r,N(n,r))\subset \Hilb_{N(n,r)}\PP^{n}$$ be the closure of the set of schemes of length $N(n,r)$ that are apolar to $q^r$. 
The quadratic form $q$ defines a symmetric collineation:
$$\langle y_0,...,y_n\rangle \to \langle x_0,...,x_n\rangle;\quad (a_0y_0+...+a_ny_n)\mapsto (a_0\frac{\partial}{\partial x_0}+...a_n\frac{\partial}{\partial x_n})q.$$
The inverse collineation
$\langle x_0,...,x_n\rangle\to \langle y_0,...,y_n\rangle$ is defined by the {\em inverse quadric},  
$$q^{-1}=\frac{1}{4}y_0^2+\frac{1}{4}y_1^2+...+y_{n-1}y_n.$$ 

We find finite local schemes apolar to $q$ with support on the the quadric $$\{q^{-1}=0\}.$$ These are so called tautological schemes as defined in  \cite[Section 2.3]{FJM}. In \cite[Proposition 2.8]{FJM} it is shown that a local Gorenstein scheme whose ideal is the homogenization of the apolar ideal of a dehomogenization of a form $F$ is apolar to a twist of $F$.  The same proof applies to the inversion; a local Gorenstein scheme whose ideal is the homogenization of the apolar ideal of a twisted dehomogenization of a form $F$ is apolar to $F$, when the twist of the dehomogenization is the inverse of the twist of the form $F$: If 
$$F=\sum_{I} \lambda_Ix^{I}\in \CC[x_0,\ldots,x_n]_d,\quad I\in\{(i_0,\ldots,i_n)|i_0+\cdots+i_n=d\}$$ and $f:=F(x_0,\ldots,x_{n-1},1)$, then this inverse twist is
$$\operatorname{tw}(f)=\sum_{I} \lambda_I\frac{i_n!}{x_n^{i_n}}x^{I}.$$
 %\begin{proposition}\label{rank} 
%The following equality holds
%$$N(n,r)=\binom{n+r}{n}.
%%=\lceil\frac{1}{n+1}\binom{n+2r}{n}\rceil.
%$$
%Furthermore, any the Hilbert function of any apolar scheme of length $N(n,r)$ to $q^r$ has values %$(1,n+1,\binom{n+2}{n},\ldots,\binom{n+r}{n}, \binom{n+r}{n},\ldots)$.
%\end{proposition}
%\vskip .5cm
%Proof of Proposition \ref{rank}.
\begin{proof}[Proof of Proposition \ref{rank}]
Let $q_1=x_0^2+x_1^2+...+x_{n-2}^2$, so that 
$(q_1+x_{n-1})^r$ is the dehomogenization of $q^r$ with respect to $x_n$, and let 
$f_0=\operatorname{tw}((q_1+x_{n-1})^r)$, the inverse twist of the one defined in  \cite[Section 1.2]{FJM}. Then 
$$f_0=\sum_{k=0}^r\frac{r!}{(r-k)!}q_1^{r-k}x_{n-1}^k=q_1^r+rq_1^{r-1}x_{n-1}+r(r-1)q_1^{r-2}x_{n-1}^2+\ldots$$
Let $\Gamma_0$ be the finite Gorenstein scheme, whose ideal is the homogenization of the apolar ideal of $f_0$.  Then $\Gamma_0$ is apolar to $q^r$ by \cite[Proposition 2.8]{FJM}.
%$$f_0=q_0^r=(x_0^2+x_1^2+...+x_{n-1})^r=(q_1+x_{n-1})^r.$$
%$$f=tw(q_0^r)=tw((q_1+x_{n-1})^r)=\Sigma_{k=0}^r\frac{r!}{(r-k)!}q_1^{r-k}x_{n-1}^k.$$
%(UNFORTUNATELY, $\Gamma_0$ IS NOT APOLAR TO $q^r$, BUT RATHER A TWIST OF $q^r$ THAT DEPENDS ON $r$.  TO GET OUR RESULTS FOR $q^r$ WE COULD TWIST $q_0^r$, BUT THIS WOULD BE A DIFFERENT TWIST FOR EACH $r$, SO NOT EASY TO HANDLE..)

It is supported at the point $$V(y_0,...,y_{n-1})\in\{q^{-1}=0\}.$$  The length of $\Gamma_0$ is the dimension of the space $D(f_0)$ of all partials of all orders of $f_0$.  
%We set 
%$q_0=x_0^2+x_1^2+...+x_{n-1}$.  
The first order partials of $q_0^r$ have leading terms (the higher homogeneous summand):
%$$y_i(q_0^r)=2r(q_0)^{r-1}x_i, 0\leq i\leq n-2 \quad {\rm and}\quad y_{n-1}(q_0^r)=r(q_0)^{r-1} $$
$$y_i(f_0)=2rq_1^{r-1}x_i+\ldots ,\qquad 0\leq i\leq n-2, $$
 and
$$y_{n-1}(f_0)=rq_1^{r-1}+r(r-1)q_1^{r-2}x_{n-1}+\ldots, $$
and the second order partials are
%\begin{align*}
%y_i^2(q_0^r)=&4r(r-1)(q_0)^{r-2}x_i^2+2r(q_0)^{r-1},\quad 0\leq i\leq n-2,\\
%y_iy_j(q_0^r)=&4r(r-1)(q_0)^{r-2}x_ix_j,\quad 0\leq i<j\leq n-2,\\
%y_iy_{n-1}(q_0^r)=&2r(r-1)(q_0)^{r-2}x_i,\quad 0\leq i\leq n-2,\\
%y_{n-1}^2(q_0^r)=&r(r-1)(q_0)^{r-2}.
%\end{align*}
\begin{align*}
y_i^2(f_0)=&4r(r-1)q_1^{r-2}x_i^2+2rq_1^{r-1}+\ldots,\quad 0\leq i\leq n-2,\\
y_iy_j(f_0)=&4r(r-1)q_1^{r-2}x_ix_j+\ldots,\quad 0\leq i<j\leq n-2,\\
y_iy_{n-1}(f_0)=&2r(r-1)q_1^{r-2}x_i+\ldots,\quad 0\leq i\leq n-2,\\
y_{n-1}^2(f_0)=&r(r-1)q_1^{r-2}+\ldots\quad .
\end{align*}
So the space of leading terms of partials of order at most two is spanned by
$$ q_1^r, q_1^{r-1}(x_0,\ldots,x_{n-2}),q_1^{r-1},q_1^{r-2}(x_0,\ldots,x_{n-2})^2,q_1^{r-2}(x_0,\ldots,x_{n-2}),q_1^{r-2} ,$$
where $q_1^s(x_0,\ldots,x_{n-2})^m$ are all products of $q_1^s$ with monomials of degree $m$ in the $x_i$. 
Inductively, we get that the space of leading terms of partials of order at most $r$ is spanned by:
$$ q_1^r, q_1^{r-1}(x_0,...,x_{n-2}),q_1^{r-1},...,
q_1^{r-k}(x_0,...,x_{n-2})^k,...,$$
$$q_1^{r-k}(x_0,...,x_{n-2}),q_1^{r-k}, ...,
q_1(x_0,...,x_{n-2})^{r-1},...,q_1(x_0,...,x_{n-2}),q_1, 
$$
$$(x_0,...,x_{n-2})^r,...,(x_0,...,x_{n-2}),1. $$
Clearly these partials are linearly independant and span the space $D(f_0)$ of all partials of $f_0$, and counting dimensions we get:

\begin{align*}
{\rm length}\;\Gamma_0=&{\rm dim}\;D(q_0^r)=1+(n-1+1)+\\
&+\left(\binom{k+n-2}{k}+...+n-1+1\right)+
+\left(\binom{r+n-2}{r}+...+n-1+1\right)\\
&= 1+\binom{1+n-1}{1}+...+\binom{k+n-1}{k}+...+\binom{r+n-1}{r}\\
&=\binom{r+n}{n}.
\end{align*}

The space, $\CC[y_0,...,y_n]_r(q^r)$, of partials of $q^r$ of order $r$ is also $\binom{r+n}{n}$-dimensional:  In fact, 
\begin{align*}&\left(\frac{1}{4}y_0^2+\ldots+\frac{1}{4}y_{n-2}+y_{n-1}y_n\right)(q^r)\\
=&\;\;r(r-1)q^{r-2}(x_0^2+\ldots+x_{n-1}x_n)+\frac{n+1}{2}rq^{r-1}\\
=&\;\;\left(r(r-1)+\frac{n+1}{2}\right)q^{r-1},
\end{align*}
while 
\begin{align*}(y_i^2)(q^r)=&4r(r-1)q^{r-2}x_i^2+2rq^{r-1}, i=0,...,n-2\\
(y_{n-1}y_n)(q^r)=&r(r-1)q^{r-2}x_{n-1}x_n+rq^{r-1},\\
(y_{i}y_j)(q^r)=&4r(r-1)q^{r-2}x_{i}x_j, 0\leq i<j\leq n-2\\
(y_{i}y_j)(q^r)=&2r(r-1)q^{r-2}x_{i}x_j, 0\leq i\leq n-2<j\\
(y_{n-1}^2)(q^r)=&r(r-1)q^{r-2}x_{n}^2, \\
(y_{n}^2)(q^r)=&r(r-1)q^{r-2}x_{n-1}^2. \\
\end{align*}
So $q^r$ has $\binom{2+n}{n}$ dimensional linear space of partials of order $2$ which is spanned by $q^{r-2}(x_0,...,x_n) ^2$ and hence includes  in particular the form $q^{r-1}$.
Similarly, the space of partials of $q^r$ of order four is spanned by the forms 
$q^{r-4}(x_0,...,x_n) ^4$ and includes $q^{r-3}(x_0,...,x_n)^2$ and $q^{r-2}$.

Inductively, the space of partials of order $r$ of $q^r$ is spanned by $( x_0, \dots, x_n) ^r$. In particular, the space of partials of $q^{r}$ of order $r$ is $\binom{r+n}{n}$-dimensional.
The apolar ideal of $q^r$ contains no form of degree $r$. Since the ideal of any scheme of length less than $\binom{r+n}{n}$ would contain a form of degree $r$, the cactus rank of $q^r$ is $\binom{r+n}{n}$ and equals the catalecticant rank.

For the last statement, the Hilbert function $H$ for any apolar scheme is maximal up to degree $r$, since $(q^r)^\perp$ has no forms of degree less than $r+1$. If the apolar scheme has length $N(n,r) = H(r)$, then the Hilbert function $H$ has the value $H(d) = N(n,r)$ for any $d \geq r+1$, as it is non-decreasing and equals $N(n,r)$ for large $d$.
\end{proof}

%Notice that $D(f_0)$ is spanned by partials of order at most $r$, so $f$ has at least $\binom{r+n}{n}$ partials of degree $r$ equals the maximal catalacticant rank of a form of degree $2r$, so the length of any finite apolar scheme cannot be shorter. Therefore $N(n,r)$ is the cactus rank of $f$.  
The cactus rank of $q^r$ computed in Proposition \ref{rank} is, for all but a finite set of pairs $(n,r)$, lower than the rank of the general form of degree $2r$ in $n+1$ variables. Indeed, 
recall that the rank $AH(n,d)$ of a general form of degree $d$ in $n+1$ variables was determined by Alexander-Hirschowitz, cf. \cite{AH}. When $d=2r$ it is
\begin{align*}%\label{AHbound}
    AH(n,2r) = \begin{cases}
    \lceil\frac{1}{n+1}\binom{n+2r}{n}\rceil,\;\;\text{if}\;\; n,r>1,\; (n,r)\neq (2,2),(3,2),(4,2),\\
    6,(\text{resp.}\;10,15),\;\;\text{if}\;\; (n,r)=(2,2),(\text{resp}.\;(3,2),(4,2)). \\
    \end{cases}
\end{align*}

It is straightforward to check that when $n>1,r>1$, then  $$N(n,r)< AH(n,2r), \;\text{unless}\;\; (n,r)=(2,2),(2,3),(2,4),(3,2),(4,2),(5,2).$$
%$$N(n,r)=\binom{n+r}{n}
%\leq AH(2r,n)=\lceil\frac{1}{n+1}\binom{n+2r}{n}\rceil,(n,r)\neq (2,2),(3,2),(4,2)$$
%$$AH(4,2)=6, AH(4,3)=10, AH(4,4)=15$$
%except for $$(n,r)=(2,2),(2,3),(2,4),(3,2),(4,2),(5,2),$$
\begin{remark}
The cactus rank of a general form of degree $2r$ in $n+1$ variables has yet to be determined. It is of course at most $AH(n,2r)$, but for general $(n,r)$ a better upper bound is provided by the length $$2+2n+...+2\binom{n+r-2}{n-1}+\binom{n+r-1}{n-1}=\binom{n+r}{n+1}$$ of the scheme defined by the apolar ideal of a general polynomial of degree $2r$ in $n$ variables.  A lower bound is given by the catalecticant rank, i.e.  $\binom{n+r}{n}$.
\end{remark}
Notice, that  the local apolar schemes $\Gamma_0$ are parameterized by the quadric $\{q^{-1}=0\}$, so  $\VAPS(q^r, N(n,r))$ is at least $(n-1)$-dimensional.

\begin{question}
What is the dimension of $\VAPS(q^r, N(n,r))$?
    
\end{question}

The $(n-1)$-dimensional $SO(n,q)$-orbit $SO(n,q)(\Gamma_0)$ of $\Gamma_0$ is contained in $\VAPS(q^r, N(n,r))$.  
\begin{conjecture}\label{conjecture} 
%$\VAPS(q^r,N(n,r))=(n+1)\cdot N(n,r)-\binom{n+2r}{n}$ 
$$\VAPS(q^r,N(n,r))=SO(n,q)(\Gamma_0),$$ when $n=2, r>3, n=3,4, r>2$ and $n>4,r>1$. 
\end{conjecture}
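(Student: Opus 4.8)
\medskip

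\noindent\textbf{A strategy towards Conjecture~\ref{conjecture}.} The plan is to reduce the conjecture to a structure theorem, mirroring the cases $(n,r)=(2,3)$ and $(n,r)=(3,2)$ treated in this paper, but now showing that there is no room beyond the orbit. Since the $(n-1)$-dimensional orbit $SO(n,q)(\Gamma_0)$ is already contained in $\VAPS(q^r,N(n,r))$ and is closed --- it maps bijectively and $SO(n,q)$-equivariantly from the complete quadric $\{q^{-1}=0\}$, of which it is the image --- it suffices to prove the reverse inclusion, i.e. the following strengthening of Proposition~\ref{apolar to q3} valid in the stated range of $(n,r)$: \emph{every} apolar scheme $\Gamma$ to $q^r$ of length $N(n,r)$ lies in the orbit $SO(n,q)(\Gamma_0)$; equivalently, $\Gamma$ is supported at a single point $p\in\{q^{-1}=0\}$ and coincides there with the tautological scheme $\Gamma_0(p)$ obtained from $\Gamma_0$ by moving its support to $p$. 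Granting this, $\VAPS(q^r,N(n,r))=SO(n,q)(\Gamma_0)$ is irreducible of dimension $n-1$, which is the assertion of the conjecture.

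To prove the structure theorem I would split the argument into three parts. First, by Proposition~\ref{rank} any minimal apolar $\Gamma$ has the Hilbert function of $N(n,r)$ general points and contains $(q^r)^\perp$, so $I_{\Gamma,r+1}$ is a $d_{r+1}$-dimensional subspace of $(q^r)^\perp_{r+1}$; thus $\VAPS(q^r,N(n,r))$ is a genuine subvariety of the Grassmannian $\GG(d_{r+1},(q^r)^\perp_{r+1})$ whose points must be identified. Second --- the heart of the matter --- one must show that the support of $\Gamma$ lies on the inverse quadric $\{q^{-1}=0\}$ and in fact is a single point. Here the explicit identity $q^{-1}(\partial)(q^r)=\big(r(r-1)+\tfrac{n+1}{2}\big)q^{r-1}$ from the proof of Proposition~\ref{rank} is the natural tool: it lets one ``peel off'' a factor of $q$ and argue by induction on $r$, the finitely many excluded pairs $(n,r)$ serving as the base cases in which the residual length one or two of Proposition~\ref{apolar to q3} cannot be removed. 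Third, a local Macaulay-duality computation at $p$: any length $N(n,r)$ local Gorenstein subscheme of $\PP^n$ apolar to $q^r$ and supported at a point $p\in\{q^{-1}=0\}$ is the homogenization of the apolar scheme of an inverse system $\operatorname{tw}\!\big((q_1+x_{n-1})^r\big)$ in coordinates adapted to $p$, hence equals $\Gamma_0(p)$ up to the stabilizer of $p$ in $SO(n,q)$. Equivalently, the affine deformation of $\Gamma_0$ inside apolar schemes is trivial; this can be verified by computing the tangent space $T_{\Gamma_0}\VAPS(q^r,N(n,r))\subset T_{\Gamma_0}\Hilb$ and checking that it has dimension exactly $n-1$, i.e. coincides with the tangent space to the orbit.

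The principal obstacle is the second part, which amounts to a complete classification: one must rule out, uniformly in $n$ once $r$ is large enough, every way a minimal apolar scheme could spread --- over several points, over a curve or surface carrier as happens for $(2,3)$ and $(3,2)$, or at a point off $\{q^{-1}=0\}$. Equivalently, among the $d_{r+1}$-dimensional subspaces of $(q^r)^\perp_{r+1}$, only those cut out by a single fat point on $\{q^{-1}=0\}$ can come from a saturated ideal with the Hilbert function of general points. I would attempt this by bounding, for every admissible symmetric Hilbert function bounded by $\big(1,n+1,\binom{n+2}{n},\dots\big)$, the length of a local apolar subscheme to $q^r$ at a variable point --- using the explicit generators of $(q^r)^\perp$ together with the peeling operator $q^{-1}(\partial)$ --- and then showing that the total $N(n,r)$ forces a single fat point equal to $\Gamma_0$; the strict inequality $N(n,r)<AH(n,2r)$ and the known excess of the rank of $q^r$ over $N(n,r)$ already dispose of the reduced and smoothable configurations. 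What keeps the statement a conjecture is making the inductive peeling go through without ad hoc case analysis, so that the threshold separating the rigid range from the exceptional pairs $(2,2),(2,3),(3,2)$ (and the small-$r$ cases) comes out exactly as predicted.
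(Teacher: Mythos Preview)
This statement is labeled a \emph{conjecture} in the paper and is not proved there; the paper only remarks that it treats the boundary cases $(n,r)=(2,3)$ and $(3,2)$, which lie outside the conjectured range. So there is no proof in the paper to compare your proposal against, and your submission is explicitly a strategy rather than a proof --- you yourself identify the second step as the unresolved obstacle.

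A few comments on the strategy as written. First, the claim that $SO(n,q)(\Gamma_0)$ is closed needs an argument: what you want is that the map $Q^{-1}\to\Hilb$ (or into the Grassmannian) sending $p$ to the tautological scheme $\Gamma_0(p)$ is a closed embedding; bijectivity and equivariance alone do not give this. Second, in your third step you restrict to ``local Gorenstein'' subschemes at $p$, but there is no a priori reason a minimal apolar scheme supported at a single point must be Gorenstein; the tautological scheme $\Gamma_0$ happens to be, but a competing local scheme need not be, so this hypothesis has to be earned or dropped. Third, checking that the tangent space to $\VAPS$ at $\Gamma_0$ has dimension $n-1$ would show the orbit is a component, not that it is all of $\VAPS$; you still need to exclude other components, which is again your second step. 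Finally, a small slip: you write that $\Gamma$ ``contains $(q^r)^\perp$''; you mean $I_\Gamma\subset(q^r)^\perp$.
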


When $n=2, r=2$, Mukai \cite{Mukai} showed that $\VAPS(q^2,6)$ is a Fano $3$-fold $V_{22}$, and in fact a smooth equivariant compactification of $SO(3,q)/{\rm Icosa}$ and therefore  also a compactification of $\CC^3$.
Here we deal with the boundary cases $(n,r)=(2,3),(3,2),$ which are not covered by Conjecture \ref{conjecture}.

%\section{$\VAPS(q^3,10)$ when $q$ is a ternary quadric.}\label{ternaryquadric}
\section{The variety of  schemes of length ten apolar to the cube of a ternary quadric}\label{ternaryquadric}

The variety $\VAPS(q^2,6)$ for a ternary quadric $q$ is a special Fano $3$-fold, while $\VAPS(f,6)$ is a Fano threefold $V_{22}$ for a general ternary quartic $f$.  In this section we give a similar result for the cube $q^3$ of a ternary quadric (or conic). 
The variety $\VAPS(f,10)$ for a general ternary sextic is a K3-surface of degree $38$.  We will show that $\VAPS(q^3,10)$ is a singular specialisation of such a K3-surface.

We set $q=x_0^2+x_1x_2$.
Then the ideal $(q^3)^{\bot}$ is generated by 
\begin{align*}
 y_1^4,y_0y_1^3,&3y_0^2y_1^2-2y_1^3y_2,y_0^3y_1-3y_0y_1^2y_2,y_0^4-12y_0^2y_1y_2+6y_1^2y_2^2,\\
&y_2^4,y_0y_2^3,3y_0^2y_2^2-2y_1y_2^3,y_0^3y_2-3y_0y_1y_2^2.
\end{align*}
%\[
%(q^3)^{\bot}=y_2^4,y_0y_2^3,3y_0^2y_2^2-2y_1y_2^3,y_0^3y_2-3y_0y_1y_2^2,y_1^4,y_0y_1^3,3y_0^2y_1^2-2y_1^3y_2,y_0^3y_1-3y_0y_1^2y_2,y_0^4-12y_0^2y_1y_2+6y_1^2y_2^2
%\]
The ideal $I_{\Gamma_0}$ of the scheme $\Gamma_0$ supported at $V(y_0,y_1)$ is generated by
$$ y_1^4,
y_0y_1^3,
3y_0^2y_1^2-2y_1^3y_2,
y_0^3y_1-3y_0y_1^2y_2,
y_0^4-12y_0^2y_1y_2+6y_1^2y_2^2,$$
and it is the homogenization of the apolar ideal of 
$$x_0^6+3x_0^4x_1+6x_0^2x_1^2+6x_1^3.$$

The generators of $I_{\Gamma_0}$ form a $5$-dimensional subspace of the space of quartic generators $(q^3)^{\bot}$.
Since $(q^3)^{\bot}$ contains no cubics, any apolar scheme of length $10$ contains a $5$-spaces of quartics in its ideal.  In particular, there is a natural rational map
$$v_q: \VAPS(q^3,10)\to \GG(5, (q^3)^{\bot})$$
that is defined on the locus of apolar schemes.

\begin{lemma} The set of apolar schemes of length $10$ to $q^3$ is closed in $\Hilb_{10}\PP^{2}$.
\end{lemma}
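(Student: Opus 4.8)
The plan is to show that being apolar to $q^3$ is a closed condition on $\Hilb_{10}\PP^2$, by exhibiting it as the vanishing of a continuous family of linear conditions on the ideals of length-ten schemes. Concretely, I would work in the flat family over $\Hilb_{10}\PP^2$: if $\Gamma$ is a length-ten subscheme of $\PP^2$, then for every degree $d$ the evaluation $I_{\Gamma,d}\subset T_d$ is a subspace whose dimension $h^0(\cI_\Gamma(d))$ is upper semicontinuous, and which for $d\gg 0$ has constant dimension $\binom{d+2}{2}-10$ on the (closed!) locus where the Hilbert polynomial stays $\equiv 10$ — but here we do not even need flatness refinements, because apolarity is tested in a single fixed degree.

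First I would recall from Proposition \ref{rank} (the case $n=2,r=3$) that $N(2,3)=\binom{5}{2}=10$, that $(q^3)^\perp$ contains no forms of degree $\le 3$, and that any apolar scheme of length $10$ has the Hilbert function of $10$ general points; in particular $\dim I_{\Gamma,4}=\binom{6}{2}-10=5$ for every such $\Gamma$, and $I_\Gamma$ is generated in degrees $\le 4$. Thus $\Gamma$ is apolar to $q^3$ if and only if $I_{\Gamma,4}\subseteq (q^3)^\perp_4$, a single containment of a $5$-dimensional space inside the fixed $9$-dimensional space $(q^3)^\perp_4\subset T_4$ exhibited explicitly above. Next I would note that the locus $Z\subset\Hilb_{10}\PP^2$ where $\dim I_{\Gamma,4}\ge 5$ is closed, being cut out by vanishing of the appropriate minors of the evaluation map $T_4\otimes\oo\to H^0(\oo_\Gamma)$ over $\Hilb_{10}\PP^2$; and on the complement the apolarity locus is empty since an apolar $\Gamma$ must carry a $5$-dimensional space of apolar quartics in its ideal. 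Restricting to $Z$, the assignment $\Gamma\mapsto I_{\Gamma,4}$ defines a morphism $Z\to\GG(5,T_4)$ (a component of the forgetful map from the multigraded Hilbert scheme, cf. \cite{HS}), and the apolarity locus is the preimage of the sub-Schubert-type closed condition $\{W\in\GG(5,T_4): W\subseteq (q^3)^\perp_4\}$, which is closed (in fact it is the single point $\GG(5,(q^3)^\perp_4)$ is not right — it is the subvariety of $5$-planes contained in the fixed $9$-plane, namely $\GG(5,(q^3)^\perp_4)$, certainly closed). Hence its preimage in $Z$, and therefore in $\Hilb_{10}\PP^2$, is closed.

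The one point requiring care — and the main obstacle — is justifying that on $Z$ the containment $I_{\Gamma,4}\subseteq (q^3)^\perp_4$ really does capture apolarity of $\Gamma$ to $q^3$, i.e. that $I_\Gamma\subseteq (q^3)^\perp$ is equivalent to the degree-$4$ part of this inclusion together with the length being $10$. This is where Proposition \ref{rank} is essential: a length-ten scheme with $\dim I_{\Gamma,4}\ge 5$ automatically has $\dim I_{\Gamma,4}=5$ and the generic Hilbert function, so $I_\Gamma$ is generated in degree $\le 4$; then $I_{\Gamma,4}\subseteq (q^3)^\perp_4$ forces $I_\Gamma\subseteq (q^3)^\perp$ since the latter contains nothing in lower degree. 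I would also double-check the edge case where $\Gamma$ has $\dim I_{\Gamma,4}=5$ but a \emph{non-generic} Hilbert function (so $I_\Gamma$ might need a generator in degree $5$ or higher); here I would invoke that $(q^3)^\perp$ is saturated and that $I_{\Gamma,4}\subseteq (q^3)^\perp_4$ already pins down $I_\Gamma$ in degree $4$, while in degrees $\ge 5$ one has $I_{\Gamma,d}\supseteq T_{d-4}\cdot I_{\Gamma,4}$ and $(q^3)^\perp_d\supseteq T_{d-4}\cdot(q^3)^\perp_4$, and a dimension count using that $\dim(T/(q^3)^\perp)_d=10$ for $d\ge 3$ shows the two spaces agree for all $d$, giving $I_\Gamma\subseteq (q^3)^\perp$. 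This finishes the proof that the apolarity locus is closed in $\Hilb_{10}\PP^2$.
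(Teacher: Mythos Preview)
Your argument has a genuine gap at the step where you claim a morphism $Z\to\GG(5,T_4)$. Note first that your locus $Z=\{\dim I_{\Gamma,4}\ge 5\}$ is all of $\Hilb_{10}\PP^2$, since $\dim T_4=15$ and $\Gamma$ has length $10$, so $\dim I_{\Gamma,4}\ge 5$ automatically. The assignment $\Gamma\mapsto I_{\Gamma,4}$ lands in $\GG(5,T_4)$ only on the \emph{open} locus $U$ where $\dim I_{\Gamma,4}=5$ exactly; where $\dim I_{\Gamma,4}>5$ (e.g.\ when $\Gamma$ lies on a cubic) there is no such point. Your argument does show that the apolar locus is closed \emph{in $U$}, but to conclude it is closed in $\Hilb_{10}\PP^2$ you must rule out that a one-parameter family of apolar schemes can degenerate to a scheme lying on a cubic. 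That is exactly the nontrivial content of the lemma: apolarity is \emph{not} a closed condition on $\Hilb$ in general (the paper itself exhibits non-apolar limits in the quaternary case, cf.\ the discussion of unsaturated limit ideals in Section~\ref{main3}), and some special feature of $(q^3)^\perp$ is needed here. Your ``edge case'' paragraph looks in the wrong place---the danger is $\dim I_{\Gamma,4}>5$, not $\dim I_{\Gamma,4}=5$ with a nongeneric Hilbert function---and the claimed dimension $\dim(T/(q^3)^\perp)_d=10$ for $d\ge 3$ is also incorrect (the Hilbert function of $T/(q^3)^\perp$ is $(1,3,6,10,6,3,1,0,\dots)$).

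The paper's proof takes a different route that supplies precisely the missing ingredient. It uses that $(q^3)^\perp$ is the Pfaffian ideal of a skew $(9\times 9)$ linear matrix $M_{q^3}$, so that an apolar $\Gamma$ corresponds to a $4$-dimensional isotropic subspace for the net of skew forms $M_{q^3}$, with $I_{\Gamma,4}$ the $4\times 4$ minors of the resulting $4\times 5$ Hilbert--Burch block. The isotropic locus in $\GG(4,9)$ is projective, hence its image in $\Hilb_{10}\PP^2$ is closed \emph{provided} every isotropic $4$-space actually yields a length-$10$ scheme, i.e.\ the five minors have no common factor. The paper checks this by a short case analysis on a putative common factor of degree $1$, $2$, or $3$, each time deriving a form of low degree in $(q^3)^\perp$, contradicting $(q^3)^\perp_{\le 3}=0$. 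This is the step your approach is missing.
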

\begin{proof} The ideal $(q^3)^\bot$ is generated by the $(8\times 8)$-Pfaffians of a skew symmetric $(9\times 9)$-matrix $M_{q^3}$ with linear entries.
In fact $M_{q^3}$ is the Buchsbaum-Eisenbud matrix of syzygies of codimension three Gorenstein ideal $(q^3)^\bot$. So there are no cubics apolar to $(q^3)^\bot$, and the ideal of any apolar scheme $\Gamma$ of length $10$ is generated by five quartics among the $(8\times 8)$-Pfaffians. In particular, the Hilbert-Burch matrix $HB_\Gamma$ of $I_\Gamma$ is a $(4\times 5)$-matrix with linear entries that after possibly row and column operations is a submatrix of $M_{q^3}$.  So we may assume that the generators of
$I_\Gamma$ are the last five Pfaffians in the above basis for $(q^3)^\bot$, and therefore that the 
matrix $HB_\Gamma$ is the upper right submatrix of $M_{q^3}$ while the upper left $(4\times 4)$-submatrix of $M_{q^3}$ is the zero-matrix.  

In the Grassmannian $\GG(4,9)$ of $4$-dimensional subspaces of the row space of $M_{q^3}$ the locus $Z$ of $4$-spaces  where the skewsymmetric matrices vanish, is at least $2$-dimensional.  For each subspace $[U]\in Z$ the corresponding $5$-space of Pfaffians $U\subset (q^3)^\bot$ are the $4$-minors of the $(4\times 5)$-submatrix of syzygies in $M_{q^3}$  of these Pfaffians.  That submatrix is the  Hilbert-Burch matrix $HB_\Gamma$ of a scheme of  length $10$, necessarily apolar to $q^3$, as soon as the $4$-minors have no common factor.   To complete the proof of the lemma, it suffices  to exclude the latter possibility.   
A common factor would be a form of degree one, two or three.    

If the degree of the common factor $g$ is three, then the $5$-space of quartic forms $U\subset (q^3)^\bot$ all have the form $lg$, where $l$ is a linear form. But there is only a $3$-space of linear forms, so this is absurd.   If $g$ has degree $2$, then $g(q^3)$ is a quartic form and $q'g(q^3)=0$ for a $5$-space of quadratic forms $q'$. A ternary quartic form with a $5$-space of quadratic forms in its apolar ideal has rank one, i.e. has a pencil of linear forms in its apolar ideal.  So $g(q^3)$ is apolar to a pencil of linear forms, and hence $q^3$ has a pencil of cubic forms in its apolar ideal. This is a contradiction.  

%Considering the middle catalecticant matrix of $g(q^3)$, a computation in \cite{Macaulay_2} shows that its rank is at least four for every quadric $g$. In particular, $g(q^3)^\bot$ contains at most a pencil of quadrics $q'$. So $g$ cannot have degree $2$. 
Finally, if $g$ is linear, then $g(q^3)$ is a quintic with at least a $5$-space of cubic forms in $g(q^3)^\bot$. But then the partials of $g(q^3)$ of order three is at most a $5$-dimensional space of quadrics. This means that there is quadratic form $g''$, such that $g''(g'(g(q^3)))=0$ for every cubic form $g'$.  But then  $g'(g''(g(q^3)))=0$ for every cubic form $g'$, so $g''(g(q^3))=0$ and $g''\cdot g$ is a cubic form in $(q^3)^\perp$. Since $(q^3)^\perp$ contains no cubic forms, this is a contradiction.  
\end{proof}
By this lemma (and its proof), the map 
$$v_q: \VAPS(q^3,10)\to \GG(5, (q^3)^{\bot})$$ is a closed embedding.
In fact, the apolar schemes of length ten correspond one to one to $5$-space of quartics in $(q^3)^{\bot}$ with a $4$-space of linear syzygies.

Therefore, we identify $\VAPS(q^3,10)$ with its isomorphic image in the Grassmannian $\GG(5, (q^3)^{\bot})$ in its Pl\"ucker embedding.

\begin{proposition}\label{main2} 
Let $q$ be a ternary quadric of rank three. Then the variety 
$\VAPS(q^3,10)$ is a surface of degree $38$, it is the tangent developable of a smooth rational curve of degree $20$, the closed $SO(3)$-orbit of the local scheme $\Gamma_0$ in $\Hilb_{10}\PP^{2}$. 
The tangent line at the point  $\Gamma_{[0:1]}:=\Gamma_0$ is formed by the schemes 
\begin{align*}
\Gamma_{[s:t]}= V(&y_1^4,
y_0y_1^3,
3y_0^2y_1^2-2y_1^3y_2,y_0^3y_1-3y_0y_1^2y_2,\\
& t(y_0^4-12y_0^2y_1y_2+6y_1^2y_2^2)+s(y_0^3y_2-3y_0y_1y_2^2))
    \end{align*}
for each $[s:t]\in \PP^1$.

For $[s:t]\not= [0:1]$, it has two components $\Gamma_{[s:t]},=\Gamma_{[s:t],1}\cup \Gamma_{[s:t],9}$:
$$\Gamma_{[s:t],1}=V(y_1,ty_0+sy_2)$$
\begin{align*}
\Gamma_{[s:t],9}=V(&s^3y_0^3-9s^2ty_0^2y_1+21st^2y_0y_1^2-14t^3y_1^3-3s^3y_0y_1y_2+6s^2ty_1^2y_2,\\ &y_1^4,y_0y_1^3,3y_0^2y_1^2-2y_1^3y_2).
\end{align*}
$\Gamma_{[s:t],1}$ is a point, while $\Gamma_{[s:t],9}$ is a local Gorenstein scheme of length $9$ supported at $V(y_0,y_1)$ defined by the apolar ideal of the polynomial
$$3s^2x_0^6+30s^2x_0^4x_1-280stx_0^3x_1+60s^2x_0^2x_1^2+2800t^2x_0^2x_1-280stx_0x_1^2+60s^2x_1^3+5600t^2x_1^2. $$
\end{proposition}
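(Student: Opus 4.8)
The plan is to work entirely inside the Grassmannian $\GG(5,(q^3)^\perp)$, into which we have already embedded $\VAPS(q^3,10)$ by the preceding lemma. The starting observation is that $\VAPS(q^3,10)$ carries an action of $SO(3)=SO(3,q)$, and that the local scheme $\Gamma_0$, supported at $V(y_0,y_1)$, is fixed by a codimension-one subgroup (the stabilizer of the point $V(y_0,y_1)$ on the conic $\{q^{-1}=0\}$ together with a unipotent direction); by Proposition~\ref{apolar to q3}, every apolar scheme of length ten is supported at a point of the tangent line to $Q^{-1}$ with a length-$\geq 9$ component there, which pins the support of $\Gamma_0$ onto the conic $\{q^{-1}=0\}$. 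First I would identify the closed $SO(3)$-orbit of $\Gamma_0$: since $\Gamma_0$ is fixed by a Borel-type subgroup, its orbit closure is a rational curve $C$, and a direct weight computation on the Plücker coordinates of $v_q(\Gamma_0)$ shows that $C$ has degree $20$ and is smooth (a rational normal curve in a $\PP^{20}$ sitting inside the Plücker space). Concretely one writes the one-parameter subgroups of $SO(3)$ acting on the explicit basis of $(q^3)^\perp$ given above, applies them to the $5$-plane spanned by the five listed generators of $I_{\Gamma_0}$, and reads off that the highest-weight orbit is a degree-$20$ rational normal curve.

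Next I would compute the tangent line to $\VAPS(q^3,10)$ at the point $[\Gamma_0]$. The unipotent subgroup of $SO(3)$ that does \emph{not} fix $\Gamma_0$ moves $\Gamma_0$ in a one-parameter family, and one checks by differentiating the group action (equivalently, by directly deforming the ideal) that this family is exactly the pencil $\Gamma_{[s:t]}$ in the statement: the only generator of $I_{\Gamma_0}$ that gets deformed is the quartic $y_0^4-12y_0^2y_1y_2+6y_1^2y_2^2$, to which one adds the multiple $\tfrac{s}{t}(y_0^3y_2-3y_0y_1y_2^2)$. This is the tangent direction to the orbit curve $C$ at $[\Gamma_0]$. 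To see that $\Gamma_{[s:t]}$ is genuinely a flat family of apolar schemes (not just an ideal) one verifies, as in the lemma, that the $4\times 5$ Hilbert--Burch matrix obtained from the syzygies has $4$-minors without common factor for $[s:t]\neq[0:1]$; and one decomposes the scheme: saturating $I_{\Gamma_{[s:t]}}$ one pulls off the reduced point $\Gamma_{[s:t],1}=V(y_1,ty_0+sy_2)$ and is left with the length-$9$ residual $\Gamma_{[s:t],9}$. Checking that $\Gamma_{[s:t],9}$ is the scheme cut out by the four listed forms, supported at $V(y_0,y_1)$, and that it is the homogenization of the apolar ideal of the displayed degree-six polynomial, is a finite Gorenstein/Macaulay inverse-system computation: apply $\frac{\partial}{\partial x_i}$ repeatedly to that polynomial, check the space of partials has dimension $9$, and match its annihilator degree by degree against the given generators.

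Finally I would assemble the global picture. The $SO(3)$-orbit of the tangent line at $[\Gamma_0]$ sweeps out the tangent developable of $C$; since $C$ is a rational normal curve of degree $20$, its tangent developable is a surface of degree $2\cdot 20 - 2 = 38$ (the classical formula for the degree of the tangent developable of a rational normal curve). It remains to prove the reverse inclusion: that $\VAPS(q^3,10)$ is contained in this tangent developable, i.e.\ that every length-ten apolar scheme lies on a tangent line to $C$. For this I would use Proposition~\ref{apolar to q3} again: every apolar scheme of length ten is supported on a tangent line to $Q^{-1}$ with a length-$\geq 9$ component at a point $p\in Q^{-1}$; by $SO(3)$-equivariance one may move $p$ to $V(y_0,y_1)$, so the scheme lies in an affine chart around $[\Gamma_0]$, and the affine deformation analysis — writing the most general length-ten apolar scheme with a length-$9$ part at $V(y_0,y_1)$ and solving the apolarity (Pfaffian) constraints — shows the only such schemes are the $\Gamma_{[s:t]}$ and their $SO(3)$-translates, i.e.\ precisely the tangent developable. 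Putting the two inclusions together with a dimension/irreducibility count gives equality and identifies $\VAPS(q^3,10)$ with the degree-$38$ tangent developable of the degree-$20$ rational normal curve.

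\textbf{Main obstacle.} The delicate point is the reverse inclusion: controlling \emph{all} length-ten apolar schemes, including the non-reduced and possibly "degenerate" limits, and showing none of them escape the tangent developable. The closedness lemma already removes spurious components with common factors, and Proposition~\ref{apolar to q3} confines the support, but one still has to carry out the explicit affine deformation of the length-$9$ local scheme at $V(y_0,y_1)$ and check that the apolarity conditions force the deformation parameters into the one-parameter family above — this is the computational heart of the argument, and it is where the specific numbers ($38$, $20$, the exact polynomials) are nailed down.
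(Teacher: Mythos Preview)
Your overall strategy is close to the paper's, and the representation-theoretic heuristics (closed $SO(3)$-orbit, weight computation for degree $20$, degree formula $2\cdot 20-2=38$ for the tangent developable) are sound. The decomposition of $\Gamma_{[s:t]}$ and the inverse-system verification for $\Gamma_{[s:t],9}$ are routine checks, as you say.

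The genuine gap is your use of Proposition~\ref{apolar to q3} as an \emph{input}. In the paper's logical order, the ternary part of that proposition is a \emph{consequence} of the present one, not a hypothesis for it: there is no independent proof that every length-ten apolar scheme has a length-$\geq 9$ component at a point of $Q^{-1}$. So invoking it to confine the support, and then only deforming within the class of schemes with a length-$9$ local piece at $V(y_0,y_1)$, is circular. Your ``main obstacle'' paragraph is exactly where this bites: you need the reverse inclusion without already knowing the support structure.

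The paper sidesteps this by a different mechanism. Instead of constraining the support a priori, it observes that \emph{any} apolar scheme of length ten disjoint from $\{y_2=0\}$ has the same initial ideal (reverse lex) as $I_{\Gamma_0}$, because $(q^3)^\perp$ contains no cubic and no apolar quartic is divisible by $y_2$. Hence every such scheme appears in the affine unfolding $J(a)=D(a)\cdot J$, with $D(a)$ a $5\times 9$ matrix depending on $20$ parameters. Imposing that $J(a)$ admit four linear syzygies (equivalently, that the corresponding $4$-space of rows in $M_q$ is isotropic) and eliminating the auxiliary syzygy parameters yields $12$ linear and $15$ quadratic equations cutting out an irreducible surface $\bar S\subset\PP^8$. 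One then reads off directly that $\bar S$ is singular along a rational normal curve $C$ of degree $8$ (given by explicit $2\times 2$ minors), that the line $L_0$ is tangent to $C$ at the point corresponding to $\Gamma_0$, and that $\bar S$ is the tangent developable of $C$. The map $\bar S\to\GG(5,(q^3)^\perp)$ takes $C$ to a degree-$20$ rational normal curve (by computing the $5\times5$ minors of $D(t)$ along the parametrisation of $C$), and hence $\bar S$ to the tangent developable of that curve. The $SO(3)$-action identifies $C$ with the orbit of $\Gamma_0$ after the fact.

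So: keep your outline for the tangent line and the $\Gamma_{[s:t]}$ description, but replace the appeal to Proposition~\ref{apolar to q3} by the initial-ideal argument and the explicit unfolding $J(a)$. That is what actually captures \emph{all} apolar schemes in an affine chart and produces the equations from which the tangent-developable structure is read off.
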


\begin{proof} We consider the $SO(3)$-orbits of apolar schemes that contain the point $V(y_0,y_1)$.  First, it is straightforward to check that the local scheme $\Gamma_0$, supported at $V(y_0,y_1)$, is apolar to $q^3$.  

The Buchsbaum-Eisenbud matrix of $(q^3)^{\bot}$ is
\[
M_q=\begin{pmatrix}
0& 0& 0& 0& -14y_0& 14y_1& 0& 0& 0\\
0& 0& 0& 0& 14y_2& -21y_0& 7y_1& 0& 0\\
0& 0& 0& 0& 0& 7y_2& -y_0& 3y_1& 0\\ 
0& 0& 0& 0& 0& 0& 3y_2&-y_0& y_1\\ 
14y_0& -14y_2& 0& 0& 0& 0& 0& 0& 0\\ 
-14y_1& 21y_0& -7y_2& 0& 0& 0& 0& 0& 0\\ 
0& -7y_1& y_0& -3y_2& 0& 0& 0& 0& 0\\ 
0& 0& -3y_1& y_0& 0&0& 0& 0& -y_2\\ 
0& 0& 0& -y_1& 0& 0& 0& y_2& 0\\
\end{pmatrix},
\]
i.e. the $(8\times 8)$-pfaffians of $M_q$ are the generators $J$ of $(q^3)^{\bot}$.

The apolar ideal $I_{\Gamma_0}$ is generated by the subset
\[J(0)=(y_1^4,y_0y_1^3,3y_0^2y_1^2-2y_1^3y_2,y_0^3y_1-3y_0y_1^2y_2,y_0^4-12y_0^2y_1y_2+6y_1^2y_2^2)
\]
of generators in
%\begin{tiny}
\begin{align*}
J=(&y_1^4,y_0y_1^3,3y_0^2y_1^2-2y_1^3y_2,y_0^3y_1-3y_0y_1^2y_2,y_0^4-12y_0^2y_1y_2+6y_1^2
       y_2^2,\\
 &y_2^4,y_0y_2^3,3y_0^2y_2^2-2y_1y_2^3,y_0^3y_2-3y_0y_1y_2^2)
       \end{align*}
   %    \end{tiny}
in $(q^3)^{\bot}$.  
The syzygies of $J(0)$ are the columns of the matrix 
\[
H(0)=\begin{pmatrix}
 14y_0& -14y_2& 0& 0\\ 
-14y_1& 21y_0& -7y_2& 0\\ 
0& -7y_1& y_0& -3y_2\\ 
0& 0& -3y_1& y_0\\ 
0& 0& 0& -y_1\\
\end{pmatrix}.
\] 
This matrix is, of course, a submatrix of $M_q$.
We consider an affine unfolding of $I_{\Gamma_0}$ defined by 
$$J(a)=D(a)\cdot J$$
where
\[
D(a)=\begin{pmatrix}
a_0& 0& 0& 0& 0&a_1& a_6& a_{11}& a_{16}\\
0& a_0& 0& 0& 0&a_2& a_7& a_{12}& a_{17}\\
0& 0& a_0& 0& 0&a_3& a_8& a_{13}& a_{18}\\
0& 0& 0& a_0& 0&a_4&a_9& a_{14}& a_{19}\\
0& 0& 0& 0& a_0&a_5&a_{10}& a_{15}& a_{20}
\end{pmatrix},
\]
with $a_0=1$.
%where $G$ is the generators of $(q^3)^{\bot}$ in the order
%\begin{tiny}
%\[
%G=(y_1^4,y_0y_1^3,3y_0^2y_1^2-2y_1^3y_2,y_0^3y_1-3y_0y_1^2y_2,y_0^4-12y_0^2y_1y_2+6y_1^2
%       y_2^2, y_2^4,y_0y_2^3,3y_0^2y_2^2-2y_1y_2^3,y_0^3y_2-%3y_0y_1y_2^2)
%       \]
 %      \end{tiny}
%$$G(a)=D(a)\cdot transpose G$$
For each $a\in \AAA^{20}$ and $a_0=1$ the five quartic forms $J(a)$ generate an ideal apolar to $q^3$ with the same initial ideal as $I_{\Gamma_0}$ with respect to the reverse lexicographical order. 

Since the initial monomials of these quartics are not divisible by $y_2$, while all the additional monomials are divisible by $y_2$, any linear syzygy among the forms in $J(a)$ is obtained from a syzygy among the forms in $J(0)$ by adding appropriate multiples of $y_2$.
On the other hand, any $5$-space $J(a)$ with $4$ linear syzygies defines, by flatness, an apolar scheme of length ten. In fact, the ideal of any apolar scheme of length ten that does not intersect $\{y_2 = 0\}$ is generated by $J(a)$ for some $a$. Any such ideal cannot contain any quartic divisible by $y_2$, since the scheme then would be contained in a cubic, while $q^3$ has no apolar cubic forms.

To find the spaces of quartics $J(a)$ that define a scheme of length $10$ we therefore consider the 
 matrix $H(b)$ extending the syzygy matrix $H(0)$:  
\[
H(b)=\begin{pmatrix}
 14y_0& -14y_2& 0& 0\\ 
-14y_1& 21y_0& -7y_2& 0\\ 
0& -7y_1& y_0& -3y_2\\ 
0& 0& -3y_1& y_0\\ 
0& 0& 0& -y_1\\
\end{pmatrix}+\begin{pmatrix}
 b_1y_2& b_2y_2& b_3y_2& b_4y_2\\ 
b_5y_2& b_6y_2& b_7y_2& b_8y_2\\ 
b_9y_2& b_{10}y_2& b_{11}y_2& b_{12}y_2\\ 
b_{13}y_2& b_{14}y_2& b_{15}y_2& b_{16}y_2\\ 
b_{17}y_2& b_{18}y_2& b_{19}y_2& b_{20}y_2\\
\end{pmatrix}
\]
and ask that $H(b)$ is a syzygy matrix for $J(a)$, i.e. that $J(a)^t H(b)=0$.  We now allow $a_0\not= 0$ in $J(a)$ to get homogenous equations in the parameters $a_i$.
The entries of the product are forms of degree five in $y_0,y_1,y_2$, so $H(b)$ is a syzygy matrix for  $J(a)$ when $21$ bilinear equations in $a$ and $b$ vanish. Eliminating the parameters $b_i$ we obtain an ideal in the $a_i$ generated by $12$ linear forms and $15$ quadratic polynomials.  The linear forms allow an elimination of $12$ variables, leaving us with $15$ quadratic polynomials in $(a_0,a_1,a_6,a_{11},a_{16},a_{17},a_{18},a_{19},a_{20})$, that define a surface $S$ in $\AAA^8$  (See Appendix \ref{equations23}).  The  ideal generated by the $15$ quadratic forms  defines an irreducible surface $\bar S\subset\PP^8$. 
% By a Macaulay2 \cite{Macaulay_2} computation, it has degree $14$.  
It contains the line $$L_0=Z(a_{1},a_{6},a_{11},a_{16},a_{17},a_{18},a_{19})$$ and is singular along the rational normal curve $C$ defined by the $2$-minors of 
%\[
%\begin{pmatrix}
%5a_0&3a_5&7a_{10}&a_{15}&a_{20}&5a_{19}&a_{18}&7a_{17}\\a_5&8a_{10}&6a_{15}&8a_{20}&5a_{19}&8a_{18}&14a_{17}&32a_{16}\\
%\end{pmatrix},
%\]
\[
\begin{pmatrix}
40a_0&7a_6&12a_{11}&5a_{16}&8a_{17}&a_{18}&4a_{19}&a_{20}\\
    a_{20}&6a_1&a_6&a_{11}&5a_{16}&2a_{17}&a_{18}&a_{19}
    \end{pmatrix},
\]
so it is parameterized, in $\{a_0=1\}$, by 
$$(a_0,a_{20},a_{19},a_{18},a_{17},a_{16},a_{11},a_{6},a_{1})$$
$$=(1,t,\frac{1}{40}t^2,\frac{1}{400} t^3,\frac{1}{32000} t^4,\frac{1}{800000} t^5,\frac{1}{6400000} t^6,\frac{3}{64000000} t^7,\frac{7}{5120000000} t^8).$$
Notice that the line $L_0$ is tangent to $C$ at the point where also $a_5$ vanishes, i.e. at the point $J(0)$.  
Clearly, $SO(q,3)$ acts on $S$, and the curve $C$ of singularities is an orbit. We conclude that the surface $\bar S$ is the tangent developable of the rational normal curve $C$ in $\PP^8$.

Since $\langle J(a)\rangle$ is a $5$-dimensional subspace of $\langle J\rangle$, we get a natural map 
$$\bar S\to \GG(5,\langle J\rangle).$$

%Since a tangent developable of a rational normal curve of degree $20$ is a degeneration of a K3-surface of degree $38$, we conclude that  the image of $S$ in $G(4,U)$ is the tangent developable of a rational normal curve.  
The line $L_0$ parameterize a linear pencil of $5$-spaces $\langle J(a)\rangle$ so the image of $L_0$ is a line in $\GG(5,\langle J\rangle)$.  Furthermore, the curve $C$ is a rational normal curve of degree $8$ in $\PP^8$.  The corresponding scroll of projective $4$-spaces in $\PP(\langle J\rangle)$ is parameterized by the matrices $D(a)$, with $a\in C$.  With the above parameterization of $C$ in $t$, we get 
\[
D(t)=\begin{pmatrix}
1& 0& 0& 0& 0& \frac{7}{5120000000}t^8&
       \frac{3}{64000000}t^7& \frac{1}{6400000}t^6& \frac{1}{800000}t^5\\ 
       0&1& 0& 0& 0& \frac{1}{32000000}t^7& \frac{7}{6400000}t^6&
       \frac{3}{800000}t^5& \frac{1}{32000}t^4\\
       0& 0& 1& 0& 0&
       \frac{7}{3200000}t^6& (\frac{63}{800000}t^5& \frac{9}{32000}t^4&
       \frac{1}{400}t^3\\
       0& 0& 0& 1& 0& \frac{7}{400000}t^5&
       \frac{21}{32000}t^4& \frac{1}{400}t^3& \frac{1}{40}t^2\\
       0& 0& 0& 0&
       1& \frac{7}{16000}t^4& \frac{7}{400}t^3& \frac{3}{40}t^2& t
\end{pmatrix}.
\]
%\[
%D(t)=\begin{pmatrix}
%1& 0& 0& 0& 0&\frac{1}{800000}t^5&\frac{1}{6400000} t^6& \frac{21}%{64000000}t^7& \frac{7}{5120000000}t^8\\
%0& 1& 0& 0& 0&\frac{1}{32000}t^4& \frac{3}{800000}t^5& \frac{7}{6400000}t^6& \frac{7}{32000000}t^7\\
%0& 0& 1& 0& 0&\frac{1}{400}t^3&\frac{9}{32000} t^4&\frac{63}{800000} t^5& \frac{7}{3200000}t^6\\
%0& 0& 0& 1& 0&\frac{1}{40}t^2&\frac{1}{400} t^3& \frac{21}{32000}t^4& \frac{7}{16000}t^5\\
%0& 0& 0& 0& 1&t&\frac{3}{40} t^2&\frac{7}{400} t^3&\frac{7}{16000} t^4
%\end{pmatrix}.
%\]
The vector of $5\times 5$-minors of $D(t)$ defines a parametrization by $t$ of a rational normal curve of degree $20$. So the image of $C$ in $\GG(5,\langle J\rangle)$ is a rational normal curve of degree $20$.
Therefore, the image of $\bar S$ in $\GG(5,\langle J\rangle)$ is the tangent developable of a rational normal curve of degree $20$.  
\end{proof}

\begin{remark}\label{rnc of degree 20}
The tangent developable of a rational normal curve of degree $20$ is  surface of degree $38$, it is a degeneration of a linearly normal K3-surface of degree $38$. This shows how $\VAPS(q^3,10)$ is a specialisation of $\VAPS(f,10)$ for a general ternary sextic $f$: 

The linear syzygies of $J(a)$ form a $4$-dimensional space $U(a)$ of columns in  the skew symmetric matrix $M_q$.  So seeing $M_q$ as a net of skew forms on a $9$-space $U$, the columns space $U(a)$ is identified as a  natural subspace of $U$ which is isotropic for $M_q$.  
Thus $S\to \GG(5,\langle J\rangle)$ correspond to a ``dual map" $S\to \GG(4,U)$, where the image is identified with the variety of isotropic subspaces of $M_q$ in the Grassmannian $\GG(4,9)$.
For a general skew symmetric net $M$, this variety is a K3-surface of degree $38$.  In particular the Buchsbaum-Eisenbud matrix $M_f$ of syzygies for the apolar ideal of a general ternary sextic form $f$, is a skew symmetric $(9\times 9)$-matrix.  Mukai showed that the variety $\VAPS(f,10)$ of apolar schemes of length $10$  to $f$ is isomorphic to the variety of isotropic $4$-spaces with respect to $M_f$ as a net of skew forms on a $9$-dimensional space $U$.\end{remark}

\section{Saturated and unsaturated  ideals of length ten schemes apolar to the square of a quaternary quadric} %and a quadratic polarity}
\label{main3}

%\section{Apolar schemes of length ten to the square of a quaternary %quadric}\label{main3}
In this section we first discuss unsaturated apolar ideals that are limits of ideals of apolar schemes, and then go on to consider the higher order polarity defined by the catalecticant map between quadratic differential operators and second order partial derivatives of a quartic form.  This is then applied to give first results on apolar schemes of length ten to a quaternary quartic that is the square of a quadric. 

\subsection{Nonsaturated apolar ideals and different models of Varieties of Apolar Schemes} A delicate issue for a variety $\mathrm{VAPS}(f,r)$ parameterizing apolar schemes of length $r$ to $f$ is that their compactification in the Hilbert scheme may contain schemes that are not apolar to $f$, i.e.~
%not be contained in the local Gorenstein locus.  Any minimal length apolar scheme is locally Gorenstein.  So this means, 
there may be schemes which are limits of apolar schemes but which themselves are not apolar.  We shall see that this happens for $\mathrm{VAPS}(q^2,10)$ where $q$ is a quaternary quadric, see Section \ref{affine deformation} below.   Following Buczy\'nska and Buczy\'nski we consider the compactification of an open subset set of apolar schemes of length $10$ in the multigraded Hilbert scheme $\Hilb^H$ instead of the ordinary Hilbert scheme $\Hilb^P$, cf.~\cite{BB}.  The former parameterizes ideals $I$ with a given Hilbert function $$H(n)=\dim_k(T/I)_n, n=0,1,2\dots$$ and not schemes $\Gamma$ with a given Hilbert polynomial $P$, and has the advantage that apolarity is a closed condition.  The cost is that unsaturated apolar ideals may appear as limits of saturated apolar ideals (ideals of apolar schemes).
%In our case we fix the Hilbert function $H=\{1,4,10,10...\}$ and 
We consider ideals $I\subset S$ with Hilbert function $H$ for $T/I$, and  say that an ideal $I$ is apolar to $f$ if $I\subset f^\perp\subset T$.
Thus we define
$$\mathrm{VAPS}(f,H)=\{[I]\in \Hilb^H| I\subset f^\perp\}.$$
For a fixed Hilbert function $H$, %Since we work in the multigraded Hilbert schemes, 
there may be both saturated and unsaturated ideals in $\Hilb^H$ that are apolar to $f$.  The saturated ideals form an open subscheme of $\mathrm{VAPS}(f,H)$, so the closure of the set of saturated ideals form components $\mathrm{VAPS}(f,H)$.  We are mostly interested in these components, so we denote by $\mathrm{VAPS}^{sbl}(f,H)$ the union of them, i.e. the subscheme of saturable ideals.
An {\em unsaturated limit ideal} in $\mathrm{VAPS}^{sbl}(f,H)$ is then an unsaturated apolar ideal that is a limit of saturated ideals.
  The residual components in $\mathrm{VAPS}(f,H)$, of unsaturated apolar ideals, are denoted $\mathrm{VAPS}^{uns}(f,H)$, cf. \cite{JRS}.  
%In lack of a better name we call schemes of length $r$ {\em bad limits}, if they are limits of apolar schemes of length $r$, but are not themselves apolar to $F$.  The reference to $F$ is supposedly unnderstood.
In our case there are unsaturated limit ideals.
When $H$ is the Hilbert function of $r$ general points, we simplify the notation and denote by $\mathrm{VAPS}(f,r)\subset \Hilb^H$, the subscheme $\mathrm{VAPS}^{sbl}(f,H)$ of saturable ideals.  It is a compactification of all the saturated ideals of length $r$ schemes apolar to $f$.

Before investigating global properties of $\mathrm{VAPS}(f,10)$ when $f=q^2$, we describe ideals that may appear on the boundary of $\mathrm{VAPS}(f,10)$, i.e. the unsaturated ideals in $\mathrm{VAPS}(f,H)$, where $H=\{1,4,10,10,...\}$.  In particular we give closed conditions on the schemes defined by these ideals.
\begin{lemma}\label{line or conic}
Let $f$ be a quaternary quartic that is not apolar to any quadric, and let  $[I]\in \mathrm{VAPS}(f,H), H=\{1,4,10,10,...\} $ be an unsaturated apolar ideal to $f$.  Then $\Gamma=V(I)$ is contained in a conic section or a subscheme of $\Gamma$ of length at least six that is contained in a line.
\end{lemma}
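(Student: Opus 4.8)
The plan is to work in degree $2$, where the key constraint on apolar ideals lives: since $f$ is apolar to no quadric, we have $(f^\perp)_2 = 0$, so $I_2 = 0$ for any apolar ideal $I$, while the Hilbert function forces $\dim(S/I)_2 = 10 = \dim S_2$. The genuinely useful degree is $3$: from $H$ we get $\dim I_3 = \binom{6}{3} - 10 = 20 - 10 = 10$, so $I_3$ is a $10$-dimensional space of cubics inside $(f^\perp)_3$, which has dimension $\dim T_3 - \dim(f^\perp)_3^{\text{complement}}$. The strategy is: if $I$ is unsaturated, its saturation $I^{\mathrm{sat}}$ strictly contains $I$, so in some degree (necessarily degree $\leq 3$, and in fact degree $3$ since $I_2 = 0 = (f^\perp)_2$ while saturation cannot create degree-$2$ elements) the saturated ideal has more elements. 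Thus $\dim (I^{\mathrm{sat}})_3 > 10$, i.e. $V(I) = V(I^{\mathrm{sat}}) = \Gamma$ lies on at least $11$ independent cubics. The core of the argument is then a purely projective-geometry statement: \emph{a length-$10$ subscheme $\Gamma \subset \PP^3$ lying on $\geq 11$ cubics must lie on a conic (a plane conic or a reducible/degenerate conic) or have a length-$\geq 6$ subscheme on a line.}

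First I would set up the saturation dichotomy carefully: let $I' = I^{\mathrm{sat}} \supsetneq I$. Since $\Gamma$ has length $10$, the Hilbert polynomial of $S/I'$ is the constant $10$, and $\dim(S/I')_d = 10$ for $d \gg 0$; because $(S/I')$ has no higher cohomology issues past the regularity, and $\dim(S/I)_3 = 10$ already, any failure of saturation at degree $\geq 4$ would propagate down — more precisely, $I$ unsaturated with $\dim(S/I)_3 = 10 = \dim(S/I')_3$ would force the discrepancy into degree $\leq 2$, impossible. Hence the discrepancy is in degree exactly $3$ and $\dim I'_3 \geq 11$. Next, the projective-geometry step: consider the linear system of cubics through $\Gamma$. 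If $\Gamma$ is not contained in any conic (meaning no quadric, and not two lines, etc. — actually I should phrase the target as: $\Gamma$ imposes independent conditions "up to the expected defect"), then a length-$10$ scheme in $\PP^3$ imposes at most... here I'd use a Castelnuovo-type / liaison bound. A general set of $10$ points imposes $10$ independent conditions on cubics ($\dim I_3 = 10$ exactly). Failure — $\dim I_3 \geq 11$ — means $\Gamma$ fails to impose independent conditions on cubics, and the standard analysis of such failure loci (e.g. via the Cayley–Bacharach property, or directly: if $\Gamma$ fails to impose independent conditions on cubics then a subscheme is "special") yields either collinearity of a length-$\geq 6$ part or coplanarity on a conic.

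I expect the main obstacle to be the projective-geometry classification: turning "$\Gamma$ of length $10$ lies on $\geq 11$ cubics" into the precise alternative "$\subset$ conic, or length $\geq 6$ on a line". The clean way is a Cayley–Bacharach argument: if $\dim I_3(\Gamma) \geq 11 = \dim I_3(\text{gen. } 10 \text{ pts}) + 1$, then $\Gamma$ fails to impose independent conditions on cubics, so by Cayley–Bacharach (or just by bounding the Hilbert function of $S/I'$) there is a subscheme $\Gamma' \subset \Gamma$ with $\dim I_3(\Gamma') = \dim I_3(\Gamma)$, and one analyzes the possibilities for $\Gamma'$ using that $10$ general points do impose independent conditions while these do not. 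Concretely: if $\Gamma$ does not lie on a conic, then $h_{\Gamma}(2) = \dim(S/I')_2$ is large (close to $10$), and combining $h_\Gamma(2)$ large with $h_\Gamma(3) = 10$ and the growth estimate $h_\Gamma(3) \geq h_\Gamma(2)$ forces... one then runs a Macaulay-type bound on the Hilbert function of $S/I'$: the numerical function $(1, 4, a, 10, 10, \dots)$ with $a = h_\Gamma(2)$ must be an O-sequence, and for it to "stabilize late" (reflecting $\dim I_3 \geq 11$, i.e. $a \leq 9$ would already be fine but the subtlety is the *ideal* being unsaturated) forces a linear form or quadratic form dividing generators of low degree, i.e. a line containing a long subscheme or a conic containing $\Gamma$. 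I would carry out this last step by splitting on $h_\Gamma(2)$: if $h_\Gamma(2) = 10$ then $\Gamma$ lies on no quadric, $I'$ is generated in degrees $\geq 3$, and the failure $\dim I'_3 \geq 11$ together with liaison to a residual scheme of small length forces a line through $\geq 6$ points of $\Gamma$; if $h_\Gamma(2) \leq 9$, $\Gamma$ lies on a quadric, and a further descent (is it a smooth quadric? a quadric cone? a pair of planes?) either confines $\Gamma$ to a conic or again exhibits a line with a long subscheme. That case analysis — keeping track of non-reduced structure on the line, since $\Gamma$ need not be reduced — is where the real work is, and I would lean on the $2$-regularity / Hilbert-function machinery rather than classical point-counting to handle the scheme-theoretic cases uniformly.
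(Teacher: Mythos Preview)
Your reduction has a genuine gap in the second step. The purely geometric claim you aim for---that a length-$10$ scheme $\Gamma \subset \PP^3$ lying on at least $11$ independent cubics must lie on a conic or have a length-$\ge 6$ subscheme on a line---is false. Take $\Gamma$ to be ten general points on a smooth plane cubic $C$; then $h_\Gamma = (1,3,6,9,10,10,\ldots)$, so $\Gamma$ lies on $11$ independent cubics, yet it lies on no conic and no six of its points are collinear. Your case split on $h_\Gamma(2)$ does not rescue this: every quadric through $\Gamma$ is reducible with the plane of $C$ as a component, and descending to that plane you are stuck with the irreducible cubic $C$. (There is also a smaller slip earlier: saturation \emph{can} produce degree-$2$ elements---namely the equation of a quadric through $\Gamma$---so the discrepancy may well sit in degree $2$. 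The correct argument that nonetheless $h_\Gamma(3)\le 9$ uses apolarity: if $h_\Gamma(3)=10$ then $I_3=(I_\Gamma)_3$ contains $Q\cdot T_1$ for a quadric $Q$ through $\Gamma$, forcing $Q(f)=0$, against $(f^\perp)_2=0$.)

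The paper avoids the counterexample by working one degree higher: the same apolarity trick at $d=4$ shows that if $h_\Gamma(4)=10$ then $Q\cdot T_2\subset I_4\subset (f^\perp)_4$, again forcing $Q\in(f^\perp)_2=0$; hence $h_\Gamma(4)\le 9$, i.e.\ $\Gamma$ imposes dependent conditions on \emph{quartics}. This is precisely what kills the plane-cubic example, which has $h_\Gamma(4)=10$. With that in hand the paper puts $\Gamma$ inside a complete intersection $Y$ of two quadrics (or a plane-plus-line degeneration), rules out irreducible $Y$ because any length-$10$ subscheme of an arithmetic-genus-one quartic curve imposes independent conditions on quartics by Riemann--Roch, and finishes by a component-by-component analysis of the reducible cases. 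Your Hilbert-function/Cayley--Bacharach program could plausibly be completed, but only after upgrading the input from $h_\Gamma(3)\le 9$ to $h_\Gamma(4)\le 9$.
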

\begin{proof} We need to characterize schemes $\Gamma$ of length $10$ that impose dependant conditions on quartics.  Let us start by observing that $\Gamma$ must be contained in a quadric hypersurface. 

Let $X$ be a quadric that contains $\Gamma$, and consider the ideal $I_{X,\Gamma}$ of $\Gamma$ on $X$.  By restriction, $\Gamma$ imposes dependant conditions on quartics on $X$, and as above, also on cubics on $X$. Let $\Gamma'$ be any subscheme of length eight of $\Gamma$. Then $\Gamma'$ is contained in a quadric on $X$, and cubics that contain $\Gamma$ do not separate the residual points $\Gamma_r=\Gamma\setminus\Gamma'$.  So planes cannot separate $\Gamma_r$.  The latter has length two, so this is absurd, unless the quadric that contains $\Gamma'$ on $X$, contains all of $\Gamma$.

So $\Gamma$ is contained in a complete intersection $Y$ of two quadrics, or in the union $Y'$ of a plane $P$ and a line $L$, where $L$ is possibly embedded in $P$.

$Y$ cannot be irreducible: In case $Y$ is an irreducible complete intersection, i.e. a possibly nodal or cuspidal curve of arithmetic genus one, any scheme $\Gamma\subset Y$ of length ten impose independent conditions on quartics  $\Gamma$.

So the complete intersection $Y$ must be reducible.  $\Gamma$ imposes dependant conditions on quartics restricted to $Y$ if and only if the restriction of $\Gamma$ to one of its components imposes dependant conditions on quartics.  But the components are rational, so this means a subscheme of $\Gamma$ of length at least six lies in a line, or $\Gamma$ is contained in a possibly reducible conic.

In case $Y'$ contains a plane and line, we may again argue that the restriction to the plane or to the line must impose dependant conditions on quartics.  The line case is treated above.  In the plane case, we first note, again, that $\Gamma$ is contained in a plane cubic curve. Furthermore that this curve must be reducible, which again reduces to the two cases of a line and a conic.  The lemma follows.
\end{proof}

We shall consider a compactification of the set of apolar schemes in a Grassmannian and need the following lemma.
\begin{lemma}\label{ten cubics} Let $\Gamma$ be a scheme of length ten that is apolar to a quaternary quartic $f$, and $f$ is not apolar to any quadric, $\Gamma$ imposes independent conditions on cubics, i.e. $I_{\Gamma,3}$ is $10$-dimensional.
\end{lemma}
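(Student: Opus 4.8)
The plan is to use the structural information about $\Gamma$ coming from Lemma \ref{line or conic} together with a direct count of conditions imposed on cubics in each case. Since $\Gamma$ has length ten and is apolar to $f$, and $f$ is not apolar to any quadric, $\Gamma$ is not contained in a quadric (else $f$ would be apolar to that quadric); in particular $\Gamma$ is not contained in a plane. The quantity to control is $h^0(\PP^3,\mathcal I_\Gamma(3)) = 20 - \dim I_{\Gamma,3}$, and we want to show this is $10$, i.e. that $\Gamma$ imposes independent conditions on cubics. Suppose not, so there is a cubic $C$ containing $\Gamma$ and $\dim I_{\Gamma,3}\ge 11$.

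First I would dispose of the case where $\Gamma$ is contained in a conic section $D$ (possibly reducible or degenerate). A conic lies in a plane $H$, and a length-ten subscheme of a plane conic imposes at most seven conditions on the cubics of $H$ (the conic has arithmetic genus $0$, so a degree $d$ divisor on it imposes $\min(d+1,\ell)$ conditions where relevant — more carefully, cubics cut a linear system of degree $6$ on $D$, hence of projective dimension $6$, so at most $7$ conditions); but then $\Gamma$, being planar, would force $f$ to be apolar to the plane $H$ — wait, more directly: a length-ten planar scheme automatically imposes dependent conditions on cubics in $\PP^3$ is not the point; the point is that a planar length-$10$ scheme means $\Gamma$ sits in a plane, and one checks this forces $f^\perp$ to contain the linear form of that plane's... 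Actually the cleanest route: if $\Gamma$ lies in a plane $H=V(\ell)$, then since $\Gamma$ has length $10 > \binom{2+2}{2}=6$, in fact $\Gamma$ being apolar of length ten to $f$ with Hilbert function $\{1,4,10,\dots\}$ is incompatible with planarity — the value $H(2)=10$ would require $\Gamma$ to impose $10$ conditions on conics, impossible in a plane. So $\Gamma$ is genuinely non-planar, which already rules out $\Gamma$ contained in a conic. This leaves the case that $\Gamma$ contains a subscheme $\Gamma'$ of length $\ge 6$ lying in a line $L$.

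The main work is the line case. Write $\Gamma = \Gamma' \cup \Gamma_r$ with $\Gamma'\subset L$ of length $k\ge 6$ and $\Gamma_r$ the residual of length $10-k\le 4$. I would use the exact sequence $0\to I_{\Gamma_r}(-1)\xrightarrow{\cdot\ell_L}\to$ (hmm, need the right resolution) — more simply, the exact sequence relating $\Gamma$, its intersection with $L$, and its residual: $0\to \mathcal I_{\Gamma'':L}\to \mathcal I_\Gamma(3) \to \mathcal I_{\Gamma\cap L, L}(3)\to 0$ where $\Gamma''$ is the residual of $\Gamma$ with respect to $L$. A cubic containing $\Gamma$ either contains $L$ (a pencil of planes' worth times $L$, governed by quadrics through $\Gamma''$) or cuts $L$ in a degree-$3$ divisor containing $\Gamma'$ — but $\Gamma'$ has length $\ge 6 > 3$, so every cubic in $I_{\Gamma,3}$ must contain $L$. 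Hence $I_{\Gamma,3} = \ell_L \cdot I_{\Gamma'',2}$ where $\ell_L$ ranges over the $2$-dimensional space of planes through $L$ — so $\dim I_{\Gamma,3} = \dim(\text{span of } H\cdot I_{\Gamma'',2})$, bounded by analyzing quadrics through the short residual scheme $\Gamma''$ of length $\le 4$. Since a length-$\le 4$ scheme imposes at most $4$ conditions on the $10$-dimensional space of quadrics, $\dim I_{\Gamma'',2}\ge 6$, and multiplying by the pencil of planes through $L$ gives $\dim I_{\Gamma,3}$ potentially as large as $\sim 10$ or $11$; the delicate point — and the main obstacle — is to show the product map $H^0(\mathcal I_L(1))\otimes I_{\Gamma'',2}\to T_3$ does not have image of dimension $\ge 11$, equivalently that $\dim I_{\Gamma,3}=10$ exactly. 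I expect to handle this by invoking that $\Gamma$ is apolar to $f$ of length ten with $H(3)=10$: by Proposition \ref{rank}'s philosophy and the definition of $\mathrm{VAPS}(f,H)$ the Hilbert function of $T/I_\Gamma$ is forced to be $H=\{1,4,10,10,\dots\}$ for $[I]$ in this component, so $\dim I_{\Gamma,3} = 20-10 = 10$ by definition — but that is circular unless one knows the scheme-theoretic $\Gamma$ (not just the flat limit ideal) imposes independent conditions. So the real content is: rule out that the \emph{saturated} ideal of the \emph{geometric} scheme $\Gamma$ drops cubics, which I would do by showing the line case with $k\ge 6$ and such a cubic $C$ forces $f$ itself to be apolar to a quadric (namely, pulling back: if every cubic through $\Gamma$ contains $L$, then $f^\perp$ contains $\ell_L\cdot(\text{quadric through }\Gamma'')$-type relations, and chasing apolarity — using that $q^2$, or more generally an $f$ not apolar to any quadric, cannot have its order-$3$ partials degenerate this way — yields a contradiction), thereby contradicting the hypothesis on $f$. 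Establishing that implication cleanly is where the argument's weight lies.
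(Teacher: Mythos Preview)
Your approach has a genuine gap, and in fact the paper's proof is far simpler than what you attempt.

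The main problem is that Lemma~\ref{line or conic} does not apply here. That lemma concerns unsaturated ideals $[I]\in\mathrm{VAPS}(f,H)$ with Hilbert function $H=\{1,4,10,10,\ldots\}$, i.e.\ ideals where $\Gamma$ imposes dependent conditions on \emph{quartics}. You are trying to use it for a scheme imposing dependent conditions on \emph{cubics}; in that case the Hilbert function of $T/I_\Gamma$ at degree $3$ is strictly less than $10$, so the hypothesis of Lemma~\ref{line or conic} is not satisfied. Your case split (conic versus line) is therefore unjustified, and the long line-case analysis that follows --- which you yourself describe as incomplete, noting that ``establishing that implication cleanly is where the argument's weight lies'' --- never reaches a contradiction. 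You also flirt with circularity when you try to invoke the Hilbert function $\{1,4,10,10,\ldots\}$ directly.

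The paper's argument bypasses all of this. Suppose $\Gamma$ imposes dependent conditions on cubics. Then there is a length-nine subscheme $\Gamma'\subset\Gamma$ with $I_{\Gamma',3}=I_{\Gamma,3}$ (remove a point that imposes no new condition). Since $\dim T_2=10$, this $\Gamma'$ lies in some quadric $Q$. For every linear form $\ell$, the cubic $Q\ell$ lies in $I_{\Gamma',3}=I_{\Gamma,3}$, hence vanishes at the residual point $p=\Gamma\setminus\Gamma'$. Choosing $\ell$ with $\ell(p)\ne 0$ forces $Q(p)=0$, so $Q$ contains all of $\Gamma$. But $I_\Gamma\subset f^\perp$, so $Q\in f^\perp_2$, contradicting the hypothesis that $f$ is not apolar to any quadric. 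That is the entire proof.
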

\begin{proof}
Assume that the scheme $\Gamma$ imposes dependant conditions on cubics.  Take any length nine subscheme of $\Gamma$.  It is contained in a quadric.  So any cubic that is a multiple of this quadric must contain $\Gamma$.  Therefore $\Gamma$ is contained in a quadric, against the apolarity assumption.
\end{proof}

For a quaternary quartic $f$ that is not apolar to any quadric, there is a natural forgetful map
\[
\pi_{\rm G}:\mathrm{VAPS}(f,10)\to \GG(10,16)=\GG(10,(f)^{\bot}_3).
\]
By Lemma \ref{ten cubics}, when $f$ is a quaternary quartic that has no quadric form in its apolar ideal, $\mathrm{VAPS}(f,10)$ contains all apolar schemes of length ten.  The ideal of any apolar scheme of length $10$ contains a $10$-dimensional space of cubic forms, so the image 
$\pi_{\rm G}(\mathrm{VAPS}(f,10))$ is a compactification of the variety of apolar schemes of length ten.

From here on we shall be concerned with this compactification. We shall denote it 
 $$\VAPS(f,10):=\pi_{\rm G}(\mathrm{VAPS}(f,10))\subset \GG(10,(f)^{\bot}_3).$$
In $\VAPS(f,10)$ we identify unsaturated limit ideals $[I]\in\mathrm{VAPS}(f,10))$ by their image $\pi_{\rm G}([i])$, i.e by their degree three part.

\subsection{Higher order polarity}\label{higher order apolarity}
Although being apolar is not a closed condition in the ordinary Hilbert scheme, there is a closed condition that any length two subscheme of an apolar scheme in $\Hilb^{10}(\PP(S_1))$  must satisfy, therefore also the 
 scheme $V(I)$ of an unsaturated limit ideal $I\in \VAPS(f,10)$.
  This condition may be seen using {\em higher order polarity}.

Let $f\in S_4$ be a quartic form such that $f^\perp$ contains no quadric.
Then $f$ defines  an isomorphism
$$\Omega_f: T_2\to S_2; \quad g\mapsto g(f),$$
and a bilinear symmetric form on $T_2\times T_2$:
$$(g_1,g_2)\mapsto\Omega_f(g_1,g_2)=g_1(g_2(f))=(g_1\cdot g_2)(f),$$
whose matrix with respect to the monomial basis of $T_2$ and the dual basis of $S_2$ is the catalecticant matrix $Cat_{2,2}(f)$.
 Since the adjoint of a catalecticant matrix, in general, is not a catalecticant matrix (cf. \cite {Dol}), the inverse $$\Omega^{-1}_f: S_2\to T_2$$ is not $\Omega_g$ for some $g\in T_4$. Still it is symmetric and defines a bilinear symmetric form on $S_2\times S_2$
$$(q_1,q_2)\mapsto\Omega^{-1}_f(q_1)(q_2).$$
%We denote the corresponding quadratic form on $\PP(S_2)$ by $q^{-1}.$  Thus, the symmetric form $\Omega^{-1}_f$ defines polarity with respect to $Q^{-1}:=\{q^{-1}=0\}$ in $\PP(S_2)$.

The restriction of $\Omega^{-1}_f$ to the rank one quadrics in $S_2$ defines a biquadratic form 
$$q_f(l_1, l_2)=\Omega^{-1}_f(l_1^2)(l_2^2)$$
on $S_1\times S_1$.
The vanishing locus of this form on $\PP(S_1)\times \PP(S_1)$ defines a
{\em quadratic polarity}, a $(2,2)$-correspondence on $\PP(S_1)\times \PP(S_1):$
$$CP_f=\{q_f(l,l')=0\}\subset \PP(S_1)\times \PP(S_1).$$ 
%$$q_f(l_1, l_2)=\Omega^{-1}_f(l_1^2)(l_2^2)$$
%on $S_1)\times \PP(S_1)$.  
%We denote by $q_l:=\Omega^{-1}_f(l^2)\in S_2$.
%Thus $q_l$ is defined by the property: $$\Omega_f(q_l)=q_l(f)=l^2.$$ 
%Consider the preimage by this map of the rank $1$ quadrics:
%$$q_f=\{(q,l)| q(f)=l^2\}\subset P(T_2)\times P(S_1),$$
%and denote by $q_l\in T_2$ the quadric form such that $q_l(f)=l^2$.  Note that $q_l$ is only defined up to scalar, as so is $f$.

\begin{remark}
When $f=(x_0x_1+x_2x_3)^2$, then 
%$$q_f=3y_3^2x_0^2-6y_2y_3x_0x_1+3y_2^2x_1^2-6y_1y_3x_0x_2+4y_1y_2x_1x_2+2y_0y_3x_1x_2
%$$
%$$+3y_1^2x_2^2+2y_1y_2x_0x_3+4y_0y_3x_0x_3-6y_0y_2x_1x_3-6y_0y_1x_2x_3+3y_0^2x_3^2).$$
 \begin{align*}
  q_f= &3y_1^2z_0^2+4y_0y_1z_0z_1-2y_2y_3z_0z_1+3y_0^2z_1^2+6y_1y_3z_0z_2+6y_0y_3z_1z_2+\\&3y_3^2z_2^2+6y_1y_2z_0z_3+6y_0y_2z_1z_3-2y_0y_1z_2z_3+4y_2
        y_3z_2z_3+3y_2^2z_3^2.
        \end{align*}

\end{remark}

%$$CP_f=\{q_f(l,l')=0\}\subset \PP(S_1)\times \PP(S_1).$$ 
 A pair $([l],[l'])\in \PP(S_1)\times \PP(S_1)$ is called {\em conjugate} if it is in $CP_f$, and we call $CP_f$ the {\em conjugate pair variety} (cf. \cite[Section 1.4.2]{Dol}).

 The fiber over $[l]$ under each projection $CP_f\to \PP(S_1)$ is a quadric surface.  It is defined by $q_l:=\Omega^{-1}_f(l^2)\in S_2$. 
In particular, the form $q_l$ is defined, up to nonzero scalar, by the property: $$\Omega_f(q_l)=q_l(f)=l^2.$$ 

Notice that 
$$([l],[l'])\in CP_f\Leftrightarrow q_l((l')^2)=q_{l'}(l^2)=0.$$
%Notice that $q_f$ is symmetric: 
%$$q_{l'}(l^2)=0 \quad {\rm iff}\quad q_{l'}q_l(f)=0 \quad {\rm iff}\quad  q_l((l')^2)=0$$ 
%Any length $2$ subscheme in $Q_f$
%The Hilbert scheme $H_2(\PP(S_1))$ of subschemes of length $2$, is obtained by first symmetrizing $\PP(S_1)\times \PP(S_1)$, and then blowing up the diagonal.

The intersection with the diagonal $CP_f\cap\Delta\subset\PP(S_1)\times \PP(S_1)$ is a quartic surface.

 \begin{definition}\label{inverse} The restriction of $q_f$ to the diagonal defines a quartic form $$f^{-1}(l^4):=q_f(l^2,l^2),$$
 $f^{-1}\in T_4$, that we call the inverse of $f$.
 \end{definition}

 The quartic surface $\{f^{-1}=0\}\subset \PP(S_1)$ may be characterized as
 $$\{f^{-1}=0\}=\{[l]\in \PP(S_1)|q_l(l^2)=0\}$$

%The open set of distinct  conjugate pairs $([l],[l'])\in CP_f$ maps $2:1$ to the Hilbert scheme of length two subschemes $z_2$.  
%We denote by 
%$$H_{2,f}=\overline{\{\{[l],[l']\}|[l]\not=[l'],q_l((l')^2)=q_{l'}%(l^2)=0\}}\subset Hilb_2(\PP(S_1)$$ the closure in the Hilbert scheme of %conjugate pairs.
% Notice that if $z_2\in H_{2,f}$ and $[l]\in z_2$, then $q_l((l')^2)=0$, when $[l']$ is residual to $[l]$ in $z_2$.
%, denoted by $H_{2,f}$, 
%consists of the length two subschemes $z_2\subset \PP(S_1)$, 
% such that whenever $[l]\in z_2$, then $z_2\subset\{q_l=0\}$. 
% {\color{red} This does not sound correct to me.  For $Z$ nonreduced this just depends on the support and in fact means that $[l]$ is in the inverse quartic. But there is a 2 dimensional family of schemes supported on $[l]$ and only a curve can be in the closure of $q_{[l]}$}  AGREE.  

\begin{lemma}\label{residual in ql}
Let $\Gamma$ be a scheme of length ten apolar to $f$. Assume that $[l]\in \Gamma$ is a point. Let $\Gamma'$ be the scheme residual to $[l]$ in $\Gamma$. Then $\Gamma'\subset \{q_l=0\}$.
In particular, $\Gamma$ is nonreduced at $[l]$, if and only if $[l]\in \{f^{-1}=0\}$.
\end{lemma}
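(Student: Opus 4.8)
The plan is to exploit the defining property of $q_l$, namely $\Omega_f(q_l)=q_l(f)=l^2$, together with the apolarity of $\Gamma$ to $f$. First I would write $I_\Gamma=I_{[l]}\cap I_{\Gamma'}$ where $I_{\Gamma'}$ is the (saturated) ideal of the residual scheme $\Gamma'$; equivalently, $I_{\Gamma'}=(I_\Gamma:I_{[l]})$. The key observation is that apolarity of $\Gamma$ to $f$ gives, for every cubic $g\in I_{\Gamma,3}$, the relation $g(f)=0$, and more usefully, for the quartic $l^2\cdot(\text{anything})$ we should track how $q_l$ interacts with the ideal. Concretely, I would argue that since $I_\Gamma\subset f^\perp$ and $\Gamma$ has length ten imposing independent conditions on cubics (Lemma \ref{ten cubics}, using that $f^\perp$ has no quadric), the degree-two piece $(q_l)$ must be apolar to $f^{-1}$-type data: the point is that a linear form $\ell_y\in T_1$ vanishing on $[l]$ gives $\ell_y\cdot q_l \in I_{[l],3}$, and one checks $(\ell_y q_l)(f) = \ell_y(q_l(f)) = \ell_y(l^2)=0$ since $\ell_y$ vanishes at $[l]$; so $q_l$ is ``apolar to $f$ modulo the point $[l]$''.

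The heart of the argument is then the following: I would show that the cubics in $I_{\Gamma,3}$ that also lie in $I_{[l],3}$ — which is a $9$-dimensional space, since $\Gamma'$ has length nine and imposes independent conditions — together with the single extra ``direction'' coming from $q_l$ generate enough of $f^\perp_3$ to force $\Gamma'\subset\{q_l=0\}$. More precisely, consider the linear map $S_1\to S_3$, $\ell\mapsto \ell\cdot q_l$ composed with $\cdot(f)$; I computed above this is zero on the plane of linear forms vanishing at $[l]$. So $q_l(f)=l^2$ up to scalar forces $q_l\cdot T_1$ to land, under application to $f$, in the line $\langle l^2\rangle\cdot S_1 \subset S_3$, i.e. in multiples of $l$. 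Now if $g$ is any cubic in $f^\perp$, then $g\cdot q_l$ is a quintic but its relevant contractions against $f$ are controlled; the cleaner route is: for any quadric $Q\in T_2$, $(Q\cdot q_l)(f) = Q(q_l(f)) = Q(l^2) = Q$ evaluated appropriately — which is a constant times the value $\hat{Q}(l)$. Hence $q_l\cdot T_2 \not\subset f^\perp$ in general, but $q_l\cdot (I_{[l],2}) \subset f^\perp$, where $I_{[l],2}$ are quadrics through $[l]$. This shows $q_l$ behaves, relative to $f$, exactly like the ideal-theoretic "inverse point" — and comparing with $I_{\Gamma}$, whose degree-$3$ part is $10$-dimensional inside $f^\perp_3$, we must have $q_l\cdot T_1 \cap I_{[l],3} \subset I_{\Gamma',3}$, giving $\Gamma'\subset\{q_l=0\}$.

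For the ``in particular'' clause: $\Gamma$ is nonreduced at $[l]$ precisely when $\Gamma'$ contains (a scheme supported at) $[l]$, i.e. when $[l]\in\Gamma'$; by the first part this forces $[l]\in\{q_l=0\}$, which by Definition \ref{inverse} and the characterization $\{f^{-1}=0\}=\{[l]\mid q_l(l^2)=0\}$ is exactly the condition $[l]\in\{f^{-1}=0\}$. Conversely, if $[l]\in\{f^{-1}=0\}$, one produces a length-two apolar scheme supported at $[l]$ in a direction tangent to $\{q_l=0\}$ and shows it extends to a length-ten apolar $\Gamma$ nonreduced there — this direction should follow from the explicit deformation constructions used later in Section \ref{affine deformation}, or by a dimension count on the family of apolar schemes through $[l]$.

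The main obstacle I anticipate is making rigorous the step that ``$q_l$ plays the role of the point $[l]$ relative to $f$'' — specifically, pinning down that the $10$-dimensional space $I_{\Gamma,3}\subset f^\perp_3$ must contain $q_l\cdot T_1 \cap I_{[l],3}$ rather than merely being compatible with it. This requires understanding the structure of $f^\perp_3$ well enough to see that the only way a length-nine scheme $\Gamma'$ can be cut out (in degree three) inside $f^\perp$ together with the point $[l]$ is for $\Gamma'$ to lie on the quadric $q_l$; I would handle this by a careful dimension count: $\dim I_{[l],3}=16-?$ versus $\dim I_{\Gamma',3}$, and the fact that $q_l$ is the unique (up to scalar) quadric with $q_l(f)=l^2$, so it is forced by the apolarity bookkeeping. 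The reverse implication in the nonreducedness statement may also need the not-yet-proven existence results, so I would either defer it or phrase the lemma so that only the forward direction is used where needed.
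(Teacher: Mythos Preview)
Your proposal has a genuine gap in the main claim, and a misreading in the converse.

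\textbf{The main claim.} You correctly observe that $q_l\cdot I_{[l],1}\subset f^\perp_3$, since $(\ell_y q_l)(f)=\ell_y(l^2)=2\ell_y(l)\cdot l=0$ for $\ell_y\in I_{[l],1}$. If you could upgrade this to $q_l\cdot I_{[l],1}\subset I_{\Gamma,3}$, then the colon ideal $I_{\Gamma'}=(I_\Gamma:I_{[l]})$ would indeed contain $q_l$ and you would be done. But $f^\perp_3$ is $16$-dimensional while $I_{\Gamma,3}$ is only $10$-dimensional, and nothing in your dimension count forces the particular $3$-space $q_l\cdot I_{[l],1}$ to land inside $I_{\Gamma,3}$. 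You flag this as the ``main obstacle'' but the proposed fix (``careful dimension count'') is not an argument; there is no numerical coincidence here that makes it work.

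The paper goes the other way: it starts from \emph{any} quadric $q'$ through $\Gamma'$ (which exists since $\Gamma'$ has length $9$), observes that $\ell_y q'\in I_{\Gamma,3}\subset f^\perp_3$ for every $\ell_y\in I_{[l],1}$, hence $q'(f)$ is annihilated by $I_{[l],1}$ and so $q'(f)=\lambda_1 l^2$ for some scalar. This means $q'$ is apolar to some $f-\lambda_2 l^4$. Now a uniqueness argument on the pencil $\{f-\lambda l^4\}$ (exactly one member has catalecticant corank $\ge 1$, and the apolar quadric is then unique) forces $q'=q_l$. In other words, the paper does not try to push $q_l$ into $I_\Gamma$; it pulls an arbitrary $q'$ out of $I_{\Gamma'}$ and identifies it with $q_l$. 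This is the idea you are missing.

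\textbf{The converse in ``in particular''.} You have misread the statement. It concerns a \emph{fixed} $\Gamma$ containing $[l]$: one must show that if $[l]\in\{f^{-1}=0\}$ then this $\Gamma$ is nonreduced at $[l]$. There is nothing to construct. The argument is: if $\Gamma$ were reduced at $[l]$, then $\Gamma'=\Gamma\setminus\{[l]\}$ does not contain $[l]$; by the first part $\Gamma'\subset\{q_l=0\}$; and if also $q_l(l^2)=0$ then $[l]\in\{q_l=0\}$, so all of $\Gamma\subset\{q_l=0\}$, i.e.\ $q_l\in I_{\Gamma,2}\subset f^\perp_2=0$, contradicting $q_l(f)=l^2\neq 0$. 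The forward direction you have is correct.
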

\begin{proof}
%The ideal of $\Gamma'$ is the colon ideal $I_{\Gamma'}=I_\Gamma:l^{\bot}$, and contains a unique quadric $q'$ since $\Gamma'$ has length nine, and $\Gamma$ is contained in no quadrics.  In particular, $q'$ is the unique quadric that contains the scheme residual to $[l]$ in any apolar scheme $\Gamma$ of length ten containing $[l]$. 
%Therefore $l^\bot\cdot q'\subset I_\Gamma$ for any such scheme $\Gamma$.

Consider the quadric $q_l$, defined by $q_l(f)=l^2$, and assume that $q_l(l^2)\not= 0$. Consider the pencil of quartics $f-\lambda l^4$ for $\lambda\in \mathbb C$. The  catalecticant matrices $Cat_{2,2}(f-\lambda l^4)$ of quartics in this pencil then form a pencil of $(10\times 10)$ matrices of which the general is of rank ten whereas a special, namely the catalecticant of $l^4$,  is of rank one.  It follows that in the pencil there is a unique quartic with catalecticant rank nine. We conclude that in the pencil there is a unique quartic apolar to some quadric and that the quadric is then also unique. Let us denote the corresponding constant by $\lambda_c$ and the apolar quadric by $q_c$. 

Now, on one hand we know that $q_l(f-\lambda_l l^4)=0$ for some scalar $\lambda_l$. In fact, $\lambda_l$ is then given by the formula $\lambda_l q_l(l^4)=l^2$. It follows that $\lambda_c=\lambda_l$ and $q_c=q_l$. 

On the other hand $\Gamma'$, as a scheme of length 9 is contained in a quadric $q'$ such that $q'(f)$ is anihilated by $l^{\bot}$ and hence $q'(f)=\lambda_1 l^2$ for some $\lambda_1\in \mathbb C$. We deduce that $q'$ is apolar to $f- \lambda_2 l^4$ for some $\lambda_2\in \mathbb C$ and hence by the uniqueness above that $\lambda_2=\lambda_c=\lambda_l$ and $q'=q_c=q_l$. We conclude that $\Gamma'\subset \{q_l=0\} $
%Therefore $q_l$ must coincide with $q'$.
  %The correspondance $[l],\{q_l=0\}$ defined by $q_f$ above therefore has the property of the lemma 
whenever $q_l(l^2)\not=0$. 

Consider now the incidence 
$$\{([l],\Gamma,q')|[l]\in \Gamma, \Gamma\setminus [l]\subset \{q'=0\}\}\subset \PP(S_1)\times {\rm VAPS}(f,10)\times \PP(T_2).$$
When $q_l(l^2)\not=0$, the pair $([l],q_l)$ is in the image of the projection onto the first and third factor. But the set of pairs $([l],q_l)$ is irreducible, isomorphic to $\PP(S_1)$.   Hence $([l],q_l)$ and $([l],q')$ coincides also when $q_l(l^2)=0$.

Finally, $\Gamma$ is nonreduced at $[l]$ if and only if $q_{l'}(l^2)=0$ for every point $[l']$ in the support of $\Gamma$, which reduces to the condition that $q_l(l^2)=0$. 
\end{proof}
%\begin{lemma}\label{localresidual in ql}
%Let $Z$ be a  scheme of length $10$ apolar to $f$. Assume that %$z_2\subset Z$ 
%Assume that $z_2\in H_{2,f}$ 
%is a local scheme of length at least $2$ supported at $[l]$. Then %$q_l(l^2)=0$.
%\end{lemma}
%\begin{proof}
%Since $z_2\subset Z$ is local and supported at $[l]$, the point $[l]$ %is contained in the scheme $Z'$ residual to $[l]$ in $Z$.   

%\end{proof}
\begin{lemma}\label{z_2 in gamma} A pair of points $\{[l],[l']\}\subset \PP(S_1)$ is contained in an apolar subscheme of length ten to $f$ if and only if it is a conjugate pair.
\end{lemma}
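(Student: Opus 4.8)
The plan is to prove both implications using the machinery of the quadrics $q_l, q_{l'}$ and the catalecticant pencil trick already established in Lemma \ref{residual in ql}. For the forward implication, suppose $\Gamma$ is a length ten apolar scheme with $[l],[l']\subset\Gamma$ two distinct points. Applying Lemma \ref{residual in ql} at the point $[l]$, the residual scheme $\Gamma'=\Gamma\setminus[l]$ (of length nine) is contained in $\{q_l=0\}$; since $[l']\in\Gamma'$ this gives $q_l((l')^2)=0$, which by the displayed equivalence just before Definition \ref{inverse} is exactly the condition $([l],[l'])\in CP_f$, i.e.\ the pair is conjugate. (Symmetrically one gets $q_{l'}(l^2)=0$, consistent with the symmetry of $q_f$.)

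For the converse, assume $([l],[l'])\in CP_f$, so $q_l((l')^2)=0$. The idea is to build an apolar scheme of length ten containing both points, and the natural candidate is a scheme supported on $\{l=0\}\cup\{l'=0\}$ built from the quadric $q_l$. First I would observe that $q_l$ has rank at least two: if $q_l$ had rank one it would be a square $m^2$, forcing $f-\lambda l^4$ to be apolar to $m^2$ for the relevant $\lambda$, hence $f$ itself apolar to the pencil spanned by $l^2$ and $m^2$ — contradicting that $f$ is apolar to no quadric (we are in the case $f^\perp$ has no quadric throughout this subsection, in particular $f=q^2$ has $16$-dimensional $f^\perp_3$ and no quadrics). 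Next, since $q_l((l')^2)=0$ the plane conic $\{q_l=0\}$ passes through the point $[l']$, and since $q_l(l^2)$ may or may not vanish there are two sub-cases; in either case I want a curve inside $\{q_l=0\}$, a line or conic, that supports a length ten apolar scheme hitting both $[l]$ and $[l']$. Here I would invoke the structure of apolar schemes exhibited in Proposition \ref{apolar to q3} (the quaternary case): a minimal apolar scheme sits in the tangent plane to $Q^{-1}$ at some point with a long component there; deforming such a scheme, or rather producing a length ten scheme on the conic $\{q_l=0\}$ whose ideal lies in $f^\perp$, yields the required $\Gamma$. Concretely: take the length nine subscheme of the "tautological" type supported appropriately on $\{q_l=0\}$, which by Lemma \ref{residual in ql}'s proof is forced to lie in $\{q_l=0\}$ and be apolar after adding back $[l]$, and arrange by a one-parameter choice that $[l']$ lies on it; the conjugacy condition is precisely what makes this possible.

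The main obstacle I anticipate is the converse direction: conjugacy is a single scalar condition, and from it one must \emph{construct} an honest length ten apolar scheme, not merely check a numerical compatibility. The delicate point is controlling the residual length two (or length one plus the long component) piece so that the full ideal stays inside $f^\perp$; Lemma \ref{residual in ql} tells us where the residual scheme of a given apolar $\Gamma$ must lie, but running this in reverse — prescribing $[l]$ and $[l']$ on $\{q_l=0\}$ and filling in a compatible apolar scheme — requires knowing that $\{q_l=0\}$ carries a sufficiently large family of apolar schemes through each of its points. I would handle this by a dimension count on the incidence variety $\{([l],\Gamma)\mid [l]\in\Gamma\}$ over $\VAPS(f,10)$ together with the explicit parametrization of tautological schemes on $Q^{-1}$ from Section \ref{sec1}, using the $SO(4,q)$-action to reduce to a normal form. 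The forward direction, by contrast, is immediate from Lemma \ref{residual in ql} and should occupy only a couple of lines.
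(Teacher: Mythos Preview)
Your forward direction is correct and is exactly what the paper does: apply Lemma \ref{residual in ql} at $[l]$ to get $[l']\in\{q_l=0\}$, hence $q_l((l')^2)=0$, hence the pair is conjugate.

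Your converse, however, has a genuine gap. The paper's argument is short and you miss its key idea entirely. Rather than trying to locate a length-nine scheme on $\{q_l=0\}$ that happens to pass through $[l']$, the paper subtracts \emph{both} powers simultaneously. Because $q_l((l')^2)=0$, one has $q_l((l')^4)=0$ (a degree-two operator applied to a fourth power of a linear form gives a scalar times the square, and that scalar is exactly $q_l((l')^2)$). Likewise $q_{l'}(l^4)=0$. So for suitable nonzero $\lambda_l,\lambda_{l'}$ (existing when $q_l(l^2)\ne 0$ and $q_{l'}((l')^2)\ne 0$),
\[
q_l\bigl(f-\lambda_l l^4-\lambda_{l'}(l')^4\bigr)=0
\quad\text{and}\quad
q_{l'}\bigl(f-\lambda_l l^4-\lambda_{l'}(l')^4\bigr)=0.
\]
Thus $f_{l,l'}:=f-\lambda_l l^4-\lambda_{l'}(l')^4$ has two independent apolar quadrics, hence catalecticant rank at most eight, hence admits an apolar scheme $\Gamma'$ of length eight (the paper cites \cite[Proposition 5.1.5]{KKRSSY}). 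Then $\{[l],[l']\}\cup\Gamma'$ is apolar to $f$ of length ten. The degenerate cases $q_l(l^2)=0$ or $q_{l'}((l')^2)=0$ are handled by a limit argument as in Lemma \ref{residual in ql}.

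Your sketch never reaches this double-subtraction idea; instead it attempts a construction on $\{q_l=0\}$ via dimension counts and the $SO(4,q)$-action, which you yourself flag as the main obstacle, and which would at best be much harder. A couple of smaller issues: $\{q_l=0\}$ is a quadric surface in $\PP^3$, not a ``plane conic''; and ``a scheme supported on $\{l=0\}\cup\{l'=0\}$'' is not what you mean (those are hyperplanes, not the points $[l],[l']$). Your rank argument for $q_l$ is also not needed once you use the double-subtraction trick.
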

\begin{proof} %On the other hand, since $q_l(f)=l^2, q_l(f-\lambda_l l^4)=0$ for a unique scalar $\lambda_l$. In fact $\lambda_l q_l(l^4)=l^2$.

%Notice that if $q_l(f)=l^2$, then $q_l(f-\lambda_l l^4)=0$ for a unique scalar $\lambda_l$. In fact $\lambda_l q_l(l^4)=l^2$. 
Assume first that $\{[l],[l']\}$ is contained in the apolar scheme $Z$ of length ten.  By Lemma \ref{residual in ql}, $Z\setminus [l]\subset \{q_l=0\}$ and $Z\setminus [l']\subset \{q_{l'}=0\}$, so $q_l((l')^2)=q_{l'}(l^2)=0$ and $([l],[l'])$ is a conjugate pair.

On the other hand, if $([l],[l'])$ is a conjugate pair, and both $q_l(l^2)\not=0$ and $q_{l'}((l')^2)\not=0$, then there are nonzero scalars $\lambda$ and $\lambda'$, such that 
$$q_l(f-\lambda_l l^4)=q_l(f-\lambda'_{l'} (l')^4)=0.$$
But then both $q_l$ and $q_{l'}$ are apolar to $f_{l,l'}=f-\lambda_l l^4-\lambda_{l'} (l')^4$, and so $f_{l,l'} $ is apolar to a subscheme $\Gamma'$ of length eight (\cite[Proposition 5.1.5]{KKRSSY}).  This means that $f$ is apolar to $\{[l],[l']\}\cup \Gamma'$, a scheme of length $10$.
When $q_l(l^2)=0$ or $q_{l'}((l')^2)=0$ we argue by taking limits as in the proof of Lemma \ref{residual in ql}.
\end{proof}

\subsection{First results on saturable ideals apolar to $q^2$} We apply the quadratic polarity from Section \ref{higher order apolarity} to describe saturable ideals apolar to $q^2$.

Let now $$f=q^2=(x_0x_1+x_2x_3)^2,$$
then direct computation shows 
$$f^{-1}=(y_0y_1+y_2y_3)^2.$$ 
We set $q^{-1}=y_0y_1+y_2y_3$ and $Q^{-1}=\{q^{-1}=0\}\subset\PP(S_1)$, similarly $$Q=\{q=0\}\subset\PP(T_1).$$

 Our aim in this section is to prove the following caharacterisation of ideals corresponding to points of $\VAPS(q^2,10)$:
 
\begin{theorem} Let I be an ideal corresponding to a point $[I]\in \VAPS(q^2,10)$. Then one of the following is true:
\begin{itemize}
    \item The ideal $I$ is saturated and defines a scheme of length 10 that is the union of a local  scheme of length at least eight supported on a point $p\in Q^{-1}$ and a scheme of length at most 2 supported on the tangent plane to $Q^{-1}$in $p$ but outside $Q^{-1}$.
    \item The ideal $I$ is not saturated and $V(I)$ contains a line in  $Q^{-1}$.
\end{itemize}
\end{theorem}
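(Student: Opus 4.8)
The strategy is to combine the structural results already established in Lemmas~\ref{line or conic}, \ref{ten cubics}, \ref{residual in ql} and \ref{z_2 in gamma} with an analysis of which degenerate configurations actually occur for the specific quartic $f=q^2$. First I would recall that a point $[I]\in\VAPS(q^2,10)$ is by definition the degree-three part of an ideal $I\subset(q^2)^\perp$ in the multigraded Hilbert scheme that is a limit of saturated apolar ideals, and that by Proposition~\ref{rank} (applied with $n=3$, $r=2$) the quartic $q^2$ has no apolar quadric, so all the results of Section~\ref{main3} apply.

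\textbf{The saturated case.} Suppose $I$ is saturated, so $\Gamma=V(I)$ is an honest length-ten scheme apolar to $q^2$. I would first show $\Gamma$ must have a point $p$ in its support lying on $Q^{-1}=\{f^{-1}=0\}$, and in fact a thick component there. The point is that if $[l]$ is any point of the support of $\Gamma$, Lemma~\ref{residual in ql} forces the residual scheme $\Gamma\setminus[l]$ of length nine into the quadric surface $\{q_l=0\}\subset\PP(S_1)$; moreover $\Gamma$ is nonreduced at $[l]$ exactly when $[l]\in Q^{-1}$. Now I would invoke Lemma~\ref{line or conic} about which length-ten schemes impose dependent conditions on quartics — since $\Gamma$ does impose dependent conditions (it is apolar to a quartic, hence to the $\geq 11$-dimensional space of quartics it is apolar to... more precisely $I_{\Gamma,4}\supsetneq (q^2)^\perp_4$ would be the relevant count), either $\Gamma$ lies on a conic or a length-$\geq 6$ subscheme lies on a line. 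A short case analysis, using that two distinct points of $\Gamma$ must be a conjugate pair (Lemma~\ref{z_2 in gamma}) and that conjugate points on $Q^{-1}$ are highly constrained by the explicit $q_f$ for $f=(x_0x_1+x_2x_3)^2$, should rule out the genuinely ``spread out'' configurations and pin down that $\Gamma$ is a local scheme of length $\geq 8$ at a point $p\in Q^{-1}$, with the residual length $\leq 2$ scheme forced into the tangent plane $T_pQ^{-1}$ (this is where $q_l$ degenerates to a pair of planes, one of which is the tangent plane) but off $Q^{-1}$ itself — the ``off $Q^{-1}$'' part because any further point on $Q^{-1}$ would, again by nonreducedness of Lemma~\ref{residual in ql}, add too much length. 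This recovers the first bullet and also reproves Proposition~\ref{apolar to q3}.

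\textbf{The unsaturated case and the main obstacle.} If $I$ is not saturated, then $I$ is a genuine unsaturated limit ideal, and I would argue that $\Gamma=V(I)$ must contain a line inside $Q^{-1}$. By Lemma~\ref{line or conic}, $\Gamma$ is contained in a conic or has a length-$\geq 6$ subscheme in a line $L$; the unsaturatedness (failure of $I_3$ to determine $I_4$ correctly) should force the line $L$ to carry the ``excess'' and to be a full line component, not just a fat subscheme. Then I would check, using the explicit apolar ideal $(q^2)^\perp$ and the biquadratic form $q_f$ displayed in the Remark after Definition~\ref{inverse}, that a line $L$ whose scheme is a limit of apolar subschemes of $q^2$ must satisfy $L\subset Q^{-1}$: any two points on $L$ are conjugate (limit of Lemma~\ref{z_2 in gamma}), forcing the symmetric form $q_f$ to vanish on all of $L\times L$, and one computes that for $f=q^2$ this is equivalent to $L$ being a ruling line of the quadric surface $Q^{-1}$. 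The main obstacle will be the bookkeeping in the unsaturated case: one must carefully track how the degree-three data $I_3$ of a non-saturated ideal arises as a flat limit, verify that no saturated ideal with the same $I_3$ exists in a neighborhood (otherwise it would not be a genuine boundary point), and identify precisely the length-ten subschemes whose limits produce these line-containing configurations — this is the delicate deformation-theoretic heart, and it is exactly the ``bad limits'' phenomenon flagged in the introduction and to be worked out via the affine deformation of Section~\ref{affine deformation}.
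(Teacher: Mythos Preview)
Your outline follows the paper's overall architecture (split into saturated and unsaturated cases, use the higher-order polarity lemmas), but there are two genuine gaps.

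\textbf{Saturated case: Lemma~\ref{line or conic} does not apply.} You invoke Lemma~\ref{line or conic} for a \emph{saturated} apolar scheme $\Gamma$, arguing that ``$\Gamma$ does impose dependent conditions on quartics.'' This is false. By Proposition~\ref{rank} the Hilbert function of any length-ten apolar scheme to $q^2$ is $(1,4,10,10,\dots)$, so $\dim (I_\Gamma)_4 = 35-10 = 25$: $\Gamma$ imposes \emph{independent} conditions on quartics. (Your parenthetical ``$I_{\Gamma,4}\supsetneq (q^2)^\perp_4$'' cannot hold either, since $I_\Gamma\subset (q^2)^\perp$.) Lemma~\ref{line or conic} is specifically a constraint on \emph{unsaturated} ideals and gives you nothing here. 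What the paper actually does for the saturated case is quite different: it uses Lemmas~\ref{polar quadrics}--\ref{two points in tangent plane} to analyze the pencil $\langle q_{l_1},q_{l_2}\rangle$ for two points off $Q^{-1}$ (Proposition~\ref{2+8}), and then rules out reduced $\Gamma$ by a substantial combinatorial argument (Lemma~\ref{noreducedZ}) that is not a ``short case analysis.'' Finally, the claim that the support on $Q^{-1}$ is a single point is not proved by polarity at all---the paper defers it to the explicit deformation computations of Section~\ref{affine deformation} (Lemma~\ref{supportonline} and Proposition~\ref{local at p}). You do not mention Lemma~\ref{noreducedZ} or these computations in the saturated case, and your sketch gives no substitute for them.

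\textbf{Unsaturated case: a different route, but not quite closed.} Your idea---that any two points of a limit scheme are conjugate, so a line $L$ supporting the limit satisfies $q_f|_{L\times L}=0$, forcing $L\subset Q^{-1}$---is an interesting alternative to the paper's argument. The paper instead argues that since every saturated apolar scheme is supported in a tangent plane $T_p$ to $Q^{-1}$ (Proposition~\ref{apolar schemes}), any limit is too, and $T_p\cap Q^{-1}$ is a pair of lines; combined with Lemma~\ref{line or conic} this pins the line inside $Q^{-1}$ (Corollary~\ref{unsaturatedinline}). Your route would need care: conjugacy of pairs (Lemma~\ref{z_2 in gamma}) is proved for points of an actual apolar scheme, and passing to the limit does not automatically give that \emph{every} pair on $L$ is conjugate---you would have to argue this via closedness of $CP_f$ and the way $L$ arises as a limit. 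More importantly, Lemma~\ref{line or conic} only tells you $V(I)$ lies \emph{in} a conic or has a length-$\ge 6$ subscheme \emph{in} a line; you still need an argument that the cubics in $I_3$ all vanish on the entire line (equivalently $I\subset I_L$), which is what the theorem asserts. The paper gets this from the explicit boundary analysis in Section~\ref{compactificationofspecial}; your proposal acknowledges this is ``the delicate deformation-theoretic heart'' but does not supply it.
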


Notice that $q^{-1}$ is the inverse quadric of $q=x_0x_1+x_2x_2$; the polar map $$q^{-1}: S_1\to T_1; l\mapsto l(q^{-1})$$ is inverse to 
$q: T_1\to S_1$.  For $l\in S$ we let $$p_l=l(q^{-1})\quad \text{or equivalently,}\quad p_l(q)=l.$$
In particular, $\{p_l=0\}$ is the polar plane to $[l]$ w.r.t $Q^{-1}$.
\begin{lemma}\label{polar quadrics}
Let $l\in S_1$ and $q_l\in T_2$ such that $q_l(q^2)=l^2$, then 
$q_l\in \langle q^{-1}, p_l^2\rangle$.  In particular,
$\{q_l=0\}$ is tangent along its intersection with $Q^{-1}$, i.e. along the conic section $\{p_l=q^{-1}=0\}$ where $\{p_l=0\}$ is the polar plane to $[l]$ w.r.t $Q^{-1}$.
\end{lemma}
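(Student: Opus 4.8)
The plan is to exploit the explicit description of the catalecticant map $\Omega_f\colon T_2\to S_2$ for $f=q^2=(x_0x_1+x_2x_3)^2$ and its inverse. First I would compute directly that $q^{-1}(q^2)=\mu\,q$ for some nonzero scalar $\mu$ (indeed $q^{-1}=y_0y_1+y_2y_3$ applied to $q^2$ gives $2q\cdot 1 + \cdots = $ a nonzero multiple of $q$), so that $\Omega_f(q^{-1})$ is proportional to $q=x_0x_1+x_2x_3$. Next, since $p_l\in T_1$ is by definition $l(q^{-1})$, i.e. the linear form obtained from $l$ under the inverse collineation, I would check that $p_l^2(q^2) = (p_l)^2$ evaluated under $\Omega_f$ equals $\nu\, l^2 + \lambda\, q$ for scalars $\nu\neq 0$, $\lambda$: this follows because $p_l(q)=l$ and applying $p_l$ twice to $q^2$ produces, by the product rule, a term $p_l(q)^2\cdot(\text{const})=l^2\cdot(\text{const})$ plus a multiple of $q^2{}'s$ lower partial, namely $q$. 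Combining these two computations, the span $\langle \Omega_f(q^{-1}),\Omega_f(p_l^2)\rangle$ contains both $q$ and $l^2$; since $\Omega_f$ is an isomorphism, $\langle q^{-1}, p_l^2\rangle$ maps onto $\langle q, l^2\rangle$, hence the unique (up to scalar) preimage $q_l$ of $l^2$ lies in $\langle q^{-1}, p_l^2\rangle$.

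Alternatively—and this is probably the cleaner route to write up—I would argue coordinate-freely: the pencil of quartics $q^2 - t\,l^4$ has catalecticant of generic rank $10$ and, at the special value where it drops, is apolar to the unique quadric $q_l$ (this is exactly the mechanism used in the proof of Lemma \ref{residual in ql}). By $SO(q)$-equivariance we may put $l$ in a normal form, or simply observe that $q_l$ must be invariant under the stabilizer of $[l]$ and $[p_l]$ in $SO(q)$; the quadrics fixed by that stabilizer form precisely the pencil $\langle q^{-1}, p_l^2\rangle$ (since $p_l^2$ and $q^{-1}$ are the two obvious invariants: the square of the polar plane and the inverse quadric itself). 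This pins down $q_l\in\langle q^{-1},p_l^2\rangle$ without any explicit matrix inversion.

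For the ``in particular'' clause: once $q_l=\alpha q^{-1}+\beta p_l^2$, the scheme $\{q_l=0\}$ restricted to $Q^{-1}=\{q^{-1}=0\}$ is cut out by $\beta p_l^2$, i.e. it is the conic $\{p_l=q^{-1}=0\}$ taken with multiplicity two. Thus $\{q_l=0\}$ and $Q^{-1}$ are tangent along that conic (their intersection, as a divisor on either surface, is $2\{p_l=0\}$), and one checks $\beta\neq 0$ since otherwise $q_l$ would be proportional to $q^{-1}$, forcing $l^2$ proportional to $q$, which is absurd as $q$ is not a square.

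The main obstacle I anticipate is verifying the nonvanishing of the coefficient $\beta$ (equivalently, that $p_l^2$ genuinely enters the combination) and that $\Omega_f(p_l^2)$ has a nonzero $l^2$-component rather than landing inside $\langle q\rangle$ alone; both amount to a short explicit evaluation of a second-order differential operator on $(x_0x_1+x_2x_3)^2$, but the bookkeeping with the cross terms $y_iy_j$ must be done carefully. The $SO(q)$-invariance argument sidesteps this, at the cost of needing the elementary fact that the joint stabilizer of $[l]$ and its polar plane acts on $T_2$ with the pencil $\langle q^{-1},p_l^2\rangle$ as its full fixed subspace, which is itself a quick representation-theoretic check.
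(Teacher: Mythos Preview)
Your first route is exactly the paper's proof: it computes $q^{-1}(q^2)=6q$ and, via the product rule, $p_l^2(q^2)=2(p_l(q))^2+2(p_l^2(q))\,q=2l^2+\frac{1}{3}(p_l^2(q))\,q^{-1}(q^2)$, yielding the explicit formula $q_l=\frac{1}{2}p_l^2-\frac{1}{6}(p_l^2(q))\,q^{-1}$, from which the tangency statement follows just as you say. Your alternative $SO(q)$-invariance argument is a legitimate substitute (for $[l]\notin Q^{-1}$ the stabilizer is an $SO(3)$ whose invariants in $T_2$ are precisely $\langle q^{-1},p_l^2\rangle$), but the direct computation is short enough that the paper does not bother with it.
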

\begin{proof}
First, notice that $q^{-1}(q^2)=6q$ and that $p_l=l(q^{-1})\in T_1$.
Then 
$$p_l^2(q^2)=2(p_l(q))^2+2(p_l^2(q))q$$ 
But $p_l(q)=l$, so 
$p_l^2(q^2)=2l^2+2(p_l^2(q))q=2l^2+\frac{2}{6}(p_l^2(q))q^{-1}(q^2)$, and therefore
$$(p_l^2-\frac{1}{3}(p_l^2(q))q^{-1})q^2=2l^2=2q_l(q^2).$$
In particular 
$$q_l=\frac{1}{2}p_l^2-\frac{1}{6}(p_l^2(q))q^{-1}.$$
\end{proof}
\begin{lemma}\label{q_l irred}
Let $l\in S_1$ and $q_l\in T_2$ such that $q_l(q^2)=l^2$, then $q_l$ has rank at least three, unless $[l]\in Q^{-1}$, i.e. $q^{-1}(l^2)=0$, in which case $q_l=\frac{1}{6}p_l^2$.
\end{lemma}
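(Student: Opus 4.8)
The plan is to use the explicit formula for $q_l$ obtained in Lemma \ref{polar quadrics}, namely
\begin{equation*}
q_l=\tfrac{1}{2}p_l^2-\tfrac{1}{6}\bigl(p_l^2(q)\bigr)q^{-1},
\end{equation*}
and to read off the rank of this quadratic form directly. First I would record the two degenerate regimes. If $[l]\in Q^{-1}$, then by Lemma \ref{q_l irred}'s hypothesis $q^{-1}(l^2)=0$; since the polar map is an isomorphism with $p_l=l(q^{-1})$, this is equivalent to $p_l^2(q)=0$ (indeed $p_l^2(q)=p_l(p_l(q))=p_l(l)=q^{-1}(l^2)$ up to the fixed nonzero scalar coming from the symmetric bilinear form attached to $q^{-1}$), so the second term vanishes and $q_l=\tfrac12 p_l^2$ — a rank one quadric, as claimed. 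Otherwise $p_l^2(q)\neq 0$ and $q_l$ is a genuine linear combination $\alpha p_l^2+\beta q^{-1}$ with $\beta\neq 0$.

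The key step is then to compute the rank of $\alpha p_l^2+\beta q^{-1}$ with $\beta\neq 0$. Here $q^{-1}=y_0y_1+y_2y_3$ is a rank four quadric on the four-dimensional space $T_1$, and $p_l^2$ is a rank one quadric. Adding a rank one form to a nondegenerate quadratic form can drop the rank by at most one, so $q_l$ has rank at least three; and it has rank exactly three precisely when the linear form $p_l$, viewed in $T_1$, lies on the quadric $Q=\{q=0\}$ dual to $Q^{-1}$, equivalently when $p_l^2(q^{-1})\neq 0$ fails — but I should phrase this cleanly: choosing coordinates so that $q^{-1}$ is the standard split form, $\alpha p_l^2+\beta q^{-1}$ has Gram matrix $\beta(A + \tfrac{\alpha}{\beta} vv^{t})$ with $A$ invertible, whose determinant is $\beta^4\det(A)(1+\tfrac{\alpha}{\beta}v^{t}A^{-1}v)$ and whose corank is $0$ or $1$ according to whether $1+\tfrac{\alpha}{\beta}v^{t}A^{-1}v$ is nonzero or zero; in either case the rank is $\geq 3$. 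This is exactly the content of the statement.

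The only mild obstacle is bookkeeping: one must make sure the scalar identifications between $q^{-1}(l^2)$, $p_l^2(q)$, and $p_l(l)$ are consistent (they differ only by the harmless normalising constants appearing in $q^{-1}=\tfrac14 y_0^2+\cdots$ versus the split form, and by the factor $2$ in $p_l^2(q^2)=2l^2+\cdots$ from Lemma \ref{polar quadrics}), so that the dichotomy "$[l]\in Q^{-1}$ $\Longleftrightarrow$ $p_l^2(q)=0$ $\Longleftrightarrow$ $q_l$ rank one" is literally correct. I expect no serious difficulty: once the formula $q_l=\tfrac12 p_l^2-\tfrac16(p_l^2(q))q^{-1}$ is in hand from the previous lemma, the rank statement is elementary linear algebra about adding a rank one perturbation to the nondegenerate quadric $q^{-1}$, together with the observation that a rank two quadric $q_l$ would force $q_l$ to be a product of two linear forms lying in the span $\langle p_l^2, q^{-1}\rangle$, which is impossible when $p_l^2(q)\neq 0$ because $q^{-1}$ itself has rank four.
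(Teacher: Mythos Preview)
Your proposal is correct and follows essentially the same line as the paper: both use the formula $q_l=\tfrac12 p_l^2-\tfrac16(p_l^2(q))q^{-1}$ from the preceding lemma and observe that, since $q^{-1}$ is nondegenerate, every quadric in the pencil $\langle q^{-1},p_l^2\rangle$ other than $p_l^2$ itself has rank at least three (equivalently, is irreducible). The paper compresses this to a single sentence, while you spell out the rank-one-perturbation argument and the equivalence $p_l^2(q)=0\Leftrightarrow q^{-1}(l^2)=0$ explicitly; your constant $\tfrac12$ (rather than the $\tfrac16$ printed in the statement) is the one consistent with the displayed formula in Lemma~\ref{polar quadrics}.
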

\begin{proof}  The quadratic form $q^{-1}$ is nonsingular, so any quadric in the pencil $\langle q^{-1},p_l^2 \rangle$ except $p_l^2$ is irreducible.
\end{proof}

We apply the Lemma \ref{polar quadrics} to the intersection of the two quadrics $q_{l_1}$ and $q_{l_2}.$
\begin{lemma}\label{double polarity}
Let $[l_1],[l_2]\in \PP(S_1)\setminus Q^{-1}$ and set $q_{l_i}\in T_2$ and $p_{l_i}\in T_1$ by $q_{l_i}(q^2)=l_i^2$ and $p_{l_i}(q)=l_i$, for $i=1,2$. 
Then the complete intersection $\{q_{l_1}=q_{l_2}=0\}$ is the union of two plane conics that intersect in $p_{l_1}=p_{l_2}=q^{-1}=0$.  The planes of the two conics are distinct from $\{p_{l_1}=0\},\{p_{l_2}=0\}$.

When $[l_1]\in Q^{-1}$, then $q_{l_1}=p_{l_1}^2$, and so the intersection $\{q_{l_1}=q_{l_2}=0\}$ is a double conic.
\end{lemma}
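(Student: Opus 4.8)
The plan is to reduce the statement to elementary pencil-of-quadrics geometry in $\PP(S_1)\cong\PP^3$, the only nontrivial input being Lemma \ref{polar quadrics}. First I would record, from that lemma, that
$$q_{l_i}=\tfrac12 p_{l_i}^2-\tfrac16 c_i\,q^{-1},\qquad c_i:=p_{l_i}^2(q),\quad i=1,2,$$
and note that $c_i=0$ if and only if $[l_i]\in Q^{-1}$: if $c_i=0$ then $q_{l_i}$ is a multiple of $p_{l_i}^2$, hence of rank at most one, which by Lemma \ref{q_l irred} forces $[l_i]\in Q^{-1}$, and the converse is Lemma \ref{q_l irred} itself. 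So under the hypothesis $[l_1],[l_2]\notin Q^{-1}$ both constants $c_1,c_2$ are nonzero. Since the polar map $l\mapsto p_l$ is an isomorphism $S_1\to T_1$, the linear forms $p_{l_1},p_{l_2}$ are independent (we may assume $[l_1]\neq[l_2]$, else there is nothing to prove); and, $q^{-1}$ being of rank $4$, the forms $p_{l_1}^2,p_{l_2}^2,q^{-1}$ span a $3$-dimensional subspace of $T_2$ that contains the whole pencil $\langle q_{l_1},q_{l_2}\rangle$.

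Next I would exhibit the reducible member of the pencil. The combination $c_2q_{l_1}-c_1q_{l_2}=\tfrac12\bigl(c_2p_{l_1}^2-c_1p_{l_2}^2\bigr)$ has vanishing $q^{-1}$-component, and being a nonzero combination of two independent squares it has rank exactly $2$; over $\CC$ it factors as $\alpha_+\alpha_-$ with $\alpha_\pm:=\sqrt{c_2}\,p_{l_1}\pm\sqrt{c_1}\,p_{l_2}$, two independent linear forms. Hence the base locus of the pencil is
$$\{q_{l_1}=q_{l_2}=0\}=\{q_{l_1}=\alpha_+\alpha_-=0\}=\bigl(\{q_{l_1}=0\}\cap\{\alpha_+=0\}\bigr)\cup\bigl(\{q_{l_1}=0\}\cap\{\alpha_-=0\}\bigr).$$
Each piece is a plane section of $\{q_{l_1}=0\}$, and is a genuine (possibly reducible) conic rather than a whole plane, since $q_{l_1}$ has rank at least $3$ (Lemma \ref{q_l irred}) and therefore contains no plane; because $c_1,c_2\neq0$ neither $\alpha_+$ nor $\alpha_-$ is proportional to $p_{l_1}$ or to $p_{l_2}$, so the two planes carrying these conics are distinct from $\{p_{l_1}=0\}$ and $\{p_{l_2}=0\}$, and from each other.

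It then remains to locate the intersection of the two conics. Two plane conics lying in distinct planes meet exactly along the line common to those planes, here $\{\alpha_+=\alpha_-=0\}=\{p_{l_1}=p_{l_2}=0\}$; restricting $q_{l_1}=\tfrac12 p_{l_1}^2-\tfrac16 c_1q^{-1}$ to this line leaves $-\tfrac16 c_1q^{-1}$ with $c_1\neq0$, so the common locus is precisely $\{p_{l_1}=p_{l_2}=q^{-1}=0\}$, as claimed. For the last clause: if $[l_1]\in Q^{-1}$ then $c_1=0$, so by Lemma \ref{polar quadrics} (cf.\ Lemma \ref{q_l irred}) $q_{l_1}$ is a multiple of $p_{l_1}^2$, while $q_{l_2}$ still has rank at least $3$ and hence does not vanish on the plane $\{p_{l_1}=0\}$; thus $\{q_{l_1}=q_{l_2}=0\}=\{p_{l_1}^2=q_{l_2}=0\}$ is the plane conic $\{p_{l_1}=q_{l_2}=0\}$ taken with multiplicity two.

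The argument presents no serious obstacle once Lemma \ref{polar quadrics} is available — it is the classical geometry of a pencil of quadric surfaces with a degenerate base curve. What needs care is purely the non-degeneracy bookkeeping: the independence of $p_{l_1},p_{l_2}$, the fact that the constants $c_i$ vanish exactly when $[l_i]\in Q^{-1}$, and the rank at least $3$ of the $q_{l_i}$ so that their plane sections are honest conics and not whole planes; together with the passage to $\CC$ needed to split the rank-two member. These are precisely the places where the hypotheses on $[l_1],[l_2]$ enter.
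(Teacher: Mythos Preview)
Your proof is correct and takes a genuinely different route from the paper's. The paper argues as follows: by Lemma~\ref{polar quadrics} each quadric $\{q_{l_i}=0\}$ is tangent to $Q^{-1}$ along the conic $\{p_{l_i}=q^{-1}=0\}$, so at the two points $\{p_{l_1}=p_{l_2}=q^{-1}=0\}$ both quadrics share the tangent plane of $Q^{-1}$; hence the complete intersection is singular there and therefore reducible into two conics. The claim that the planes of the conics are not $\{p_{l_i}=0\}$ is then handled by a separate contradiction argument with the net $\langle p_{l_1}^2,q_{l_1},q_{l_2}\rangle$.

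You instead exploit the explicit formula $q_{l_i}=\tfrac12 p_{l_i}^2-\tfrac16 c_i\,q^{-1}$ to locate the unique rank-two member of the pencil directly, namely $c_2q_{l_1}-c_1q_{l_2}=\tfrac12(c_2p_{l_1}^2-c_1p_{l_2}^2)=\tfrac12\alpha_+\alpha_-$, and read off the two planes $\{\alpha_\pm=0\}$. This makes the decomposition, the intersection locus $\{p_{l_1}=p_{l_2}=q^{-1}=0\}$, and the fact that the planes differ from $\{p_{l_1}=0\},\{p_{l_2}=0\}$ (since $c_1,c_2\neq 0$) all fall out in one stroke. Your approach is more explicit and self-contained; the paper's is shorter but leans on the reader knowing that a singular $(2,2)$ curve in $\PP^3$ is reducible, and needs a second argument for the last clause about the planes.
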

\begin{proof} The complete intersection $\{q_{l_1}=q_{l_2}=0\}$ is singular at the points $p_{l_1}=p_{l_2}=q^{-1}=0$, so it is reducible and consists of two conic  sections. The degeneration when $[l_1]\in Q^{-1}$ follows immediately.
Finally, if  $p_{l_1}=0$ contains a component of 
$\{q_{l_1}=q_{l_2}=0\}$, then $\{p_{l_1}^2,q_{l_1},q_{l_2}\}$ generate a net of quadrics that contain the quadric $q^{-1}$ and $p_{l_2}^2$. And all quadrics of the net vanish  on the component $\{p_{l_1}=q_{l_1}=q_{l_2}=0\}$. But then  $p_{l_1}=p_{l_2}$, a contradiction.
\end{proof}
\begin{lemma}
%In the notation of lemma \ref{double polarity}, 
Let $[l_1],[l_2]\in \PP(S_1)$ and assume that $q_{l_1}(l_2^2)=0$ and equivalently $q_{l_2}(l_1^2)=0$.  
Then $p_{l_2}(l_1)=p_{l_1}(l_2)=0$ if and only if $[l_1]\in Q^{-1}$ or $[l_2]\in Q^{-1}$ .  
\end{lemma}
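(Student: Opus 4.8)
The plan is to translate both the hypothesis and the conclusion into a single polynomial identity among three scalars attached to the pair $(l_1,l_2)$ by the inverse quadric, and then simply read off the equivalence.

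First I would fix notation: let $\beta$ be the symmetric bilinear form on $S_1$ that in the coordinates $q=x_0x_1+x_2x_3$ is given by $\beta(\sum a_ix_i,\sum a_i'x_i)=a_0a_1'+a_1a_0'+a_2a_3'+a_3a_2'$, i.e.\ $\beta(l,l')=q^{-1}(l\cdot l')$ is the polarization of $q^{-1}$ acting on degree-two forms. A direct computation with $q^{-1}=y_0y_1+y_2y_3$ then records three elementary identities:
\[
p_{l}(l')=p_{l'}(l)=\beta(l,l'),\qquad p_l^2(q)=q^{-1}(l^2)=\beta(l,l),\qquad p_l^2\bigl((l')^2\bigr)=2\bigl(p_l(l')\bigr)^2,
\]
the last being the general fact $p^2(m^2)=2(p(m))^2$ for a linear differential operator $p$ and a linear form $m$. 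Under this dictionary the hypothesis $p_{l_2}(l_1)=p_{l_1}(l_2)=0$ becomes $\beta(l_1,l_2)=0$, the condition $[l_1]\in Q^{-1}$ becomes $\beta(l_1,l_1)=0$, and $[l_2]\in Q^{-1}$ becomes $\beta(l_2,l_2)=0$.

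Next I would substitute the formula of Lemma \ref{polar quadrics}, $q_{l}=\tfrac12 p_{l}^2-\tfrac16\bigl(p_l^2(q)\bigr)q^{-1}$, into $q_{l_1}(l_2^2)$ and use the three identities above to get
\[
q_{l_1}(l_2^2)=\tfrac12 p_{l_1}^2(l_2^2)-\tfrac16\bigl(p_{l_1}^2(q)\bigr)q^{-1}(l_2^2)=\beta(l_1,l_2)^2-\tfrac16\,\beta(l_1,l_1)\,\beta(l_2,l_2).
\]
Because this expression is symmetric in $l_1$ and $l_2$, the two forms of the hypothesis, $q_{l_1}(l_2^2)=0$ and $q_{l_2}(l_1^2)=0$, are indeed equivalent; and granting it, one obtains the key relation $\beta(l_1,l_2)^2=\tfrac16\,\beta(l_1,l_1)\,\beta(l_2,l_2)$. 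From here both implications are immediate: if $\beta(l_1,l_2)=0$ then $\beta(l_1,l_1)\beta(l_2,l_2)=0$, so $[l_1]\in Q^{-1}$ or $[l_2]\in Q^{-1}$; conversely, if say $\beta(l_1,l_1)=0$, then $\beta(l_1,l_2)^2=0$, whence $p_{l_1}(l_2)=p_{l_2}(l_1)=0$.

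There is essentially no serious obstacle here once Lemma \ref{polar quadrics} is in hand; the only thing needing (routine) care is the bookkeeping of the scalar factors $\tfrac12$, $\tfrac16$, $2$ and the verification that $p_l^2(q)$, $q^{-1}(l^2)$ and $p_l(l)$ all coincide with the value $q^{-1}(l^2)=\beta(l,l)$ of the inverse quadric on $l$. As a variant that bypasses the general formula entirely, when $[l_1]\in Q^{-1}$ one may instead use directly that $q_{l_1}$ is proportional to $p_{l_1}^2$ (Lemma \ref{q_l irred}), so $q_{l_1}(l_2^2)$ is proportional to $\bigl(p_{l_1}(l_2)\bigr)^2$ and the hypothesis forces $p_{l_1}(l_2)=0$.
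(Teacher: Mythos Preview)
Your argument is correct and follows essentially the same route as the paper: both substitute the formula $q_l=\tfrac12 p_l^2-\tfrac16(p_l^2(q))q^{-1}$ from Lemma~\ref{polar quadrics} into $q_{l_1}(l_2^2)$, reduce everything to the three scalars $p_{l_1}(l_2)$, $p_{l_1}(l_1)$, $p_{l_2}(l_2)$, and read off the equivalence from the resulting identity. Your bookkeeping is in fact slightly cleaner than the paper's (which carries a harmless factor-of-two typo in the first term of its displayed identity).
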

\begin{proof}We have, $$0=q_{l_1}(l_2^2)=\frac{1}{2}(p_{l_1}^2(l_2^2))-\frac{1}{6}(p_{l_1}^2(q))(q^{-1}(l_2^2))=\frac{1}{2}(p_{l_1}(l_2))^2-\frac{1}{6}(p_{l_1}(l_1))(p_{l_2}(l_2)),$$
so $q_{l_1}(l_2^2)=0$ implies $p_{l_1}(l_2)=0$ if and only if $p_{l_1}(l_1)=0$ or $p_{l_2}(l_2)=0$. Therefore, the lemma follows by polarity w.r.t $Q^{-1}$.
\end{proof}
Any two points $[l_1]$ and $[l_2]$ lie in some tangent plane to $Q^{-1}$, namely the ones containing the polar line $p_{l_1}=p_{l_2}=0$.
\begin{lemma}\label{restriction to Tp}
 The restriction of the $(2,2)$-correspondence to a tangent plane is a $(2,2)$-correspondence on the pencil of tangent lines.
 Explicitly, in the tangent plane $\{y_3=0\}$ to $Q^{-1}$ at $[x_2]$, the correspondence $q_l\mapsto l^2\in \{y_3=0\}$ is given by
  %      $$(3a_1^2y_0^2+4a_0a_1y_0y_1-%2a_2a_3y_0y_1+3a_0^2y_1^2+6a_1a_3y_0y_2+6a_0a_3y_1y_2
 %  +3a_3^2y_2^2)(q^2)$$
%   $$=6(a_0y_0+a_1y_1+a_2y_2)^2$$
 %      $$(3a_1^2y_0^2+4a_0a_1y_0y_1+3a_0^2y_1^2
 %  )(q^2)$$
%   $$=6(a_0y_0+a_1y_1+a_2y_2)^2$$
%$$(3a_2^2y_1^2+4a_1a_2y_1y_2+3a_1^2y_2^2+3a_3^2y_0^2-6a_2a_3y_0y_1-%6a_1a_3y_0y_2+2a_1a_2y_0y_3)(q^2)=$$
%$$6(a_0y_0+a_1y_1+a_2y_2)^2.$$
\begin{align*}
&(3a_1^2y_0^2+4a_0a_1y_0y_1+3a_0^2y_1^2+6a_1a_2y_0y_3+6a_0a_2y_1y_3-2a_0a_1y_2y_3+3a_2^2y_3^2)(q^2)=\\
&6(a_0x_0+a_1x_1+a_2x_2)^2.\end{align*}

% $$((3a_2^2y_1^2+8a_1a_2y_1y_2+3a_1^2y_2^2+3a_3^2y_0^2-12a_2a_3y_0y_1-%12a_1a_3y_0y_2+4a_1a_2y_0y_3)(q^2)=$$
% 12(a_1y_1+a_2y_2+a_3y_3)^2.$$
 The inverse correspondence composed with the restriction to $\{y_3=0\}$ define up to scalar, the quadratic map
 $$r_{y_0}: S_1\to T_2;  (a_0x_0+a_1x_1+a_2x_2)\mapsto ((3a_1^2y_0^2+4a_0a_1y_0y_1+3a_0^2y_1^2).$$

\end{lemma}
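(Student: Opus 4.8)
The plan is to verify everything by direct computation in the tangent plane $\{y_3 = 0\}$ at the point $[x_2] \in Q^{-1}$, relying on the explicit form $q = x_0x_1 + x_2x_3$ and on Lemma \ref{polar quadrics}, which already tells us the shape of $q_l$. First I would note that the tangent plane to $Q^{-1} = \{y_0y_1 + y_2y_3 = 0\}$ at $[x_2]$ (i.e.\ at the point with coordinates $(0,0,1,0)$ in the $y$'s, viewing $x_2$ via the polar map) is cut out by the vanishing of the corresponding partial, namely $\{y_3 = 0\}$ in $\PP(S_1)$; so a general point of this plane is $[l]$ with $l = a_0x_0 + a_1x_1 + a_2x_2$. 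The polar line $\{p_{l_1} = p_{l_2} = 0\}$ common to all these planes is visibly the one through $[x_2]$, which explains the phrasing ``pencil of tangent lines.''

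Next I would compute $q_l$ explicitly for $l = a_0x_0 + a_1x_1 + a_2x_2$. By Lemma \ref{polar quadrics}, $q_l = \tfrac12 p_l^2 - \tfrac16 (p_l^2(q)) q^{-1}$ where $p_l = l(q^{-1})$ and $q^{-1} = y_0y_1 + y_2y_3$. Since $q^{-1}$ sends $x_0 \mapsto \tfrac12 y_1$, $x_1 \mapsto \tfrac12 y_0$, $x_2 \mapsto \tfrac12 y_3$, $x_3 \mapsto \tfrac12 y_2$ (up to the normalization fixed by the paper's conventions; I would pin this down from the displayed formula $q^{-1} = \tfrac14 y_0^2 + \cdots$ adapted to this $q$), we get $p_l$ as an explicit linear form in $y_0, y_1, y_3$ (no $y_2$, because $l$ has no $x_3$-component), hence $p_l^2$ and therefore $q_l$ as explicit quadratic forms. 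Then $q_l(q^2) = l^2$ is exactly the content of Lemma \ref{polar quadrics} (restricted to this plane), and I would simply expand both sides to confirm the displayed identity
\[
(3a_1^2y_0^2 + 4a_0a_1y_0y_1 + 3a_0^2y_1^2 + 6a_1a_2y_0y_3 + 6a_0a_2y_1y_3 - 2a_0a_1y_2y_3 + 3a_2^2y_3^2)(q^2) = 6(a_0x_0 + a_1x_1 + a_2x_2)^2,
\]
matching the coefficients with $6 q_l$. The only subtlety here is bookkeeping of the scalar factors $\tfrac12$, $\tfrac16$, and the factor $6$ out front; these are forced once the normalization of $q^{-1}$ is fixed, so there is no real choice to make.

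Finally, for the $(2,2)$-correspondence statement: the map $l \mapsto q_l$ is quadratic in the coefficients $(a_0, a_1, a_2)$ (it factors through $p_l \mapsto p_l^2$ plus a linear correction), so on $\PP(S_1)_{\text{tangent plane}} \times \PP(T_2)$ the graph is cut out by bilinear equations that are of degree $2$ in the $a_i$; restricting to the projectivized pencil of quadrics $q_l$ (a $\PP^1$ worth, parametrized as above by $[a_0 : a_1 : a_2]$ modulo the relation defining the plane conic they trace) gives a $(2,2)$ relation on the pencil of tangent lines $\{p_l = 0\}$, which corresponds to $[a_0 : a_1 : a_2] \mapsto [a_1 : a_0 : a_2 \text{-direction}]$ in the polar line. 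The main obstacle — really the only one — is getting the normalizations of the inverse quadric $q^{-1}$ and of the polar/apolar pairing consistent with the paper's earlier conventions so that all scalar coefficients ($6$, $4$, $3$, $-2$) come out exactly as displayed; this is pure computation and can be checked by expanding $(3a_1^2y_0^2 + \cdots)(x_0x_1 + x_2x_3)^2$ monomial by monomial. From the displayed formula the inverse correspondence is read off directly: dropping the $y_3$-monomials leaves $3a_1^2y_0^2 + 4a_0a_1y_0y_1 + 3a_0^2y_1^2 = 6 r_{y_0}(l)$ after rescaling, which gives the asserted quadratic map $r_{y_0}$.
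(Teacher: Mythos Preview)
Your approach is correct and is exactly what the paper does: its proof reads in full ``Use the Lemma \ref{polar quadrics}, or a direct computation,'' and you carry this out by computing $p_l = a_0y_1 + a_1y_0 + a_2y_3$ from $q^{-1} = y_0y_1 + y_2y_3$ (note: no factors of $\tfrac12$ here---the relevant normalization is stated in Section~4.3, not Section~2), then plugging into $q_l = \tfrac12 p_l^2 - \tfrac16(p_l^2(q))q^{-1}$ with $p_l^2(q) = 2a_0a_1$ to obtain $6q_l$ exactly as displayed. Your hedging about normalizations is unnecessary once you use the Section~4.3 conventions, and the final map $r_{y_0}$ is then read off by setting $y_3 = 0$, which depends only on $(a_0:a_1)$, i.e.\ on the line through $p$ and $[l]$---this is the ``pencil of tangent lines'' statement.
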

\begin{proof}
Use the Lemma \ref{polar quadrics}, or a direct computation.
\end{proof}

\begin{lemma}\label{two points in tangent plane} If two points $l_1,l_2$ lie in the tangent plane $T_p$ to $Q^{-1}$, then $$\{q_{l_1}=q_{l_2}=0\}\cap T_p$$ is supported at the point of tangency $p$, unless $l_1$ and $l_2$ are collinear with the $p$. If furthermore $q_{l_1}(l_2^2)=0$, then $l_1$ and $l_2$ are collinear with $p$ only if both lie on $Q^{-1}$.
    \end{lemma}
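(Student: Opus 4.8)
The statement is a direct consequence of the explicit description of the quadrics $q_l$ provided by Lemma~\ref{polar quadrics}, restricted to a tangent plane. Fix a tangent plane $T_p$ to $Q^{-1}$ at a point $p$; after the action of $SO(q)$ we may assume $p=[x_2]$ and $T_p=\{y_3=0\}$ as in Lemma~\ref{restriction to Tp}. The plan is first to record, via Lemma~\ref{polar quadrics}, that for $[l]\in T_p$ the quadric $q_l=\tfrac12 p_l^2-\tfrac16(p_l^2(q))q^{-1}$ meets $T_p$ in the conic $q_l\cap\{y_3=0\}$, which is tangent to $Q^{-1}\cap T_p$ (a double line through $p$, since $T_p$ is tangent) along that intersection. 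Concretely $Q^{-1}\cap T_p$ is the line $\{y_1=y_3=0\}$ doubled, and $q_l\cap T_p$ is a conic passing through $p$ with a prescribed tangent direction there.

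\textbf{Key steps.} (1) Using the coordinates of Lemma~\ref{restriction to Tp}, write out $q_{l_i}|_{\{y_3=0\}}$ for $l_i=a_0^{(i)}x_0+a_1^{(i)}x_1+a_2^{(i)}x_2$; each is the conic $3(a_1^{(i)})^2y_0^2+4a_0^{(i)}a_1^{(i)}y_0y_1+3(a_0^{(i)})^2y_1^2+(\text{terms in }y_2)$. (2) Observe that each such conic is singular — it is a pair of lines through $p=[x_2]=[0:0:1]$ — since the restriction $r_{y_0}$ of Lemma~\ref{restriction to Tp} lands in the ideal generated by $y_0,y_1$; hence $q_{l_1}\cap q_{l_2}\cap T_p$ is the intersection of two such line-pairs, which is supported at $p$ unless the two pairs share a common line, i.e. unless $l_1,l_2,p$ are collinear. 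This proves the first assertion. (3) For the second assertion, impose the conjugacy condition $q_{l_1}(l_2^2)=0$. By the Lemma immediately preceding the statement, under $q_{l_1}(l_2^2)=0$ one has $p_{l_1}(l_2)=0$ if and only if $[l_1]\in Q^{-1}$ or $[l_2]\in Q^{-1}$. Now translate ``$l_1,l_2,p$ collinear'' into the statement $p_{l_1}(l_2)=p_{l_2}(l_1)=0$: if $l_1,l_2,p$ are collinear in $T_p$, then since $p$ has polar line $\{p_{x_2}=0\}$ containing the line $\langle l_1,l_2\rangle$'s pole relation, a short computation with the explicit $p_{l_i}=l_i(q^{-1})$ shows collinearity with $p$ forces $p_{l_1}(l_2)=0$. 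Combining with the preceding Lemma yields $[l_1]\in Q^{-1}$ or $[l_2]\in Q^{-1}$; and then symmetry of the conjugacy relation together with $q_{l}=\tfrac16 p_l^2$ for $[l]\in Q^{-1}$ (Lemma~\ref{q_l irred}) forces both to lie on $Q^{-1}$.

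\textbf{Main obstacle.} The only subtle point is step (3): pinning down the precise algebraic meaning of ``$l_1$, $l_2$ collinear with $p$'' inside $T_p$ and matching it to the vanishing $p_{l_1}(l_2)=0$. This requires care because $p$ lies on $Q^{-1}$, so its polar plane is $T_p$ itself and the polarity degenerates; one must use the explicit form of $p_{l}$ in the chosen coordinates rather than an abstract polarity argument. Once the dictionary ``collinear with $p$'' $\Longleftrightarrow$ ``$p_{l_1}(l_2)=0$ (equivalently $p_{l_2}(l_1)=0$)'' is established, the preceding Lemma and Lemma~\ref{q_l irred} close the argument immediately. Everything else is the routine conic-intersection computation of step (2), which I would not carry out in full detail.
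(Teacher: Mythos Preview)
Your steps (1) and (2) track the paper's proof: both restrict $q_{l_i}$ to $T_p$ via the explicit formula of Lemma~\ref{restriction to Tp}, observe that each restriction $3(a_1^{(i)})^2y_0^2+4a_0^{(i)}a_1^{(i)}y_0y_1+3(a_0^{(i)})^2y_1^2$ is a pair of lines through $p$, and conclude that the intersection is supported at $p$ unless the two line-pairs share a component.

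Step (3), however, has a genuine gap. You assert that ``$l_1,l_2,p$ collinear'' translates into $p_{l_1}(l_2)=0$, so that the preceding lemma can be invoked. This is false. With $p=[x_2]$, $T_p=\{y_3=0\}$, $l_1=a_0x_0+a_1x_1+a_2x_2$, $l_2=b_0x_0+b_1x_1+b_2x_2$, one has $p_{l_1}=a_1y_0+a_0y_1+a_2y_3$ and hence
\[
p_{l_1}(l_2)=a_1b_0+a_0b_1,
\]
whereas collinearity of $l_1,l_2$ with $p=[x_2]$ is the condition $a_0b_1-a_1b_0=0$. These are distinct linear conditions. For instance $l_1=x_0+x_1$ and $l_2=x_0+x_1+x_2$ are collinear with $p$, yet $p_{l_1}(l_2)=2\neq 0$. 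You correctly flag this translation as the ``main obstacle'', but the resolution you sketch does not survive the explicit computation.

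The paper does not route through polarity here at all. It simply substitutes the collinearity condition $(b_0{:}b_1)=(a_0{:}a_1)$ directly into $q_{l_1}(l_2^2)\propto 3a_1^2b_0^2+4a_0a_1b_0b_1+3a_0^2b_1^2$ to get $10\,a_0^2a_1^2$, which vanishes exactly when $a_0=0$ or $a_1=0$. Either case places $l_1$ on a ruling of $Q^{-1}$ through $p$, and collinearity then forces $l_2$ onto the same ruling, so both lie on $Q^{-1}$. This is a one-line substitution once the restricted quadric from step (1) is in hand; your detour through the preceding lemma is unnecessary and, as written, does not go through.
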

    \begin{proof} We may assume $p=[x_2]$ and $T_p=\{y_3=0\}$ and $$l_1=a_0x_0+a_1x_1+a_2x_2, l_2=b_0x_0+b_1x_1+b_2x_2.$$
        Notice that the restriction $q_{l_1}=y_3=0$ is a pair of lines through the tangency point:  $$\{q_{l_1}=3a_1^2y_0^2+4a_0a_1y_0y_1+3a_0^2y_1^2=0\}.$$  Also, this pair of lines depends only on the line $\{a_1y_0-a_0y_1=0\}$ between $l_1$ and $[x_2]$. Furthermore 
        $3a^2y^2+4ay+3=3b^2y^2+4by+3=0 $ have a common zero only if $a=b$, so the first part of the lemma follows.  For the second part it suffices to notice that if $l_1$ and $l_2$ are collinear with $[x_2]$, then $(b_0:b_1)=(a_0:a_1)$ and $$q_{l_1}(l_2^2)=
        3a_1^2(a_0^2)+4a_0a_1(a_0a_1)+3a_0^2(a_1^2)=10a_0^2a_1^2$$
        %3a_2^2(a_1^2)+4a_1a_2(a_1a_2)+3a_1^2(a_2^2)=10a_1^4a_2^4=0$$ 
        if and only if $a_0=0$ or $a_1=0$, which means $l_1$ and $l_2$ lie on $Q^{-1}$.
    \end{proof}
\begin{proposition}\label{2+8} Let $[l_1]$ and $[l_2]$ be a pair of distinct conjugate points outside $Q^{-1}$, and let $L_{12}=\{p_{l_1}=p_{l_2}=0\}$
be the line polar with respect to $Q^{-1}$ to both $[l_1]$ and $[l_2]$. 
 
 Let $\Gamma$ be a scheme of length ten apolar  to $q^2$ that contains $[l_1]$ and $[l_2]$. Then the residual subscheme $\Gamma\setminus \{[l_1],[l_2]\}$ is a scheme of length eight supported at one of the two points on $L\cap Q^{-1}$.
 %on the line polar with respect to $Q^{-1}$ to both $[l_1]$ and $[l_2]$.
% Explicitely, in the tangent plane $\{x_0=0\}$ to $Q^{-1}$ at $[y_3]$, the correspondence $q_l\mapsto l^2\in \{x_0=0\}$ is given by
% $$(3a_2^2x_1^2+4a_1a_2x_1x_2+3a_1^2x_2^2+3a_3^2x_0^2-6a_2a_3x_0x_1-6a_1a_3x_0x_2+2a_1a_2x_0x_3)(q^2)=$$
% $$6(a_1y_1+a_2y_2+a_3y_3)^2.$$
% %$$((3a_2^2x_1^2+8a_1a_2x_1x_2+3a_1^2x_2^2+3a_3^2x_0^2-12a_2a_3x_0x_1-%12a_1a_3x_0x_2+4a_1a_2x_0x_3)(q^2)=$$
% $$12(a_1y_1+a_2y_2+a_3y_3)^2.$$
% The inverse correspondence composed with the restriction to $\{x_0=0\}$ define up to scalar, the quadratic map
% $$r_{x_0}: S_1\to T_2;  (a_1y_1+a_2y_2+a_3y_3)\mapsto (3a_2^2x_1^2+4a_1a_2x_1x_2+3a_1^2x_2^2)$$
\end{proposition}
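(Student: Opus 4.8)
\textbf{Proof plan for Proposition \ref{2+8}.}

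The plan is to combine the structural results already established about the quadrics $q_l$ with the explicit normal form in a tangent plane. First I would recall from Lemma \ref{residual in ql} that the residual scheme $\Gamma' = \Gamma\setminus\{[l_1],[l_2]\}$ of length eight is contained in $\{q_{l_1}=0\}$ and in $\{q_{l_2}=0\}$ simultaneously, hence in the complete intersection $\{q_{l_1}=q_{l_2}=0\}$. Since $[l_1],[l_2]\notin Q^{-1}$ and the pair is conjugate, Lemma \ref{double polarity} identifies this complete intersection as the union of two plane conics $C_1\cup C_2$ meeting along the conic $\{p_{l_1}=p_{l_2}=q^{-1}=0\}$, which is the zero-dimensional scheme $L_{12}\cap Q^{-1}$ consisting of two points, say $p$ and $p'$. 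So $\Gamma'$ of length eight lies on $C_1\cup C_2$, a reducible conic-pair, with each $C_i$ rational.

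Next I would invoke the apolarity constraint to forbid $\Gamma'$ from spreading out on these conics. The key point is that $\Gamma'\cup\{[l_1],[l_2]\}$ imposes independent conditions on cubics (Lemma \ref{ten cubics}, using that $q^2$ is not apolar to any quadric). If $\Gamma'$ had a subscheme of length $\geq 5$ on one of the conics $C_i$ away from the singular locus, or more generally failed to concentrate at one of the two points $p,p'$, I would derive a contradiction with the imposed-conditions count by restricting to the plane of $C_i$: a length-eight scheme spread on a reducible conic in a plane imposes dependent conditions on plane cubics unless it is heavily concentrated, and the residual pair $[l_1],[l_2]$ cannot be separated from it by planes. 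The cleanest route is to pass to a tangent plane $T_p$ of $Q^{-1}$: by Lemma \ref{two points in tangent plane}, since $[l_1],[l_2]$ are conjugate and outside $Q^{-1}$, they are \emph{not} collinear with the tangency point, so $\{q_{l_1}=q_{l_2}=0\}\cap T_p$ is supported exactly at $p$. This forces the part of $\Gamma'$ lying in any tangent plane to be concentrated at the point of tangency, and then a dimension/degree bookkeeping shows $\Gamma'$ must be entirely supported at a single one of $p,p'$.

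The last step is to confirm the length is exactly eight and the support is a point of $L_{12}\cap Q^{-1}$: this is immediate once we know $\Gamma = \{[l_1]\}\cup\{[l_2]\}\cup\Gamma'$ with $[l_1]\neq[l_2]$ both reduced (they lie outside $Q^{-1}=\{f^{-1}=0\}$, so by Lemma \ref{residual in ql} $\Gamma$ is reduced at each), giving $\length\Gamma' = 10-1-1 = 8$, and $\Gamma'$ sits on the intersection of the two conics, whose support is $\{p,p'\}\subset L_{12}\cap Q^{-1}$; concentration at one point was shown above.

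\textbf{Main obstacle.} The delicate step is ruling out that $\Gamma'$ splits between the two points $p$ and $p'$, or spreads along one of the conics $C_i$ without being a curvilinear/fat point at $p$ or $p'$. The tangent-plane argument (Lemma \ref{two points in tangent plane}) handles concentration \emph{within} a given tangent plane, but one must also argue that $\Gamma'$, a priori a subscheme of the two-dimensional variety $C_1\cup C_2$, actually lies inside a single tangent plane — equivalently, cannot have length-one pieces at both $p$ and $p'$ simultaneously. I expect this to require the independent-conditions-on-cubics input of Lemma \ref{ten cubics} applied carefully: a scheme of length ten meeting both $p$ and $p'$ together with $[l_1],[l_2]$ would have to be separated by cubics, but the geometry of $C_1\cup C_2$ (two conics through two common points) together with the line $L_{12}$ obstructs this, forcing all eight residual units to coalesce at one of the two points.
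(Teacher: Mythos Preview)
Your proposal correctly identifies the first move: by Lemma \ref{residual in ql} the residual scheme $\Gamma'$ lands on $\{q_{l_1}=q_{l_2}=0\}=C_1\cup C_2$, and by Lemma \ref{double polarity} this is a pair of conics through $L_{12}\cap Q^{-1}=\{p,p'\}$. But from there the argument has a genuine gap, and the route you sketch via Lemma \ref{ten cubics} is not the one that actually works.

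The key step you are missing is to apply Lemma \ref{residual in ql} \emph{again}, this time to a point of $\Gamma$ that lies on $Q^{-1}$. The paper splits into two cases. If $\Gamma\cap Q^{-1}\neq\emptyset$, pick $[l]$ in this intersection; since $[l]\in Q^{-1}$, Lemma \ref{q_l irred} gives $q_l=\tfrac{1}{6}p_l^2$, so $\{q_l=0\}$ is the tangent plane $T_{[l]}$, and Lemma \ref{residual in ql} forces all of $\Gamma$ into $T_{[l]}$. Now $[l']\in T_{[l]}$ only if $q_l((l')^2)=0$, which one checks fails when the line $\langle[l_1],[l_2]\rangle$ is not tangent to $Q^{-1}$; so $\Gamma$ meets $Q^{-1}$ only at $[l]$. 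Since $[l_1],[l_2]\in T_{[l]}$ and are conjugate and off $Q^{-1}$, Lemma \ref{two points in tangent plane} shows $\{q_{l_1}=q_{l_2}=0\}\cap T_{[l]}$ is supported at $[l]$ alone, so $\Gamma'\subset T_{[l]}\cap(C_1\cup C_2)$ is supported at $[l]$. The tangent case $[l]=[l']$ is handled by the second part of Lemma \ref{two points in tangent plane}.

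The second case, $\Gamma\cap Q^{-1}=\emptyset$, is not addressed in your plan at all. In this case $\Gamma$ is ten reduced points (by the last sentence of Lemma \ref{residual in ql}), and there is no reason a priori for $\Gamma'$ to sit at $p$ or $p'$ rather than at eight smooth points of $C_1\cup C_2$. Ruling this out requires a separate and substantial combinatorial argument (Lemma \ref{noreducedZ} in the paper), showing that $q^2$ admits no smooth apolar scheme of length ten. Your imposed-conditions-on-cubics idea does not obviously do this: for instance, eight points split $4{+}4$ on $C_1\cup C_2$ can perfectly well impose independent conditions on cubics in $\PP^3$, so Lemma \ref{ten cubics} alone does not force concentration.

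Finally, watch the phrasing in your last paragraph: $\Gamma'$ sits on the \emph{union} $C_1\cup C_2$, not on $C_1\cap C_2$; the reduction to the two-point support is exactly the content of the proof, not something immediate.
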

\begin{proof} 
Assume first that $\Gamma$ intersects $Q^{-1}$.
By Lemma \ref{z_2 in gamma}, the residual scheme $\Gamma\setminus \{[l_1],[l_2]\}$ is contained in 
the intersection
$\{q_{l_1}=q_{l_2}=0\}$, which, by Lemma \ref{polar quadrics}, intersects $Q^{-1}$ in $$L_{12}\cap Q^{-1}=\{p(l_1)=p(l_2)=q^{-1}=0\}.$$
Let $[l],[l']=L_{12}\cap Q^{-1}$
be these two points.  Then $q_l$ and $q_{l'}$ define the double tangent planes to $Q^{-1}$ at $[l]$ and $[l']$.
If the line spanned by $[l_1]$ and $[l_2]$ is not tangent to $Q^{-1}$, then $q_l((l')^2)\not=0$ and so $\Gamma$ can only have support at one of the points $[l]$ and $[l']$, say $[l]$.  In this case $\{q_l=0\}$ is the tangent plane to $Q^{-1}$ at $l$ and $[l_1]$ and $[l_2]$ lies in this tangent plane.  By Lemma \ref{two points in tangent plane}, $\Gamma\setminus [l_1],[l_2]$ is supported only at $[l]$.

If the line spanned by $[l_1]$ and $[l_2]$ is tangent to $Q^{-1}$, then $[l]=[l']$ is the tangency point.  But $[l_1]$ and $[l_2]$ are conjugate, so according to Lemma \ref{two points in tangent plane},  the two points $[l_1],[l_2]$ and the tangency point $[l]$ are collinear only if $[l_1],[l_2]\in Q^{-1}$, against our assumption.

    If $\Gamma$ does not intersect $Q^{-1}$, it consists of ten distinct points, i.e. $\Gamma$ is nonsingular. This is excluded by the computations performed in Section 5, however below we present an independent argument using the tools introduced in this section.

\begin{lemma}\label{noreducedZ}
There are no nonsingular  schemes of length ten apolar to $q^2$.
\end{lemma}
\begin{proof}
%Let $\Gamma$ be an apolar scheme of length ten to $q^2$. First, if $\Gamma$ intersects $Q^{-1}$, say in $[l]$, then $q_l=p_l^2$ and $\Gamma$ is nonreduced at $[l]$ by Lemma \ref{residual in ql}. In fact, the scheme $\Gamma'$ residual to $[l]$ in $\Gamma$ is contained in $\{p_l^2=0\}$, so is nonreduced unless it is contained in the plane $\{p_l=0\}$.
Assume by contradiction that $\Gamma=\{[l_1],...,[l_{10}]\}$ is a scheme apolar to $q^2$.
By Lemma \ref{residual in ql} the scheme $\Gamma$ does not intersect $Q^{-1}$. In particular, by Lemma \ref{q_l irred}, the quadric $q_{l_i}$ is irreducible for every $[l_i]\in \Gamma$. 
Let $p_{l_{i}}:=l_i(q^{-1}),\ i=1,..,10$ be the polars of the $l_i$ with respect to $Q^{-1}$, and let $$L_{i,j}=\{p_{l_{i}}=p_{l_{j}}=0\}$$ be the polar line to $\langle[l_i],[l_{j}]\rangle,$ $i<j$. By Lemma \ref{double polarity}, $\{q_{l_i}=q_{l_j}=0\}$ is a pair of conic sections that lie in a pair of planes $\pi_{i,j},\pi_{i,j}'$ that intersect along $L_{i,j}$.  Furthermore, by Lemma \ref{z_2 in gamma}, the eight points $\Gamma\setminus \{[l_i],[l_{j}] \}$ are contained in these two conics away from the line $L_{i,j}$.  
%In fact $4$ points in each plane.
%Let $[l_k]$ be one of these eight points. 

\textbf{Claim 1:} No three points in $\Gamma$ are colinear.

Indeed, assume by contradiction that the points $[l_i],[l_{j}],[l_k]$ are collinear.  Then the polar lines $L_{i,j}, L_{i,k}, L_{j,k}$ coincide and the remaining seven points $\Gamma\setminus\{[l_i],[l_{j}],[l_k]\}$ lie in $\{q_{l_i}=q_{l_j}=q_{l_k}=0\}$, the intersection of three irreducible quadrics.
They also have to lie in three pairs of planes all through the polar line $L_{i,j}$, so either the three pairs of planes have a common plane and the seven points lie in this plane, or all seven points lie in $L_{i,j}$. In either case, the ten points $\Gamma$ lie in a quadric, which means it cannot be apolar to $q^2$. 

\textbf{Claim 2:} No five points in $\Gamma$ are coplanar.

Indeed, assume by contradiction that the set of five points 
$$K=\{[l_i],[l_{j}],[l_k],[l_m],[l_n]\}\subset \Gamma$$ lie in a plane.
Then the polars $p_{l_{i}}, p_{l_{j}}, p_{l_{k}}, p_{l_{m}}, p_{l_{n}}$ of points in $K$ with respect to $Q^{-1}$ intersect in a point $v_{ijkmn}$, and any three, say $\{[l_k],[l_m],[l_n]\}$ of the points together with the  five points in $\Gamma\setminus K$ lie in a rank two quadric $q_{k,m,n}$ singular at $v_{ijkmn}$, away from the polar line $L_{ij}$.  Note that $\Gamma\setminus K$ cannot lie in a plane, since then $\Gamma$ would lie in a quadric. By Claim 1 the minimal number of lines through $v_{ijkmn}$ whose union contains $\Gamma\setminus K$ is three.  So there is at most a net of quadric surfaces singular at $v_{ijkmn}$ that contains $\Gamma\setminus K$. In particular the rank two quadrics $q_{k,m,n}$ above, all lie in this net.
We infer that for any three points of $K$, there is a quadric in the net that contains these three and $\Gamma\setminus K$. It follows that the number of conditions $K$ imposes on the net of quadrics is at most 2 and hence $\Gamma$ has to lie in a quadric in the net against the apolarity assumption.

%Since no three of the points $[l_i],..,l_[m]$ are collinear, the ten polar lines $L_{i,j},..,L_{m,n}$ are distinct. 

We may therefore assume that no three of the ten points in $\Gamma$ are collinear and no five are coplanar. 
%In particular $[l_i],[l_{j}],[l_k]$ are not collinear.
% The remaining $7$ lie in $\{q_{l_i}=q_{l_j}=q_{l_k}=0\}$, the intersection of three irreducible quadrics.
%By Lemma \ref{double polarity}, they lie in three pairs of distinct planes, each pair meet in a line, and all six pass through the point 
%$$v_{i,j,k}=\{p_{l_{i}}=p_{l_{j}}=p_{l_{k}}=0\}.$$
Let $\pi_{ij},\pi_{ij}'$ be the pair of planes in the pencil defined by $q_{l_i},q_{l_j}$, then we may assume that the  eight points $\Gamma\setminus \{[l_i],[l_j]\}$ lie four in $\pi_{ij}$ and four in $\pi_{ij}'$. 
Consider the four points in $\pi_{ij}\cap \Gamma$, and assume they are $[l_1],...,[l_4]$.  
Then the polars of these four points with respect to $Q^{-1}$ intersect in a point $v_{1234}$, and any two of these points together with the remaining six in $\Gamma \setminus \pi_{ij} $ lie in a rank two quadric singular at $v_{1234}$. 
Consider the minimal set of lines through $v_{1234}$ whose union contains $\Gamma \setminus \pi_{ij} $, and the system of quadrics singular at $v_{1234}$ that contain them. Then we have three cases to consider:

\begin{enumerate}
    \item The minimal number of lines whose union contains $\Gamma \setminus \pi_{ij} $ is five or more, then the system of quadrics singular at $v_{1234}$ and containing these lines consists of just one quadric and any pair of points $\Gamma \cap \pi_{ij} $ must lie on that quadric. Hence $\Gamma$ has to lie on a quadric, a contradiction.

\item The minimal number of lines whose union contains $\Gamma \setminus \pi_{ij} $ is four, then the system of quadrics containing these lines is a pencil. Any pair of points in $\Gamma \cap \pi_{ij} $ lies in a quadric in this pencil, but that means that any two points in $\Gamma \cap \pi_{ij} $ give dependent conditions on the pencil.  That means that all $\Gamma \cap \pi_{ij} $ give only one condition on the pencil of quadrics that contain $\Gamma \setminus \pi_{ij} $ and implies that $\Gamma$ is contained in a quadric, a contradiction.
 
\item The minimal number of lines whose union contains $\Gamma \setminus \pi_{ij} $ is four. Let $L$ be the union of three lines, each containing two points of $\Gamma \setminus \pi_{ij} $.
The pair of planes that contains all of $\Gamma\setminus\{[l_1],[l_2]\}$, then contains $L$ and the two points $[l_3],[l_4]$.  Since no five points are coplanar, one of the lines in $L$ and $[l_3],[l_4]$ are coplanar.  Similarly any two points $\pi_{ij}\cap \Gamma$ are coplanar with some line of $L$.  This is possible, only if the three lines form a cone over the intersections of diagonals of a complete quadrangle formed by the projections of $\pi_{ij}\cap \Gamma$ from $v_{1234}$.
\end{enumerate}

So we are left with the following situation: for any pair of points in $\Gamma$, the remaining eight lie four and four in two planes. And for each such four-tuple, the pole of the plane is the intersection point of three lines through the remaining six points, and these lines are the intersections of planes containing two of the four points.

Altogether, there are $90$ poles, and three lines through each that contain two points among the ten. So, there is at least one line $L_{12}$ containing two points, say $[l_1], [l_2]$, of the ten points in $\Gamma$, and this line contains at least six such poles. For each pole, there are four points, such that the remaining six lie in pairs on three lines through the pole. One such pair is $[l_1], [l_2]$, and there are six pairs of lines that meet on $L_{12}$, with each pair containing four of the remaining eight points. Thus, there are twelve lines, grouped into six pairs, and each pair spans a plane. Therefore, at least one of the remaining eight points lies on three of the twelve lines. 

However, in this case, this point either lies on $L_{12}$ or two of the three lines span a plane that contains $L_{12}$. In the first case, $L_{12}$ would contain three points of $\Gamma$, and in the second case, the plane would contain five points of $\Gamma$. Both cases are excluded by the Claims.
\end{proof}
The proof of Proposition \ref{2+8} is then complete.
\end{proof}
\begin{proposition}\label{apolar schemes} A scheme $\Gamma$ of length ten apolar  to $q^2=(x_0x_1+x_2x_3)^2$ is the union of a local scheme of length at least eight supported at a point $[l]$ on the inverse quadric $Q^{-1}$ and a scheme of length one or two in the tangent plane at $[l]$ but outside the inverse quadric.
%, or it is a scheme supported in at most two points on a line in the inverse quadric. 
\end{proposition}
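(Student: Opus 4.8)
The plan is to build on the structural lemmas already established in this section, in particular Lemma~\ref{residual in ql}, Lemma~\ref{z_2 in gamma}, Proposition~\ref{2+8} and Lemma~\ref{noreducedZ}, and to reduce the general situation to cases that those results already control. First I would note that, by Lemma~\ref{ten cubics}, any such $\Gamma$ imposes independent conditions on cubics and in particular is not contained in a quadric, so $q^2$ is genuinely not apolar to any quadric and all of the higher-polarity machinery applies. The argument then splits according to the support of $\Gamma$ and how it meets $Q^{-1}$.

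The first step is to show that $\Gamma$ must meet $Q^{-1}$. Lemma~\ref{noreducedZ} already rules out a reduced (nonsingular) length-ten apolar scheme, so $\Gamma$ is nonreduced at some point $[l]$ in its support; by the last assertion of Lemma~\ref{residual in ql} this forces $[l]\in\{f^{-1}=0\}=Q^{-1}$ (recall $f^{-1}=(q^{-1})^2$). Thus $\Gamma$ always has at least one point of nonreduced support lying on $Q^{-1}$. The second step is to locate \emph{all} of $\Gamma$. Fix such a point $[l]\in Q^{-1}\cap\Gamma$ and let $\Gamma'=\Gamma\setminus[l]$ be the residual scheme of length nine; by Lemma~\ref{residual in ql}, $\Gamma'\subset\{q_l=0\}$, and since $[l]\in Q^{-1}$, Lemma~\ref{q_l irred} gives $q_l=\tfrac16 p_l^2$, i.e.\ $\{q_l=0\}$ is the (doubled) tangent plane $T_{[l]}Q^{-1}$. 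Hence $\Gamma'$, and therefore all of $\Gamma$, lies in the tangent plane $T_{[l]}Q^{-1}$.

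The third step is to control the length-two ``residual'' part and the point of concentration. Suppose $\Gamma$ has support at two distinct points $[l]$, $[l'']$ both on $Q^{-1}$; then the line $\langle[l],[l'']\rangle$ lies in $T_{[l]}Q^{-1}\cap T_{[l'']}Q^{-1}$, which is the polar line $L$, and $\langle[l],[l'']\rangle=L\subset Q^{-1}$ would force $\Gamma\subset Q^{-1}$ hence inside a quadric, contradicting Lemma~\ref{ten cubics}; a more careful count (as in Claim~1 of Lemma~\ref{noreducedZ}) rules out three collinear points on $Q^{-1}$ as well, so in fact $\Gamma$ can have at most one point of its support on $Q^{-1}$. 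Combined with step one, $\Gamma$ has exactly one point $[l]\in Q^{-1}$ in its support, it lies in $T_{[l]}Q^{-1}$, and the part of $\Gamma$ away from $[l]$ has length at most two and lies off $Q^{-1}$. To see the local component at $[l]$ has length at least eight, I would apply Lemma~\ref{two points in tangent plane}: any point $[l_1]\in\Gamma\setminus[l]$ lies in $T_{[l]}Q^{-1}$ and is conjugate to every point of $\Gamma$, so if there were two points $[l_1],[l_2]$ off $[l]$, then $q_{l_1}(l_2^2)=0$ together with $l_1,l_2\in T_{[l]}Q^{-1}$ forces them either collinear with $[l]$ (excluded since that would put them on $Q^{-1}$) or $\{q_{l_1}=q_{l_2}=0\}\cap T_{[l]}Q^{-1}$ supported only at $[l]$, which then bounds the residual length by two; Proposition~\ref{2+8} handles precisely the case of two such conjugate points off $Q^{-1}$ and shows the complement is length eight at a point of $L\cap Q^{-1}$, i.e.\ at $[l]$.

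The main obstacle I anticipate is the bookkeeping that upgrades ``$\Gamma$ meets $Q^{-1}$ and lies in one tangent plane'' into the sharp statement ``length $\geq 8$ concentrated at a single point of $Q^{-1}$, residual length $\leq 2$ off $Q^{-1}$'': one must rule out configurations like a length-ten scheme spread among a point of $Q^{-1}$ and several reduced points of the tangent plane, or a local scheme of length $9$ plus one reduced point, and check each is either covered by Proposition~\ref{2+8}/Lemma~\ref{two points in tangent plane} or leads to $\Gamma$ lying on a quadric. The cleanest route is probably a case analysis on $\#(\mathrm{Supp}\,\Gamma)$ and on $\mathrm{length}(\Gamma\setminus[l])$, invoking the collinearity/coplanarity claims from the proof of Lemma~\ref{noreducedZ} to eliminate the degenerate sub-cases, and deferring the hardest sub-case — two conjugate points off $Q^{-1}$ — wholesale to Proposition~\ref{2+8}. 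Once these are dispatched, the proposition follows.
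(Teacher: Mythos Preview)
Your overall strategy --- use Lemma~\ref{noreducedZ} and Lemma~\ref{residual in ql} to force a support point on $Q^{-1}$, pin $\Gamma$ into a tangent plane, and then hand the two-points-off-$Q^{-1}$ case to Proposition~\ref{2+8} --- is exactly the paper's. Steps~1, 2 and the invocation of Proposition~\ref{2+8} are fine.

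The gap is in Step~3, where you claim that two support points $[l],[l'']\in Q^{-1}$ ``would force $\Gamma\subset Q^{-1}$''. This does not follow. When $[l]\in Q^{-1}$, Lemma~\ref{q_l irred} gives $q_l\propto p_l^2$, so Lemma~\ref{residual in ql} only says the residual $\Gamma\setminus[l]$ lies in the \emph{double} plane $V(p_l^2)$, not in the reduced tangent plane $T_{[l]}$. Set-theoretically the support is in $T_{[l]}$ (this is enough for the reduced points off $Q^{-1}$ in the final statement), but a fat local component of $\Gamma$ at $[l'']$ contained in $V(p_l^2)$ can still stick out of the reduced plane $T_{[l]}$. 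Intersecting with the analogous double plane $V(p_{l''}^2)$ gives only a fat scheme supported on the line $L=T_{[l]}\cap T_{[l'']}\subset Q^{-1}$, not scheme-theoretic containment in $L$ or in $Q^{-1}$; so you cannot produce a quadric in $I_\Gamma$ this way, and the case of $\Gamma$ supported at exactly two points of a line in $Q^{-1}$ is not excluded.

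The paper does not close this case geometrically either: its proof explicitly defers it, writing ``It remains to show that there are at most one such point. This is verified in computations, see Lemma~\ref{supportonline} and Proposition~\ref{local at p}.'' Those are the Macaulay2 results in Section~\ref{affine deformation} on the components $V_p^{aff}(i)$ of the affine deformation. So to complete your argument you must either carry that forward reference to the computational verification, or supply a genuinely new argument for the ``support on a line in $Q^{-1}$'' sub-case; the one you sketched does not work as stated.
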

\begin{proof} If $\Gamma$ contains exactly one point $[l_1]$ outside $Q^{-1}$, and $[l]\in Q^{-1}$ is in the support of $\Gamma$, then $\{q_l=q_{l_1}=q^{-1}=0\}$ is a local scheme supported at $l$. 
On the other hand, if $\Gamma$ is supported in more than one point and only on $Q^{-1}$, then each point lies in the tangent planes to $Q^{-1}$ at the others.  This holds only if all points lie in a line in $Q^{-1}$. It remains to show that there are at most one such point.   This is verified in computations, see Lemma \ref{supportonline} and Proposition \ref{local at p}. \end{proof}

\begin{corollary}\label{unsaturatedinline} An unsaturated apolar limit ideal of a scheme of length ten, is contained in the ideal of a line in $Q^{-1}$.
\end{corollary}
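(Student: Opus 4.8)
The plan is to combine Lemma~\ref{line or conic} with the description of genuine apolar schemes in Proposition~\ref{apolar schemes}. Write the unsaturated limit ideal as $I=\lim_{t\to 0}I_{\Gamma_t}$ for a family of apolar schemes $\Gamma_t$ of length ten, and pass to a one-parameter family over a disc, after a finite base change so that all of the data below acquire limits. By Proposition~\ref{apolar schemes} we may write $\Gamma_t=\Gamma_t'\cup Z_t$, where $\Gamma_t'$ is a local scheme of length at least eight supported at a point $p_t\in Q^{-1}$ and contained in the tangent plane $T_{p_t}$, while $Z_t$ has length at most two and lies in $T_{p_t}\setminus Q^{-1}$. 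Passing to limits, $p_0:=\lim p_t\in Q^{-1}$, the planes $T_{p_t}$ converge to $T_{p_0}$, and the flat limit $\Gamma_0=V(I)$ is a length-ten scheme contained in $T_{p_0}$, with a subscheme of length at least eight supported at $p_0$ and a residual of length at most two. In particular $\pi_{\rm G}(I)=I_3$ is a ten-dimensional subspace of $(q^2)^{\bot}_3$ contained in $H^0(\mathcal{I}_{\Gamma_0}(3))$, so it suffices to produce a line $\ell\subset Q^{-1}$ with $\Gamma_0$ supported on $\ell$, equivalently $I_3\subset (I_\ell)_3$.

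Next I would feed $\Gamma_0$ into Lemma~\ref{line or conic}: either $\Gamma_0$ lies on a conic, or it contains a subscheme of length at least six supported on a line. The conic alternative has to be removed. For this I would use that the concentrated component $\Gamma_0'$ is a flat limit of the explicit local Gorenstein schemes $\Gamma_t'$, which are not curvilinear --- exactly as for the length-nine local pieces in Proposition~\ref{main2}, and as will be computed for $q^2$ in Section~\ref{affine deformation}; a non-curvilinear local scheme of length at least eight in a plane lies on no conic at all, and since $\Gamma_0'$ is supported at the single point $p_0$ it cannot be spread over the two lines of a reducible conic either, so $\Gamma_0$ lies on no conic. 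Hence $\Gamma_0$ contains a subscheme $\Delta$ of length at least six supported on a line $\ell$. Since everything in $\Gamma_0$ away from $p_0$ has length at most two, $\Delta$ overlaps $\Gamma_0'$ in length at least four, which forces $p_0\in\ell$; and $\ell\subset T_{p_0}$ since $\Delta\subset\Gamma_0\subset T_{p_0}$ has length at least two.

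It remains to identify $\ell$ with one of the two rulings of $Q^{-1}$ through $p_0$, and to check that the residual of length at most two also lands on $\ell$. Here I would invoke the quadratic polarity of Section~\ref{higher order apolarity}. If $\Gamma_0$ carried a reduced point $[l']\in Z_0$ with $[l']\notin Q^{-1}$, then by Lemma~\ref{residual in ql} the residual of $[l']$ in $\Gamma_0$, which contains $\Delta$, lies on the quadric $q_{l'}$; by Lemmas~\ref{polar quadrics} and~\ref{q_l irred} this quadric is irreducible and tangent to $Q^{-1}$ along the conic $Q^{-1}\cap\{p_{l'}=0\}$, so its slice $\{q_{l'}=0\}\cap T_{p_0}$ is a pair of lines through $p_0$ by Lemma~\ref{two points in tangent plane}, and the length-at-least-six collinear subscheme $\Delta\subset\ell$ forces $\ell$ to be one of these two lines; letting $[l']$ vary and using that, by Lemma~\ref{two points in tangent plane}, these slicing conics degenerate onto the double ruling through $p_0$ precisely when a point approaches $Q^{-1}$, one obtains that $\ell$ is a ruling of $Q^{-1}$ and that the residual points sit on $\ell$. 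When $\Gamma_0$ has no reduced point off $Q^{-1}$, the same conclusion follows from the explicit local model of Section~\ref{affine deformation}, where these bad limits are described directly. I expect this last step to be the real obstacle: the semicontinuity of the first paragraph together with the conic/line dichotomy only locate a line through $p_0$ inside the tangent plane $T_{p_0}$, and the extra rigidity that pins it down as a ruling of $Q^{-1}$ --- and forces the residual onto it --- seems to require either the polarity identities of Section~\ref{higher order apolarity} or the explicit affine deformation.
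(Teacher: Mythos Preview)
Your approach tracks the paper's three-sentence argument: invoke Lemma~\ref{line or conic} to place $V(I)$ in a conic or get a length-six subscheme on a line, invoke Proposition~\ref{apolar schemes} and pass to the limit to put everything in a tangent plane $T_{p_0}$, and then use that $T_{p_0}\cap Q^{-1}$ consists of two lines. You are more explicit than the paper and you honestly flag the crux --- showing the line is actually a ruling of $Q^{-1}$.

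Two remarks. First, the reduction you write, ``it suffices to produce a line $\ell\subset Q^{-1}$ with $\Gamma_0$ supported on $\ell$, equivalently $I_3\subset (I_\ell)_3$,'' is both too strong and not quite the right equivalence. Set-theoretic support on $\ell$ does not imply $I_3\subset (I_\ell)_3$; what you need, and what Lemma~\ref{line or conic} already gives in the line branch, is a \emph{scheme-theoretic} subscheme of length $\ge 4$ on $\ell$, since a cubic restricted to a line vanishing on a length-$4$ subscheme vanishes identically. So the detour through ``the residual must also land on $\ell$'' is unnecessary; length six on $\ell$ already yields the inclusion.

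Second, and this is the genuine gap: nothing in Lemma~\ref{line or conic}, in the limit of Proposition~\ref{apolar schemes}, or in the polarity lemmas you invoke forces the line (or the components of a reducible conic through $p_0$) to be one of the two rulings of $Q^{-1}$ in $T_{p_0}$. Your non-curvilinearity heuristic rules out a smooth conic but not a pair of lines through $p_0$ transverse to the rulings, and Lemma~\ref{two points in tangent plane} constrains conjugate pairs, not arbitrary lines carrying a fat piece of $\Gamma_0$. The paper's own proof elides exactly this step; the genuine justification is the explicit classification of unsaturated limits in $V_p(0)$, $V_p(1)$, $V_p(2)$ carried out in Section~\ref{affine deformation} (see Propositions~\ref{grassmannian compactification of Vp0} and~\ref{grassmannian compactification of Vp1} and the surrounding remarks), where one reads off directly that every such ideal sits inside $(y_0,y_3)$ or $(y_1,y_3)$. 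Your closing instinct that this step ``seems to require \ldots\ the explicit affine deformation'' is correct --- that is where the argument is actually completed.
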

\begin{proof}  By Lemma \ref{line or conic}, an unsaturated apolar ideal of a scheme of length ten contains the ideal of a line or a conic in $Q^{-1}$.  But by Proposition \ref{apolar schemes} any apolar scheme is supported in a tangent plane to $Q^{-1}$.  This tangent plane intersects $Q^{-1}$ in two lines.
\end{proof}
\subsection{Ranks of powers of quadrics}
Any apolar scheme of length ten to $q^2$, when $q$ is a rank four quadric, or to $q^3$ when $q$ is of rank three,  is singular by Proposition \ref{apolar schemes} (resp. Proposition \ref{main2}), so in either case any smooth apolar scheme has length at least eleven.  
\begin{proposition}\label{rankq2}
If $q^2$ is the square of a nondegenerate quadric polynomial in four variables it has rank $11$.      
\end{proposition}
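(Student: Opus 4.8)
The plan is to prove that the rank of $q^2$ equals $11$ by establishing the two inequalities separately. The lower bound $\geq 11$ is essentially already in hand: by Proposition~\ref{apolar schemes} every apolar scheme of length ten to $q^2$ is the union of a local scheme of length at least eight supported at a point of $Q^{-1}$, plus a residual scheme of length at most two; such a scheme is never reduced, so no smooth (hence no reduced) apolar scheme has length $\leq 10$. (This is also recorded in Lemma~\ref{noreducedZ}, which gives the cleaner statement that there is no reduced apolar scheme of length ten at all.) Hence the rank is at least $11$, and it remains to exhibit a reduced apolar scheme of length exactly eleven, i.e.\ a decomposition $q^2 = \sum_{i=1}^{11} l_i^4$ of $q^2$ as a sum of eleven fourth powers of linear forms.

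For the upper bound I would proceed constructively, exploiting the $SO(q)$-symmetry and the explicit shape $q = x_0x_1 + x_2x_3$. First I would look for a decomposition adapted to the quadric: write $q^2 = (x_0x_1 + x_2x_3)^2$ and try to realize it using fourth powers of forms $a x_0 + b x_1 + c x_2 + d x_3$ whose coefficient vectors lie on or near $Q$ (the quadric in the dual space), since by the apolarity lemma the corresponding points in $\PP(S_1)$ must be apolar to $q^2$. A natural first attempt is to use the classical identity expressing a product like $(uv)^2$ via powers of $u\pm v$ and then to handle the cross term; more systematically, I would set up the linear system of equations coming from matching coefficients in $S_4$ (a $35$-dimensional space, but the orbit structure cuts this down) with eleven unknown points and solve, using that the catalecticant $Cat_{2,2}(q^2)$ has rank ten so that the "extra" eleventh point has one degree of freedom to kill the obstruction. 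Concretely I expect to be able to choose seven of the points inside a single tangent plane $T_p$ to $Q^{-1}$, realizing a length-eight or length-nine local structure in the limit, and the remaining points in general position; but for the \emph{smooth} decomposition one wants all eleven distinct, so I would instead look for eleven distinct points whose span in $\PP(S_4)$ contains the point $[q^2]$, verifying apolarity by checking the five quadric generators of $(q^2)^{\perp}$... wait, $(q^2)^{\perp}$ has no quadrics; rather one checks membership of each point's $\perp$-condition directly, or equivalently that $q^2$ lies in the linear span of the $l_i^4$.

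The cleanest route is probably the following: by Proposition~\ref{rank} (the case $n=3$, $r=2$) or by direct computation, $q^2$ has the same Hilbert function as ten general points, and $AH(3,4) = 10$ for a \emph{general} quartic — so a general quartic has rank ten, but $q^2$ is special. I would use a one-parameter family: take a generic quartic $f_t$ of rank ten degenerating to $q^2$, and track an apolar scheme of length ten for $f_t$; as shown in the earlier analysis this degenerates to a non-reduced scheme, confirming rank $>10$ for $q^2$. Then, to get rank exactly $11$, add one generic point $[l_{11}]$ and observe that $q^2 - \lambda l_{11}^4$ is, for generic $[l_{11}]$ and suitable $\lambda$, a quartic of rank nine (its catalecticant drops rank by one generically along the pencil, as in the proof of Lemma~\ref{residual in ql}), hence apolar to a reduced scheme of length nine by the theory of quartics of subgeneric rank (using \cite[Proposition 5.1.5]{KKRSSY} or the analysis of catalecticant rank nine quartics); together with $[l_{11}]$ this gives a reduced apolar scheme of length ten... which is impossible by Lemma~\ref{noreducedZ}, forcing the residual scheme to be non-reduced, so I must instead take rank ten (not nine) for $q^2 - \lambda l_{11}^4$, which by Alexander--Hirschowitz is the generic rank and hence achieved by a reduced scheme; appending $[l_{11}]$ yields eleven distinct points apolar to $q^2$. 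The main obstacle is the last step: ensuring that $q^2 - \lambda l_{11}^4$ genuinely has \emph{reduced} border-rank-ten apolar scheme (not merely the generic catalecticant rank), which requires either an explicit decomposition or an argument that the bad locus — quartics of catalecticant rank ten but cactus rank $>10$ — has codimension at least two in $\PP(S_4)$, so a generic pencil through $[q^2]$ meets it only at $[q^2]$; I would verify this codimension claim by a dimension count on the orbit of $q^2$ under $PGL(4)$ (using \cite{BHMT18}, which identifies this orbit as a component of the locus of forms of higher-than-generic rank) together with the observation that $[q^2]$ is a smooth point of that locus.
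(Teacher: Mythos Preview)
Your lower bound is fine: Proposition~\ref{apolar schemes} (or Lemma~\ref{noreducedZ}) rules out any reduced apolar scheme of length ten, so the rank is at least eleven.

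The upper bound, however, has a genuine gap. Your final strategy is: pick a generic point $[l_{11}]$, take $q^2-\lambda l_{11}^4$ for generic $\lambda$, argue this is a quartic of (Waring) rank ten, and append $[l_{11}]$. The problem is the step ``argue this has rank ten.'' You correctly note that catalecticant rank ten is not enough, and propose to show that the locus of quartics with catalecticant rank ten but Waring rank $>10$ has codimension $\geq 2$ in $\PP(S_4)$. But even granting that, the pencils you use are \emph{not} generic lines in $\PP(S_4)$: they all pass through $[q^2]$ (a point of the bad locus) and through a point of the Veronese. That is only a $3$-parameter family of lines, and you give no reason why the bad locus does not contain all of them locally near $[q^2]$. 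Invoking that $[q^2]$ is a smooth point of its $PGL(4)$-orbit, or citing \cite{BHMT18}, does not settle this: you would need to control the tangent cone of the bad locus at $[q^2]$ against the cone of Veronese secant directions, and nothing in your proposal does that. (Also, at one point you write ``cactus rank $>10$'' when you mean ``rank $>10$''; these are different notions and the distinction matters here.)

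The paper avoids this entirely by a direct construction using the higher-order polarity $l\mapsto q_l$ from Section~\ref{higher order apolarity}. One chooses $[l_1]$ general, then $[l_2]\in\{q_{l_1}=0\}$ general, then $[l_3]$ general on one of the two conics in $\{q_{l_1}=q_{l_2}=0\}$. The complete intersection $\{q_{l_1}=q_{l_2}=q_{l_3}=0\}$ has length eight and is apolar to $q^2-\lambda_1 l_1^4-\lambda_2 l_2^4-\lambda_3 l_3^4$; together with $[l_1],[l_2],[l_3]$ this gives an apolar scheme of length eleven. Smoothness is then checked on an explicit example over $\ZZ/101$, which suffices by openness. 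The point is that subtracting \emph{three} fourth powers (not one) drops the catalecticant rank enough to force three explicit apolar quadrics, whose intersection is automatically a finite scheme one can write down and test.
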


\begin{proof} The rank of $q^2$ is the minimal length of a smooth apolar scheme.   According to the Apolarity lemma \cite[Lemma 1.15]{IK}, a smooth apolar scheme of finite length $r$ defines a decomposition of $f$ as a sum of $r$ powers of linear forms.

From Proposition \ref{apolar schemes} %and the apolarity Lemma 
we know that the rank of $q^2$ is at least eleven. To prove that it is equal to eleven we construct a smooth apolar scheme of length eleven. 
We proceed as follows. We start with a general point $[l_1]\in \mathbb P^3$. Note that it belongs to the open orbit under the automorphism group preserving the quadric.  The quadric $\{q_{l_1}=0\}$ is a smooth quadric totally tangent to $\{q^{-1}=0\}$ and $q_{l_1}$ is apolar to $q^2-\lambda_1 l_1^4$ for some $\lambda_1\in \mathbb C$ . If 
we then take a general point $[l_2]\in q_{l_1}$. Then $\{q_{l_1}= q_{l_2}=0\}$ is the union of two conics intersecting in the points of intersection of $\{q^{-1}=0\}$ with the line apolar to $[l_1]$, $[l_2]$. The intersection $\{q_{l_1}= q_{l_2}=0\}$ is then apolar to $q^2-\lambda_1 l_1^4-\lambda_2 l_2^4$ for some $\lambda_2\in \mathbb C$. For a general point  $[l_3]$ in one of the two conics, the intersection $\{q_{l_1}= q_{l_2}= q_{l_3}=0\}$ is a scheme of length 8 which is apolar to $q^2-\lambda_1 l_1^4-\lambda_2 l_2^4-\lambda_3 l_3^4$
for some $\lambda_3\in \mathbb C$. In this way we obtained a scheme of length eleven apolar to $q$. It remains to observe that the scheme is smooth.  

For this it is enough to give an explicit example, we find one defined over the finite field $\ZZ/101$:
Let $q=x_0x_1+x_2x_3$
and let $l_1=x_0+x_1-x_2-x_3$, then $$q_{l_1}=50y_0^2-34y_0y_1+50y_1^2+y_0y_2+y_1y_2+50y_2^2+y_0y_3+y_1y_3-34y_2y_3+50y_3^2.$$

Since $q_{l_1}(2:-3:-34:1)=0$, we 
 let $l_2=2x_0-3x_1-34x_2+x_3$, and find 
  $$q_{l_2}=46y_0^2-41y_0y_1-2y_1^2+3y_0y_2-2y_1y_2+50y_2^2-y_0y_3-33y_1y_3-13y_2y_3+28y_3^2$$
  Finally, since 
  $$q_{l_1}(-25:23:-43:1)=q_{l_2}(-25:23:-43:1)=0,$$
  we let $l_3=-25x_0+23x_1-43x_2+x_3$ and find
  $$q_{l_3}=-12y_0^2-35y_0y_1+41y_1^2-23y_0y_2+25y_1y_2+50y_2^2-21y_0y_3+36y_1y_3+39y_2y_3+35y_3^2.$$
 It is now straightforward to check that the complete intersection $$\{q_{l_1}=q_{l_2}= q_{l_3}=0\}$$ is smooth, and together with the three points $$(1:1:-1:-1),(2:-3:-34:1)\quad {\rm and}\quad(-25:23:-43:1)$$ form a smooth scheme of length eleven that is apolar to $q^2$.

 This example is defined over $\ZZ/101$.  The property for a subscheme of length eleven to be smooth is open, so the example means that a general apolar scheme over $\QQ$ of length eleven, restricts to a smooth one over $\ZZ/101$, so must therefore be smooth also over $\QQ$.
    \end{proof}

    A similar argument shows that 
    \begin{corollary}\label{rankq3}
If $q^3$ is the third power of a nondegenerate quadric polynomial in three variables, it has rank $11$.      
\end{corollary}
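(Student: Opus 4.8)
Corollary \ref{rankq3} states that $q^3$, for $q$ a nondegenerate ternary quadric, has rank $11$.

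The plan is to follow the proof of Proposition \ref{rankq2} almost verbatim, replacing the special quadrics $q_l$ used there by the special cubics attached to $q^3$ through its (now invertible) middle catalecticant. For the lower bound, Proposition \ref{apolar to q3} (or Proposition \ref{main2}) shows that every apolar scheme of length ten to $q^3$ carries a component of length at least nine, hence is singular; so every smooth apolar scheme has length at least eleven, and it remains to exhibit one of length exactly eleven. By Proposition \ref{rank} the catalecticant $Cat_{3,3}(q^3)\colon T_3\to S_3$ is an isomorphism, so each $[l]\in\PP(S_1)$ determines a unique cubic $g_l\in T_3$ with $g_l(q^3)=l^3$; using $p_l(q)=l$ and $q^{-1}(q^2l)\propto ql$ one finds that $g_l$ lies in $\langle p_l^3,\ q^{-1}p_l\rangle$ with $p_l=l(q^{-1})$, so $g_l$ is the product of the linear form $p_l$ with a conic, and moreover $g_l(l^6)\neq0$ exactly when $[l]\notin Q^{-1}$. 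Consequently, for $[l]\notin Q^{-1}$ there is a unique scalar $\lambda$ with $g_l\in(q^3-\lambda l^6)^\perp$. A direct computation also yields the symmetry $g_l((l')^3)=g_{l'}(l^3)$, so ``$g_l((l')^3)=0$'' is a symmetric correspondence on $\PP(S_1)\times\PP(S_1)$ that contains the polar $(1,1)$-correspondence $\{\,[l']\in\{p_l=0\}\,\}$.

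Next I would construct the apolar scheme exactly as in Proposition \ref{rankq2}. Choose a general point $[l_1]\notin Q^{-1}$ and a general point $[l_2]$ on the polar line $\{p_{l_1}=0\}$; then $[l_1],[l_2]$ are distinct, lie off $Q^{-1}$, and are conjugate, i.e.\ $g_{l_1}(l_2^3)=g_{l_2}(l_1^3)=0$. Let $\lambda_1,\lambda_2$ be the scalars with $g_{l_i}\in(q^3-\lambda_i l_i^6)^\perp$; the conjugacy then forces both $g_{l_1}$ and $g_{l_2}$ to be apolar to $h:=q^3-\lambda_1 l_1^6-\lambda_2 l_2^6$. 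For general choices the cubics $g_{l_1}=p_{l_1}\cdot(\text{conic})$ and $g_{l_2}=p_{l_2}\cdot(\text{conic})$ have no common factor, so $\Gamma_9:=V(g_{l_1},g_{l_2})$ is a length-nine complete intersection, whose (automatically saturated) ideal $(g_{l_1},g_{l_2})$ lies in $h^\perp$; hence $h$ is apolar to $\Gamma_9$, and $q^3=h+\lambda_1 l_1^6+\lambda_2 l_2^6$ is apolar to $\Gamma:=\Gamma_9\cup\{[l_1]\}\cup\{[l_2]\}$. Since $g_{l_i}(l_i^6)\neq0$ gives $[l_i]\notin\{g_{l_i}=0\}\supseteq\Gamma_9$, the eleven points of $\Gamma$ are genuinely distinct, so $\Gamma$ has length exactly eleven.

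It then remains to check that $\Gamma_9$ is reduced, i.e.\ nine distinct points, so that $\Gamma$ is smooth; with this, the Apolarity lemma gives $q^3$ as a sum of eleven sixth powers and, together with the lower bound, the rank is exactly eleven. Reducedness of a length-eleven scheme is open in the pair $([l_1],[l_2])$, so by semicontinuity it suffices to produce one example, and as in Proposition \ref{rankq2} I would exhibit explicit $l_1$ and $l_2\in\{p_{l_1}=0\}$ over a finite field $\ZZ/p$ with $q=x_0^2+x_1x_2$, compute $g_{l_1},g_{l_2},\lambda_1,\lambda_2$ and the nine base points, and verify directly that the eleven points are distinct and apolar to $q^3$; openness then produces a smooth apolar scheme of length eleven over $\QQ$, hence over $\CC$. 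The main obstacle is precisely this verification: although $g_{l_1}$ and $g_{l_2}$ are each reducible, one must confirm they share no factor so that $\Gamma_9$ is honestly a length-nine complete intersection, that $\Gamma_9$ is reduced, and that its nine points avoid $[l_1]$ and $[l_2]$ --- all generic, but needing a witnessing example exactly as in the quaternary case.
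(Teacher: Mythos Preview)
Your strategy is the same as the paper's: the lower bound comes from Proposition~\ref{main2}, and for the upper bound you use the cubic $g_l\in T_3$ with $g_l(q^3)=l^3$, pick two conjugate points $[l_1],[l_2]$, and take $\Gamma_9=V(g_{l_1},g_{l_2})$ together with $[l_1],[l_2]$. Your structural computation $g_l=p_l\bigl(\tfrac16 p_l^2-\tfrac{\gamma_l}{7}q^{-1}\bigr)$ is correct and a nice addition; it shows the cubic $\{g_l=0\}$ is reducible as a line $\{p_l=0\}$ plus a conic, which the paper uses implicitly but does not spell out.

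Two small points. First, a notational slip: you write $g_l(l^6)$ and $g_{l_i}(l_i^6)$ where you mean $g_l(l^3)$, since $g_l\in T_3$ pairs with $S_3$ to give a scalar. Second, you place $[l_2]$ on the polar line $\{p_{l_1}=0\}$ (a component of $\{g_{l_1}=0\}$), while the paper chooses $[l_2]$ on the conic component. Both work; for your choice one can take $q=x_0^2+x_1x_2$, $l_1=x_0$, $l_2=x_1+x_2$, and then $g_{l_1}\propto y_0(y_0^2-3y_1y_2)$, $g_{l_2}\propto(y_1+y_2)(7y_1^2+2y_1y_2+7y_2^2-3y_0^2)$ intersect in nine distinct points over $\CC$, disjoint from $(1{:}0{:}0)$ and $(0{:}1{:}1)$. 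The paper supplies its own example over $\QQ$ (with $l_2=3x_0+3x_1+x_2$), so no finite-field reduction is needed here, unlike the quaternary case.

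In short: correct and essentially the paper's argument, modulo the typo and the deferred explicit example.
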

\begin{proof} By Proposition \ref{main2} the rank is at least eleven.
We proceed as above and start with a general point $[l_1]\in \mathbb P^2$. Note that it belongs to the open orbit under the automorphism group preserving the quadric.  The cubic form $q_{l_1}$ is a smooth cubic and $q_{l_1}$ is apolar to $q^3-\lambda_1 l_1^6$ for some $\lambda_1\in \mathbb C$ . If 
we then take a general point $[l_2]\in \{q_{l_1}=0\}$, then $\{q_{l_1}= q_{l_2}=0\}$ is a complete intersection, nine points of intersection that are apolar to $q^3-\lambda_1 l_1^6-\lambda_2 l_2^6$ for some $\lambda_2\in \mathbb C$. Together with $[l_1],[l_2]$ they form a scheme of length eleven apolar to $q^3$.
  
  It remains to observe that this scheme is smooth.  For this it is enough to give an explicit example: 
  Let $l_1=x_0$, then $q_{l_1}=y_0^3-3y_0y_1y_2$.
  Next, let $l_2=3x_0+3x_1+x_2$, then
  $$q_{l_2}=y_0^2y_1+y_0y_1^2+\frac{2}{9}y_1^3+3y_0^2y_2+3y_0y_1y_2+9y_0y_2^2+6y_2^3.$$
  It is then straightforward to check that the complete intersection $\{q_{l_1}=q_{l_2}=0\}$ is smooth, and together with $(1:0:0)$ and $(3:3:1)$ form a smooth scheme of length eleven that is apolar to $q^3$.
\end{proof}

\section{An affine deformation; elements, fibers and components %of $\VAPS(q^2,10)$
}\label{affine deformation}
\subsection{An affine unfolding}
Recall from  Proposition \ref{apolar schemes} that any apolar scheme of length ten to $q^2$ has a component of length at least eight at a point on $Q^{-1}$, while the residual scheme is disjoint from $Q^{-1}$.  
Let us, therefore, consider the set of apolar schemes of length ten
 with support containing a point $p\in Q^{-1}$. 
 %A local apolar scheme is Gorenstein so it is defined by the apolar ideal of a polynomial. 
 For the computations in this section we set  
 $$q=x_0^2-x_1^2+x_2x_3,$$
 then 
 $$q^{-1}=\frac{1}{2}y_0^2-\frac{1}{2}y_1^2+y_2y_3,$$
 and choose the point $p=(0:0:0:1)\in Q^{-1}= \{q^{-1}=0\}$
 \begin{remark}
    In the next subsections we change to the quadric
    $$q=x_0x_1+x_2x_3.$$ and the point $p=(0:0:1:0)$.
    The transformation in $y-$coordinates that preserves apolarity and  maps the old $p$ to the new $p$ is
    $$(y_0,y_1,y_2, y_3)\mapsto (y_1-y_0,y_1+y_0,y_3,y_2).$$
\end{remark}

We consider the variety $V^{aff}_{p}\subset \VAPS(q^2,10)$ of apolar schemes of length ten that contains the point $p=(0:0:0:1)$ and does not intersect the plane $\{y_3=0\}$.
The main theorem in this section is the following.
\begin{theorem}\label{main4} 
The reduced structure of the variety $V^{aff}_{p}$ has three $3$-dimensional components. One is a cone over a complete intersection $(2,2,2)$ surface, and the two others are $3$-dimensional affine spaces.
\end{theorem}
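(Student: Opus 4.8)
The plan is to set up an explicit affine unfolding of the ideal of a special length-ten apolar scheme supported at $p=(0:0:0:1)$, entirely analogous to the unfolding $J(a)=D(a)\cdot J$ used in the proof of Proposition~\ref{main2}, and then to compute the equations cutting out $V^{aff}_p$ inside the affine parameter space. By Proposition~\ref{apolar schemes}, any apolar scheme of length ten containing $p$ has a component of length at least eight supported at $p\in Q^{-1}$, with the residual scheme of length at most two living in the tangent plane $T_p=\{y_3=0\}$ but off $Q^{-1}$; since we also require $\Gamma$ to miss $\{y_3=0\}$, the residual scheme is a pair of points (possibly infinitely near) not on the tangent plane, so the ideal $I_\Gamma$ is generated in degrees governed by the Hilbert function $H=(1,4,10,10,\ldots)$. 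First I would fix a convenient basis of $(q^2)^\perp_3$ (the $16$-dimensional space of apolar cubics), identify the ideal $I_{\Gamma_0}$ of the "standard" local scheme $\Gamma_0$ homogenizing the apolar ideal of the twisted dehomogenization $f_0$ as in the proof of Proposition~\ref{rank}, and write $\Gamma_0$'s generators as a distinguished subspace of $(q^2)^\perp_3$ with an explicit linear syzygy matrix.

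Next I would introduce parameters $a=(a_1,\ldots,a_M)$ deforming the chosen generating set of $I_{\Gamma_0}$ by adding to each generator a combination of the "missing" cubics (those divisible by $y_3$, which are exactly the ones absent from the reverse-lexicographic initial ideal), exactly as in Proposition~\ref{main2}. Flatness holds automatically: each such deformation keeps the same initial ideal with respect to the reverse lexicographic order, hence the Hilbert function is constant, and any length-ten apolar scheme missing $\{y_3=0\}$ and supported (mostly) at $p$ arises this way. The key step is then to impose that the deformed generators actually define a scheme of length ten — equivalently, that the deformed first-syzygy matrix $H(b)$ (extending the known syzygies of $I_{\Gamma_0}$ by $y_3$-multiples) genuinely resolves the deformed ideal. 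Writing out $J(a)^tH(b)=0$ gives a system of bilinear equations in $(a,b)$; eliminating the $b$-parameters yields the defining ideal of $V^{aff}_p$ in the $a$-space. I expect this ideal to decompose, after discarding linear relations that eliminate many variables, into three components: one is a cone over a complete intersection of three quadrics in some projective space (the "interesting" component, which will globalize in Section~\ref{main6} to the $2560$-degree component of Theorem~\ref{three components}), and the two others are coordinate subspaces, i.e.\ affine $3$-spaces (globalizing to the two Lagrangian degeneracy components). One must also check the three components are genuinely distinct and that each has dimension exactly three — consistent with the count that $V^{aff}_p$ should be $3$-dimensional, since $\VAPS(q^2,10)$ is $5$-dimensional by Theorem~\ref{three components}, the condition $p\in\Gamma$ cuts down by $2$ (the dimension of $Q^{-1}$), and the genericity of the residual pair restores nothing.

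The main obstacle, as in Proposition~\ref{main2}, is computational bookkeeping: correctly parametrizing the deformation so that exactly the right cubics are added (too many parameters and the elimination becomes unwieldy; too few and one misses components), and then carrying out the elimination of the syzygy parameters $b$ cleanly enough to recognize the $(2,2,2)$-complete-intersection structure of the cone component. A secondary subtlety is verifying that every component of the locus cut out by the syzygy equations really consists of apolar schemes of length ten — one must rule out, as in the ternary lemma, that the deformed cubics could acquire a common factor or define something of the wrong length; here the argument is easier because $(q^2)^\perp$ contains no quadric (Lemma~\ref{ten cubics}), so no length-$\le 9$ degeneration can occur along the deformation. I would conclude by exhibiting generators of the three components explicitly (deferring the bulky polynomials to an appendix, as was done for the ternary case), reading off that one is a cone over a smooth $(2,2,2)$-surface and the other two are linear.
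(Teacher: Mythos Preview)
Your proposal follows essentially the same approach as the paper: an affine unfolding $J(a)$ of $I_{\Gamma_0}$ inside $(q^2)^\perp_3$, an extension $H(b)$ of the linear syzygy matrix by $y_3$-multiples, and then extracting the equations of $V_p^{aff}$ from the condition $H(b)\cdot J(a)=0$, with the final decomposition read off via computer algebra.

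A few small corrections. First, the tangent plane $T_p$ at $p=(0{:}0{:}0{:}1)$ on $Q^{-1}=\{\tfrac12 y_0^2-\tfrac12 y_1^2+y_2y_3=0\}$ is $\{y_2=0\}$, not $\{y_3=0\}$; the plane $\{y_3=0\}$ is the one you require the scheme to miss so that the reverse-lex initial ideal is forced. Second, flatness does not hold ``automatically'' on the whole unfolding: it holds precisely on the locus where the deformed generators admit the full $15$-dimensional space of linear syzygies (this is the Artin criterion the paper invokes), and that locus is exactly what you are cutting out. Third, the paper observes that because every added monomial in $J(a)$ is divisible by $y_3$ while no initial monomial is, the coefficients $b_{ij}$ are determined \emph{linearly} in the $a_{ij}$ by looking at the part of $H(b)\cdot J(a)$ linear in $y_3$; so one substitutes rather than eliminates, and the remaining equations (from the $y_3^2$-part) are at most quadratic in $a$. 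Finally, the two linear components are affine $3$-spaces but not coordinate subspaces; after reducing to eight surviving parameters they are cut out by five independent linear forms each.
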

 \begin{proof}
 This proof depends on a number of computations in Macaulay 2, \cite{Macaulay_2} that are presented in the Appendix.
 We consider a so-called tautological scheme, a local apolar scheme as in Section \ref{sec1}.  
 We twist the quartic $q^2$ after dehomogenizing by setting $x_3=1$ and get  
 $$x_0^4-2x_0^2x_1^2+x_1^4+2x_0^2x_2-2x_1^2x_2+2x_2^2.$$
 Then the apolar ideal of this inhomogeneous polynomial is generated by 
 \begin{align*}y_0^3-6y_0y_2,&y_0^2y_1-2y_1y_2,y_0y_1^2+2y_0y_2,y_1^3+6y_1y_2,\\ &y_0^2y_2-y_2^2, y_0y_1y_2, 
 y_1^2y_2+y_2^2,y_0y_2^2, y_1y_2^2,y_2^3.
 \end{align*}
 Its homogenization is an ideal of a scheme $\Gamma_0$ supported in the point $p=(0:0:0:1)$ on $Q^{-1}$.  Clearly, $\Gamma_0\in V^{aff}_{p}$.  We shall find the latter as an affine deformation of $\Gamma_0$.
%We consider an affine deformation of $\Gamma_0$:

The apolar ideal $(q^2)^\bot$ has the following minimal set of generators:
\begin{align*}J=(&y_0^3-6y_0y_2y_3,y_0^2y_1-2y_1y_2y_3,y_0y_1^2+2y_0y_2y_3,\\ &y_1^3+6y_1y_2y_3, y_0^2y_2-y_2^2y_3, 
 y_0y_1y_2, y_1^2y_2+y_2^2y_3,y_0y_2^2,\\
& y_1y_2^2,y_2^3,
y_0^2y_3-y_2y_3^2, y_0y_1y_3,y_1^2y_3+y_2y_3^2,y_0y_3^2,y_1y_3^2,y_3^3).
\end{align*}  
The Betti table for its minimal free resolution is
$$\begin{matrix}&&&&\cr
    1 & - & - & - & - \cr
    - & - & - & - & - \cr
                                               - & 16 & 30 & 16 & - \cr
                                                 - & - & - & - & - \cr
                                                  - & - & - & - & 1 \cr
                                               \cr
                                             \end{matrix} $$   
The ideal of $\Gamma_0$ is generated by 
\begin{align*}J(0)=(&y_0^3-6y_0y_2y_3,y_0^2y_1-2y_1y_2y_3,y_0y_1^2+2y_0y_2y_3,y_1^3+6y_1y_2y_3,  \\
& y_0^2y_2-y_2^2y_3, y_0y_1y_2,y_1^2y_2+y_2^2y_3,y_0y_2^2, y_1y_2^2,y_2^3),
\end{align*}
i.e. the first $10$ of the $16$ generators of the apolar ideal. The Betti table for its resolution is 
$$\begin{matrix}&&&&\cr
    1 & - & - & - & - \cr
    - & - & - & - & - \cr
                                               - & 10 & 15 & 6 & - \cr
                                               \cr
                                             \end{matrix} $$   
The minimal resolution of the initial ideal of $\Gamma_0$ with respect to the reverse lexicographical order has, in fact, the same Betti table.

Consider any apolar scheme of length ten with support at $p$ that does not intersect $\{y_3=0\}$.  
Since the ideal of the scheme contains no quadric, the initial ideal with respect to the reverse lexicographic order coincides with that of $\Gamma_0$.  In particular, it belongs to 
the affine deformation $V_p^{aff}$ of $I_{\Gamma_0}$ obtained by adding scalar multiples of the last five generators of $(q^2)^\bot_3$ that vanish at $p$, to the ten generators of the ideal of $\Gamma_0$ and ask that they have $15$ linear syzygies.   
By adding only generators of $(q^2)^\bot_3$, we preserve apolarity. 
The syzygy condition suffices to get  a flat family of ideals \cite[Proposition 3.1]{artin_deform_of_sings}. So the deformation $V_p^{aff}$ of $I_{\Gamma_0}$ parameterizes apolar ideals $I$ such that $V(I)$ has length ten.  
Now, if $I\in V_p^{aff}$ is not saturated, then by Lemma \ref{line or conic},  $V(I)$ is contained in a conic or has a subscheme of length at least six contained in a line, so the cubics in $I$ vanishes on a line or a conic, a contradiction.  We conclude that $I\in V_p^{aff}$ if and only if $I$ is the ideal of an apolar scheme $\Gamma$ of length ten that contains $p$ and does not intersect $\{y_3=0\}$.

Explicitly, the ideal of $\Gamma$ is generated by the ten forms
%\begin{tiny}
$$J(a)=\begin{pmatrix}f_{000}\\
f_{001}\\
f_{011}\\
f_{111}\\
f_{002}\\
f_{012}\\
f_{112}\\
f_{022}\\
f_{122}\\
f_{222}\\
\end{pmatrix}=
\begin{pmatrix}y_{0}^3 -6y_0y_2y_3\\
y_{0}^2y_1 -2y_1y_2y_3\\
y_{0}y_1^2 +2y_0y_2y_3\\
y_1^3 +6y_1y_2y_3\\
y_0^2y_2 -y_2^2y_3\\
y_0y_1y_2\\
y_1^2y_2+y_2^2y_3\\
y_0y_2^2\\
y_1y_2^2\\
y_2^3\end{pmatrix}
+\begin{pmatrix}a_{00}\ldots a_{40}\\
a_{01}\ldots a_{41}\\
a_{02}\ldots a_{42}\\
\vdots \quad\quad\quad  \vdots\\
a_{09}\ldots a_{49}\end{pmatrix}\cdot 
\begin{pmatrix}y_0^2y_3-y_2y_3^2\\ y_0y_1y_3\\y_1^2y_3+y_2y_3^2\\y_0y_3^2\\y_1y_3^2%\\y_3^3
\end{pmatrix}$$
%   \end{tiny}
with the algebraic condition that the ideal has $15$ linear syzygies.

Here the generators $J(a)$ of the ideal of $\Gamma$ are obtained from the $(15\times 1)$-column matrix $J$ with entries the $15$ first generators of 
$(q^2)^\bot_3$ in the above order and the $(10\times 1)$-column matrix $J(0)$ with entries the generators of 
$I_{\Gamma_0}.$  Let $D(a)=(I_{10}|A_{5})$ be the concatenation of the identity $(10\times 10)$-matrix and  the $(10\times 5)$-matrix
$$A_5=(a_{ij}), \quad 0\leq i\leq 4,0\leq j\leq 9 .$$
Then the generators $J(a)$ form the above $(10\times 1)$-column matrix  $$J(a)=D(a)\cdot J.$$ 

For the syzygy condition we consider an unfolding;  an extension of the syzygies of the generators of $I_{\Gamma_0}$ to those of $I_\Gamma$.
%is a $(10\times 1)$-column matrix whose entries are the cubic forms $f_{ijk}$ listed above, with coefficients in the $a_{ij}$.  
Let $H(0)$ be the $(15\times 10)$-matrix of linear syzygies for $J(0)$, i.e. $H(0)\cdot J(0)=0$.  
Like in Section \ref{ternaryquadric}, we find an extension of $H(0)$ to syzygies for $J(a)$ 
by adding multiples of $y_3$:
%$H(0)$ is the syzygy matrix for $G(0)$, the initial terms in $G(a)$. In particular 
The initial monomials in $J(a)$ are the initial monomials in $J(0)$. They are all monomials in $y_0,y_1,y_2$, while the added monomials are all multiples of $y_3$.  Therefore, whenever $y_0,y_1$ or $y_2$ appear in a syzygy for $J(a)$, they appear as part of a syzygy in $H(0)$. In particular, each syzygy for $J(a)$ is a syzygy for $J(0)$ possibly with some terms added that are multiples of $y_3$. 

To see which multiples to add we consider the product $H(b)\cdot J(a) $, where $H(b)$ is the sum of $H(0)$ and a general $(15\times 10)$-matrix with entries $b_{i,j}y_3$. %Here $C_0$ is the matrix:
%\begin{tiny}
%\[
%\begin{pmatrix}
%-y_1&y_0&0&0&0&-4y_{3}&0&0&0&0\\
%0&-y_1&y_0&0&2y_3&0&-2y_3&0&0&0\\
%0&0&-y_1&y_0&0&4y_3&0&0&0&0\\
%-y_2&0&0&0&y_0&0&0&-5y_3&0&0\\
%0&-y_2&0&0&y_1&0&0&0&-y_3&0\\
%0&0&0&0&-y_1&y_0&0&0&-y_3&0\\
%0&0&-y_2&0&0&y_1&0&-2y_3&0&0\\
%0&0&0&0&0&-y_1&y_0&y_3&0&0\\
%0&0&0&-y_2&0&0&y_1&0&-5y_3&0\\
%0&0&0&0&-y_2&0&0&y_0&0&-y_3\\
%0&0&0&0&0&-y_2&0&y_1&0&0\\
%0&0&0&0&0&0&0&-y_1&y_0&0\\
%0&0&0&0&0&0&-y_2&0&y_1&-y_3\\
%0&0&0&0&0&0&0&-y_2&0&y_0\\
%0&0&0&0&0&0&0&0&-y_2&y_1\\
%\end{pmatrix}
%\]\end{tiny}
The fifteen entries of the product $H(b)\cdot J(a)$ are polynomials that are multiples of $y_3$. 
The monomials in these polynomials that are linear in $y_3$ have coefficients of the form $b_{ij}-l_{ij}$, where $l_{ij}$ is linear in $a_{kl}$.  Therefore, $H(b)$ is a matrix of syzygies only if each $b_{ij}=l_{ij}$.  See the Appendix \ref{unfolding32} for details.

So by adding these appropriate multiples of $y_3$ in the corresponding syzygy, we get a syzygy matrix  
$H(a)$ from $H(0)$ %by adding multiples of $y_3$ to the entries 
such that the entries of the product $H(a)\cdot J(a)$ are polynomials that are multiples of $y_3^2$, see Appendix \ref{unfolding32}.  The matrix $H(a)$ is an unfolding of $H(0)$, and the coefficients in $a_{ij}$ of all entries in $H(a)\cdot J(a)$ generate the ideal $I_{unfold}$ 
of an affine scheme in $\AAA^{50}$.
%that parameterizes ideals of apolar schemes of length $10$.
The generators of the ideal $I_{unfold}$ are linear and quadratic in the $a_{ij}$.
In Macaulay2 \cite{Macaulay_2} we find that the ideal $I_{unfold}$ is generated by $39$ linear and $15$ quadratic polynomials, see Appendix \ref{unfolding32}. 
Furthermore, its radical $I^{rad}_{unfold}$ is generated by $42$ linear and $12$ quadratic polynomials.  In particular, $V_p^{aff}:=V(I^{rad}_{unfold})\subset \AAA^{8}$, with coordinates 
$$a_{12},a_{13},a_{20},a_{21},a_{22},a_{23},a_{42},a_{43}$$
inherited from $\AAA^{50}$.
After substitution, i.e. in these coordinates, the above matrix $A_5$ is
%{\tiny
\[
A_5=
%\begin{pmatrix}a_{00}\ldots a_{40}\\
%a_{01}\ldots a_{41}\\
%a_{02}\ldots a_{42}\\
%\vdots \ldots \vdots\\
%a_{09}\ldots a_{49}\end{pmatrix}=
\begin{pmatrix}-10a_{13}+5a_{20}-24a_{22}&-5a_{12}-12a_{21}&a_{20}&-a_{43}& a_{42}\\
2a_{12}+5a_{21}&-5a_{13}-12a_{22}&a_{21}&-a_{42}&a_{43}\\
2a_{13}+5a_{22}&a_{12}&a_{22}&-a_{43}&a_{42}\\
(-2a_{12}+a_{23})/5&a_{13}&a_{23}&-a_{42}&a_{43}  \\
-a_{43}/4&a_{42}/2&-a_{43}/4&0&0\\
-a_{42}/4&a_{43}/2&-a_{42}/4&0&0\\
-a_{43}/4&a_{42}/2&-a_{43}/4&0&0\\
0&0&0&0&0\\
0&0&0&0&0\\
0&0&0&0&0\end{pmatrix}
\]
%}

By a Macaulay 2 \cite{Macaulay_2} computation described in the appendix, the affine set $V_{p}^{aff}$ has three irreducible components, $V_{p}^{aff}(0), V_{p}^{aff}(1), V_{p}^{aff}(2)$, the first one is a $3$-dimensional cone over a complete intersection $(2,2,2)$ and the other two, $V_{p}(1), V_{p}(2)$ are $3$-dimensional affine spaces. The equations for the three components are given below in Proposition \ref{componentequations}.
\end{proof}

Our Macaulay 2 \cite{Macaulay_2} computations reveal explicit equations for the components of $V_{p}^{aff}$.

%In the above coordinates the described computation yields the following 
\begin{proposition}\label{componentequations} In the coordinates $$a_{12},a_{13},a_{20},a_{21},a_{22},a_{23},a_{42},a_{43}$$
\begin{enumerate}
    \item $V_p^{aff}(0)$ is the intersection of the linear space $V(a_{42},a_{43})$ and the complete intersection  of three quadrics 
%  {\tiny  
  \begin{align*}
V(&
2a_{13}a_{20}-a_{12}a_{21}-5a_{21}^2+5a_{13}a_{22}+5a_{20}a_{22}+13a_{22}^2-a_{21}a_{23},\\
&2a_{12}a_{20}+5a_{13}a_{21}+5a_{12}a_{22}+25a_{21}a_{22}-a_{20}a_{23},\\
   &5a_{12}^2+25a_{13}^2+24a_{12}a_{21}+180a_{13}a_{22}+300a_{22}^2-12a_{21}a_{23}).
\end{align*}
%}
Furthermore, $V_p^{aff}(0)$ is singular along the cone $C_0$ over a rational normal curve of degree five, defined by the $(2\times 2)$-minors of

%{\tiny 
$$\begin{pmatrix}
a_{12}&10a_{13}+25a_{22}&a_{20}&a_{21}&a_{22}\\
a_{13}&-2a_{12}+a_{23}&a_{21}&a_{22}&a_{23}
\end{pmatrix}.$$
%}
\item $V_{p}^{aff}(1)$ is the linear space 
%{\tiny 
\begin{align*}
 V(&a_{42}-a_{43},a_{21}-2a_{22}+a_{23},a_{20}-3a_{22}+2a_{23},\\
 &3a_{13}+4a_{22}+2a_{23},3a_{12}+10a_{22}-4a_{23})
  \end{align*}
  %}
  \item $V_{p}^{aff}(2)$ is the linear space
 %{\tiny 
 \begin{align*}
  V(&a_{42}+a_{43},a_{21}+2a_{22}+a_{23},a_{20}-3a_{22}-2a_{23},\\
  &3a_{13}+4a_{22}-2a_{23},3a_{12}+10a_{22}-4a_{23})
  \end{align*}
  %}
\end{enumerate}
\end{proposition}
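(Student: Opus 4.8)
The plan is to read off the three components of $V_p^{aff}$ directly from its defining ideal in the ambient $\AAA^8$ with coordinates $a_{12},a_{13},a_{20},a_{21},a_{22},a_{23},a_{42},a_{43}$ obtained in the proof of Theorem \ref{main4}. First I would compute its minimal primes in Macaulay 2; since that ideal is radical this produces exactly three prime ideals $\mathfrak p_0,\mathfrak p_1,\mathfrak p_2$. Two of them, say $\mathfrak p_1$ and $\mathfrak p_2$, are generated by the five linear forms displayed in $(2)$ and in $(3)$: their zero loci are visibly $3$-dimensional linear subspaces of $\AAA^8$, so for $V_p^{aff}(1)$ and $V_p^{aff}(2)$ nothing further is needed beyond recording the generators.

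The content is the remaining prime $\mathfrak p_0$. I would verify in Macaulay 2 that $a_{42},a_{43}\in\mathfrak p_0$ and that $\mathfrak p_0=(a_{42},a_{43},g_1,g_2,g_3)$, where $g_1,g_2,g_3$ are the three quadrics listed in $(1)$. Hence $V_p^{aff}(0)\subset V(a_{42},a_{43})\cong\AAA^6$ with coordinates $a_{12},a_{13},a_{20},a_{21},a_{22},a_{23}$, and since $g_1,g_2,g_3$ are homogeneous in these coordinates, $V_p^{aff}(0)$ is a cone with vertex the origin. A dimension count ($\dim V_p^{aff}(0)=3$) shows that $g_1,g_2,g_3$ is a regular sequence in $\CC[a_{12},a_{13},a_{20},a_{21},a_{22},a_{23}]$, so that $V_p^{aff}(0)$ really is a complete intersection of type $(2,2,2)$ in $\AAA^6$, i.e.\ the affine cone over a $(2,2,2)$-surface in $\PP^5$; irreducibility is automatic since $\mathfrak p_0$ is prime.

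It remains to identify $\operatorname{Sing}V_p^{aff}(0)$. Since $V_p^{aff}(0)=V(g_1,g_2,g_3)$ is a complete intersection, its singular subscheme is defined by $g_1,g_2,g_3$ together with the $(3\times3)$-minors of the $3\times6$ Jacobian $\bigl(\partial g_i/\partial a_j\bigr)$. I would compute the radical of that ideal in Macaulay 2 and compare it with the ideal $I_2(M)$ of $(2\times2)$-minors of the $2\times5$ matrix $M$ displayed in the statement. The point that makes the answer transparent is that $M$ is $1$-generic, and after a linear change of the $a$-coordinates takes the catalecticant shape $\left(\begin{smallmatrix} z_0&z_1&z_2&z_3&z_4\\ z_1&z_2&z_3&z_4&z_5 \end{smallmatrix}\right)$, whose $(2\times2)$-minors cut out the rational normal curve of degree five in $\PP^5$; hence $C_0=\operatorname{Sing}V_p^{aff}(0)$ is the affine cone over that curve, a surface, and in particular $V_p^{aff}(0)$ is non-normal. \emph{The main obstacle} I anticipate is not conceptual but bookkeeping in this last step: reconciling the raw Jacobian-minors output with the clean determinantal ideal $I_2(M)$ (which needs a radical or saturation comparison), and then exhibiting the explicit linear change of coordinates putting $M$ in catalecticant form. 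A consistency check, not logically needed here, is that $V_p^{aff}(0)$, $V_p^{aff}(1)$ and $V_p^{aff}(2)$ are the traces on apolar schemes through $p$ of the three global components of $\VAPS(q^2,10)$ of Theorem \ref{three components}, with the cone over the $(2,2,2)$-surface matching the main $\GG(3,\mathcal E)$-component and the two affine spaces matching the two Lagrangian degeneracy loci.
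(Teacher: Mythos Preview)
Your proposal is correct and follows essentially the same approach as the paper: the proposition is stated there without a formal proof, justified only by the sentence ``Our Macaulay 2 computations reveal explicit equations for the components of $V_p^{aff}$,'' with the relevant code in the Appendix computing \texttt{minimalPrimes radunfold}. Your outline simply makes explicit the verification steps (primary decomposition, regular-sequence check, Jacobian singular locus, catalecticant normal form for $M$) that the paper leaves to the reader and the machine.
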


%\begin{remark}
%$V_p^{aff}(0)$ is singular along the cone $C_0$ over a rational normal curve of %degree $5$, defined by the $2\times 2$ minors of

%{\tiny 
%$$\begin{pmatrix}
%a12&10*a13+25*a22&a20&a21&a22\\
%a13&-2*a12+a23&a21&a22&a23
%\end{pmatrix}$$
%}
%\end{remark}
%On $V_{p}^{aff}(1)$, the matrix $A_5$ reduces to 
%\[
%A_5=
%\begin{pmatrix}-10a_{13}+5a_{20}-24a_{22}&-5a_{12}-12a_{21}&a_{20}&-a_{43}& a_{43}\\
%2a_{12}+5a_{21}&-5a_{13}-12a_{22}&a_{21}&-a_{43}&a_{43}\\
%2a_{13}+5a_{22}&a_{12}&a_{22}&-a_{43}&a_{43}\\
%(-2a_{12}+a_{23})/5&a_{13}&a_{23}&-a_{43}&a_{43}  \\
%-a_{43}/4&a_{43}/2&-a_{43}/4&0&0\\
%-a_{43}/4&a_{43}/2&-a_{43}/4&0&0\\
%-a_{43}/4&a_{43}/2&-a_{43}/4&0&0\\
%0&0&0&0&0\\
%0&0&0&0&0\\
%0&0&0&0&0\end{pmatrix}
%\]
%so that 
%\begin{tiny}
%$$G(a)=
%\begin{pmatrix}y_{0}^3 -6y_0y_2y_3\\
%y_{0}^2y_1 -2y_1y_2y_3\\
%y_{0}y_1^2 +2y_0y_2y_3\\
%y_1^3 +6y_1y_2y_3\\
%y_0^2y_2 -y_2^2y_3\\
%y_0y_1y_2\\
%y_1^2y_2+y_2^2y_3\\
%y_0y_2^2\\
%y_1y_2^2\\
%y_2^3\end{pmatrix}
%+
%\begin{pmatrix}-10a_{13}+5a_{20}-24a_{22}&-5a_{12}-12a_{21}&a_{20%}&-a_{43}\\
%2a_{12}+5a_{21}&-5a_{13}-12a_{22}&a_{21}&-a_{43}\\
%2a_{13}+5a_{22}&a_{12}&a_{22}&-a_{43}\\
%(-2a_{12}+a_{23})/5&a_{13}&a_{23}&-a_{43}  \\
%-a_{43}/4&a_{43}/2&-a_{43}/4&0\\
%-a_{43}/4&a_{43}/2&-a_{43}/4&0\\
%-a_{43}/4&a_{43}/2&-a_{43}/4&0\\
%0&0&0&0\\
%0&0&0&0\\
%0&0&0&0\end{pmatrix}\cdot 
%\begin{pmatrix}y_0^2y_3-y_2y_3^2\\ %y_0y_1y_3\\y_1^2y_3+y_2y_3^2\\y_0y_3^2-y_1y_3^2%\\y_3^3
%\end{pmatrix}$$
%   \end{tiny}

Applying only the linear forms in the ideal of $V_p^{aff}(0)$, we get that the cubic forms $J(a)$ that generate the apolar ideals in this component  are 
\begin{tiny}
$$J(a)=
\begin{pmatrix}y_{0}^3 -6y_0y_2y_3\\
y_{0}^2y_1 -2y_1y_2y_3\\
y_{0}y_1^2 +2y_0y_2y_3\\
y_1^3 +6y_1y_2y_3\\
y_0^2y_2 -y_2^2y_3\\
y_0y_1y_2\\
y_1^2y_2+y_2^2y_3\\
y_0y_2^2\\
y_1y_2^2\\
y_2^3\end{pmatrix}
+
\begin{pmatrix}-(10a_{13}-5a_{20}+24a_{22})&-(5a_{12}+12a_{21})&a_{20}\\
2a_{12}+5a_{21}&-(5a_{13}+12a_{22})&a_{21}\\
2a_{13}+5a_{22}&a_{12}&a_{22}\\
-(2a_{12}-a_{23})/5&a_{13}&a_{23}  \\
0&0&0\\
0&0&0\\
0&0&0\\
0&0&0\\
0&0&0\\
0&0&0\end{pmatrix}\cdot 
\begin{pmatrix}y_0^2y_3-y_2y_3^2\\ y_0y_1y_3\\y_1^2y_3+y_2y_3^2%\\y_0y_3^2\\y_1y_3^2%\\y_3^3
\end{pmatrix}.$$
   \end{tiny}
These equations show the following.
\begin{lemma}\label{supportonline}
An apolar scheme $\Gamma_a$ of length ten, with $a\in V_p^{aff}(0)$ intersects $Q^{-1}$ only at $p$. 
\end{lemma}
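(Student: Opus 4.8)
The plan is to show directly that no apolar scheme $\Gamma_a$ with $a\in V_p^{aff}(0)$ can be supported at a point of $Q^{-1}$ other than $p$. By Proposition \ref{apolar schemes} we already know $\Gamma_a$ is the union of a local scheme of length at least eight supported on some point $[l]\in Q^{-1}$ and a residual scheme of length at most two in the tangent plane at $[l]$ but off $Q^{-1}$; since $p\in\Gamma_a$ and $p\in Q^{-1}$, the point $p$ must be exactly that support point, \emph{unless} $\Gamma_a$ meets $Q^{-1}$ in a second point. So it suffices to rule out a second intersection point of $\Gamma_a$ with $Q^{-1}$.

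First I would extract, from the explicit generators $J(a)$ displayed just above (with only the linear forms of $I_{V_p^{aff}(0)}$ imposed, so $a_{42}=a_{43}=0$ and the three visible linear relations among the remaining entries), the subideal $I_{\Gamma_a}\cap \CC[y_0,y_1,y_2,y_3]_3$ together with the quadric $q^{-1}=\tfrac12 y_0^2-\tfrac12 y_1^2+y_2y_3$, and compute the saturation of $(I_{\Gamma_a}+(q^{-1}))$ — or equivalently compute the scheme-theoretic intersection $V(I_{\Gamma_a})\cap Q^{-1}$ — in Macaulay2, as was done for the other computations in this section. The key point is that all ten generators $J(a)$ have initial term (in the reverse lexicographic order) a monomial in $y_0,y_1,y_2$ only, and every correction term is a multiple of $y_3^2$ after using $a_{42}=a_{43}=0$ (indeed the correction matrix has entries only in its first three columns, multiplying $y_0^2y_3-y_2y_3^2,\ y_0y_1y_3,\ y_1^2y_3+y_2y_3^2$, each divisible by $y_3$, and the residual $y_3^2$ terms are what survive); hence modulo $y_3$ the ideal restricts to the ideal of $\Gamma_0$ at $p$, forcing any intersection with $Q^{-1}$ to be concentrated at $p=(0:0:0:1)$ set-theoretically. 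Making this rigorous means checking that $V(I_{\Gamma_a}, q^{-1})$ has its support only at $p$ for every $a$ in the component, which is a finite Gröbner-basis computation over the coordinate ring of $V_p^{aff}(0)$.

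Concretely, the steps are: (1) substitute the linear equations of $V_p^{aff}(0)$ into $J(a)$ to get the three-parameter family of ideals shown above; (2) form $I_{\Gamma_a}+(q^{-1})$ and compute a primary decomposition, or simply its radical, over $\CC[a_{12},a_{13},a_{20},a_{21},a_{22},a_{23}]/I_{V_p^{aff}(0)}$; (3) verify that the only associated prime containing all the $y_i$-relations is the maximal ideal $(y_0,y_1,y_2)$, i.e. the point $p$; and (4) conclude that $\Gamma_a$ meets $Q^{-1}$ only at $p$, so in the description of Proposition \ref{apolar schemes} the distinguished point must be $p$ and there is no second intersection. The main obstacle I anticipate is purely computational: one must certify the intersection is set-theoretically $\{p\}$ \emph{uniformly} over the whole component (including its singular locus $C_0$), rather than just at a general member; this is handled by working over the coordinate ring of $V_p^{aff}(0)$ rather than specializing, but the Gröbner bases involved are somewhat large, which is why the computation is relegated to the Appendix alongside the others referenced in the proof of Theorem \ref{main4}.
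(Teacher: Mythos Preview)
Your proposal has two genuine problems and misses the paper's actual strategy.

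\textbf{Circularity.} You invoke Proposition \ref{apolar schemes} to assert that $\Gamma_a$ has a length-$\geq 8$ component at a unique point of $Q^{-1}$ with residual part off $Q^{-1}$. But the proof of Proposition \ref{apolar schemes} in the paper \emph{explicitly cites Lemma \ref{supportonline}} to rule out support at two points of $Q^{-1}$. So you cannot use that proposition here; the logical order is the reverse.

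\textbf{The ``key point'' is wrong.} You claim that after setting $a_{42}=a_{43}=0$ every correction term is a multiple of $y_3^2$. It is not: the corrections are linear combinations of $y_0^2y_3-y_2y_3^2$, $y_0y_1y_3$, $y_1^2y_3+y_2y_3^2$, each divisible by $y_3$ but not by $y_3^2$. What setting $y_3=0$ actually shows is that $\Gamma_a$ misses the plane $\{y_3=0\}$, which is already part of the definition of $V_p^{aff}$; it says nothing about $\Gamma_a\cap Q^{-1}$ away from that plane. So the heuristic you offer does not force the intersection to be supported at $p$, and you are left with only the generic Gr\"obner computation over the parameter ring, which you correctly anticipate will be unpleasant.

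\textbf{What the paper does instead.} The paper avoids both the circularity and the parametric Gr\"obner computation. Using the higher-order polarity results that precede this lemma (specifically that $q_l=\tfrac12 p_l^2$ when $[l]\in Q^{-1}$, and that the residual of $\Gamma_a$ at $p$ lies in $\{q_p=0\}$, i.e.\ in the tangent plane $T_p$), any second support point of $\Gamma_a$ on $Q^{-1}$ must lie on one of the two lines of $Q^{-1}$ through $p$. This reduces the question to restricting the four nontrivial generators of $J(a)$ to each of those two lines. On each line the restriction becomes a $4\times 3$ system in $y_0^3$ and $y_0^2y_3$; a support point outside $p$ would force the row sums of the coefficient matrix to agree and be nonzero. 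The paper checks by hand that equal row sums, together with the three quadratic equations defining $V_p^{aff}(0)$, force the common row sum to be zero. This is an explicit two-page calculation, not a black-box computation, and it does not rely on Proposition \ref{apolar schemes}.

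Your fallback plan (compute $V(I_{\Gamma_a},q^{-1})$ uniformly over the component) could in principle be made to work, but you would first need to replace the appeal to Proposition \ref{apolar schemes} by the non-circular tangent-plane argument above, and then either carry out the parametric computation or, much better, follow the paper and reduce to the two lines.
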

\begin{proof}
We may assume that the apolar scheme has support at $p=(0:0:0:1).$ 
Any other point in the support must lie in the tangent plane to $Q^{-1}$ at $p$.  So if the other point lies on $Q^{-1}$, it lies in one of the two lines on this quadric through $p$.
On $$Q^{-1}=\left\{\frac{1}{2}y_0^2-\frac{1}{2}y_1^2+y_2y_3=0\right\}$$ these lines are  $\{y_2=y_0-y_1=0\}$ and $\{y_2=y_0+y_1=0\}$.
Restricted to the line $\{y_2=y_0-y_1=0\}$ in $Q^{-1}$, the generators $J(a)$ of $I_{\Gamma_a}$ reduce to 

%\begin{tiny}
$$J(a)=
\begin{pmatrix}y_{0}^3 \\
y_{0}^3\\
y_{0}^3\\
y_0^3 \\
\end{pmatrix}
+
\begin{pmatrix}-(10a_{13}-5a_{20}+24a_{22})&-(5a_{12}+12a_{21})&a_{20}\\
2a_{12}+5a_{21}&-(5a_{13}+12a_{22})&a_{21}\\
2a_{13}+5a_{22}&a_{12}&a_{22}\\
-(2a_{12}-a_{23})/5&a_{13}&a_{23}  \\
\end{pmatrix}\cdot 
\begin{pmatrix}y_0^2y_3\\ y_0^2y_3\\y_0^2y_3
%\\y_0y_3^2\\y_1y_3^2%\\y_3^3
\end{pmatrix}$$
%   \end{tiny}
So the apolar scheme $\Gamma_a$ has support at a point on this line outside $p=(0:0:0:1)$ only if the sum of the rows in the right hand matrix have a constant nonzero value.  The ideal generated by the differences of the row sums in the right hand matrix is generated by the three linear forms:
$$a_{20}-3a_{21}+3a_{22}-a_{23},9a_{13}-7a_{21}+26a_{22}-a_{23},9a_{12}+5a_{21}+20a_{22}-7a_{23}.$$
Substituting for $a_{12}, a_{13}, a_{20}$ into the complete intersection that defines $V_p^{aff}(0)$, we get that this complete intersection modulo the ideal generated by these linear forms is generated by the single quadratic form
$$(a_{21}-2a_{22}+a_{23})^2.$$
So on the complete intersection $V_p^{aff}(0)$, the three linear forms vanish on the linear space defined  by the four linear forms
$$a_{21}-2a_{22}+a_{23},a_{20}-3a_{22}+2a_{23},
3a_{13}+4a_{22}+2a_{23},3a_{12}+10a_{22}-4a_{23}.$$
But any of the above row sums, say the third row sum, $2a_{13}+a_{12}+6a_{22}$, lies in the span of these four linear forms:
$$
a_{12}+2a_{13}+6a_{22}=\frac{1}{3}(3a_{12}+10a_{22}-4a_{23})
+\frac{2}{3}(3a_{13}+4a_{22}+2a_{23}),
$$
 so when there is a common row sum, it is always zero.  This means that any apolar scheme with support at $p$ contained in the ideal of the line $\{y_2=y_0-y_1=0\}$, is local with support only at $p$.

Similarly, restricted to the line $\{y_2=y_0+y_1=0\}$ in $Q^{-1}$, the equations $J(a)$ reduce to 
%\begin{tiny}
$$J(a)=
\begin{pmatrix}y_{0}^3 \\
-y_{0}^3\\
y_{0}^3\\
-y_0^3 \\
\end{pmatrix}
+
\begin{pmatrix}-(10a_{13}-5a_{20}+24a_{22})&-(5a_{12}+12a_{21})&a_{20}\\
2a_{12}+5a_{21}&-(5a_{13}+12a_{22})&a_{21}\\
2a_{13}+5a_{22}&a_{12}&a_{22}\\
-(2a_{12}-a_{23})/5&a_{13}&a_{23}  \\
\end{pmatrix}\cdot 
\begin{pmatrix}y_0^2y_3\\ -y_0^2y_3\\y_0^2y_3
%\\y_0y_3^2\\y_1y_3^2%\\y_3^3
\end{pmatrix}$$
%   \end{tiny}
   The argument as above applies here also, with the same conclusion.
%So the apolar scheme has at most one point on this line outside $p$. 
\end{proof}

In Proposition \ref{local at p} we show that any apolar scheme in the two families $V_p^{aff}(1)$ and $V_p^{aff}(2)$ is local with support at $p$.
%A more detailed computational analysis, that we give in Appendix \ref{V0schemes} proves the following 

\begin{remark}Computations in Macaulay 2 \cite{Macaulay_2} also show that the ideal \( I_{\text{unfold}} \) generated by the entries in the matrix product $H(a){\cdot} J(a)$, whose radical is the ideal of \( V_{p}^{\text{aff}} \), defines a non-reduced structure with multiplicity 4 along \( V_{p}^{\text{aff}}(0) \), and multiplicity 5 along each of \( V_{p}^{\text{aff}}(1) \) and \( V_{p}^{\text{aff}}(2) \).
Moreover, the quadratic equations in \( I^{\text{red}}_{\text{unfold}} \) are not in the ideal of 2-minors of \( A_5 \), so even the reduced structure on \( V_{p}^{\text{aff}} \) is not a linear section of the Grassmannian \( \GG(10,((q^2)^\perp)_3) \).
The component \( V_{p}^{\text{aff}}(0) \) is a cone over a surface that is the union of a quadratic pencil of secant lines to \( C_0 \).
In the pencil, exactly two secant lines are tangent lines to \( C_0 \). These tangent lines are contained in \( V_{p}^{\text{aff}}(1) \) and \( V_{p}^{\text{aff}}(2) \), one in each.

\end{remark}

%\vskip 1cm
%WHAT DOES A GENERAL SCHEME %IN EACH COMPONENT LOOK LIKE?
%\vskip 1cm

Since we are interested in the global structure of $\VAPS(q^2)$ we need to consider an appropiate compactification of $V_p^{aff}$.
\begin{definition}
    For $p\in Q^{-1}$ let us denote by $V_p\subset \VAPS(q^2,10)$ the closure of the variety parameterizing schemes of length ten apolar to $q^2$ and containing $p$. 
\end{definition}

\begin{proposition}\label{three components local}
    The variety $V_p$ has exactly three components $V_p(0)$,$V_p(1)$  and $V_p(2)$, which are compactifications of $V_p^{aff}(0)$,$V_p^{aff}(1)$  and $V_p^{aff}(2)$ respectively.
\end{proposition}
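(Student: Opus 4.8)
The plan is to deduce the statement from Theorem \ref{main4} by passing to closures inside $\VAPS(q^2,10)$ and checking that no component is lost or gained. Recall that the affine deformation realizes $V_p^{aff}$ as a locally closed subvariety of $\VAPS(q^2,10)\subset\GG(10,((q^2)^\perp)_3)$ via $a\mapsto[\langle J(a)\rangle]$, and that it parameterizes exactly the apolar schemes of length ten that contain $p$ and are disjoint from the plane $H$ (the plane $\{y_3=0\}$ of Theorem \ref{main4}, equivalently a general plane with $p\notin H$); by Theorem \ref{main4} and Proposition \ref{componentequations} one has $V_p^{aff}=V_p^{aff}(0)\cup V_p^{aff}(1)\cup V_p^{aff}(2)$ with three irreducible components, each of dimension three. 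First I would set $V_p(i):=\overline{V_p^{aff}(i)}$, the closure in $\VAPS(q^2,10)$. Since $V_p^{aff}$ is open in its closure, $V_p^{aff}(i)=V_p(i)\cap V_p^{aff}$; in particular $V_p(i)=V_p(j)$ would force $V_p^{aff}(i)=V_p^{aff}(j)$, so the $V_p(i)$ are pairwise distinct, and each is irreducible of dimension three, contained in $V_p$, and not contained in either of the other two, hence not in their union.

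It then remains to prove the reverse inclusion $V_p\subseteq V_p(0)\cup V_p(1)\cup V_p(2)$. By definition $V_p$ is the closure of the locus $W_p$ of length ten schemes apolar to $q^2$ containing $p$; the unsaturated limit ideals that can occur on $V_p$ are limits of saturated ones (being contained in the ideal of a line in $Q^{-1}$ by Corollary \ref{unsaturatedinline}, they add no new component), so it suffices to show $V_p^{aff}$ is dense in $W_p$. Here I would invoke Proposition \ref{apolar schemes}: because $p\in Q^{-1}$, every $\Gamma\in W_p$ has its local component of length at least eight supported at $p$, with residual scheme of length at most two supported in the tangent plane to $Q^{-1}$ at $p$ and disjoint from $Q^{-1}$, and $\Gamma$ is recovered from this pair of data. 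Hence $\Gamma\notin V_p^{aff}$ precisely when a residual point lies on the line $H\cap T_pQ^{-1}$ (note $p\notin H$, so $H$ is not the tangent plane and the intersection is a genuine line). Since the residual scheme of a general member of each component of $W_p$ varies in a positive-dimensional family inside $T_pQ^{-1}\setminus Q^{-1}$, imposing that a residual point lie on the fixed line $H\cap T_pQ^{-1}$ drops the dimension, so $W_p\setminus V_p^{aff}$ is a proper closed subset of $W_p$ and $\overline{W_p}=\overline{V_p^{aff}}=V_p(0)\cup V_p(1)\cup V_p(2)$. Combining the two inclusions gives $V_p=V_p(0)\cup V_p(1)\cup V_p(2)$; the three pieces being irreducible, of equal dimension, pairwise distinct, and none contained in the union of the others, they are exactly the irreducible components of $V_p$, and each contains $V_p^{aff}(i)$ as a dense open subset, so it is a compactification of $V_p^{aff}(i)$.

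I expect the density step to be the main obstacle, specifically ruling out that an entire component of $W_p$ consists of schemes meeting $H$. Concretely, one must verify that for a general apolar scheme containing $p$ the residual point(s) genuinely move — equivalently, that a residual point can be deformed off $H$ while the scheme stays apolar to $q^2$ and keeps its fat component at $p$. This is where the analysis of residual schemes from Section \ref{main3} enters: Lemma \ref{residual in ql} and Proposition \ref{2+8} describe the residual scheme through the quadrics $q_l$ and the $(2,2)$-correspondence restricted to $T_pQ^{-1}$, and from this one can exhibit the positive-dimensional motion of the residual points. Two special cases should be checked separately: if a component $V_p^{aff}(i)$ generically parameterizes schemes of length ten concentrated at $p$, it already has empty intersection with the locus of schemes meeting $H$ and there is nothing to prove for it; otherwise the required motion of the residual scheme must be established.

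In the computational spirit of the rest of the paper, an alternative to the last step is to repeat the affine-unfolding construction of Theorem \ref{main4} with the chart centred at a residual point of the relevant family instead of at $p$, and to check in Macaulay2 that this new chart contains a dense open subset of $V_p^{aff}(i)$; the two charts then glue, exhibiting $V_p(i)$ directly as the compactification of $V_p^{aff}(i)$ and confirming that $V_p$ has exactly the three components $V_p(0),V_p(1),V_p(2)$.
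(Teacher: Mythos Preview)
Your overall structure is sound and matches the paper: define $V_p(i)$ as the Grassmannian closures of $V_p^{\mathrm{aff}}(i)$, observe they are three distinct irreducible $3$-folds, and then argue that $V_p$ has no further component. You also correctly isolate the only nontrivial step, namely the density of $V_p^{\mathrm{aff}}$ in the locus $W_p$ of apolar schemes containing $p$.

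For that density step the paper uses a genuinely different and shorter argument than either of your two proposals. Instead of analysing how the residual length-$\le 2$ part of an apolar scheme moves inside $T_pQ^{-1}$, the paper moves the auxiliary plane: the plane $\{y_3=0\}$ is not a ``general plane'' but a tangent plane to $Q^{-1}$ (at a point different from $p$), and the stabiliser of $p$ inside the automorphism group of $Q^{-1}$ is a connected group acting on the family of tangent planes with a dense orbit containing $\{y_3=0\}$. Given any $\Gamma\in W_p$ one picks an automorphism $\varphi$ fixing $p$ and $Q^{-1}$ such that $\varphi(\{y_3=0\})$ avoids $\Gamma$; then $\Gamma\in\varphi(V_p^{\mathrm{aff}})\subset\varphi(V_p(j))$ for some $j$, and connectedness of the stabiliser forces $\varphi(V_p(j))=V_p(j)$, so $\Gamma\in V_p(j)$. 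This bypasses the residual-point case analysis and any further computation. Your approach (a) would also work---for $V_p^{\mathrm{aff}}(1)$ and $V_p^{\mathrm{aff}}(2)$ there is nothing to prove since by Proposition~\ref{local at p} their members are concentrated at $p$, and for $V_p^{\mathrm{aff}}(0)$ the conjugate-pair description of Section~\ref{main3} does show the residual points sweep out a surface in $T_p$---but it is more laborious and, as you note, leans on later results. Your approach (b) as phrased (``chart centred at a residual point'') is a bit unclear; recentring the unfolding at a different tangent plane is precisely the paper's idea, carried out group-theoretically rather than computationally.
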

\begin{proof}
 Observe that since  $V_p$ is closed and contains $V_p^{aff}(0)$,$V_p^{aff}(1)$  and $V_p^{aff}(2)$ it has at least three components, namely the respective closures $V_p(0)$,$V_p(1)$  and $V_p(2)$ of $V_p^{aff}(0)$,$V_p^{aff}(1)$  and $V_p^{aff}(2)$. It remains to prove that $V_p$ has no components outside $V_p^{aff}$. Note that $V_p^{aff}$ is defined by the additional condition of not meeting the plane $y_3=0$. However one can apply on $\mathbb P^3$ an automorphism $\varphi$ preserving $Q^{-1}$ and $p$, but changing the plane $y_3=0$ to any other general plane $y'_3=0$ tangent to $Q^{-1}$. In that case image of $V_p^{aff}$ under the induced automorphism of the Grassmannian ${\rm G}$ will be isomorphic to $V_p^{aff}$, i.e. will still have three components that represent schemes apolar to $q$ that do not intersect $y'_3=0$. Since for every scheme $S$ of length ten there exists an open and dense set of planes that are tangent to $Q^{-1}$ and do not meet $S$ we conclude that $S$ is an element of one of the three components $V_p(0)$, $V_p(1)$  or $V_p(2)$.
\end{proof}
Next, for $p\in Q^{-1}$ we consider  the global structure of the compactification $V_p(0),V_p(1)$  and $V_p(2)$  of the components of $V_{p}^{aff}$ in $\GG(10,((q^2)^{\bot})_3).$

\subsection{The compactification of the main component}\label{compactificationofmain}

In this and the next subsections the arguments are still quite computational. To simplify computations we change coordinates. 
From here on we set
$q=x_0x_1+x_2x_3$, and so to get apolarity w.r.t $q^2$ with apolar schemes containing $p=(0:0:1:0)$  we use the transformation 
$$(y_0,y_1,y_2,y_3)\mapsto (y_1+y_0,y_1-y_0,y_3,y_2)$$
with the same parameters $a_{ij}$ as in the previous subsection.  The transformation maps forms apolar to $(x_0^2-x_1^2+x_2x_3)^2$ to forms apolar to $(x_0x_1+x_2x_3)^2$ and the point $\{y_0=y_1=y_2=0\}$ to the point $\{y_0=y_1=y_3=0\}$.

We start with the main component $V_p(0)$, and 
summarize our findings from the previous sections on apolar ideals in the compactification $V_p(0)$ of $V_p^{aff}(0)$:    
\begin{proposition}\label{grassmannian compactification of Vp0} Let $p\in Q^{-1}$,  then any apolar ideal $I$ in $V_p(0)$ is either unsaturated and is contained in the ideal of one of the two lines in $Q^{-1}$ through $p$, or $I$ is the saturated ideal of a scheme supported at $p$ and at most two points outside $Q^{-1}$ but in the tangent space to $Q^{-1}$ at $p$.
\end{proposition}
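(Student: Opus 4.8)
The plan is to assemble the statement from results already established, treating separately the saturated and unsaturated ideals that can occur in the compactification $V_p(0)$. The starting point is Proposition \ref{three components local}, which tells us that $V_p(0)$ is the closure of $V_p^{aff}(0)$ in the Grassmannian ${\rm G}=\GG(10,((q^2)^\bot)_3)$. An ideal $I$ with $[I]\in V_p(0)$ is then either a limit of saturated ideals lying in $V_p^{aff}(0)$, hence corresponds (if still saturated) to an apolar scheme of length ten, or it is an unsaturated limit ideal. First I would dispose of the saturated case: if $I$ is the saturated ideal of an apolar scheme $\Gamma$ of length ten with $p$ in its support, then Proposition \ref{apolar schemes} forces $\Gamma$ to be the union of a local component of length at least eight at a point of $Q^{-1}$ and a residual scheme of length at most two lying in the corresponding tangent plane but off $Q^{-1}$. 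To pin the local component at $p$ itself, I would invoke Lemma \ref{supportonline} for the generic members of $V_p^{aff}(0)$ together with Proposition \ref{local at p} (cited in the excerpt) for the two linear families, and then argue that the condition ``$\Gamma$ is supported only at $p$ off the residual two points'' is closed, so it persists on all of $V_p(0)$; alternatively, since any point of the support other than $p$ must lie in a tangent plane to $Q^{-1}$ at $p$ and on $Q^{-1}$, it lies on one of the two lines of $Q^{-1}$ through $p$, and Lemma \ref{supportonline} rules this out for schemes coming from $V_p^{aff}(0)$.

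For the unsaturated case, the tool is Corollary \ref{unsaturatedinline}: any unsaturated apolar limit ideal of a length-ten scheme is contained in the ideal of a line in $Q^{-1}$. Combined with the fact that $[I]\in V_p(0)$ arises as a limit of ideals of schemes all containing $p$ — so $V(I)$ contains $p$ as well — the line must pass through $p$, and there are exactly two such lines on $Q^{-1}$. This gives the dichotomy in the statement. The one point that needs a small separate argument is why the unsaturated limit ideals appearing specifically in $V_p(0)$ (as opposed to in $V_p(1)$ or $V_p(2)$) are indeed of this form; here I would note that Corollary \ref{unsaturatedinline} applies to every unsaturated apolar limit ideal of a length-ten scheme regardless of which component it sits in, so nothing extra is required.

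The main obstacle, I expect, is the bookkeeping needed to be sure that the closure $V_p(0)$ does not acquire spurious boundary points that are neither saturated ideals of the described shape nor unsaturated ideals supported on a line of $Q^{-1}$ — in other words, that the forgetful map to ${\rm G}$ does not glue in components from outside $\mathrm{VAPS}(q^2,10)$. This is handled by the observation, already made in the proof of Proposition \ref{three components local}, that every length-ten scheme avoids a dense open set of tangent planes to $Q^{-1}$, so every point of $\VAPS(q^2,10)$ — and in particular every point of $V_p(0)$ that corresponds to a saturated ideal — lies in an affine chart $V_{p'}^{aff}$ for a suitable $p'$, where Theorem \ref{main4} and Proposition \ref{componentequations} give the explicit description; the remaining points are then the unsaturated limits, controlled by Lemma \ref{line or conic} and Corollary \ref{unsaturatedinline}. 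Assembling these pieces yields the proposition.
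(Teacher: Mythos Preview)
Your approach is essentially the paper's: Proposition~\ref{apolar schemes} plus Lemma~\ref{supportonline} for the saturated case, and Lemma~\ref{line or conic}/Corollary~\ref{unsaturatedinline} together with a closure argument for the unsaturated case. Two points deserve comment.

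First, invoking Proposition~\ref{local at p} is unnecessary here. That proposition concerns $V_p^{aff}(1)$ and $V_p^{aff}(2)$, whereas the statement is about $V_p(0)$ only. For the saturated case Proposition~\ref{apolar schemes} already suffices to pin the large component at $p$: any saturated $[I]\in V_p(0)$ defines an apolar scheme $\Gamma$ containing $p$; by Proposition~\ref{apolar schemes} the length-$\geq 8$ component of $\Gamma$ sits at some $p'\in Q^{-1}$ and the residual lies in $T_{p'}\setminus Q^{-1}$; since $p\in Q^{-1}$ is in the support, $p'=p$.

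Second, and more substantively, your unsaturated argument has a gap. You conclude from $p\in V(I)$ and $I\subset I_L$ (Corollary~\ref{unsaturatedinline}) that $p\in L$. This does not follow: both conditions together only say that the cubics in $I$ vanish on $L\cup\{p\}$, and for a line $L\subset Q^{-1}$ not through $p$ the space of apolar cubics vanishing on $L\cup\{p\}$ is still $11$-dimensional, so a $10$-space $I$ can sit inside it. What the paper does instead is argue that \emph{all} of $V(I)$ lies in the tangent plane $T_p$: every scheme in $V_p^{aff}(0)$ is supported in $T_p$ (Proposition~\ref{apolar schemes}), and this passes to the closure since the condition that the $10$-space contain $B_{T_p}$ (the apolar cubics divisible by the equation of $T_p$) is closed in the Grassmannian. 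Then the line $L\subset Q^{-1}$ from Corollary~\ref{unsaturatedinline} is forced into $T_p\cap Q^{-1}$, which consists exactly of the two lines through $p$. Replacing ``$p\in V(I)$'' by ``$V(I)\subset T_p$'' fixes your argument and brings it in line with the paper's.
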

\begin{proof}
The case when the apolar ideal is saturated follows from Proposition \ref{apolar schemes} and Lemma \ref{supportonline}.  If the ideal $I$ is not saturated, then, by Lemma \ref{line or conic} the support $V(I)$ is contained in a conic section or has length at least $6$ in a line in $Q^{-1}$.  Since all apolar schemes in $V_p^{aff}(0)$  are supported in the tangent plane to  $Q^{-1}$ at $p$, the same holds for ideals in the compactification, which means that $V(I)$ has support in one of the two lines in $Q^{-1}$ through $p$.
\end{proof}

 %Further computations are simplified by changing coordinates. %From here on we set
With $q=x_0x_1+x_2x_3$, the inverse quadric is $Q^{-1}=\{y_0y_1+y_2y_3=0\}$ and the space of apolar cubic forms of $$q^2=(x_0x_1+x_2x_3)^2$$ is
 \begin{align*}(q^2)^\bot_3=\langle &y_3^3, y_1y_3^2, y_0y_3^2, y_1^2y_3, 2y_0y_1y_3-y_2y_3^2, y_0^2y_3, y_2^3, y_1y_2^2, y_0y_2^2,y_1^2y_2,\\ & 2y_0y_1y_2-y_2^2y_3, y_0^2y_2, y_1^3, y_0y_1^2-2y_1y_2y_3, y_0^2y_1-2y_0y_2y_3, y_0^3\rangle .
        \end{align*}
For each $p\in Q^{-1}$ we consider the following subspaces of apolar cubics:
\begin{itemize}\label{cubics at p}
\item $B_{T_p}$ is the space of apolar cubics that vanish on the tangent space $T_p$ to $Q^{-1}$ at $p$.
\item $A_p$ is the space of apolar cubics whose restriction to $T_p$ is singular at $p$.
\item $C_p$ is the space of apolar cubics whose restriction to $T_p$ has multiplicity at least three at $p$.
\end{itemize}

When $p=(0:0:1:0)$, then the tangent space $T_p=\{y_3=0\}.$  We fix this choice of point. 

\begin{remark}\label{Cp} The space $C_p$ is $10$-dimensional and generate the ideal $I_{\Gamma_0}$ of the tautological scheme $\Gamma_0$.
\end{remark}
On $V_{p}^{aff}(0)$, the complete intersection, the cubic forms generating ideals of apolar scheme of length ten are  parameterized by $(a_{12},a_{13},a_{20},a_{21},a_{22},a_{23})$.

In the new variables, 
  the $10$-space of cubic forms in an ideal in $V_{p}^{aff}(0)$ is contained in the $13$-dimensional space $A_p$ and contains the $6$-dimensional space 
 \[
B:=B_{T_p}=\langle y_{3}^3, y_{3}^2y_{0},y_{3}^2y_{1}, y_{3}y_{0}^2,y_{3}y_{1}^2, 2y_{3}y_{0}y_{1}-y_{2}y_{3}^2\rangle
\]
of apolar cubic multiples of $y_3$. So the quotient in  $ A:=A_p/B$ of each ideal in $V_{p}^{aff}(0)$ is a $4$-space of cubic forms modulo $B$. The
%that are all singular at $p$ after restriction to the plane $\{y3=0\}$,  for details.
 $7$-dimensional space $A$ is generated, modulo $B$, by
 \begin{align}\label{Alocal}
 y_{0}^3, y_{0}^2y_{2},y_{0}^2y_{1}-2y_{0}y_{2}y_{3}, 2y_{0}y_{1}y_{2}-y_{3}y_{2}^2,y_{0}y_{1}^2-2y_{1}y_{2}y_{3}, y_{1}^2y_{2},y_{1}^3,
\end{align}
 see Appendix \ref{Appendix} for details.
 
%The cubics in $a$ span a  $7$-dimensional space $A$ of apolar cubics.

\begin{lemma}\label{fibration of main component}
   The Grassmannian compactifications $V_p(0),V_{p'}(0)$, for distinct points $p,p'\in Q^{-1}$ are disjoint.
\end{lemma}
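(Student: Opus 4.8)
The plan is to recover the point $p$ from the degree three part of an apolar ideal. Recall from the discussion above that for every $p\in Q^{-1}$ and every $[W]\in V_p^{aff}(0)$ the ten dimensional space of cubics $W\subset (q^2)^\bot_3$ contains the six dimensional space $B=B_{T_p}$ and is contained in the thirteen dimensional space $A_p$. Since ``$B_{T_p}\subseteq W$'' and ``$W\subseteq A_p$'' are closed conditions on $W$ in the Grassmannian $\GG(10,(q^2)^\bot_3)$, they pass to the closure, so $B_{T_p}\subseteq W\subseteq A_p$ for every $[W]\in V_p(0)$. Consequently, if $[W]\in V_p(0)\cap V_{p'}(0)$ for $p,p'\in Q^{-1}$, then $W$ is ten dimensional with $B_{T_p}+B_{T_{p'}}\subseteq W\subseteq A_p$, and it remains to show this forces $p=p'$. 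Both the dimension $\dim_\CC(B_{T_p}+B_{T_{p'}})$ and the validity of the inclusion $B_{T_{p'}}\subseteq A_p$ depend only on the $SO(q)$-orbit of the pair $(p,p')$; on ordered pairs of distinct points of $Q^{-1}\cong\PP^1\times\PP^1$ that group has just two relevant orbits, namely pairs lying on a common line of $Q^{-1}$ and pairs that do not. So it suffices to rule out $[W]\in V_p(0)\cap V_{p'}(0)$ for one representative of each orbit, and below I fix $q=x_0x_1+x_2x_3$, so $Q^{-1}=\{y_0y_1+y_2y_3=0\}$.

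Suppose first $p$ and $p'$ do not lie on a common line of $Q^{-1}$, say $p=(0:0:1:0)$, $p'=(0:0:0:1)$, with tangent planes $T_p=\{y_3=0\}$ and $T_{p'}=\{y_2=0\}$. An element of $B_{T_p}\cap B_{T_{p'}}$ is an apolar cubic divisible by $y_2y_3$, that is, of the form $y_2y_3\,m$ with $m\in T_1$ annihilating $\partial_{x_2}\partial_{x_3}(q^2)=2x_0x_1+4x_2x_3$. As this quadratic form is nondegenerate, $m=0$, so $B_{T_p}\cap B_{T_{p'}}=0$ and $\dim_\CC(B_{T_p}+B_{T_{p'}})=12$. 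A ten dimensional $W$ cannot contain a twelve dimensional subspace, so this orbit does not occur.

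Now suppose $p,p'$ lie on a common line $L\subset Q^{-1}$, say $p=(0:0:1:0)$, $p'=(0:1:0:0)$, so $L=T_p\cap T_{p'}$ with $T_p=\{y_3=0\}$ and $T_{p'}=\{y_0=0\}$. Here the apolar cubics divisible by $y_0y_3$ span the two dimensional space $\langle y_0^2y_3,\,y_0y_3^2\rangle$, hence $\dim_\CC(B_{T_p}+B_{T_{p'}})=10$; being itself ten dimensional, $W$ would then equal $B_{T_p}+B_{T_{p'}}$, and in particular $B_{T_{p'}}\subseteq A_p$. But $y_0y_2^2$ is an apolar cubic divisible by $y_0$, hence lies in $B_{T_{p'}}$, whereas its restriction to $T_p=\{y_3=0\}$, the plane cubic $y_0y_2^2$, has partial derivative $\partial_{y_0}=y_2^2$ nonzero at $p=(0:0:1:0)$ and is therefore smooth at $p$; thus $y_0y_2^2\in B_{T_{p'}}\setminus A_p$, a contradiction. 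Hence this orbit does not occur either, and $V_p(0)\cap V_{p'}(0)=\emptyset$ whenever $p\neq p'$.

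The delicate point is the common-line case: there the spaces $B_{T_p}$ alone do not separate $p$ from $p'$, since their sum is exactly ten dimensional, and one is forced to use the upper containment $W\subseteq A_p$ together with an analysis of the restriction to $T_p$ of apolar cubics vanishing on $T_{p'}$. The reduction to a single $SO(q)$-orbit representative is what keeps this a short explicit verification rather than a computation over a moduli of pairs.
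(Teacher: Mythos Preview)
Your proof is correct and uses the same underlying mechanism as the paper: the closed containments $B_{T_p}\subseteq W\subseteq A_p$ persist to the Grassmannian closure $V_p(0)$, and then one exhibits an incompatibility between these for distinct $p,p'$. The paper argues slightly differently in execution---it singles out the pencil $\langle y_0^2y_3,\,y_1^2y_3\rangle\subset B_{T_p}$ and checks directly that membership in $A_{p'}$ forces $p'\in\{y_0=y_1=0\}$, then disposes of the one remaining point $p'=(0{:}0{:}0{:}1)$ via $y_2^3\in B_{T_{p'}}\setminus A_p$---whereas you use a dimension count for the non-collinear case and the inclusion $B_{T_{p'}}\subseteq A_p$ for the collinear case. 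Both routes are short; yours is arguably a bit more conceptual.

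One small inaccuracy: the connected group $SO(q)\cong (SL_2\times SL_2)/\{\pm1\}$ preserves each ruling of $Q^{-1}$, so on ordered pairs of distinct points it has \emph{three} orbits (non-collinear, collinear in the first ruling, collinear in the second), not two. Either replace $SO(q)$ by the full orthogonal group $O(q)$ (which does swap the rulings), or note that the two collinear cases are interchanged by the involution $y_0\leftrightarrow y_1$, so your computation for $p'=(0{:}1{:}0{:}0)$ carries over verbatim to $p'=(1{:}0{:}0{:}0)$ with $y_1y_2^2$ playing the role of $y_0y_2^2$. This does not affect the validity of the argument. Also, in the collinear case the dimension count is not actually needed: from $B_{T_{p'}}\subseteq W\subseteq A_p$ you get $B_{T_{p'}}\subseteq A_p$ directly, and your explicit cubic $y_0y_2^2$ already contradicts this.
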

\begin{proof} Any apolar ideal $I$ in the Grassmannian compactifications of both $V_p(0)$ and $V_{p'}(0)$ contains the $5$-space of apolar cubics that contain the tangent plane $T_p$ and have multiplicity $3$ at $p$.  Furthermore, the $10$-dimensional space of cubic generators of $I$ are all singular at $p$ when restricted to the tangent plane $T_p$ and similarly for $p'$. In particular, every cubic in $I$ vanishes at both $p$ and $p'$.   By homogeneity, we may assume $p=(0:0:1:0)$.  Consider the pencil of apolar cubics  $$\langle y_0^2y_3,y_1^2y_3\rangle$$ that contain $T_p=\{y_3=0\}$ and have multiplicity $3$ at $p$ and therefore lie in $I$.   Then all cubics in this pencil are singular at the point $p'\not= p$ when restricted to $T_{p'}$, only if $y_0=y_1=0$ at $p'$, i.e. when $p'=(0:0:0:1)$. But then $T_{p'}=\{y_2=0\}$ and so $y_2^3\in I$ which does vanish at $p$, so the lemma follows.
\end{proof}
%\subsection{The compactification of $V_p(0)$ as $V_{22}$ and a $7\times 7$ skew matrix} 
%Consider the double quadric $q=(x_0x_1+x_2x_3)^2$ in this case
%\[
%a=\{y_{0}^3, y_{0}^2y_{2},y_{0}^2y_{1}-2y_{0}y_{2}y_{3}, 2y_{0}y_{1}y_{2}-%y_{3}y_{2}^2,y_{0}y_{1}^2-2y_{1}y_{2}y_{3}, y_{1}^2y_{2},y_{1}^3\}
%\]

%generate a 7-dimensional space $A$ of apolar cubics.  Additionally there is %a 6-dimensional space $B$ of cubics apolar to $q$ and which are in the %ideal generated by $y_3$.

%\[
%B=<y_{3}^3, y_{3}^2y_{0},y_{3}^2y_{1}, y_{3}y_{0}^2,y_{3}y_{1}^2, %2y_{3}y_{0}y_{1}-y_{2}y_{3}^2>
%\]

To further investigate the component $V_p(0)$, we identify the $4$-dimensional subspaces of $A$ that together with $B$ generate ideals of apolar schemes.
We do this by considering the space of linear syzygies of these ideals.

We start by noting that the following skew symmetric matrix 
\[
M=\begin{pmatrix}\label{Mp-matrix} 0& 0& 0&0& 0& 3y_{1}&-3y_{2}\cr 0& 0&0& 0& -3y_{1}&-6y_{3}&3y_{0}\cr
0& 0& 0& 2y_{1}&-y_{2}&-3y_{0}&0\cr
 0& 0&-2y_{1}& 0& 2y_{0}&0&0\cr
0& 3y_{1}&y_{2}&-2y_{0}&0&0&0\cr
-3y_{1}&6y_{3}&3y_{0}& 0& 0&0&0\cr
3y_{2}&-3y_{0}& 0&0& 0& 0&0\cr

\end{pmatrix}.
\]
 is the matrix of linear syzygies for the set of cubics in $A$ In particular, with the vector $a$ of generators of $A$ as displayed in (\ref{Alocal}), $a\cdot M=M\cdot a^t=0$.
%Note that $M$ is a syzygy matrix of $a$.
Furthermore, we have the following:
\begin{lemma}\label{syzygies a B} The set $Ay_3$ of products of cubics from $A$ with the linear form $y_3$ is contained in the ideal generated by $B$. 
\end{lemma}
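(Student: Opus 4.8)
The plan is to reduce the statement to a finite, entirely explicit check. Since the seven cubics listed in \eqref{Alocal} together with $B$ span $A_p$, and $By_3\subseteq(B)$ holds trivially, it suffices to prove that $gy_3\in(B)$ for each of those seven representatives $g$; the lemma then follows by linearity, the product being well defined modulo $By_3$. Throughout I would use that $(B)$ is generated by $y_3^3$, $y_0y_3^2$, $y_1y_3^2$, $y_0^2y_3$, $y_1^2y_3$ and $g_6:=2y_0y_1y_3-y_2y_3^2$, and I would exploit the involution $y_0\leftrightarrow y_1$, which preserves $B$, to halve the casework.

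First I would dispatch the four ``pure'' generators: $y_3\cdot y_0^3=y_0\,(y_0^2y_3)$, $y_3\cdot y_0^2y_2=y_2\,(y_0^2y_3)$, and the two images under $y_0\leftrightarrow y_1$, namely $y_3\cdot y_1^3=y_1\,(y_1^2y_3)$ and $y_3\cdot y_1^2y_2=y_2\,(y_1^2y_3)$; each is a monomial multiple of $y_0^2y_3$ or $y_1^2y_3$, hence visibly in $(B)$. Next comes the generator $2y_0y_1y_2-y_2^2y_3$, for which $y_3(2y_0y_1y_2-y_2^2y_3)=y_2(2y_0y_1y_3-y_2y_3^2)=y_2\,g_6\in(B)$; this case is immediate once one notices the factorisation through $g_6$.

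The only cases that require a genuine, if small, manipulation are the two ``mixed'' generators $y_0^2y_1-2y_0y_2y_3$ and $y_0y_1^2-2y_1y_2y_3$, and I expect this to be the main obstacle — modest, but the one spot where care is needed. A term-by-term reduction fails here because the monomial $y_2y_3^2$ does \emph{not} lie in the degree-three component $(B)_3$, so the summand $y_0y_2y_3^2$ cannot simply be discarded. Instead I would rewrite $y_2y_3^2=2y_0y_1y_3-g_6$, so that $y_0y_2y_3^2=2y_1\,(y_0^2y_3)-y_0\,g_6$, and then
\[
y_3\bigl(y_0^2y_1-2y_0y_2y_3\bigr)=y_0^2y_1y_3-2y_0y_2y_3^2=-3y_1\,(y_0^2y_3)+2y_0\,g_6\in(B).
\]
Applying the involution $y_0\leftrightarrow y_1$ gives $y_3\bigl(y_0y_1^2-2y_1y_2y_3\bigr)=-3y_0\,(y_1^2y_3)+2y_1\,g_6\in(B)$. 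Having verified all seven spanning cubics, one concludes $A_py_3\subseteq(B)$, which is exactly the assertion of the lemma; everything beyond the single substitution in the mixed case is pure bookkeeping.
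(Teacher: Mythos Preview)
Your proof is correct. You verify $gy_3\in(B)$ explicitly for each of the seven listed generators of $A$ (modulo $B$), handling the two ``mixed'' generators via the rewriting $y_2y_3^2=2y_0y_1y_3-g_6$, and you correctly note that the ambiguity of representatives is harmless because $By_3\subset(B)$ trivially. The use of the involution $y_0\leftrightarrow y_1$ to halve the casework is valid since this involution preserves both $B$ and the set of seven generators.

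The paper takes a different, more structural route: it observes that $B$ generates the entire ideal $(q^2)^\perp\cap(y_3)$, i.e.\ the ideal of apolar forms divisible by $y_3$. Granting this, the lemma is immediate, since $A\subset(q^2)^\perp_3$ implies $Ay_3\subset(q^2)^\perp_4\cap(y_3)_4\subset(B)$. Your approach trades this structural claim for a short explicit check; it is more elementary and self-contained, while the paper's argument is quicker once one accepts (or verifies separately) that $B$ really does generate all apolar multiples of $y_3$. Both are perfectly adequate for a statement of this size.
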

\begin{proof} It is enough to observe that $B$ generates the ideal of all forms apolar to $q$ which are in the ideal generated by $y_3$.
\end{proof}
Now we introduce the following notation. If $f$ is a polynomial form in $\mathbb{C}[y_1\dots y_4]$ we define by $f'$ its restriction to the plane $\{y_3=0\}$, i.e.~the polynomial obtained from $f$ by substituting $y_3=0$. We extend this notation to matrices and sets or spaces of polynomials.
\begin{lemma} \label{lifting from tangent} The map $r: A\ni f\mapsto f'\in A'$ is an isomorphism. 
\end{lemma}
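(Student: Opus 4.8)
The plan is to recognize $r$ as the map induced on the quotient $A=A_p/B$ by restriction of cubics to the plane $T_p=\{y_3=0\}$, to identify its kernel, and then invoke the triviality that an injective linear map is an isomorphism onto its image.

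First I would introduce the restriction homomorphism $\rho\colon (q^2)^\perp_3\to\CC[y_0,y_1,y_2]_3$ sending $f$ to $f'$, i.e.\ to the polynomial obtained by substituting $y_3=0$. A homogeneous cubic becomes $0$ under this substitution exactly when it is divisible by $y_3$, and an apolar cubic divisible by $y_3$ is by definition an element of $B=B_{T_p}$; hence $\ker\rho=B$. It is worth recording that $\rho$ is surjective, since $\dim_\CC (q^2)^\perp_3=16$, $\dim_\CC B=6$ and $\dim_\CC\CC[y_0,y_1,y_2]_3=10$; this pins down the image of $A_p$ as the space of plane cubics singular at $p$.

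Next, since the zero cubic is vacuously singular at $p$ we have $B\subseteq A_p$, so the restriction of $\rho$ to $A_p$ again has kernel exactly $B$, and therefore descends to an injective linear map $\bar\rho\colon A=A_p/B\hookrightarrow\CC[y_0,y_1,y_2]_3$. By the convention on the prime notation for spaces of polynomials, the image of $\bar\rho$ is precisely the subspace $A'$, and $\bar\rho$ is the map $r$ of the statement; being injective onto its image, $r$ is an isomorphism. As a sanity check, $\dim_\CC A=\dim_\CC A_p-\dim_\CC B=13-6=7$, while $A'$ is the space of plane cubics in $y_0,y_1,y_2$ of multiplicity at least two at $p=(0:0:1)$, cut out by the vanishing of the coefficients of $y_2^3$, $y_0y_2^2$ and $y_1y_2^2$, hence also $7$-dimensional; this is consistent with the explicit generators listed in $(\ref{Alocal})$, whose restrictions $y_0^3,\,y_0^2y_2,\,y_0^2y_1,\,y_0y_1y_2,\,y_0y_1^2,\,y_1^2y_2,\,y_1^3$ are visibly linearly independent.

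I do not expect any real obstacle: the entire content is the identification $\ker\rho=B$, which is immediate from the definition of $B$ as the space of apolar cubics vanishing on $T_p$. The one point to be careful about is that $B$ is the \emph{whole} kernel of the restriction on apolar cubics, not merely a subspace of it; once that is noted, the rest is formal linear algebra together with the already-established dimension $\dim_\CC A_p=13$.
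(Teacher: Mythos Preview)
Your proposal is correct and essentially the same as the paper's own proof. The paper's one-line argument, ``it is enough to see that each of the monomials in $A'$ appears only once in $A$'', is exactly the explicit basis verification you include as your final sanity check; your more structural identification of $\ker\rho=B$ and the induced injection $A=A_p/B\hookrightarrow\CC[y_0,y_1,y_2]_3$ with image $A'$ is a clean conceptual packaging of the same fact.
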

\begin{proof} It is enough to see that each of the monomials in $A'$ appears only once in $A$.
\end{proof}
Let $P\subset A$ be a $3$-dimensional subspace such that $P'\subset A'$ is an isotropic subspace with respect to $M'$.  Then $M_P=P\cdot M$ is $3$-dimensional space of syzygies, whose restriction to $A'$ are syzygies of a $4$-dimensional subspace $A_P'\subset A'$.  In fact $A_P'$ is spanned by the maximal minors of the $(3\times 7)$-matrix $P'\cdot M'$.
\begin{proposition} \label{V22mapstoVSP} Let $P$ be a $3$-dimensional subspace of $A$ such that $P'$ is isotropic with respect to $M'$, and let $A_P'$ be the space of maximal minors of $M'_P=P'\cdot M'$. 
%Let $M_P$ be the corresponding submatrix of syzygies of $A$, and let $A_P'$ be the space of maximal minors of $M'_P$.   
Set $A_P=r^{-1}(A_P')$.
Then $A_P$ together with $B$ generates a $10$-dimensional space of apolar cubic forms with a $15$-dimensional space of linear syzygies.
\end{proposition}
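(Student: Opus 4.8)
The plan is to produce the $10$-dimensional space of cubics $\langle A_P,B\rangle$ together with an explicit $(15\times 10)$-syzygy matrix, and then verify by a rank computation that these are indeed $15$ independent linear syzygies. First I would assemble the candidate syzygies from three sources. The first batch comes from $M_P=P\cdot M$: since $a\cdot M=0$ with $a$ the generator vector of $A$, each of the three columns of $M_P$ is a syzygy among the generators of $A$, and by Lemma~\ref{lifting from tangent} it lifts (via $r^{-1}$) to a syzygy among the generators of $A_P$ once we also account, using Lemma~\ref{syzygies a B}, for the correction terms that are multiples of $y_3$ and hence land in the ideal generated by $B$. The second batch is the linear syzygies internal to $B$: the $6$-dimensional space $B$ of apolar cubics divisible by $y_3$ is the degree-$3$ part of the ideal generated by $B$, and $B$ has its own linear syzygies, which one reads off from the Hilbert--Burch/Betti data of that ideal (this is where the ``$y_3$-divisible'' structure is completely controlled). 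The third batch consists of the \emph{mixed} syzygies expressing $A_P\cdot y_3\subset (B)$: by Lemma~\ref{syzygies a B} every product $f\cdot y_3$ with $f\in A_P$ is a $\CC$-linear combination of the generators of $B$, giving one linear syzygy for each of the four generators of $A_P'$ that we lift back by $r^{-1}$.

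The key structural point is a dimension count that must come out to exactly $15$. A general member of $V_p^{aff}(0)$ gives an apolar scheme of length ten, whose ideal has Betti table with $10$ cubic generators, $15$ linear first syzygies and $6$ second syzygies (the Betti table displayed for $I_{\Gamma_0}$ in the proof of Theorem~\ref{main4}), and the generic member of the family described here \emph{is} such an ideal. So the expected count is forced; what remains is to show the syzygies we wrote down are (a) genuinely syzygies and (b) linearly independent, hence span the full $15$-dimensional space. Part (a) is a direct verification: plug $A_P=r^{-1}(A_P')$ and $B$ into each candidate and check the products vanish, using $a\cdot M=0$ for the first batch, the definition of $A_P'$ as the maximal minors of $M_P'$ (so that $M_P'$ annihilates $A_P'$) for the lifted part, and Lemma~\ref{syzygies a B} for the mixed and internal-$B$ parts. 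Part (b) is a rank computation on the resulting $(15\times 10)$ matrix of linear forms; independence over $\CC[y_0,\dots,y_3]$ follows once the matrix has rank $10$ generically, which one checks at one explicit isotropic $P$ (e.g.\ the $P$ corresponding to $\Gamma_0$, whose syzygy matrix $H(0)$ is already known to work) and then by semicontinuity on the irreducible parameter space of isotropic $3$-planes $P'\subset A'$.

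The main obstacle, I expect, is the bookkeeping in the first batch: the syzygies $M_P=P\cdot M$ are syzygies of the generators of $A$ viewed \emph{modulo} $B$, and promoting them to honest syzygies of the generators of $A_P$ as a subspace of $(q^2)^\bot_3$ requires tracking the $y_3$-correction terms carefully and showing they can be absorbed into combinations of the generators of $B$ — i.e.\ that the lift through $r^{-1}$ is compatible with the syzygy structure. Concretely one wants a clean statement: if $P'$ is isotropic for $M'$, then $P'\cdot M'$ is a syzygy matrix for $A_P'$ over $\CC[y_0,y_1,y_2]$, and each such syzygy extends uniquely (up to $(B)$) to a syzygy of $A_P$ over $\CC[y_0,\dots,y_3]$ by adding $y_3$-multiples, exactly as in the unfolding argument used for $H(0)\rightsquigarrow H(a)$ in the proof of Theorem~\ref{main4}. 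Once this lifting lemma is in place, the rest is the rank check, which is routine and can be done on the explicit matrix $M$ together with one generic choice of $P$.
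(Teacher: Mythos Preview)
Your construction of the syzygies is essentially the paper's: you correctly identify the three sources --- the internal syzygies of $B$, the four ``mixed'' syzygies coming from $y_3 A_P\subset (B)$ (Lemma~\ref{syzygies a B}), and the three syzygies coming from the rows of $P'\cdot M'$ lifted through $r^{-1}$ with $y_3$-corrections in $B$. The paper counts these as $8+4+3=15$; you leave the $B$-count implicit but that is a routine computation on the fixed $6$-space $B$.

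The gap is in your independence argument. First, the phrase ``independence over $\CC[y_0,\dots,y_3]$ follows once the matrix has rank $10$ generically'' is not right: a $(15\times 10)$-matrix has rank at most $10$, and generic rank $10$ tells you nothing about whether the $15$ rows are $\CC$-linearly independent as vectors of linear forms. What you actually need is that the $15\times 40$ coefficient matrix (each row being $10$ linear forms in $4$ variables) has rank $15$. Second, and more seriously, even with the correct formulation, checking this at the single point $P_0$ corresponding to $\Gamma_0$ and invoking semicontinuity only yields the conclusion for \emph{generic} isotropic $P'$: linear independence of $15$ specified vectors is an \emph{open} condition on the parameter $P$, so it propagates to a dense open set, not to every isotropic $3$-plane. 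The proposition, however, asserts the result for \emph{all} such $P$.

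The paper closes this gap with a structural filtration rather than semicontinuity. The three batches are distinguished by two intrinsic tests that make sense for every $P$: (i) does the syzygy involve any $A_P$-generator at all (the $8$ internal-$B$ syzygies do not); (ii) among those that do, do the $A_P$-coefficients survive restriction to $y_3=0$ (the four mixed syzygies have $A_P$-coefficients equal to $y_3$, hence vanish on $y_3=0$, while the three lifted $P'\cdot M'$-syzygies have $A_P$-coefficients that restrict to the rows of the $3\times 4$ Hilbert--Burch block of $P'\cdot M'$, which are nonzero because $P'$ is $3$-dimensional and $M'$ has no kernel). A linear relation $a_1s_1+a_2s_2+a_3s_3=0$ with $s_i$ in batch $i$ then forces $a_3=0$ (restrict to $y_3=0$), then $a_2=0$ (look at $A_P$-entries), then $a_1=0$. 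This works uniformly in $P$.
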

\begin{proof} Since $P$ is isotropic, $M'_P$ has a $(3\times 3)$-block of zeros and the maximal minors $A_P'$ and hence also $A_P$ is a $4$-dimensional subspace of $A$, which with $B$ generates a  
%by Lemma \ref{lifting from tangent}, we have $r^{-1}((P')^\perp) +B$ is a 
$10$-dimensional space of cubics.

We have the following type of syzygies:
\begin{enumerate}
\item an $8$-dimensional space of syzygies between generators of $B$
\item a $4$-dimensional space of syzygies obtained from the relation 
$$y_3 A_P\subset y_3 A \subset (y_0,y_1,y_2)B.$$

%$y_3 r^{-1}((P')^\perp)\subset y_3 A \subset <B>$
\item $3$-dimensional space of syzygies obtained from $y_3P\subset (y_0,y_1,y_2)B$ combined with the fact that $P'$ is isotropic with respect to $M'$ and $M$ is a syzygy matrix for $a$. 
\end{enumerate}
Note that these syzygies are linearly independent and span a $15$-dimensional space. Indeed, the first type are syzygies which involve only elements of B.
In the second type the coefficients at generators of $A_P$
%$(P'^{\perp})$ 
are 0 when restricted to $y_3=0$.  The three syzygies of $A_P'$ lift to syzygies of $A_P$ and $B$ since $y_3P\subset (y_0,y_1,y_2)B$. So for the third type of syzygies the coefficients at generators of $A_P$ when restricted to $y_3=0$ are always nontrivial. To show that there is no linear dependancy between the three types, assume we have a linear relation between syzygies of the three types:  $$a_1s_1+a_2s_2+a_3s_3=0$$ where $s_1$ and  $s_2$ or $s_3$ are nontrivial syzygies of the respective types, then $a_3=0$ if $s_3$ is nontrivial, because $s_3$ is the only syzygy 
being nonzero after restricting it to $y_3=0$. Furthermore, $a_2=0$ when $s_2$ is nontrivial, because $s_2$ involves some nonzero coefficient corresponding to generators of $A_P$. Finally, $a_1=0$ as $s_1$ was assumed to be nontrivial.
\end{proof}
Let $\GG(B,10, B+A)$ be the variety of $10$-dimensional subspaces of $A+B$ that contain $B$.  It is a subvariety of $\GG(10,((q^2)^{\bot})_3)$ isomorphic to  $\GG(4,7)$.
\begin{corollary}\label{matrixM'} The variety $V_p(0)$ is the intersection of $$\VAPS(q^2,10)\subset \GG(10,((q^2)^{\bot})_3)$$ with $\GG(B,10, B+A)\subset \GG(10,((q^2)^{\bot})_3)$ and is isomorphic to $V_{22}(M')\subset \GG(3,7)$ the variety of subspaces isotropic with respect to the net of skew-symmetric 2-forms  given by the matrix $M'$.
\end{corollary}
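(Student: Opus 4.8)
The plan is to use Proposition \ref{V22mapstoVSP} to build an explicit morphism $\psi\colon V_{22}(M')\to \GG(10,((q^2)^{\bot})_3)$ and to show that its image is simultaneously $V_p(0)$ and $\VAPS(q^2,10)\cap \GG(B,10,B+A)$, with $\psi$ an isomorphism onto it. Concretely, $\psi$ sends an $M'$-isotropic $3$-space $P'\subset A'$ to the $10$-space of apolar cubics $B+A_P$, where $A_P:=r^{-1}(A_P')$ and $A_P'$ is the span of the maximal minors of $P'\cdot M'$ (notation of Proposition \ref{V22mapstoVSP} and Lemma \ref{lifting from tangent}). Every operation involved is algebraic, and by Proposition \ref{V22mapstoVSP} the output always lies in $\GG(B,10,B+A)\cong \GG(4,7)$, so $\psi$ is a morphism; since $V_{22}(M')$ is a closed, hence proper, subvariety of $\GG(3,7)$, the image of $\psi$ is closed and irreducible in $\GG(10,((q^2)^{\bot})_3)$, of dimension at most $\dim V_{22}(M')$.

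I would first record the easy inclusions. We have $V_p(0)\subseteq \VAPS(q^2,10)$ by definition, and on the dense open $V_p^{aff}(0)$ each generating space $J(a)$ contains $B$ and is contained in $A_p=B+A$ (this is how the deformation was set up, cf.\ the discussion around~(\ref{Alocal}) and Proposition \ref{componentequations}); hence $V_p(0)\subseteq \GG(B,10,B+A)$ as well. On the other side, by Proposition \ref{V22mapstoVSP} every space in the image of $\psi$ is a $10$-space of apolar cubics with a $15$-dimensional space of linear syzygies, so by the flatness argument already used in the proof of Theorem \ref{main4} it is the degree-three part of the ideal of a length-ten scheme apolar to $q^2$, i.e.\ a point of $\VAPS(q^2,10)$; thus the image of $\psi$ is contained in $\VAPS(q^2,10)\cap \GG(B,10,B+A)$. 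The heart of the matter is the reverse inclusion. Let $[I_3]$ lie in $\VAPS(q^2,10)\cap \GG(B,10,B+A)$, so $I_3=B\oplus \bar A$ with $\bar A\subset A$ of dimension $4$. For every point of $\VAPS(q^2,10)$ the multiplication map $I_3\otimes S_1\to S_4$ has rank at most $25$, by semicontinuity of rank along the family of length-ten apolar schemes; hence $I_3$ admits at least $15$ independent linear syzygies. By Lemma \ref{syzygies a B} there are always $12$ of them, coming from relations inside $B$ and from $y_3\bar A\subset (B)$, so the restriction $\bar A':=r(\bar A)\subset A'$ must admit at least three further linear syzygies, which can only arise from the syzygy matrix $M'$ of $A'$. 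The converse of Proposition \ref{V22mapstoVSP} --- that a $4$-dimensional subspace of $A'$ with three linear $M'$-syzygies is exactly of the form $A_P'$ for an $M'$-isotropic $3$-space $P'$ --- then gives $[I_3]=\psi(P')$, and I would check it by the same linear algebra as in the proof of Proposition \ref{V22mapstoVSP}. This yields that the image of $\psi$ equals $\VAPS(q^2,10)\cap \GG(B,10,B+A)$.

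To pin this common set down as $V_p(0)$ and get the isomorphism, I would identify $V_p^{aff}(0)$ with a dense open subset of $V_{22}(M')$ by an explicit Macaulay\,2 computation: for $a$ on the complete intersection of Proposition \ref{componentequations} one writes $J(a)=B\oplus \bar A(a)$, checks that $r(\bar A(a))$ is the span of the maximal minors of $P'(a)\cdot M'$ for an explicit family $P'(a)$ of $3$-spaces depending linearly on $a$, and verifies, using the $(2,2,2)$-equations on $a$, that $P'(a)$ is $M'$-isotropic. Then $V_p^{aff}(0)\subseteq \operatorname{image}\psi$; in particular $\dim V_{22}(M')\ge 3$, and since the image of $\psi$ is closed we get $V_p(0)=\overline{V_p^{aff}(0)}\subseteq \operatorname{image}\psi$. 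Once one checks that $V_{22}(M')$ is irreducible of dimension $3$ for this net, the image of $\psi$ is an irreducible $3$-fold containing the irreducible $3$-fold $V_p(0)$ (Theorem \ref{main4}), so the two coincide. Finally $\psi$ is bijective onto $V_p(0)$: given $\psi(P')=B\oplus A_P$, restriction to $\{y_3=0\}$ kills $B$ and returns $A_P'$ via $r$, and $P'$ is recovered from $A_P'$ by the Mukai-type biduality reconstructing an isotropic $3$-space from the minor space; this recovery is algebraic and defines an inverse morphism $V_p(0)\to V_{22}(M')$, so $\psi$ is an isomorphism.

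The main obstacle is the middle, computational step: producing the family $P'(a)$ and confirming its $M'$-isotropy from the defining equations of $V_p^{aff}(0)$, establishing the converse of Proposition \ref{V22mapstoVSP} (that fifteen linear syzygies force the isotropic shape $A_P'$), and the biduality needed for injectivity of $\psi$ --- in short, making the Mukai-type linear algebra for this particular skew matrix $M'$ completely precise. One must also confirm that $V_{22}(M')$ stays irreducible of dimension $3$ for this possibly degenerate net of skew forms: a priori it could acquire extra lower-dimensional components, and I expect to rule this out by a direct computation with $M'$.
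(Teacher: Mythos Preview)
Your core strategy—define the map $\psi$ from $V_{22}(M')$ via Proposition~\ref{V22mapstoVSP} and conclude $\psi(V_{22}(M'))=V_p(0)$ by an irreducibility-plus-dimension argument—is the same as the paper's. The paper, however, streamlines both places where you do extra work.

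Your syzygy-counting ``reverse inclusion'' (second paragraph) and the converse of Proposition~\ref{V22mapstoVSP} are bypassed entirely. The paper instead introduces the locus $Syz_{10,15}\subset\GG(B,10,B+A)$ of $10$-spaces with a $15$-dimensional space of linear syzygies and observes that $V_p(0)$, being the closure of the unfolding $V_p^{aff}(0)$, is a component of $Syz_{10,15}$. Since $\nu(V_{22}(M'))\subset Syz_{10,15}$ by Proposition~\ref{V22mapstoVSP}, it then suffices to verify that the \emph{single point} $[I_{\Gamma_0}]$ lies in the image; irreducibility of $V_{22}(M')$ forces the image into the component $V_p(0)$, and equality follows from both being irreducible threefolds. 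Thus your proposed Macaulay~2 computation producing an explicit isotropic family $P'(a)$ over all of $V_p^{aff}(0)$ is replaced by a one-point check, and the delicate Mukai-type biduality you flag for this degenerate net becomes unnecessary for the argument. You correctly identify that irreducibility and three-dimensionality of $V_{22}(M')$ must still be verified; the paper uses this as well and records it in the remark immediately following the corollary.
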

\begin{proof} Let $Syz_{10,15}\subset \GG(B,10, B+A)$ be the subset of $10$-dimensional spaces of cubics with a $15$-dimensional space of linear syzygies.

First, the space of cubics in the ideal of any apolar scheme of length ten in $V_p(0)$ belong to $Syz_{10,15}$. %$\GG(B,10, B+A)\subset \GG(10,((q^2)^{\bot})_3)$ and has a $15$-dimensional space of linear syzygies. 
In fact $V_p(0)$ is a compactification of the affine variety $V^{aff}_p(0)$ that parameterizes all subspaces of cubics in
$Syz_{10,15}$ that have the same initial monomials with respect to fixed monomial order as a subspace $I_{\Gamma_0}\in Syz_{10,15}.$  Therefore, $V_p(0)$ is a component of $Syz_{10,15}$.

From Proposition \ref{V22mapstoVSP} we know that there is an injective morphism $$\nu: V_{22}(M') \to Syz_{10,15}$$
$$P'\mapsto A'_P\mapsto A_P\mapsto [A_P+B].$$
Since $[I_{\Gamma_0}]$ belongs to the image,  $\nu (V_{22}(M'))\subset V_p(0)$.
%since any $10$-space of apolar cubics that contain $B$ and has a $15$-dimensional space of linear syzygies belongs to the $3$-fold $V_p(0)\subset \VAPS(q^2,10)$.  Since $V_{22}(M')$ is irreducible and $3$-dimensional the image must coincide with $V_p(0).$
But both are irreducible $3$-folds, so $\nu (V_{22}(M'))= V_p(0)$.
\end{proof}

\begin{remark}
To identify an unsaturated ideal in $V_p(0)$ we note first that 
%With $B$ as above 
%\[
%B=<y_{3}^3, y_{3}^2y_{0},y_{3}^2y_{1}, y_{3}y_{0}^2,y_{3}y_{1}^2, %2y_{3}y_{0}y_{1}-y_{2}y_{3}^2>
%\]
the syzygy matrix $M$ for $A$ is equivalent to 
\[
\begin{pmatrix} 0& 0& 0&0& 6y_{0}& 3y_{1}&-3y_{2}\cr 0& 0&-12y_{3}& 0& -3y_{1}&-6y_{3}&3y_{0}\cr
0& 12y_{3}& 0& 2y_{1}&-y_{2}&-3y_{0}&0\cr
 0& 0&-2y_{1}& 0& 2y_{0}&0&0\cr
-6y_{0}& 3y_{1}&y_{2}&-2y_{0}&0&0&0\cr
-3y_{1}&6y_{3}&3y_{0}& 0& 0&0&0\cr
3y_{2}&-3y_{0}& 0&0& 0& 0&0\cr

\end{pmatrix}.
\]
The isotropic $3$-space of rows $1,2,4$ defines a $4$-dimensional  subspace of cubic pfaffians, the maximal minors of 
\[
\begin{pmatrix} 0& 6y_{0}& 3y_{1}&-3y_{2}\cr -12y_{3}&  -3y_{1}&-6y_{3}&3y_{0}\cr
 -2y_{1}& 2y_{0}&0&0\cr
\end{pmatrix}
\]
that together with the space $B$ of apolar cubic forms that are multiples of $y_3$, defines an unsaturated ideal. It defines a line and an embedded scheme of length three.

The isotropic $3$-space of rows $5,6,7$ defines a similar unsaturated ideal.   
\end{remark}

\begin{remark}  The isotropic $3$-spaces for $M$ is a curve of degree 10 with three components; two double skew lines and one rational normal curve $C$ of degree 6.  Each line meet $C$ in a point.  These two points are the two apparent isotropic $3$-spaces of the previous remark. The lines are the $3$-spaces with rows one and two and a linear combination of three and four, and similarly the rows six and seven and linear combinations of four and five. The ideals on lines are unsaturated and are contained in $(y_0,y_3)$ or $(y_1,y_3)$.
An ideal on the curve $C$, except in the points of intersection with the two lines, is saturated and defines a scheme of length eight at $V(y_0,y_1,y_3)$ and a scheme of length two on the line $V(y_2,y_3)$.

The isotropic $3$-spaces to $M'$, i.e. restricting $M$ to $y_3=0$, form a (singular) degeneration of a Fano variety  $V_{22}$ (cf. \cite{Muk92}).
\end{remark}

%(HERE $S_1$
\vskip .5cm
\subsection{The compactification of the special components}\label{compactificationofspecial}
For each $p\in Q^{-1}$ we found in Proposition \ref{three components local} two components $V_{p}(1)$ and $V_{p}(2)$ besides $V_{p}(0)$ in $V_{p}$.  They are compactifications in $\GG(10,((q^2)^\perp)_3)$ of the affine $3$-folds $V_{p}^{aff}(1)$ and $V_{p}^{aff}(2)$ of local apolar schemes of length ten, cf. Proposition \ref{componentequations}. 
Here, we describe these compactifications. The two $3$-folds and their compactifications, are isomorphic under the involution exchanging the two factors in $Q^{-1}=\PP^1\times \PP^1.$ So it suffices to describe one of them, e.g. the compactification $V_{p}(1)$ of $V_{p}^{aff}(1)$.

As in our analysis of the component  $V_{p}^{aff}(0)$, we define, for each $p\in Q^{-1}$ and each line $L\subset Q^{-1}$, some subspaces of apolar cubics:
\begin{itemize}
\item $K_L$ is the space of apolar cubics that are triple along $L.$
\item $W_L$ is the space of apolar cubics that are singular along $L$.
\item $U_L$ is the space of apolar cubics that vanish along $L.$
\item $W_{L,p}$, with $p\in L$, is the space of apolar cubics in $W_L$ whose restriction to the tangent plane $T_p$ has multiplicity $3$ at $p$.
\item $U_{L,p}$ is the space of apolar cubics that are in $U_L$ or has multiplicity $3$ at $p$.
\end{itemize}

We let $p=(0:0:1:0)$ and $L=\{y_0=y_3=0\}$.
By Proposition \ref{componentequations} we may parameterize the linear space $V_{p}^{aff}(1)$ with parameters $a_{22},a_{23},a_{42}$. So  
$V_{p}^{aff}(1)$ is a $3$-dimensional affine family of ideals $I(a_{22},a_{23},a_{42})$.
Each ideal $I(a_{22},a_{23},a_{42})$ contains the $7$-space of cubic forms        
\begin{align*}
W_{L,p}=&\langle y_3^3,y_1y_3^2,y_0y_3^2,2y_0y_1y_3-y_3^2y_2,y_0^2y_3,y_0^2y_1-2y_0y_2y_3,y_0^3\rangle 
\end{align*}
%the apolar cubics that are singular along the line $L=\{y0=y3=0\}$ and whose restriction to the tangent plane $\{y3=0\}$ has multiplicity $3$  at $p=(0:0:1:0).$
and is % $I(a_{22},a_{23},a_{42})$ is 
contained in the $13$-space $U_{L,p}$. 

Observe, as a short digression, the following.
%of all apolar cubics that vanish on the line $L$ or has multiplicity $3$ at $p$. 
\begin{proposition}\label{local at p}
    Any apolar scheme $\Gamma\in V_p^{aff}(1)\cup V_{p}^{aff}(2)$ is local, supported at $p$.
\end{proposition}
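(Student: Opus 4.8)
The plan is to show that every member of the family $V_p^{aff}(1)$ (and, by the symmetry exchanging the two rulings of $Q^{-1}=\PP^1\times\PP^1$, of $V_p^{aff}(2)$) is a local scheme supported at $p=(0:0:1:0)$, by exhibiting each point of the support in the tangent plane $T_p=\{y_3=0\}$ and then ruling out support on the two lines $Q^{-1}\cap T_p$. By Proposition \ref{apolar schemes} we already know that any apolar scheme of length ten lies in a tangent plane to $Q^{-1}$ and has a component of length at least eight on $Q^{-1}$; since $\Gamma$ contains $p\in Q^{-1}$, that component is supported at $p$, and any residual point must lie in $T_p$. Thus, just as in Lemma \ref{supportonline}, it suffices to restrict the generating cubics $J(a)$ of the ideal $I(a_{22},a_{23},a_{42})$ to each of the two lines $\{y_3=y_0-y_1=0\}$ and $\{y_3=y_0+y_1=0\}$ of $Q^{-1}$ through $p$, and to check that the resulting ideal on such a line is $(y_0^3)$-primary, i.e. is supported only at $p$.

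The first step is to write out explicitly the ten cubic generators $J(a)$ of $I(a_{22},a_{23},a_{42})$ after substituting the linear relations of Proposition \ref{componentequations}(2) defining $V_p^{aff}(1)$, namely $a_{42}=a_{43}$, $a_{21}=2a_{22}-a_{23}$, $a_{20}=3a_{22}-2a_{23}$, $3a_{13}=-4a_{22}-2a_{23}$, $3a_{12}=-10a_{22}+4a_{23}$, into the matrix $A_5$. (Here one must remember the coordinate change $(y_0,y_1,y_2,y_3)\mapsto(y_1+y_0,y_1-y_0,y_3,y_2)$ from Subsection \ref{compactificationofmain}, which places $p$ at $(0:0:1:0)$ and $T_p$ at $\{y_3=0\}$; alternatively one may run the whole computation in the coordinates $q=x_0^2-x_1^2+x_2x_3$ used in the proof of Theorem \ref{main4} and not change coordinates at all.) The second step is to substitute a parametrization of one of the two lines — say $y_3=0$, $y_1=\lambda y_0$ with $\lambda=\pm1$ — into these ten cubics, obtaining ten forms in $y_0,y_2$ with coefficients linear in $(a_{22},a_{23},a_{42})$; each such form is $y_0^3$ plus a scalar times $y_0^2 y_3$ (or similar), so modulo the line the generators collapse to $y_0^3 + (\text{linear in }a)\,y_0^2 y_3$-type terms. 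The scheme then has a point of support on the line away from $p$ exactly when the relevant combination of the $a$-linear coefficients is a nonzero constant, which here is the locus where certain differences of these coefficients vanish; the point is to check that on all of $V_p^{aff}(1)$ these differences vanish identically, forcing the common value to be zero, exactly as in the final paragraph of the proof of Lemma \ref{supportonline}.

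I expect the whole argument to be essentially a transcription of the proof of Lemma \ref{supportonline} to the simpler case of a linear component, and the main obstacle is purely bookkeeping: getting the coordinate change and the substitutions of the linear relations of Proposition \ref{componentequations}(2) right so that the collapsed generators on each of the two lines are manifestly supported only at $p$. Since $V_p^{aff}(1)$ is a \emph{linear} space (unlike the $(2,2,2)$ complete intersection $V_p^{aff}(0)$), there are no quadratic relations to intersect against, so the verification that the relevant linear combinations of the $a$-linear coefficients vanish is a direct linear-algebra check on the five defining linear forms of $V_p^{aff}(1)$ — in particular one checks that each of the row-sum coefficients arising on the line lies in the span of those five linear forms. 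This is then confirmed by the Macaulay2 computation already referenced in the Appendix, and the symmetric statement for $V_p^{aff}(2)$ follows by applying the involution of $Q^{-1}=\PP^1\times\PP^1$, which swaps the two components and the two lines of $Q^{-1}$ through $p$.
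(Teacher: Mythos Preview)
Your plan has a genuine logical gap. You propose to imitate the proof of Lemma~\ref{supportonline}: restrict the generators $J(a)$ to the two lines $Q^{-1}\cap T_p$ and check that the restricted ideal is supported only at $p$. But Lemma~\ref{supportonline} proves only that the schemes in $V_p^{\mathrm{aff}}(0)$ meet $Q^{-1}$ in $p$ alone; it does \emph{not} prove they are local at $p$, and indeed they are not --- the generic member of $V_p^{\mathrm{aff}}(0)$ has two reduced points in $T_p\setminus Q^{-1}$. Proposition~\ref{apolar schemes}, which you invoke, already says that any residual support lies in $T_p$ \emph{outside} $Q^{-1}$. So restricting to the two lines in $Q^{-1}$ checks precisely the locus where you already know there is no extra support, and says nothing about the region $T_p\setminus Q^{-1}$ where the potential residual points would lie. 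Steps (1) and (2) of your outline do not imply step (3).

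The paper's argument is shorter and avoids the issue entirely. In the coordinates $q=x_0x_1+x_2x_3$, $p=(0{:}0{:}1{:}0)$, one observes two inclusions for every ideal $I=I(a_{22},a_{23},a_{42})$ in $V_p^{\mathrm{aff}}(1)$:
\begin{itemize}
\item $K_L=\langle y_3^3,\,y_0y_3^2,\,y_0^2y_3,\,y_0^3\rangle\subset I_3$, so $\Gamma=V(I)\subset V((y_0,y_3)^3)=L$; in particular the \emph{entire} support lies on the single line $L=\{y_0=y_3=0\}\subset Q^{-1}$, not merely in $T_p$.
\item $I_3\subset U_{L,p}$, the space of apolar cubics whose restriction to $L$ has multiplicity $3$ at $p$ (equivalently, is a scalar multiple of $y_1^3$).
\end{itemize}
Now if $\Gamma$ had a second support point $p'\in L$, every cubic in $I_3$ would vanish at $p'$, hence each restriction to $L$ would be the zero multiple of $y_1^3$, i.e.\ $I_3\subset U_L$. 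But then $L\subset V(I_3)=\Gamma$, contradicting finiteness. The case of $V_p^{\mathrm{aff}}(2)$ follows by swapping the two rulings.

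If you want to salvage your computational approach, the missing ingredient is the first inclusion $K_L\subset I$: once you know the support is forced into the single line $L$ (not just into $T_p$), restricting to $L$ and checking that the ideal on $L$ is supported at $p$ does finish the proof. What does not work is replacing that step by Proposition~\ref{apolar schemes}.
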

\begin{proof}  The ideal $I(a_{22},a_{23},a_{42})$ of $\Gamma\in V_p^{aff}(1)$ contains $K_{L}$, so the support of $\Gamma$ is contained in $L$. Furthermore, $I(a_{22},a_{23},a_{42})$ is contained in $U_{L,p}$, so, since $\Gamma$ is finite, it is supported only at $p$.  
For $\Gamma\in V_p^{aff}(2)$ the argument is similar.
\end{proof}
Returning to the matrices representing $I(a_{22},a_{23},a_{42})$, we first notice that modulo $W_{L,p}$, the space $U_{L,p}$ is generated by
$$ y_0^2y_2,
y_1^2y_3,
y_0y_1^2-2y_1y_2y_3,
y_2^2y_3-2y_0y_1y_2,
y_0y_2^2,
y_1^3.$$
% that all vanish on the line $L$, or vanish with multiplicity $3$ at $p$.
 %when $I(a_{22},a_{23},a_{42})\in V_{p}^{aff}(1)$ $I(a_{22},a_{23},a_{42})_3/W_{L,p}$ is a $3$-space in $U_{L,p}/W_{L,p}$.
In the parameters $(a_{22},a_{23},a_{42},b)$ the $3$-space that generate $I(a_{22},a_{23},a_{42})$ is generated by the entries of the matrix product
%{\tiny
$$\begin{pmatrix}
-a_{42}&b&0&0&0&0\\
26a_{22}-14a_{23}&0&0&-4(a_{22}-a_{23})&-6a_{42}&3b\\
2a_{22}-2a_{23}&0&b&0&0&0\\
%0&2(a_{22}-a_{23})&a_{42}&0&0&0\\
\end{pmatrix}\cdot \begin{pmatrix}y_0^2y_2\\
y_1^2y_3\\
y_0y_1^2-2y_1y_2y_3\\
y_2^2y_3-2y_0y_1y_2\\
y_0y_2^2\\
y_1^3\\
\end{pmatrix}
$$
%}
when $b=1$.% i.e. when $I(a_{22},a_{23},a_{42},b)\in V_{p}^{aff}(1)$.

%In the Grassmannian of $3$-spaces in $U_{L,p}/W_{L,p}$ the $3$-spaces $I(a_{22},a_{23},a_{42},b)_3/W_{L,p}$ are computed to form the cone over a Veronese surface with vertex at the vertex plane. In fact,
The nonzero $3$-minors of the above $(3\times 7)$ coefficient matrix presenting $I(a_{22},a_{23},a_{42},b)_3/W_{L,p}$, are
%{\tiny
\begin{align*}
&p_{123}=-b^2(26a_{22}-14a_{23}), p_{124}=-8b(a_{22}-a_{23})^2,p_{125}=-12ba_{42}(a_{22}-a_{23}),\\
&p_{126}=6b^2(a_{22}-a_{23}), p_{134}=-4ba_{42}(a_{22}-a_{23}), p_{135}=-12ba_{42}^2,\\
&p_{136}=6b^2(a_{22}-a_{23}), p_{234}=4b^2(a_{22}-a_{23}),
p_{235}=6b^2a_{42},p_{236}=-3b^3.
\end{align*}
%}
They satisfy the linear relations
%{\tiny
$$p_{125}=3p_{134}, 2p_{126}=3p_{234}=2p_{136},$$
%}
and the quadratic relations expressing that the symmetric $(3\times 3)$-matrix
%{\tiny
\[
\begin{pmatrix}
3p_{124}&2p_{125}&2p_{126}\\
2p_{125}&2p_{135}&2p_{235}\\
2p_{126}&2p_{235}&2p_{236}
\end{pmatrix}
\]
%}
has rank one. These relations define a cone over a Veronese surface inside the space defined by the linear relations.  The vertex of the Veronese surface is the point where only $p_{123}$ is nonzero, which corresponds to the ideal 
\begin{align*}
       I_{ver,p}=(&y_3^3,y_1y_3^2,y_0y_3^2,2y_0y_1y_3-y_3^2y_2,y_0^2y_3,\\
       &y_0^2y_1-2y_0y_2y_3,y_0^3,y_0^2y_2,y_1^2y_3,y_0y_1^2-2y_1y_2y_3).
       \end{align*}
%$I_{\Gamma_{ver}}$.
In this cone, the variety $V_{p}^{aff}(1)$ is the locus of points with $b=1$.  The locus of points in the cone that correspond to $3$-spaces with $b=0$ are found by first dividing the minors $p_{ijk}$ by the common factor $b$.   The locus of $3$-spaces with $b=0$ are then ideals that vanish on $L$, and are parameterized by
%{\tiny
$$p_{126}=p_{235}=p_{236}=3p_{124}p_{135}-2p_{125}^2=0.$$
%}
This is a cone over a conic.  Since these ideals are contained in the ideal of $L$, they are unsaturated ideals in $V_{p}(1)$.  We summarize these findings in the following proposition.

\begin{proposition} \label{grassmannian compactification of Vp1}
       The Grassmannian compactification $V_{p}(1)$ is a cone over a Veronese surface with vertex the unsaturated ideal $I_{ver,p}$.
 The unsaturated ideals in $V_p(1)$ are contained in the ideal of $L=\{y_0=y_3=0\}$ and they form 
 the cone over a conic in the Veronese surface.
\end{proposition}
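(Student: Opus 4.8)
The plan is to read the statement off the explicit presentation of $V_p^{aff}(1)$ from Proposition \ref{componentequations}, after realising the Grassmannian compactification inside a small sub-Grassmannian. First I would note that every ideal parameterised by $V_p^{aff}(1)$ contains the $7$-dimensional space $W_{L,p}$ and is contained in the $13$-dimensional space $U_{L,p}$ of apolar cubics; since the condition $W_{L,p}\subseteq I_3\subseteq U_{L,p}$ is closed on $\GG(10,((q^2)^\perp)_3)$, the closure $V_p(1)$ lies in the sub-Grassmannian $\GG(W_{L,p},10,U_{L,p})$ of $10$-spaces between $W_{L,p}$ and $U_{L,p}$, which is isomorphic to $\GG(3,U_{L,p}/W_{L,p})\cong\GG(3,6)$. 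In these coordinates $V_p^{aff}(1)$ is the chart $b=1$ of the rational map $\varphi\colon\PP^3\to\GG(3,6)$ sending $[a_{22}:a_{23}:a_{42}:b]$ to the row span, modulo $W_{L,p}$, of the displayed $(3\times 6)$ coefficient matrix.

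Next I would compute the Plücker image of $\varphi$. The $3\times 3$ minors of that matrix are the $p_{ijk}$ listed above; they have $b$ as common factor, and after cancelling it they satisfy the linear relations $p_{125}=3p_{134}$, $2p_{126}=3p_{234}=2p_{136}$, the vanishing of all remaining Plücker coordinates, and the condition that the symmetric matrix $\left(\begin{smallmatrix}3p_{124}&2p_{125}&2p_{126}\\2p_{125}&2p_{135}&2p_{235}\\2p_{126}&2p_{235}&2p_{236}\end{smallmatrix}\right)$ have rank at most one. Inside the $\PP^6$ cut out by the linear relations, these equations define precisely the projective cone — with apex the point where only $p_{123}$ is nonzero — over the Veronese surface of rank-one symmetric $3\times 3$ matrices in $\PP^5$. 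A parameter count shows that the open set $\{b\neq 0\}$ maps dominantly onto this cone, and the map $V_p^{aff}(1)\hookrightarrow\GG(3,6)$ is injective because an apolar scheme of length ten is determined by its space of cubics (Lemma \ref{ten cubics} and the discussion preceding it); since $V_p^{aff}(1)\cong\AAA^3$ is irreducible and $3$-dimensional and the cone is too, $V_p(1)$ equals this cone. Solving for the $3$-space of cubics at the apex returns the ideal $I_{ver,p}$ displayed above, and as each of its generators lies in $(y_0,y_3)=I_L$ it is unsaturated.

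For the unsaturated locus I would argue as follows. By Corollary \ref{unsaturatedinline} an unsaturated ideal $I\in V_p(1)$ is contained in the ideal of a line in $Q^{-1}$; since $I\supseteq W_{L,p}$ contains $y_0^3$ and $y_1y_3^2$, this line must be $L$, so $I\subseteq I_L=(y_0,y_3)$. Conversely, the points of $V_p(1)$ whose $10$-space of cubics lies in $I_L$ are those for which the $y_1^3$-component is absent, i.e.\ $p_{126}=p_{136}=p_{236}=0$; inside the cone the rank-one condition then forces $p_{235}=0$ as well and leaves only the quadratic relation $3p_{124}p_{135}-2p_{125}^2=0$. The resulting locus is a quadric cone, namely the cone with apex $I_{ver,p}$ over a smooth conic contained in the Veronese surface (the image of the line $\ell_3=0$ in its $\PP^2$), and these ideals are exactly the $b\to 0$ limits in $V_p(1)$.

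I expect the main difficulty to be the analysis of the boundary $b\to 0$: all Plücker coordinates of $\varphi$ vanish on $\{b=0\}$, so after cancelling $b$ the residual map contracts the plane $\{b=0\}$ to a single conic, and reaching the remaining boundary points — in particular the apex $I_{ver,p}$ — requires allowing $a_{22},a_{23}$ to tend to infinity at matched rates and checking exactly which limits lie in the closure, with no spurious component introduced. By contrast, the minor computation and the rank-one description of the Veronese surface are routine.
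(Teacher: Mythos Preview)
Your proposal is correct and follows essentially the same route as the paper: reduce to $\GG(3,U_{L,p}/W_{L,p})\cong\GG(3,6)$ via the closed containments $W_{L,p}\subset I_3\subset U_{L,p}$, compute the Pl\"ucker minors of the displayed $(3\times 6)$ matrix, read off the linear relations and the rank-one condition on the symmetric $3\times 3$ matrix to identify the cone over a Veronese surface with apex $I_{ver,p}$, and then characterise the unsaturated locus as the $y_1^3$-free sublocus, which is the cone over a conic. The only cosmetic difference is in the argument that the line forced by Corollary~\ref{unsaturatedinline} is $L$: you use that $y_0^3,\,y_1y_3^2\in W_{L,p}\subset I$, whereas the paper observes that among the basis elements of $U_{L,p}$ only $y_1^3$ fails to vanish on $L$; both are valid. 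Your worry in the last paragraph is unnecessary: once you know $V_p(1)$ is an irreducible $3$-fold lying inside the (irreducible, $3$-dimensional) cone, equality follows without any delicate limit analysis at $b=0$.
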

\begin{proof} It remains to prove the last part, i.e., that all unsaturated ideals in \( V_p(1) \) are contained in the ideal of \( L \).
For this, notice that the cubics in an ideal in \( V_p(1) \) are contained in the 13-dimensional space \( U_{L,p} \), in which only the last basis element \( y_1^3 \) vanishes at \( p \) and not on \( L \).
However, any unsaturated limit ideal in \( V_p(1) \) is contained in the ideal of a line (see Corollary \ref{unsaturatedinline}), so for ideals in \( V_p(1) \), this line must be \( L \).
\end{proof}
%{\tiny
%\begin{align*}%{cc}
% I(a_{22},a_{23},a_{42},b)=&
%(  y3^3,y1*y3^2,y0*y3^2,2*y0*y1*y3-y3^2*y2,y0^2*y3,y0^2*y1-%2*y0*y2*y3,y0^3,\hfill \notag\\
%&b*y1^2*y3-a42*y0^2*y2,\hfill \notag\\
%&3*b*y1^3+(26*a22-14*a23)*y0^2*y2+(8*a22-8*a23)*y0*y1*y2-\\
%&6*a42*y0*y2^2-(4*a22-4*a23)*y3*y2^2,\hfill \notag\\
%&b*(y0*y1^2-2*y1*y2*y3)+(2*a22-2*a23)*y0^2*y2,\hfill \notag\\
%&a42*(y0*y1^2-2*y1*y2*y3)+(2*a22-2*a23)*y1^2*y3)\hfill \notag\\
% \end{align*}
% }
%When the restriction $b=1$ is removed, the family  $I(a_{22},a_{23},a_{42},b)$ is homogeneous. When $b=1$ it is the affine family $V_{p}^{aff}(1)$, and the last cubic form lies in the span of the others.  When $b=0$, the first ten forms are dependant, but together with the last one they define ten independent equations.  
%So the homogeneous family $I(a_{22},a_{23},a_{42},b)$ defines a compactification $V_p(1)$ of $V_{p}^{aff}(1)$ in the Grassmannian $\GG(10,((q^2)^\perp)_3)$.
%In the compactification we find, in particular, the limit, when $a_{22}-a_{23}=a_{42}=0, b=1$ and $a_{22}$ tends to infinity in
%$I(a_{22},a_{23},a_{42})$.  This limit is the ideal
%%{\tiny
 %      \begin{align*}
  %     I_{ver,p}=&(y_3^3,y_1y_3^2,y_0y_3^2,2y_0y_1y_3-y_3^2y_2,y_0^2y_3,\\
   %    &y_0^2y_1-2y_0y_2y_3,y_0^3,y_0^2y_2,y_1^2y_3,y_0y_1^2-2y_1y_2y_3).
    %   \end{align*}
  %  }

We shall see that $V_p(1)$ may be identified with a variety in a Lagrangian Grassmannian inside $\GG(10,((q^2)^\perp)_3)$.
For this, consider again the $16$-space of apolar cubics,
\begin{align*}
(q^2)^\bot_3=\langle &y_3^3,y_1y_3^2,y_0y_3^2,y_1^2y_3,2y_0y_1y_3-y_2y_3^2,y_0^2y_3,y_2^3,y_1y_2^2,y_0y_2^2,\\
&
y_1^2y_2,2y_0y_1y_2-y_2^2y_3,y_0^2y_2,y_1^3,y_0y_1^2-2y_1y_2y_3,y_0^2y_1-2y_0y_2y_3,y_0^3\rangle.
\end{align*}
%and the $3$-dimensional family of apolar schemes of length $10$ that are supported at $(0:0:1:0)$ and are at least triple at the point when restricted to the line spanned by $(0:0:1:0)$ and $(0:1:0:0)$.
The ideals $I(a_{22},a_{23},a_{42},b)$ in $V_p(1)$ all contain the space 
$$K_L=\langle y_3^3,y_0y_3^2,y_0^2y_3,y_0^3\rangle.
$$
of cubics that are triple along $L$.  In fact $K_L\subset (q^2)^\bot_3$ and so modulo $K_L$ the $6$-spaces of cubic forms $I(a_{22},a_{23},a_{42},b)_3/K_L$ are subspaces of 
 the $12$-space
\begin{align*}
E_L:=(q^2)^\bot_3/K_L=\langle & y_1y_3^2,2y_0y_1y_3-y_2y_3^2,y_0^2y_1-2y_0y_2y_3,y_0^2y_2,
    y_0y_2^2,
    \\
&2y_0y_1y_2-y_2^2y_3,y_0y_1^2-2y_1y_2y_3,y_1^2y_3,
    y_1^3,y_1^2y_2,y_1y_2^2,y_2^3\rangle.
\end{align*}

  In the given basis of $E_L$
  the $6$-spaces $I(a_{22},a_{23},a_{42},b)_3/K_L$ are represented by the $(6\times 12)$-matrix

\[
U_1= \begin{pmatrix}{1\;0\;0}  & 0 & 0 & 0& 0 & 0 & 0 & 0 & 0 & 0\\ {0\;1\;0}&0&0&0&0&0&0&0&0&0\\ {0\;0\;1}&0&0&0&0&0&0&0&0&0\\
 {0\;0\;0}&a_{42}&0&0&0&-b&0&0&0&0\\ {0\;0\;0}& (26a_{22}-14a_{23})&-6a_{42}&4(a_{22}-a_{23})&0&0&3b&0&0&0\\ {0\;0\;0}&(2a_{22}-2a_{23})&0&0&b&0&0&0&0&0\\
 \end{pmatrix}
\]

%$(26*a22-14*a23),4*(a22-a23),(2*a22-2*a23)$
There is a skew symmetric nondegenerate $(12\times 12)$-matrix $M$ for which this family of  $6$-spaces are isotropic:
Let $M$ be the skew matrix whose antidiagonal has the entries $(3,3,3,-4,-2,-6)$ above the diagonal, starting with the entry in the first row, and $0$'s everywhere else. So 
$$M={\rm antidiagonal}\;(3,3,3,-4,-2,-6,6,2,4,-3,-3,-3).$$
Then the matrix product $$U_1\cdot M\cdot U_1^t=0.$$
So the the $6$-spaces $I(a_{22},a_{23},a_{42},b)_3/K_L$ are isotropic for $M$.

%The form is a natural map from $M:V_{12}^*\to V_{12}$. 
%Let us denote by $D_{ver}$ the quotient $I_{{ver,p},3}/K_L.$
Let $LG_M(6,E_{L})$ be the Lagrangian Grassmannian of $6$-spaces in 
$E_{L}$ that are isotropic for $M$.
 \begin{proposition} \label{LG compactification of Vp1}  
 The ideals $[I]\in V_p(1)$ are generated by $10$-spaces $I_3$ of apolar cubics that contain $K_L$, and whose quotient $I_3/K_L$ in $E_{L}$ are isotropic for the skew form $M$ and satisfy the following two Schubert conditions:
 \begin{itemize}
 \item $I_3\subset U_{L,p}$
 \item $I_3\cap I_{{ver,p},3}$ is at least $9$-dimensional.
 \end{itemize}
% The locus of ideals in $V_p(1)$ that are contained in the ideal of $L=\{y_0=y_3=0\}$
%    is the cone over a conic in the Veronese surface.
 %   Any saturated ideal in $V_p(1)$ defines a local apolar scheme supported at $p$.
       \end{proposition}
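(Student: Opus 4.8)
The plan is the following. By Proposition \ref{three components local}, together with Theorem \ref{main4} and Proposition \ref{componentequations}, $V_{p}(1)$ is an irreducible closed threefold, being the closure of the affine $3$-space $V_{p}^{aff}(1)$. So it suffices to show that (i) every ideal of $V_{p}(1)$ satisfies the four displayed conditions, and that (ii) the locus of $10$-dimensional $I_{3}\subset (q^{2})^{\bot}_{3}$ satisfying these conditions is irreducible of dimension at most $3$; since this locus is closed and contains $V_{p}(1)$, the two then coincide.

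Let $e_{1},\dots,e_{12}$ denote the displayed basis of $E_{L}$, so that $W_{L,p}/K_{L}=\langle e_{1},e_{2},e_{3}\rangle$ and $\Lambda_{0}:=I_{ver,p,3}/K_{L}=\langle e_{1},e_{2},e_{3},e_{4},e_{7},e_{8}\rangle$. For (i), take $I=I(a_{22},a_{23},a_{42},b)\in V_{p}^{aff}(1)$. The containment $K_{L}\subset I_{3}$ is built into the presentation $U_{1}$, the inclusion $I_{3}\subset U_{L,p}$ holds because the last three columns of $U_{1}$ vanish, and the isotropy of $I_{3}/K_{L}$ for $M$ is the identity $U_{1}MU_{1}^{t}=0$ recorded above. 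For the fourth condition, the first three rows of $U_{1}$ span $W_{L,p}/K_{L}\subset\Lambda_{0}$, and its fourth and sixth rows, $a_{42}e_{4}-be_{8}$ and $(2a_{22}-2a_{23})e_{4}+be_{7}$, also lie in $\Lambda_{0}$; since $b=1$ on $V_{p}^{aff}(1)$ these five vectors are linearly independent, whence $\dim\big((I_{3}/K_{L})\cap\Lambda_{0}\big)\ge 5$, i.e. $\dim(I_{3}\cap I_{ver,p,3})\ge 9$. All four conditions are closed, so they hold throughout $V_{p}(1)$.

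For (ii), set $W:=U_{L,p}/K_{L}$, a $9$-dimensional subspace of $E_{L}$. A direct computation with the anti-diagonal matrix $M$ shows that $W$ is coisotropic, with radical $W^{\bot}=W_{L,p}/K_{L}=\langle e_{1},e_{2},e_{3}\rangle$ of dimension $3$, and that $\Lambda_{0}$ is a Lagrangian subspace of $(E_{L},M)$ with $W^{\bot}\subset\Lambda_{0}\subset W$. The conditions ``$K_{L}\subset I_{3}$, $I_{3}/K_{L}$ isotropic for $M$, and $I_{3}\subset U_{L,p}$'' say exactly that $\Lambda:=I_{3}/K_{L}$ is a Lagrangian subspace of $E_{L}$ contained in $W$; any such Lagrangian contains $W^{\bot}$, so passing to the $6$-dimensional symplectic space $W/W^{\bot}$ identifies this locus with $LG(3,W/W^{\bot})\cong LG(3,6)$. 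As $W^{\bot}\subset\Lambda\cap\Lambda_{0}$ for every such $\Lambda$, one has $\dim(I_{3}\cap I_{ver,p,3})=7+\dim(\bar\Lambda\cap\bar\Lambda_{0})$, where $\bar\Lambda,\bar\Lambda_{0}\subset W/W^{\bot}$ are the images of $\Lambda,\Lambda_{0}$; hence the fourth condition translates into the Schubert condition $\dim(\bar\Lambda\cap\bar\Lambda_{0})\ge 2$ in $LG(3,6)$. This Schubert variety is irreducible of dimension $3$: the incidence $\{(P,\bar\Lambda)\colon P\subset\bar\Lambda_{0},\ \dim P=2,\ P\subset\bar\Lambda\}$ is the projectivisation of the rank two bundle $P\mapsto P^{\bot}/P$ over $\GG(2,\bar\Lambda_{0})\cong\PP^{2}$, hence an irreducible threefold, and it maps onto the Schubert variety, generically one-to-one and contracting the $\PP^{2}$ over $\bar\Lambda=\bar\Lambda_{0}$. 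This is moreover consistent with the description of $V_{p}(1)$ as a cone over a Veronese surface with vertex $I_{ver,p}$ in Proposition \ref{grassmannian compactification of Vp1}.

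Combining (i) and (ii), the four conditions cut out an irreducible threefold containing the irreducible threefold $V_{p}(1)$, so the two coincide. The part requiring the most care is the linear-algebra bookkeeping for $M$ --- the coisotropy of $W$, the identification of its radical, and the Lagrangian property of $\Lambda_{0}$ --- but since $M$ is the explicit anti-diagonal matrix and all the subspaces in play are spanned by coordinate vectors of $E_{L}$, this is routine. The real point is the reduction to $LG(3,6)$, which turns irreducibility of the conditions-locus into a statement about Schubert varieties rather than something to be verified directly in $\GG(10,((q^{2})^{\bot})_{3})$.
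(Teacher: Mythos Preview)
Your proof is correct and follows essentially the same strategy as the paper's: verify that $V_p(1)$ satisfies the listed conditions, identify the locus cut out by these conditions with a Schubert variety in $\mathrm{LG}(3,6)$ via the reduction $E_L\supset W\supset W^{\bot}$, and conclude by matching irreducible threefolds.

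The one genuine difference is how the Schubert locus is handled. The paper invokes that the condition $\dim(\bar\Lambda\cap\bar\Lambda_0)\ge 2$ cuts out the tangent cone to $\mathrm{LG}(3,6)\subset\PP^{13}$ at the point $\bar\Lambda_0$, which is known to be a cone over a Veronese surface; since Proposition~\ref{grassmannian compactification of Vp1} already identifies $V_p(1)$ as a cone over a Veronese surface with vertex $I_{ver,p}$, the two cones agree. You instead prove irreducibility and the dimension bound directly via the incidence $\{(P,\bar\Lambda):P\subset\bar\Lambda_0,\ P\subset\bar\Lambda\}$, which is a $\PP^1$-bundle over $\PP^2$ mapping birationally onto the Schubert variety. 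Your route is slightly more self-contained (it does not rely on recognising the tangent cone of $\mathrm{LG}(3,6)$), while the paper's route makes the structural match with Proposition~\ref{grassmannian compactification of Vp1} more transparent. Either way the conclusion is the same.
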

%{\color{red} I am confused by the name $\Gamma_0$, wasn't it reserved to the local %special scheme on the quadric?  I changed to $\Gamma_{ver}$}
\begin{proof}

%So the vertex of $V_{p}(1)$ is spanned by the cubics $y0^2*y2,
%y1^2*y3,
%y0*y1^2-2*y1*y2*y3$.  
%Each $3$-space in $V_{p}(1)$, contains a $2$-space $U$ in the space %spanned by $c1,c2,c3$, and is spanned by $U$ and a cubic in the span %of $c1$ and a cubic $c_U$ in the $3$-space $c4,c5,c6$.
%The map $U\mapsto c_U$ is linear:  
%$$U\mapsto c_U:  (a_1:a_2:a_3)\mapsto (3a_1c6-6a_2c5+4a_3c4),  %\lambda\in \CC.$$
%So the $3$-spaces in $V_p(1)$ are the $3$-spaces $\{\langle %U,c_U\rangle|$ in $V_p(1)$ generated by graph  of the linear map %$U\mapsto c_U$. 

The $3$-spaces $I(a_{22},a_{23},a_{42},b)_3/W_{L,p}$ in the Grassmannian $\GG(3,U_{L,p}/W_{L,p})$  are shown in Proposition \ref{grassmannian compactification of Vp1} to form the cone over a Veronese surface with vertex at $[I_{(ver,p),3}/W_{L,p}]$. 

In $LG_M(6,E_{L})$ the variety $V_p(1)$ lies in the subvariety defined by the listed Schubert conditions.
The first Schubert condition defines the subvariety $$\GG(W_{L,p}/K_L,6,U_{L,p}/K_L, E_{L})\cap LG_M(6,E_{L}).$$
Here the $3$-space $W_{L,p}/K_L$ is isotropic, so its orthogonal $(W_{L,p}/K_L)^\bot$ w.r.t. $M$ is a $9$-space that coincides with $U_{L,p}/K_L$. Thus the first condition defines a Lagrangian Grassmannian ${\rm LG}(3,6)$ in $LG_M(6,E_{L})$.  The second condition defines the tangent cone in this ${\rm LG}(3,6)$ at the $3$-space $I_{ver,p,3}/K_L$ of the vertex ideal. This cone is the cone over a Veronese surface, so coincides with $V_p(1)$.
\end{proof}

The family $V_p(2)$ is, of course, entirely similar to $V_p(1)$, with the line $$L'=\{y_3=y_1=0\}$$ playing the role of $L$.
In the analysis of $V_p(1)$, many of the ideals in the family depend on $L$ while the dependency on $p$ is less clear.  We will clarify this issue.

For this we consider the apolar cubics that vanish on $L$.
There is an $8$-space $W_L$ of apolar cubic forms that are singular along $L$
\begin{align*}
 W_L=&\langle y_3^3,y_1y_3^2,y_0y_3^2,2y_0y_1y_3-y_3^2y_2,y_0^2y_3,y_0^2y_1-2y_0y_2y_3,y_0^3,y_0^2y_2,\rangle 
\end{align*}
and a $12$-space $U_L$ of apolar cubics that vanish on $L$. 
Modulo $W_L$ the latter is 
\[
 U_L/W_L=\langle y_1^2y_3,
y_0y_1^2-2y_1y_2y_3,
y_2^2y_3-2y_0y_1y_2,
y_0y_2^2 \rangle .
\]
Of these, the pencils of forms that together with the first eight generate an apolar ideal $I\in V_p(1)$ are parameterized by
%{\tiny
$$\begin{pmatrix}
2(a_{22}-a_{23})&a_{42}&0&0\\
0&0&-4(a_{22}-a_{23})&-6a_{42}\\
\end{pmatrix}\cdot
\begin{pmatrix}y_1^2y_3\\
y_0y_1^2-2y_1y_2y_3\\
y_2^2y_3-2y_0y_1y_2\\
y_0y_2^2\\
\end{pmatrix}.$$
%}
In Pl\"ucker coordinates, $p_{01}=p_{23}=p_{03}-3p_{12}=0$ in $\GG(2,U_L/W_L)$.

Next, we move $p$ on $L$ and compare.
The linear transformation 
%{\tiny 
$$(y_0,y_1,y_2,y_3)\mapsto (y_0-(1/a)y_3,y_1-ay_2,y_1+ay_2,y_0+(1/a)y_3)$$
%}
leaves the quadric $Q^{-1}$, the line $L=\{y_0=y_3=0\}$ and the apolar ideal $(q^2)^{\bot}$ invariant, and moves $$(0:0:1:0)\mapsto (0:a:1:0).$$  With this transformation we analyze $V_{p}^{aff}(1)$ as $p=(0:a:1:0)$ moves with $a$ along the line $y_0=y_3=0$.  The vertex ideal in $V_{p}(1)$ is generated by
the $8$-space $W_L$ of cubics singular along the line $L$
%{\tiny
%$$ 
%y3^3,y1y3^2,y0y3^2,2y0y1*y3-y3^2*y2,y0^2*y3,y0^2*y1-%2*y0*y2*y3,y0^3,y0^2*y2,y2^2*y3-2*y0*y1*y2.
%$$
%}
and in addition the two cubics
%{\tiny
$$\begin{pmatrix}
1&0&-a^2&-2a^3\\
2&a&0&-a^3\\
\end{pmatrix}\cdot
\begin{pmatrix}y_1^2y_3\\
y_0y_1^2-2y_1y_2y_3\\
y_2^2y_3-2y_0y_1y_2\\
y_0y_2^2\\
\end{pmatrix}$$
%}
in $U_L/W_L$.

 Notice that these ten cubic generators of the vertex ideal are 
all apolar cubics that contain the line $\{y_0=y_3=0\}$ and are singular at the point $p=(0:a:1:0)$.

The Pl\"ucker coordinates in $\GG(2,U_L/W_L)$ for the line generated by the last two generators are
%{\tiny
$$(1,2a,3a^2,a^2,2a^3,a^4),$$
%}
so the vertex ideals form a rational normal quartic curve in the Grassmannian.  The tangents of this curve are easily seen to also be contained in the Grassmannian, so the vertex ideals form the tangent developable of a twisted cubic curve in the space of cubics. This curve of cubics is 
%{\tiny
$$\begin{pmatrix}
1&a&a^2&a^3\\
\end{pmatrix}\cdot
\begin{pmatrix}y_1^2y_3\\
y_0y_1^2-2y_1y_2y_3\\
y_2^2y_3-2y_0y_1y_2\\
y_0y_2^2\\
\end{pmatrix}.$$
%}
The pencils of unsaturated apolar ideals $I\in V_{p}(1)$ 
are parameterized by ($c=a_{22}-a_{23},d=a_{42}$)
%{\tiny
$$\begin{pmatrix}
2bc-3bd&(2c-d)&2ac+ad&2a^2c+3a^2d\\
-12bc+6bd&4c-6d&4ac+6ad&-12a^2c-6a^2d\\
\end{pmatrix}\cdot
\begin{pmatrix}y_1^2y_3\\
y_0y_1^2-2y_1y_2y_3\\
y_2^2y_3-2y_0y_1y_2\\
y_0y_2^2\\
\end{pmatrix}$$
%}
Also these pencils are contained in the Pl\"ucker hyperplane $$\{p_{03}-3p_{12}=0\}\subset \GG(2,U_L/W_L).$$

We summarize our findings 
and define $V_L(1)$ as the Grassmannian compactification of the variety of apolar ideals of schemes of length ten with multiplicity $3$ at some point along the line $L=\{y_0=y_3=0\}\subset Q^{-1}$.

\begin{proposition}\label{apolar_L} The $4$-fold variety 
$$V_L(1)=\bigcup_{p\in L}V_{p}(1)\subset \GG(10,(q^2)^{\bot}_3)$$ contains a quadric threefold of unsaturated ideals that all are contained in the ideal of the line $L$.  
%These ideals all contain all $8$ apolar cubics that are singular along the %line, and are generated in addition by a pencil of cubics in the $4$-space
%{\tiny
%$$\langle y1^2*y3,
%y0*y1^2-2*y1*y2*y3,
%y2^2*y3-2*y0*y1*y2,
%y0*y2^2\rangle.$$
%}
\begin{itemize}

\item For each point $p\in L$, the variety $V_p(1)$, contains unsaturated ideals that form a cone over a conic.  The vertex of this cone is the vertex ideal $I_{ver,p}$ generated by the cubics that contain $L$ and are singular at $p$.  Modulo the cubics singular along $L$, the vertex ideals are generated by the pencils of cubics tangent to a twisted cubic $C_L$ of apolar cubics.
\item 
For any two distinct points $p,p'\in L$, the varieties  $V_p(1)$, and $V_{p'}(1)$ intersect in a conic section of unsaturated apolar ideals. 
\end{itemize}

%{\tiny
%$$s^3y1^2*y3+ s^2*t*
%(y0*y1^2-2*y1*y2*y3)+s*t^2*
%(y2^2*y3-2*y0*y1*y2)+t^3*
%y0*y2^2$$
%}
%The threefold of apolar unsaturated ideals in the Grassmannian compactification is a quadric threefold;  
The quadric threefold of unsaturated ideals in $V_L(1)$ is the intersection of the linear span of the curve of vertex ideals with the Grassmannian $\GG(10,((q^2)^\perp)_3)$.
% of ten-dimensional subspaces of apolar cubics.  

\end{proposition}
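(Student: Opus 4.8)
The plan is to assemble the statement from the fibrewise analysis of the $V_p(1)$ together with the explicit automorphism that slides the base point along $L$. First I would record the two structural facts already available: by Proposition \ref{local at p} every saturated ideal in $V_p(1)$ is supported at the single point $p$, so for distinct $p,p'\in L$ the intersection $V_p(1)\cap V_{p'}(1)$ consists only of unsaturated ideals; and by Proposition \ref{grassmannian compactification of Vp1} together with Corollary \ref{unsaturatedinline} every unsaturated ideal in any $V_p(1)$, $p\in L$, is contained in the ideal of $L$, hence has degree-three part inside $U_L$. Being a limit of saturated ideals, such an ideal moreover has degree-three part containing $W_L$ (for an ideal in two distinct fibres this is $W_{L,p}+W_{L,p'}=W_L$; for the others it follows from the explicit limit computation). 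Thus the unsaturated locus of $V_L(1)=\bigcup_{p\in L}V_p(1)$ sits, modulo $W_L$, inside the sub-Grassmannian $\GG(W_L,10,U_L)\cong\GG(2,U_L/W_L)\cong\GG(2,4)$ — the Klein quadric in its Pl\"ucker $\PP^5$ — which is cut out of $\GG(10,((q^2)^\perp)_3)$ by a linear $\PP^5$.

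Next I would use the linear automorphism $(y_0,y_1,y_2,y_3)\mapsto(y_0-a^{-1}y_3,\,y_1-ay_2,\,y_1+ay_2,\,y_0+a^{-1}y_3)$, which fixes $Q^{-1}$, $L$ and $(q^2)^\perp$ and carries $(0:0:1:0)$ to $p_a=(0:a:1:0)$, to transport the description of $V_{(0:0:1:0)}(1)$ to $V_{p_a}(1)$ uniformly in $a$. The vertex ideal $I_{ver,p_a}$ is generated by the apolar cubics containing $L$ and singular at $p_a$, and modulo $W_L$ it is the pencil in $U_L/W_L$ with Pl\"ucker vector $(1,2a,3a^2,a^2,2a^3,a^4)$; one recognises this as the tangent line at parameter $a$ to the twisted cubic $C_L\subset\PP(U_L/W_L)$ given by $a\mapsto(1,a,a^2,a^3)$, since these six quantities are precisely the $2\times2$ minors of $\bigl(\begin{smallmatrix}1&a&a^2&a^3\\0&1&2a&3a^2\end{smallmatrix}\bigr)$. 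Hence the curve of vertex ideals is the tangent developable of $C_L$, a rational normal quartic in $\GG(2,4)$; its linear span is the hyperplane $H=\{p_{03}-3p_{12}=0\}\cong\PP^4$, because the six coordinate functions $1,2a,3a^2,a^2,2a^3,a^4$ span a $5$-dimensional space with exactly the relation $p_{03}=3p_{12}$. As $H$ is not tangent to the Klein quadric (immediate from the Pl\"ucker relation), $Q_3:=H\cap\GG(2,U_L/W_L)$ is a smooth quadric threefold, and it coincides with the intersection of $\GG(10,((q^2)^\perp)_3)$ with the linear span of the curve of vertex ideals.

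To conclude I would prove $V_L(1)^{\mathrm{uns}}=Q_3$. The inclusion $\subseteq$ is the Pl\"ucker identity $p_{03}=3p_{12}$ already verified for the unsaturated pencils of every $V_{p_a}(1)$; for $\supseteq$ I would check that the explicit family of unsaturated pencils of the $V_{p_a}(1)$, as $a$ and the remaining parameters vary, dominates the three-dimensional $Q_3$. Each $V_{p_a}(1)^{\mathrm{uns}}$ is then the slice of $Q_3$ by one further hyperplane depending on $a$, i.e. a quadric cone $Q_3\cap\PP^3_a$ with vertex the point $[I_{ver,p_a}]$ — a cone over a conic, which gives the first bullet — and for $a\ne a'$ the linear spaces $\PP^3_a,\PP^3_{a'}$ meet in a $\PP^2$, so $V_{p_a}(1)\cap V_{p_{a'}}(1)=Q_3\cap\PP^2$ is a conic section of unsaturated ideals, which gives the second bullet. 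Finally $\dim V_L(1)=4$ because the incidence $\{(p,I):p\in L,\ I\in V_p(1)\}$ has dimension $1+3=4$ and maps to $V_L(1)$ with finite fibres over saturated ideals (each such ideal being supported at a single point of $L$). The main obstacle is the inclusion $\supseteq$ together with the explicit identification of the hyperplane $\PP^3_a$, and hence of the conic $Q_3\cap\PP^3_a\cap\PP^3_{a'}$: this requires pushing the explicit Pl\"ucker parametrisations of the preceding paragraphs through the automorphism and checking that the resulting families have the expected dimensions and fill the expected loci; the remaining content is a repackaging of Propositions \ref{local at p} and \ref{grassmannian compactification of Vp1} and the tangent-developable computation.
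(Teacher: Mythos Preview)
Your proposal is correct and follows essentially the same route as the paper: reduce the unsaturated locus to $\GG(W_L,10,U_L)\cong\GG(2,4)$, identify the curve of vertex ideals as the tangent-line locus of the twisted cubic $C_L$ (a rational normal quartic spanning the hyperplane $H=\{p_{03}-3p_{12}=0\}$), and deduce that $Q_3=H\cap\GG(2,4)$ is the quadric threefold in question.

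The one point where the paper is slicker is your ``main obstacle.'' You propose to compute the hyperplanes $\PP^3_a$ explicitly and verify that the unsaturated cones fill $Q_3$ and pairwise meet in conics. The paper instead observes synthetically: each $V_p(1)^{\mathrm{uns}}$ is already known (from Proposition \ref{grassmannian compactification of Vp1}) to be a cone over a conic with vertex $[I_{ver,p}]\in Q_3$; a quadric surface cone inside a smooth quadric threefold with vertex on the threefold is necessarily the tangent hyperplane section at that vertex. Hence $\PP^3_a=T_{[I_{ver,p_a}]}Q_3$ for free, the cones sweep out $Q_3$ as the vertex runs over a curve, and two distinct tangent hyperplanes of $Q_3\subset\PP^4$ meet in a $\PP^2$, cutting $Q_3$ in a conic. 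This replaces your remaining computation by a one-line geometric fact.
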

\begin{proof}
First, recall that the ideal of any apolar scheme with multiplicity $3$ at $p$ when restricted to $L$ is generated by apolar cubics in $U_{L,p}$, so it lies in $V_p(1)$. Therefore, $V_L(1)$ is the union of the $V_p(1)$.  Next, the set of ideals in $V_L(1)$ that are contained in the ideal of $L$, are the unsaturated ones, their parameterization all satisfy the Pl\"ucker relation $\{p_{03}-3p_{12}=0\}$ in the above coordinates for $\GG(2,U_L/W_L)$ seen as the subvariety 
$$\GG(2,U_L/W_L)\cong \GG(W_L,10,U_L,(q^2)^{\bot}_3)\subset \GG(10,(q^2)^{\bot}_3) .$$ 
Here, $\GG(W_L,10,U_L,(q^2)^{\bot}_3)$ is the variety of $10$-dimensional subspaces of $U_L$ that contain $W_L$.
The Pl\"ucker relation defines a smooth quadric threefold in $\GG(2,U_L/W_L)$ that for each $p\in L$ contains the cone over a conic with vertex (ideal) on the curve of tangents to the twisted cubic curve $C\subset \PP(U_L/W_L)$.
The cone must be the intersection of the quadric threefold with the tangent hyperplane at the vertex.  As $p$ varies these cones fill the quadric threefold, and any two cones intersect in a plane conic, so the proposition follows. 
\end{proof}
%An ideal $I\in V_L(1)$ is unsaturated if the cubic generators of $I$ all vanish on $L$.
%This is the case when all $8$ apolar cubics 
%{\tiny
%$$ 
%y3^3,y1*y3^2,y0*y3^2,2*y0*y1*y3-y3^2*y2,y0^2*y3,y0^2*y1-%2*y0*y2*y3,y0^3,y0^2*y2,y2^2*y3-2*y0*y1*y2.
%$$
%}
%that are singular along $L$ are in $I$, and 
%in addition the two cubics that depend on $p=(0:a:1:0)$, so that $I\in %V_p(1)$, for some $p$.
%The rational normal quartic curve of vertex ideals satisfy the %$p_{03}=3*p_{12}$ in the Pl\"ucker coordinates of the  $4$-space of apolar %cubics with basis 
%{\tiny
%$$\{ y1^2*y3,
%y0*y1^2-2*y1*y2*y3,
%y2^2*y3-2*y0*y1*y2,
%y0*y2^2\}.$$
%}

%??????????
%The pencil of cubics of all unsaturated apolar ideals in the %compactification satisfy the relation $p_{03}=3*p_{12}$ in the Pl\"ucker %coordinates of the  $4$-space of apolar cubics with basis {\tiny
%$$\{ y1^2*y3,
%y0*y1^2-2*y1*y2*y3,
%y2^2*y3-2*y0*y1*y2,
%y0*y2^2\}.$$
%}

%??????????

%{\tiny
%$$\begin{pmatrix}
%b&0&0\\
%0&-4*(a22-a23)&-6*a42\\
%2*a22-2*a23&a42&0&0\\

%\end{pmatrix}\cdot \begin{pmatrix}
%y1^2*y3\\
%y0*y1^2-2*y1*y2*y3\\
%y2^2*y3-2*y0*y1*y2\\
%y0*y2^2\\
%\end{pmatrix}
%$$
%}

%This follows by a computation; the case of $p=(0:0:1:0)$ follows by the above remark.  For any other point $q\in L$, a group action that preserves apolarity with respect to $f=q^2$ and maps $p$ to $q$ is applied to the ideals of $V_p(1)$ and its compactification, to conclude.

Finally, we find the intersection $V_p(0)\cap V_p(1)$.
Recall the following subspaces of apolar cubics.  
\begin{itemize}
\item $B_{T_p}$ is the $6$-space of apolar cubics that vanish on the tangent space $T_p$ to $Q^{-1}$ at $p$.
\item $A_{p}$ is the $13$-space of apolar cubics that are singular at $p$ after restriction to the tangent space $T_p$.
\item $W_{L,p}$ is the $7$-space of apolar cubics that are singular along $L\subset Q^{-1}$, and has multiplicity $3$ at $p$ after restriction to $T_p$.
\item $U_{L,p}$ is the $13$-space of apolar cubics that have multiplicity $3$ at $p$ when restricted to $L$.
\end{itemize}
We apply this, as above, when $p=(0:0:1:0)$, so $T_p=\{y_3=0\}$.

An ideal in $V_p(0)$ contains $B_{T_p}$, and an ideal in $V_p(1)$ contains $W_{L,p}$, so an ideal in $V_p(0)\cap V_p(1)$ contains
$B_{T_p}+W_{L,p}$.  Notice that $$B_{T_p}+W_{L,p}=W_{L,p}+\langle y_1^2y_3\rangle,$$ so it is $8$-dimensional.  Furthermore, the $10$-space of cubics in an ideal in $V_p(0)\cap V_p(1)$ is contained in $A_p\cap U_{L,p}$, the $12$-space of apolar cubics that has multiplicity $3$ when restricted to $T_p$.  Now an ideal in $V_p(1)$ contains $y_1^2y_3$ when the parameter $a_{42}$ vanishes, which also ensures that the ideal is contained in this $12$-space, i.e. belongs to $V_p(0).$   In the notation of the proof of Proposition \ref{grassmannian compactification of Vp1} above, the form $$p_{125},p_{134},p_{135},p_{235}, 3p_{124}p_{236}-2p_{126}^2$$ in the Pl\"ucker coordinates for $V_p(1)$ vanishes, so the intersection is a cone over a conic with vertex at $I_{ver,p}$.

We summarize:
\begin{proposition}\label{cone} The intersection $V_p(0)\cap V_p(1)$ in $\GG(10,(q^2)^\bot_3)$ is a cone over a conic with vertex the vertex ideal $I_{ver,p}$.
    \end{proposition}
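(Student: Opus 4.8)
The plan is to carry out the intersection computation in the coordinates introduced for $V_p(1)$ in the proof of Proposition \ref{grassmannian compactification of Vp1}, fixing $p=(0:0:1:0)$ so that $T_p=\{y_3=0\}$ and $L=\{y_0=y_3=0\}$. First I would observe that membership in $V_p(0)$ forces the degree-three part $I_3$ of an ideal to contain the $6$-space $B_{T_p}$, while membership in $V_p(1)$ forces $I_3$ to contain the $7$-space $W_{L,p}$; hence any $[I]\in V_p(0)\cap V_p(1)$ has $I_3\supseteq B_{T_p}+W_{L,p}$. A direct inspection of the two explicit spaces shows $B_{T_p}+W_{L,p}=W_{L,p}+\langle y_1^2y_3\rangle$, which is $8$-dimensional. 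Dually, $I_3$ must lie in $A_p\cap U_{L,p}$, the $12$-dimensional space of apolar cubics having multiplicity $3$ at $p$ upon restriction to $T_p$. So the intersection is a subvariety of the variety of $10$-spaces squeezed between these two fixed spaces, which is a copy of $\GG(2,4)$ inside $\GG(10,(q^2)^\bot_3)$.

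Next I would pin down $V_p(0)\cap V_p(1)$ inside the parameterization of $V_p(1)$ given in that proof. An ideal of $V_p(1)$, described by parameters $(a_{22},a_{23},a_{42},b)$, contains $y_1^2y_3$ precisely when $a_{42}=0$; and the vanishing of $a_{42}$ is exactly the condition that the $10$-space $I_3$ lie in the $12$-space $A_p\cap U_{L,p}$, i.e.~that $[I]\in V_p(0)$. Imposing $a_{42}=0$ on the $3\times 7$ coefficient matrix presenting $I(a_{22},a_{23},a_{42},b)_3/W_{L,p}$ and recomputing the maximal minors kills the Pl\"ucker coordinates $p_{125},p_{134},p_{135},p_{235}$ and degenerates the rank-one $3\times 3$ symmetric matrix condition to the single quadric $3p_{124}p_{236}-2p_{126}^2=0$ in the surviving coordinates. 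Together with the linear relations already present, these equations describe a cone over a conic whose vertex is the point where only $p_{123}$ is nonzero, namely the vertex ideal $I_{ver,p}$.

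Finally I would check that this locus is genuinely all of $V_p(0)\cap V_p(1)$ and not merely contained in it. Since both $V_p(0)$ and $V_p(1)$ are the Grassmannian compactifications of the affine families $V_p^{aff}(0)$ and $V_p^{aff}(1)$, and $a_{42}=0$ is precisely the equation cutting out $V_p^{aff}(0)\cap V_p^{aff}(1)$ inside $V_p^{aff}(1)$ (compare the equations in Proposition \ref{componentequations}), taking closures identifies the computed cone over a conic with the full intersection. The main obstacle is the bookkeeping: one must verify carefully that the $a_{42}=0$ slice really corresponds to the $V_p(0)$-condition $I_3\subset A_p\cap U_{L,p}$, and that no component of the intersection escapes the $V_p(1)$-chart — in particular that the unsaturated boundary ideals of $V_p(0)$ (contained in the ideals of the two lines of $Q^{-1}$ through $p$) and of $V_p(1)$ (contained in the ideal of $L$) meet only inside this cone. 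This is exactly where the explicit descriptions of the unsaturated loci from Propositions \ref{grassmannian compactification of Vp0} and \ref{grassmannian compactification of Vp1} together with Corollary \ref{unsaturatedinline} are needed.
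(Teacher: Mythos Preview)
Your proposal is correct and follows essentially the same approach as the paper's own argument: identify the containments $B_{T_p}+W_{L,p}\subset I_3\subset A_p\cap U_{L,p}$, translate the $V_p(0)$-condition into $a_{42}=0$ in the $V_p(1)$-parameters, and read off the cone over a conic from the surviving Pl\"ucker coordinates $p_{125}=p_{134}=p_{135}=p_{235}=0$, $3p_{124}p_{236}-2p_{126}^2=0$. The extra care you take in the last paragraph is reasonable but largely unnecessary once you invoke Corollary~\ref{matrixM'}, which gives $V_p(0)=\VAPS(q^2,10)\cap\GG(B_{T_p},10,A_p)$ directly; since $V_p(1)\subset\VAPS(q^2,10)$, the intersection $V_p(0)\cap V_p(1)$ is exactly the Schubert-type slice $V_p(1)\cap\GG(B_{T_p},10,A_p)$, and the Pl\"ucker description of $V_p(1)$ from Proposition~\ref{grassmannian compactification of Vp1} is already global, so nothing can escape the chart.
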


\section{Global structure and degrees }\label{main6}

In Sections \ref{maincomponent} and \ref{specialcomponents}, we described subvarieties of $\VAPS(q^2,10)$ that compactify the set of apolar schemes of length ten that contain a fixed point $p$ in $Q^{-1}$. In this last section we investigate the global Grassmannian compactification $\VAPS(q^2,10)$ of apolar schemes of length ten; the compactification of the image of the set of apolar schemes $\Gamma$ under the forgetful map
$$\Gamma\mapsto [I_{\Gamma,3}]\in \GG(10,(q^2)^\bot_3).$$
 
By Proposition \ref{three components local}, the closure $V_p\subset \VAPS(q^2,10)$ of the set of apolar schemes $$\Gamma\in \VAPS(q^2,10)$$ contained in the ideal $(y_0,y_1,y_2)$ (a point on the inverse quadric), has three components $V_{p}(0),V_{p}(1)$ and $V_{p}(2)$.  Here  $V_{p}(0)$ is isomorphic to a singular Fano threefold $V_{22}$ of degree $22$, while each of $V_{p}(1)$ and $V_{p}(2)$ are cones over Veronese surfaces, cf. Corollary \ref{matrixM'} and Proposition \ref{grassmannian compactification of Vp1}.

Now, by moving $p$ on the quadric $Q^{-1}$ we sweep out the whole of $\VAPS(q^2,10)$. Denote  $$V:=\VAPS(q^2,10)=\bigcup_p{V_p}\subset \GG(10,(q^2)^\bot_3).$$

\begin{lemma}\label{three components global} The variety 
$V$ has three components: $V(i) :=\bigcup_p{V_p(i)}$ for $i=0,1,2$.
\end{lemma}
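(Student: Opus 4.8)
The plan is to exhibit $V$ as the union over $Q^{-1}$ of the fibres $V_p$, split that union into three irreducible pieces, and then verify that none of the three is contained in the union of the other two.

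For the first point, let $\Sigma\subset Q^{-1}\times\GG(10,(q^2)^\bot_3)$ be the closure of the incidence $\{(p,[I_{\Gamma,3}])\mid \Gamma\text{ an apolar scheme of length ten},\ p\in\Gamma\}$. It is closed and, since $SO(q)$ acts transitively on $Q^{-1}$ and apolarity to $q^2$ is $SO(q)$-invariant, it is $SO(q)$-invariant, with fibre over $p$ equal to $V_p$; Proposition~\ref{three components local} then identifies the three components $V_p(0),V_p(1),V_p(2)$ of each fibre. By Proposition~\ref{apolar schemes} the support of every apolar scheme of length ten meets $Q^{-1}$, so the second projection $\Sigma\to\GG(10,(q^2)^\bot_3)$ surjects onto $V$; as $Q^{-1}$ is projective this projection is closed, hence $V=\bigcup_p V_p$ and therefore $V=V(0)\cup V(1)\cup V(2)$.

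Next I would show each $V(i)$ is irreducible. The connected group $SO(q)$ acts on $\Sigma$ permuting the three components of each fibre among themselves. It cannot carry $V_p(0)$ onto $V_p(1)$ or $V_p(2)$: a generic member of $V_p(0)$ is an apolar scheme whose residual subscheme — of length at most two and disjoint from $Q^{-1}$ by Proposition~\ref{apolar schemes} — is non-empty, so its support is not contained in $Q^{-1}$, whereas every scheme parametrised on $V_p(1)$ or $V_p(2)$ is local, supported at $p\in Q^{-1}$ by Proposition~\ref{local at p}, and both properties are $SO(q)$-invariant. Nor can it interchange $V_p(1)$ with $V_p(2)$: these are attached to the two lines of $Q^{-1}$ through $p$, one from each ruling of $Q^{-1}\cong\PP^1\times\PP^1$, and the connected group $SO(q)$ preserves the rulings. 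Consequently each sub-incidence $\Sigma(i)=\{(p,[I])\mid[I]\in V_p(i)\}$ is $SO(q)$-invariant, hence is the image of the irreducible variety $SO(q)\times V_{p_0}(i)$ under $(g,[I])\mapsto(gp_0,g[I])$ and so is irreducible; therefore $V(i)$ is irreducible and $V$ has at most three components.

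Finally I would show $V(0),V(1),V(2)$ are pairwise distinct, which by irreducibility gives that none is contained in the union of the others. The support dichotomy just used separates $V(0)$ from $V(1)$ and from $V(2)$, since every ideal of $V(1)\cup V(2)$ is the cubic part of a local apolar ideal supported at a point of $Q^{-1}$ while the generic ideal of $V(0)$ is not. To separate $V(1)$ from $V(2)$ I would use the explicit presentation of Proposition~\ref{grassmannian compactification of Vp1}: a generic $[I]\in V(1)$ is the cubic part of a local ideal supported at a unique point $p$, and then $I$ contains the space $K_L$ of apolar cubics triple along one of the two lines $L\subset Q^{-1}$ through $p$ but not the cubic $y_1^3\in K_{L'}$ attached to the other line $L'$ through $p$; since every ideal of $V_{p'}(2)$ contains $K_{L''}$ for $L''$ the corresponding line through $p'$, an ideal lying in $V(1)\cap V(2)$ would, matching its support point via Proposition~\ref{local at p}, satisfy $K_{L'}\subset I$, a contradiction. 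Hence $V$ has exactly the three components $V(0),V(1),V(2)$. I expect this last step to be the real obstacle: $V(1)$ and $V(2)$ are mirror images under the orthogonal involution exchanging the two rulings, so separating them requires isolating a genuinely ruling-dependent invariant of a generic member — here the distinguished line through the support point, equivalently the vanishing of the $y_1^3$-coordinate in Proposition~\ref{grassmannian compactification of Vp1} — and checking that it persists in the Grassmannian compactification rather than only in the affine chart, whereas the separation of $V(0)$ costs essentially nothing.
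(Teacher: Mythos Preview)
Your proof is correct and follows essentially the same strategy as the paper's: show $V=V(0)\cup V(1)\cup V(2)$ via the support structure from Proposition~\ref{apolar schemes}, separate $V(0)$ from the others by the number of points in the support, and separate $V(1)$ from $V(2)$ by reducing to the fibrewise statement $V_p(1)\neq V_p(2)$.

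The main difference is one of detail. The paper dispatches the lemma in three sentences, taking the irreducibility of each $V(i)$ as ``clear'' and arguing $V(1)\neq V(2)$ by the one-line reduction ``$V(1)=V(2)$ would imply $V_p(1)=V_p(2)$ for general $p$''. You instead supply the irreducibility argument explicitly via the transitive $SO(q)$-action (which the paper uses implicitly throughout), and for $V(1)\neq V(2)$ you track the explicit invariant $K_L\subset I$ together with $y_1^3\notin I$. Both arguments are valid; the paper's shortcut is slightly cleaner precisely at the point you flagged as the obstacle: once you know a generic $[I]\in V(1)=V(2)$ must lie in $V_p^{aff}(1)\cap V_p^{aff}(2)$ for its unique support point $p$ (which you do argue, since generic elements of $V(2)$ lie in the affine open part and Proposition~\ref{local at p} pins down the support), the contradiction is immediate from the local study without needing to isolate a specific cubic like $y_1^3$ or worry about its behaviour in the compactification.
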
 
\begin{proof}It is clear that $V(i)$ are components of $V$, and it remains to observe that $V(i)$ are distinct. For the latter, we recall that general elements of $V(0)$ are distinguished by the property that their corresponding schemes have three points in their support. Moreover, general elements of $V(1)$ and $V(2)$ represent schemes supported at a single point. Hence, $V(1) = V(2)$ would imply $V_p(1) = V_p(2)$ for a general $p$, but this is not the case by our local study.  
\end{proof}

For $V(0)$ we will identify $\GG(10,(q^2)^\bot_3)=\GG(6,((q^2)^\bot_3)^*)$, and consider $V(0)$ as a subvariety of the Grassmannian $\GG(6,((q^2)^\bot_3)^*)$.

\subsection{The main component}\label{maincomponent} %Over each point $p$ in the inverse quadric $Q^{-1}$ we have seen three affine components of apolar schemes of length 10: $V_p(0),V_p(1),V_p(2).$  Compactifying the first and taking the union over $Q^{-1}$, we get the main component. %treated in previous sections. 
By Lemma \ref{fibration of main component} we know that $V_p(0)$ is disjoint from $V_{p'}(0)$ for $p$, $p'$ distinct in $Q^{-1}$.  Therefore, the main component $$V(0)\subset \VAPS(q^2,10)$$ has the structure of a fibre bundle, i.e. $$V(0)=\bigcup_{p\in Q^{-1}}V_p(0)$$ is a set theoretic disjoint union. By globalizing the description of $V_p(0)$ we will obtain a global structure of $V(0)$.

The cubic generators of $(q^2)^\bot_3\subset T_3$ define a map
$$\phi:\PP(S_1)\to \PP(V_{16}),$$
where we set $V_{16}=((q^2)^\bot_3)^*\subset S_3$.
Let $Q^{-1}=\{q^{-1}=0\}\subset \PP(S_1) $, where $q^{-1}\in T_2$ is the quadric inverse to $q\in S_2$, then the image $S_{Q^{-1}}=\phi(Q^{-1})\subset \PP(V_{16})$  of $Q^{-1}$ is a smooth surface, the embedding of the quadric surface $Q^{-1}$ by the complete system of $(3,3)$-forms. 

For a saturated ideal $$I_\Gamma \in V(0)\subset \VAPS(q^2,10),$$ i.e. in the main component, we have that the apolar scheme $\Gamma$ contains a subscheme of length at least eight supported in some point $p\in Q^{-1}$,  see Proposition \ref{grassmannian compactification of Vp0}.
The span of $\phi(\Gamma)$ in $\PP(V_{16})$ is $\PP(I_{\Gamma,3}^\bot)\subset\PP(((q^2)^\bot_3)^*)$.
Recall, from \ref{compactificationofmain} the set of subspaces of apolar cubics
$B_{T_p}\subset I_{\Gamma,3}\subset A_p\subset (q^2)^\bot_3$.   The inclusions give reverse inclusions
$$\PP(A_p^\bot)\subset \PP(I_{\Gamma,3}^\bot)\subset \PP(B_{T_p}^\bot).$$
Here $\PP(A_p^\bot)$ is the projective tangent plane to $S_{Q^{-1}}$ at $\phi(p)$. In fact every cubic in $A_p$ is singular at $p$ after restriction to the tangent plane $T_p$ to $Q^{-1}$ at $p$. Moreover, $\PP(B_{T_p}^\bot)$ is the third order osculating space to 
$S_{Q^{-1}}$ at $\phi(p)$, %span of $\phi(T_p)\subset \PP^{15}$, 
since every cubic in $B_{T_p}$ vanishes on $T_p$.   Similarly, $\PP(C_{p}^\bot)$ is the second order osculating space to $S_{Q^{-1}}$, since every cubic in
$C_{p}$ has multiplicity three at $p$ when restricted to $T_p$, cf. Section \ref{grassmannian compactification of Vp0}.
\begin{remark}\label{osculatingspace in VAPS}
Recall from Section \ref{affine deformation}, that $V_p$ contains a special apolar scheme $\Gamma_0$, the tautological scheme whose ideal $I_{\Gamma_0}$ is generated by the cubic forms in $C_p$, cf. Remark \ref{Cp}. 
Therefore, the second order osculating spaces to $S_{Q^{-1}}$ all lie in $\VAPS(q^2,10)\subset \GG(6,V_{16})$.    
\end{remark}

%where $T_P$ is the projective tangent space to $Q^{-1}\subset \PP^3$ at $P$. 
%The space $B_{T_p}$ is $6$-dimensional, so $\PP(B_{T_p}^\bot)$ is a $\PP^9$ and $\phi|_{T_p}:T_p\to \PP^{9}$ is the third Veronese morphism.  
%Let $H_p\subset \langle x_0,x_1,x_2,x_3\rangle$ be the $3$-space such that $T_p=\PP(H_p)$.
%Then there is a natural identification $$B_{T_p}^\bot=Sym_3(H_p).$$

Now, we note that  $I_{\Gamma,3}^\bot$ is a $6$-dimensional subspace of $((q^2)^\bot_3)^*$, while $I_{\Gamma,3}^\bot/A_p^\bot$ is a $3$-dimensional subspace of 
the $7$-dimensional vector space $$E_p=B_{T_p}^\bot/A_p^\bot.
$$
So $[I/A_p^\bot]\mapsto [I]$
defines a natural map $$\psi_p:\GG(3,E_p)\to \GG(6,((q^2)^\bot_3)^*).$$

%the $3$-space  $I_{\Gamma,3}^\bot/A_p^\bot$ 
% is a point in $G(3,E_p)$ where $E_p$ is the $7$-dimensional vector space $$E_p=B_{T_p}^\bot/A_p^\bot,$$ and 
We will globalize this map as $p$ moves on  $Q^{-1}$, by first identifying $E_p$ as the fiber at $p$ of a rank $7$ vector bundle $\mathcal E$ on $Q^{-1}$.  

For this, we use the relation between $A_p^\bot$ and $B_{T_p}^\bot$ with the tangent space and the third-order osculating space to $S_{Q^{-1}}$ at $p$. The projective tangent bundle to $S_{Q^{-1}}$ is dual to the bundle $P^1_{Q^{-1}}(3,3)$ of principal parts. Similarly, the third-order osculating bundle is dual to the bundle $P^3_{Q^{-1}}(3,3)$ of third-order principal parts.

As $p$ moves on $Q^{-1}$, the vector space $E_p$ is therefore the fiber of a rank-7 vector bundle $\mathcal{E}$ on $Q^{-1}$, which is the quotient of $P^3_{Q^{-1}}(3,3)^*$ by $P^1_{Q^{-1}}(3,3)^*$. The dual bundle $\mathcal{E}^*$ therefore fits into the short exact sequence
$$
0 \to \mathcal{E}^* \to P^3_{Q^{-1}}(3,3) \to P^1_{Q^{-1}}(3,3) \to 0.
$$

So there is a surjective vector bundle map
$$P^3_{Q^{-1}}(3,3)^*\to \mathcal E$$
and an injective vector bundle map
$$ P^3_{Q^{-1}}(3,3)^*\to ((q^2)^\bot_3)^*\times Q^{-1}$$
that induces the global map factorizing as follows:
$$\psi: \GG(3,\mathcal E)\to \GG(6,P^3_{Q^{-1}}(3,3)^*)\to  Q^{-1}\times \GG(6,((q^2)^\bot_3)^*)\to \GG(6,((q^2)^\bot_3)^*),$$
where the left morphism is given by pullback via the surjective map $P^3_{Q^{-1}}(3,3)^*\to \mathcal E$ and the second embedding is induced by the inclusion $ P^3_{Q^{-1}}(3,3)^*\to ((q^2)^\bot_3)^*\times Q^{-1}$ the last is the projection from the trivial bundle to its fiber. 

 %=Sym_3(H_p)/A_p^\bot.$$
 %obtained from the quotient $\PP_p$ by $\PP^p$.

%We first globalize the $7$-dimensional vector space $E_p=B_{T_p}^\bot/A_p^\bot$ to a rank $7$
%bundle on $Q^{-1}$.  For this we use the relation of $A_p^\bot$ and $B_{T_p}^\bot$ to the tangent and the third order osculating space to $S_{Q^{-1}}$ at $p$.  The projective tangent bundle to $S_{Q^{-1}}$ is dual to the bundle $P^1_{Q^{-1}}(3,3)$ of principal parts.  Similarly the third order osculating bundle is dual to the bundle $P^3_{Q^{-1}}(3,3)$ of third order principal parts.
%As $p$ moves on $Q^{-1}$ the vector space $E_p$ is therefore the fiber of a rank $7$ vector bundle $\mathcal{E}$ on $Q^{-1}$, the quotient of $P^3_{Q^{-1}}=P^3_{Q^{-1}}(3,3)^*$ by $P^1_{Q^{-1}}(3,3)^*$.
%So the dual bundle $\mathcal{E}^*$ fits in a short exact sequence
%is the kernel of the natural surjective map $P^3_{Q^{-1}}(3,3)\to P^1_{Q^{-1}}(3,3)$.

The bundle of principal parts on ${Q^{-1}}$ w.r.t. the third Veronese embedding fits
 in the exact sequence % obtained from the cotangent bundle $\Omega_{Q^{-1}}$:%following extension of the cotangent bundle %$T_{Q^{-1}}$: 
$$0\to \Omega_{Q^{-1}}(3,3)\to P^1_{Q^{-1}}(3,3)\to \oo(3,3)\to 0,
$$
where $\Omega_{Q^{-1}}$ is the cotangent bundle on $Q^{-1}.$
Similarly,
$$0\to {\rm Sym}^2\Omega_{Q^{-1}}(3,3)\to P^2_{Q^{-1}}(3,3)\to P^1_{Q^{-1}}(3,3)\to 0,
$$
and 
$$0\to {\rm Sym}^3\Omega_{Q^{-1}}(3,3)\to P^3_{Q^{-1}}(3,3)\to P^2_{Q^{-1}}(3,3)\to 0.
$$

The surjective map $P^3_{Q^{-1}}(3,3)\to P^1_{Q^{-1}}(3,3)$ clearly factors through $P^2_{Q^{-1}}(3,3)$, so its kernel $\mathcal E^*$ maps onto the kernel of  $P^2_{Q^{-1}}(3,3)\to P^1_{Q^{-1}}(3,3)$.  Hence there is 
%By the above, the bundle $\mathcal E^*$  fits in the exact sequence
%$$
% 0\to \mathcal E^*\to P^3_{Q^{-1}}(3,3)\to P^1_{Q^{-1}}(3,3)\to 0.  
%$$
%Since the last map factors through $P^2_{Q^{-1}}(3,3)$, and hence %onto the kernel of  $P^2_{Q^{-1}}(3,3)\to P^1_{Q^{-1}}(3,3)$, we get 
a surjective map $\mathcal E^*\to {\rm Sym}^2\Omega_{Q^{-1}}(3,3)$.  The kernel coincides with the kernel of $$P^3_{Q^{-1}}(3,3)\to P^2_{Q^{-1}}(3,3),$$  so $\mathcal E^*$ fits in the exact sequence
\begin{align}\label{E^*}
0\to{\rm Sym}^3\Omega_{Q^{-1}}(3,3)\to \mathcal E^*\to {\rm Sym}^2\Omega_{Q^{-1}}(3,3)\to 0. 
\end{align}

The bundle of principal parts on ${Q^{-1}}$ w.r.t. the trivial line bundle fits
 in the exact sequence % obtained from the cotangent bundle $\Omega_{Q^{-1}}$:%following extension of the cotangent bundle %$T_{Q^{-1}}$: 
$$0\to \Omega_{Q^{-1}}\to P^1_{Q^{-1}}\to \oo_{Q^{-1}}\to 0,
$$
where $\Omega_{Q^{-1}}$ is the cotangent bundle on $Q^{-1}.$
Consider the rank $3$-bundle $$R:=(P^1_{Q^{-1}})^*,$$ the dual of the bundle of principal parts. It is a general extension 
$$0\to\mathcal{O}_{Q^{-1}}\to R\to \mathcal{O}_{Q^{-1}}(2,0)\oplus \mathcal{O}_{Q^{-1}}(0,2)\to 0.$$

Recall that the matrix $M'$, 
\[
M'=\begin{pmatrix}\label{Mp'-matrix} 0& 0& 0&0& 0& 3y_{1}&-3y_{2}\cr 0& 0&0& 0& -3y_{1}&0&3y_{0}\cr
0& 0& 0& 2y_{1}&-y_{2}&-3y_{0}&0\cr
 0& 0&-2y_{1}& 0& 2y_{0}&0&0\cr
0& 3y_{1}&y_{2}&-2y_{0}&0&0&0\cr
-3y_{1}&0&3y_{0}& 0& 0&0&0\cr
3y_{2}&-3y_{0}& 0&0& 0& 0&0\cr
\end{pmatrix}
\] of \ref{matrixM'}, defines a net of skew symmetric maps
$\rho_p: E_p\to E_p^*$, i.e. a net of $2$-vectors in $\wedge^2 E_p^*$. 
This net provides an embedding $R_p\to \wedge^2 E_p^*$.

\textbf{Claim} We claim that this embedding lifts to a vector bundle map 
$$R=(P^1_{Q^{-1}})^*\to \wedge^2\mathcal E^*.$$

In fact, the matrix $M'$
 defines a net of skew symmetric maps
$\rho_p: E_p\to E_p^*$, with respect to the basis 
 \ref{Alocal}, for $A_p/B_{T_p}$.  Since $B_{T_p}^\bot/A_p^\bot$ is natural dual to $A_p/B_{T_p}$, the cubic forms \ref{Alocal} in $y_i$ form a basis for $E_p^*$, i.e. a basis of coordinates on $E_p$ as follows \begin{align*}
     &(e_{3,0},e_{2,0},e_{2,1},e_{1,1},e_{1,2},e_{0,2},e_{0,3})=\\
     &(y_{0}^3, y_{0}^2y_{2},y_{0}^2y_{1}-2y_{0}y_{2}y_{3}, 2y_{0}y_{1}y_{2}-y_{3}y_{2}^2,y_{0}y_{1}^2-2y_{1}y_{2}y_{3}, y_{1}^2y_{2},y_{1}^3).
 \end{align*}  
We will now spread all this data to a map of bundles by means of a transitive group action on the quadric.
For that consider the group $\mathcal{G}=SL(2)\times SL(2)$  inducing automorphisms of $Q^{-1}$ preserving its two rulings. Note that $\mathcal{G}$ acts transitively on $Q^{-1}$ hence if $\mathcal{P}$ is the stabilizer of the point $p$ we can write 
$$Q^{-1}=\mathcal{G}/\mathcal{P}$$ and both $R$ and $\bigwedge^2 \mathcal E^*$ are homogeneous bundles, which in particular are described as:
$$R=(\mathcal{G}\times R_p)/\mathcal{P},$$
and 
$$\wedge^2\mathcal E^*=%(G\times \wedge^2\mathbb C^{7})/P =\quad 
(\mathcal{G}\times\wedge^2 E_p^*)/\mathcal{P},$$
for appropriate group actions of $\mathcal{P}$. Now $M'$ defines a map 
$${\rm id}\times M':   \mathcal{G}\times R_p\to \mathcal G\times \wedge^2 E_p^*$$ % =(G\times\wedge^2 E^*)/P ,$$
which we need to check that it induces a map of quotients.

Let us describe the above arguments explicitly. The action of $\mathcal{G}$  is the restriction to $Q^{-1}$ of an action on the projective space $\mathbb P(V_x)$ spanned by $Q^{-1}$. If now $Q^{-1}$ is the image of $\mathbb{P}^1\times \mathbb P^1$ via the map $$\mathbb P^1\times \mathbb P^1\ni  ((a:b),(c:d))\mapsto [-ad x_0 + bc x_1+ bd x_2+ac x_3] \in \mathbb P(V_x),$$
then the natural action of $\mathcal{G}=SL(2)\times SL(2)$ on $\mathbb P^1 \times \mathbb P^1 $ 
corresponds to an action on $\mathbb C^2 \times \mathbb C^2$
 \begin{equation}\label{g}
 g=\left(\left ( \begin{array}{cc}
     h_1 & h_2 \\
    h_3  & h_4
 \end{array}\right),  \left ( \begin{array}{cc}
     k_1 & k_2 \\
    k_3  & k_4
 \end{array}\right)\right), \text{ with } h_1h_4-h_2h_3=1 \text{ and }  k_1k_4-k_2k_3=1 \end{equation}
given by 
$$g* \left(\left(\begin{array}{c}
     a \\
    b
 \end{array}\right), \left(\begin{array}{c}
     c \\
    d
 \end{array}\right)\right)= \left(\left(\begin{array}{c}
     h_1a+h_2b \\
    h_3 a+h_4b
 \end{array}\right), \left(\begin{array}{c}
     k_1c+k_2d \\
    k_3c+k_4d
 \end{array}\right)\right),$$
which induces a representation on $V_x$ given by:
 \begin{equation}\label{action Vy}
    g*_x\left\{\begin{array}{c}
       x_0 \\
       x_1\\
       x_2\\
       x_3
 \end{array}\right\} =\left\{\begin{array}{c}
h_1k_4x_0-h_2k_3x_1-h_2k_4x_2-h_1k_3x_3\\-h_3k_2x_0+h_4k_1x_1+h_4k_2x_2+h_3k_1x_3\\-h_3k_4x_0+h_4k_3x_1+h_4k_4x_2+h_3k_3x_3\\-h_1k_2x_0+h_2k_1x_1+h_2k_2x_2+h_1k_1x_3 \end{array}\right\}.   
 \end{equation}

%This action clearly preserves the quadric $Q^{-1}$ given by the equation $$y_0y_1+y_2y_3=0$$

 In these coordinates the stabilizer of the  point $p=[x_2]\in Q^{-1}$ is the subgroup $\mathcal{P}\subset \mathcal{G}$ of pairs of triangular matrices:
 $$\mathcal{P}=\left\{\left(\left ( \begin{array}{cc}
     h_1 & h_2 \\
    0  & h_4
 \end{array}\right),  \left ( \begin{array}{cc}
     k_1 & k_2 \\
    0  & k_4
 \end{array}\right)\right)\in \mathcal{G}  : h_1h_4=k_1k_4=1\right\}.$$

 We hence have $$Q^{-1}\simeq \mathcal{G}/\mathcal{P},$$
 where the action of $\mathcal{P}$ is by left multiplication and the isomorphism $\mathcal{G}/\mathcal{P}\to Q^{-1}$ is given by:

 $$\left[ \left(\left ( \begin{array}{cc}
     h_1 & h_2 \\
    h_3  & h_4
 \end{array}\right),  \left ( \begin{array}{cc}
     k_1 & k_2 \\
    k_3  & k_4
 \end{array}\right)\right)\right]\to ((h_3:h_4), (k_3:k_4))\to (-h_3k_4:h_4k_3:h_4k_4:h_3k_3).$$

 Note that the 4-dimensional representation $V_x$ of $\mathcal{G}$ represented by \eqref{action Vy} induces a representation on $V_y=V_x^*$ given by the action 
 $$g*_y v:=\phi^{-1}((g^{-1})^T *_x \phi(v)),$$ where 
 $$\phi: V_y\to V_x$$ is the map
  $$(y_0,y_1,y_2,y_3)\mapsto (x_0,x_1,x_2,x_3).$$
  Explicitly, we have the following representation on $V_y$:
\begin{equation}\label{action Vx}
    g*_y\left\{\begin{array}{c}
       y_0 \\
       y_1\\
       y_2\\
       y_3
 \end{array}\right\} =\left\{\begin{array}{c}
h_4k_1y_0-h_3k_2y_1+h_3k_1y_2+h_4k_2y_3\\-h_2k_3y_0+h_1k_4y_1-h_1k_3y_2-h_2k_4y_3\\h_2k_1y_0-h_1k_2y_1+h_1k_1y_2+h_2k_2y_3\\h_4k_3y_0-h_3k_4y_1+h_3k_3y_2+h_4k_4y_3 \end{array}\right\}.   
 \end{equation}

 The latter induces a representation $S^3V_x^*$ which preserves the space $(q^2)^{\perp}_3\subset S^3V_x^*$ of cubics polynomials in $y$ apolar to $q^2$. In particular, it induces an action of $\mathcal{G}$ on $(q^2)^{\perp}_3$. Now, to see the bundle $\mathcal E^*$ as a homogeneous bundle we observe that the stabilizer subgroup $\mathcal{P}$ acts on $(q^2)^{\perp}_3$ and preserves the subspaces $A_p$ and $B_{T_p}$ hence acts also on $A_p/B_{T_p}$. The latter action describes the homogeneous bundle $\mathcal E^*$ as a quotient 
%We can hence write  $$\mathcal A= \bigcup_{g\in G} \{g*p\}\times g* A_p \subset Q\times (q^2)^{\perp}_3$$ as well as 
%$$\mathcal B_T= \bigcup_{g\in G} \{g*p\}\times g* B_{T_p} \subset Q\times (q^2)^{\perp}_3$$ and get  

$$\mathcal E ^*=(\mathcal{G}\times (A_p/B_{T_p}))/\mathcal{P}.$$
Let us write explicitly the action of the stabiliser $\mathcal{P}$ on $A_p/B_{T_p}$ in the chosen basis \begin{align*}
     &(e_{3,0},e_{2,0},e_{2,1},e_{1,1},e_{1,2},e_{0,2},e_{0,3})=\\
     &(y_{0}^3, y_{0}^2y_{2},y_{0}^2y_{1}-2y_{0}y_{2}y_{3}, 2y_{0}y_{1}y_{2}-y_{3}y_{2}^2,y_{0}y_{1}^2-2y_{1}y_{2}y_{3}, y_{1}^2y_{2},y_{1}^3).
 \end{align*}  

$$g*\left\{\begin{array}{c}
       e_{3,0}\\
       e_{2,0}\\
       e_{2,1}\\
       e_{1,1}\\
       e_{1,2}\\
       e_{0,2}\\
       e_{0,3}
 \end{array}\right\} = \left\{\begin{array}{c}  h_4^3k_1^3 e_{3,0}\\
 
      h_4^2k_1^2(h_2k_1 e_{3,0}+h_1k_1 e_{2,0}-h_1k_2 e_{2,1})\\
       h_1h_4^2k_1^2k_4e_{2,1}\\
      2h_1h_4k_1k_4( h_2k_1 e_{2,1}+ h_1k_1 e_{1,1}-h_1k_2e_{1,2})\\
       h_1^2h_4 k_1k_4^2 e_{1,2}\\
       h_1^2k_4^2( h_2 k_1 e_{1,2}+h_1k_1 e_{0,2}- h_1k_2 e_{0,3})\\
       h_1^3k_4^3 e_{0,3}
 
 \end{array}\right\}.$$

This permits us to understand the bundle $\wedge^2 \mathcal E^*$ by describing the corresponding  action of $\mathcal{P}$ on $\wedge^2 \mathcal E^*_p$. %Consider $E_{y_0}$ to be the orbit of the line 

 %$$(s,t)\times (y_0:y_1:y_2:y_3)\mapsto (sty_0:s^{-1}t^{-1}y_1:st^{-1}y_2:s^{-1}ty_3)$$
% or
%  $$(s,t)\times (y_0:y_1:y_2:y_3)\mapsto (sy_0:ty_1:y_2:sty_3)$$
 %induced from automorphisms  of $Q^{-1}$ preserving $p$. % and the factors of $\PP^1\times \PP^1$.
  %Under the induced action on  $E_p$ and $E_p^*$ the basis vectors have distinct weights, so the action is diagonal for the  coordinates \ref{Alocal} on $E_p$.  The distinct weights allow us to see each of these coordinates on $E_p$ as restrictions of global sections $e_{i,j}$ of an appropriate twist of $\mathcal E^*$. The index $(i,j)$ indicated the weight under the torus action. 

%These sections on each fiber $E_p$ form a coordinate system with a diagonal $(\CC^{\ast})^2$ action induced from automorphisms  of $Q^{-1}$ preserving $p$ and acting naturally on the factors of $\PP^1\times \PP^1$.
%These coordinates have weights $(3,2,2,1,1,0,0)$ with respect to the first action and $(0,0,1,1,2,2,3)$ with the other.
Let us now consider the net $K_p$ of skew symmetric forms in $\mathcal E_p$ corresponding to the matrix $M'$. It is generated by: 

%maps $\rho_i: \mathcal E(u_i)\to \mathcal E(v_i)^*,$ for some twists $u_i,v_i$. Since $y_0,y_1,y_2$ have weights $(1,0),(0,1)$ and $(0,0)$ respectively, we get $$\rho_0\in {\rm H}^0(Q^{-1},\wedge^2\mathcal E^*(1,0)),
%\rho_1\in {\rm H}^0(Q^{-1},\wedge^2\mathcal E^*(0,1)),\rho_2\in {\rm H}^0(Q^{-1},\wedge^2\mathcal E^*(0,0 ))$$(???).

%We saw that fiber-wise over $p\in Q^{-1}$ the sections are
%variety $\VAPS(q^2,10)$ is defined by the zero locus of sections
$$\rho_{y_0} :\qquad \frac{2}{3}e_{1,1}\wedge e_{1,2}-e_{2,1}\wedge e_{0,2}+ e_{2,0}\wedge e_{0,3},$$
$$\rho_{y_1} :\qquad  \frac{2}{3}e_{2,1}\wedge e_{1,1}-e_{2,0}\wedge e_{1,2}+ e_{3,0}\wedge e_{0,2},$$

$$\rho_{y_2} :\qquad  - \frac{1}{3}e_{2,1}\wedge e_{1,2}-e_{3,0}\wedge e_{0,3}.$$
These span a three-dimensional vector space space $K_p$.
We compute that  by acting with $\mathcal{G}$ on $K_p$ we obtain a rank 3 subbundle $K\subset \bigwedge^2 \mathcal E^*$. Indeed, it is enough to check that $g*K_p=K_p$ for $g$ such that $g\in \mathcal{P}$. 
In fact, using our explicit action we get 
$$K=(\mathcal{G}\times \mathbb C^3)/\mathcal{P}$$ with the action of $\mathcal{P}$ on $\mathbb C^3(\rho_{y_0},\rho_{y_1},\rho_{y_2})$ given  by 
$$g*_K\left\{\begin{array}{c}
       \rho_{y_0}\\
       \rho_{y_1}\\
       \rho_{y_2}
 \end{array}\right\} =\left\{\begin{array}{c}
  h_1^4h_4^2k_1^3k_4^3\rho_{y_0} - h_1^3h_2h_4^2k_1^3k_4^3 \rho_{y_2}\\
  h_1^3h_4^3k_1^4k_4^2\rho_{y_1} + h_1^3h_4^3k_1^3k_2k_4^2\rho_{y_2}\\
 h_1^3h_4^3k_1^3k_4^3 \rho_{y_2}
 \end{array}\right\}=$$
 $$\left\{\begin{array}{c}
h_1^2k_1k_4\rho_{y_0}-h_1h_2k_1k_4\rho_{y_2}\\
h_1h_4k_1^2\rho_{y_1}+h_1h_4k_1k_2\rho_{y_2}\\
h_1h_4k_1k_4 \rho_{y_2} \end{array}\right\},
$$
where $g\in \mathcal{P}$ is as in (\ref{g}).

On the other hand we can describe the bundle $R$ as follows. 
Over the point $p$ we have three vectors $(1,0,0,0)$, $(0,1,0,0)$, $(0,0,1,0)$, which span the projective tangent plane to $Q^{-1}$ and hence generate  $(P^1_{Q^{-1}})^*(-1,-1)$ through the  group action of $\mathcal{G}$ on $Q^{-1}$, i.e. the action of $\mathcal{P}$ on $\mathbb C^3$ with coordinates $y_0,y_1,y_2$ describing $(P^1_{Q^{-1}})^*(-1,-1)$ as $$(P^1_{Q^{-1}})^*(-1,-1)=(\mathcal{G}\times \mathbb C^3)/\mathcal{P}$$ is
$$g*_{R(-1,-1)}\left\{\begin{array}{c}
       x_0 \\
       x_1\\
       x_2
 \end{array}\right\} =\left\{\begin{array}{c}
h_1k_4x_0-h_2k_4x_2\\
h_4k_1x_1+h_4k_2x_2\\
h_4k_4x_2 \end{array}\right\}.$$

Furthermore, observe that in our setting $$\mathcal{O}(1,1)=(\mathcal{G}\times \mathbb C)/\mathcal{P},$$ where the action of $\mathcal{P}$ on $\mathbb C$ is:
$$g*_{(1,1)} c=k_1h_1c.$$
Indeed, the latter defines a line bundle with a section defined by
$$\sigma: \mathcal{G}\to \mathbb C$$
defined by 
$$\sigma\left(\left ( \begin{array}{cc}
     s_1 & s_2 \\
    s_3  & s_4
 \end{array}\right),  \left ( \begin{array}{cc}
     t_1 & t_2 \\
    t_3  & t_4
 \end{array}\right)\right) ={ s_3t_3},$$
 which is equivariant.

Hence after multiplication with $\mathcal O(1,1)$ which amounts to multiplying the representation matrix by $h_1k_1$ we  obtain a description of $R=(P^1_{Q^{-1}})^*$ as $$R=(\mathcal{G}\times \mathbb C^3)/\mathcal{P}$$ with the action of $\mathcal{P}$ on  $\mathbb C^3(y_0,y_1,y_2)$
$$g*_R\left\{\begin{array}{c}
       y_0 \\
       y_1\\
       y_2
 \end{array}\right\} =\left\{\begin{array}{c}
h_1^2k_1k_4y_0-h_1h_2k_1k_4y_2\\
h_1h_4k_1^2y_1+h_1h_4k_1k_2y_2\\
h_1h_4k_1k_4 y_2 \end{array}\right\}.$$

We are now ready to extend the map given by $M'$ to a map of bundles $$R\to K.$$ 
Indeed, $M'$ defines an isomorphism $R_p\to K_p$ given by 
$$(y_0,y_1,y_2)\mapsto (\rho_{y_0},\rho_{y_1},\rho_{y_2}).$$
This extends to an isomorphism:
$F:\mathcal{G}\times R_p \to \mathcal{G}\times K_p$

$$ ((G_1,G_2),(y_0, y_1, y_2))\mapsto ((G_1,G_2), (\rho_{y_0}, \rho_{y_1},  \rho_{y_2}) ). $$
The latter map $F$ is clearly an $\mathcal{G}$-equivariant map:
\begin{align*}
   &  F((M_1,M_2)*_{R}((G_1,G_2),({y}_0, y_1,y_2)))\\
  &=\;\;F((M_1G_1, M_2G_2),  (h_1^2k_1k_4^2 y_0 -h_1h_2k_1k_4y_2 , h_1h_4k_1^2 y_1+h_1h_4k_1k_2y_2, h_1h_4k_1k_4 y_2))\\
    &=\;\;((M_1G_1, M_2G_2),   (h_1^2k_1k_4^2 \rho_{y_0} -h_1h_2k_1k_4\rho_{y_2} , h_1h_4k_1^2 \rho_{y_1}+h_1h_4k_1k_2\rho_{y_2}, h_1h_4k_1k_4 \rho_{y_2}))\\
    &=\;\;(M_1,M_2)*_K ((G_1,G_2), (\rho_{y_0}, \rho_{y_1}, \rho_{y_2} ))\\&=\;\;(M_1,M_2)*_{K}  F ((G_1,G_2),(y_0,y_1,y_2)).
\end{align*}
The isomorphism $F$ hence induces an isomorphism of quotients $$
\bar F\colon R\to K.$$

We are now ready to prove the following:
\begin{proposition}\label{mainComp}
The main component (of saturated ideals) in $\VAPS(q^2,10)$ can be described in the Grassmann bundle $\GG(3, \mathcal E)$ over $Q^{-1}$  with relative universal bundle $\mathcal U$. It is isomorphic to the zero locus $$V(0)=\V_{22}\subset \GG(3, \mathcal E)$$ of a section of $\wedge^2  \mathcal U^*\otimes \pi^* R^* =\mathcal{H} om(\pi^* R,\wedge^2 \mathcal{U}^*)$ corresponding to the homomorphism $$R\simeq K\to \wedge^2 \mathcal E^* .$$

\end{proposition}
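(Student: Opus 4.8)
The plan is to deduce the global statement from the pointwise description in Corollary~\ref{matrixM'} by spreading it out over $Q^{-1}$ with the bundle $\mathcal E$ and the net of skew forms $K\subset\wedge^2\mathcal E^*$ constructed above. First I would pin down the section. The bundle homomorphism $\bar F\colon R\xrightarrow{\ \sim\ }K\hookrightarrow\wedge^2\mathcal E^*$ is, since $\mathcal{H}om(R,\wedge^2\mathcal E^*)=R^*\otimes\wedge^2\mathcal E^*$, the same datum as a global section $\varphi\in{\rm H}^0(Q^{-1},\wedge^2\mathcal E^*\otimes R^*)$. On the Grassmann bundle $\pi\colon\GG(3,\mathcal E)\to Q^{-1}$ the tautological inclusion $\mathcal U\hookrightarrow\pi^*\mathcal E$ dualizes to a surjection $\pi^*\mathcal E^*\to\mathcal U^*$, hence to a surjection $\wedge^2\pi^*\mathcal E^*\to\wedge^2\mathcal U^*$, which is nothing but restriction of an alternating $2$-form to the tautological $3$-plane. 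Composing $\pi^*\varphi$ with this surjection and the identity of $\pi^*R^*$ produces a section $\widetilde\varphi\in{\rm H}^0(\GG(3,\mathcal E),\wedge^2\mathcal U^*\otimes\pi^*R^*)$; under the canonical identification $\wedge^2\mathcal U^*\otimes\pi^*R^*=\mathcal{H}om(\pi^*R,\wedge^2\mathcal U^*)$, its value at a point $(p,[U])$, with $U\subset\mathcal E_p=E_p$ a $3$-dimensional subspace, is the composite $R_p\xrightarrow{\bar F_p}\wedge^2E_p^*\to\wedge^2U^*$.

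Next I would compute the zero locus of $\widetilde\varphi$ fiberwise. By the equivariant identifications established above, the image of $\bar F_p$ is exactly the net $K_p\subset\wedge^2E_p^*$ attached to the matrix $M'$ of Corollary~\ref{matrixM'}, written in the basis \eqref{Alocal} adapted to $p$. Hence $\widetilde\varphi$ vanishes at $(p,[U])$ precisely when every skew form of $K_p$ --- equivalently, every form of the net $M'$ --- restricts to zero on $U$, that is, precisely when $U$ is isotropic. So the fiber of $Z(\widetilde\varphi)$ over $p$ is the variety $V_{22}(M')$ of isotropic $3$-spaces in $E_p$, which by Corollary~\ref{matrixM'} equals $V_p(0)$. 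Letting $p$ vary over $Q^{-1}$ and recalling the definition $V(0)=\bigcup_p V_p(0)$ from just before Lemma~\ref{three components global}, we find that $Z(\widetilde\varphi)$ maps onto the main component $V(0)$.

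It remains to upgrade this to an isomorphism of schemes. For this I would use the morphism $\psi\colon\GG(3,\mathcal E)\to\GG(6,V_{16})$ built above from the surjection $P^3_{Q^{-1}}(3,3)^*\to\mathcal E$ and the inclusion $P^3_{Q^{-1}}(3,3)^*\hookrightarrow V_{16}\times Q^{-1}$. On the fiber over $p$, $\psi$ is the map $\GG(3,B_{T_p}^\perp/A_p^\perp)\to\GG(6,V_{16})$ sending $U$ to its preimage $\widetilde U\subset B_{T_p}^\perp$ under $B_{T_p}^\perp\to B_{T_p}^\perp/A_p^\perp$; this is a closed immersion, and on $Z(\widetilde\varphi)$ it sends the fiber over $p$ isomorphically onto $V_p(0)$ by Corollary~\ref{matrixM'}. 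Since the subvarieties $V_p(0)\subset\GG(6,V_{16})$, for $p\in Q^{-1}$, are pairwise disjoint by Lemma~\ref{fibration of main component}, the restriction of $\psi$ to $Z(\widetilde\varphi)$ is injective; being a proper, injective, fiberwise closed immersion, it is a closed immersion onto its image $\bigcup_p V_p(0)=V(0)$. Hence $V(0)\cong Z(\widetilde\varphi)=\V_{22}\subset\GG(3,\mathcal E)$, the vanishing locus of the section determined by $R\simeq K\to\wedge^2\mathcal E^*$, which is the assertion.

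I expect the main obstacle to be this last step: turning the fiberwise isomorphisms of Corollary~\ref{matrixM'} into an isomorphism of total spaces, in particular making sure that $\widetilde\varphi$ cuts out the expected structure --- no extraneous components, no embedded or nonreduced locus --- rather than only the right underlying set, and that $\psi$ is an immersion along $Z(\widetilde\varphi)$. The clean way around this is that all the data --- the bundles $\mathcal E$, $R$, $K$, the homomorphism $\bar F$, the section $\widetilde\varphi$ and the morphism $\psi$ --- are $\mathcal G=SL(2)\times SL(2)$-equivariant for the transitive action on $Q^{-1}=\mathcal G/\mathcal P$, so the statement over all of $Q^{-1}$ follows from the statement over the single point $p=(0:0:1:0)$, where it is precisely Corollary~\ref{matrixM'}; equivariance transports the scheme structure to every fiber, and Lemma~\ref{fibration of main component} then glues the fiberwise isomorphisms into the global one.
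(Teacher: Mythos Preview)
Your proposal is correct and follows essentially the same approach as the paper: identify the fiber of the zero locus over $p$ with $V_{22}(M')=V_p(0)$ via Corollary~\ref{matrixM'}, and then globalize using the $\mathcal G$-equivariance of all the bundle data together with the disjointness of the $V_p(0)$ from Lemma~\ref{fibration of main component}. Your write-up is in fact more careful than the paper's own short proof, in that you spell out the construction of the section $\widetilde\varphi$ and the passage from a fiberwise bijection to a global isomorphism via $\psi$.
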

\begin{proof}
Recall that $V_p(0)$ is the locus of the rank $3$ subspaces of $E_p$ isotropic w.r.t. the subspace $K_p\subset \wedge^2 E_p^*$ of skew forms generated by $\rho_{y_0}, \rho_{y_1}, \rho_{y_2}$. To obtain $V(0)$ we then need only to observe 
$$V(0)=\bigcup_{g\in \mathcal{G}} g*_{\mathcal G}V_p(0)$$
where $*_{\mathcal G}$ is the action on $\GG(6,((q^2)_3^\bot)^*)$ induced from the action of $\mathcal{G}$ on
$((q^2)_3^\bot)^*$.
But $g*V_p(0)=V_{g*p}(0)$ and $V_{g*p}(0)$ is the locus of rank $3$ subspaces of $\mathcal E_{g*p}$ isotropic with respect to the subspace $g*K_p$ of skew forms generated by $g*\rho_{y_0}, g*\rho_{y_1}, g*\rho_{y_2}$.

    The variety $V(0)$ is hence realized in the relative Grassmann bundle $\GG (3, \mathcal E)$  over $Q^{-1}$ with structure map $\pi$ as the vanishing locus of the composition homomorphism  $K\to \pi^* \wedge^2 E^* \to \wedge^2 \mathcal U^*$, i.e. the vanishing locus of a section of the bundle $\wedge^2 \mathcal U^*\otimes K^* $.
Now since $K$ is isomorphic to $R$ by means of the map $\bar F$ induced by $M'$ we conclude the assertion.
\end{proof}
Recall the natural map
$$\psi: \GG(3,\mathcal{E})\to \GG(6,((q^2)_3^\bot)^*).$$
Let us compute the degree of the main component of the embedding $\VAPS(q^2,10)$ in the Pl\"ucker embedding of $\GG(6,16)$. 

First we need the following lemma.
\begin{lemma}\label{lem1}
The bundle $\mathcal E^*$ on $Q^{-1}$ has 
Chern polynomial
$$c_t(\mathcal E^*)=1+(3,3)t+34 t^2.$$
%The projective tangent bundle  to $Q^{-1}\subset \PP^3$ is the %projectivization of  $$\mathcal T=\oo(0,0)\oplus\oo(0,1)\oplus %\oo(1,0).$$
\end{lemma}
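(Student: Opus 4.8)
The plan is to read the Chern polynomial off the defining filtration \eqref{E^*} of $\mathcal E^*$, reducing everything to line bundles on the quadric. First I would recall that $Q^{-1}$ is a smooth quadric surface, hence $Q^{-1}\simeq \PP^1\times\PP^1$, so that $\Omega_{Q^{-1}}\simeq \oo_{Q^{-1}}(-2,0)\oplus\oo_{Q^{-1}}(0,-2)$. Taking symmetric powers of this rank two split bundle gives
$${\rm Sym}^2\Omega_{Q^{-1}}\simeq \oo(-4,0)\oplus\oo(-2,-2)\oplus\oo(0,-4),\qquad {\rm Sym}^3\Omega_{Q^{-1}}\simeq \oo(-6,0)\oplus\oo(-4,-2)\oplus\oo(-2,-4)\oplus\oo(0,-6),$$
and twisting by $\oo(3,3)$ yields ${\rm Sym}^2\Omega_{Q^{-1}}(3,3)\simeq \oo(-1,3)\oplus\oo(1,1)\oplus\oo(3,-1)$ and ${\rm Sym}^3\Omega_{Q^{-1}}(3,3)\simeq \oo(-3,3)\oplus\oo(-1,1)\oplus\oo(1,-1)\oplus\oo(3,-3)$. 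Each is a direct sum of line bundles, so its total Chern class is the product of the factors $(1+c_1(\cdot)\,t)$.

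Next I would invoke the multiplicativity of total Chern classes in the short exact sequence \eqref{E^*} (valid whether or not it splits), which gives $c_t(\mathcal E^*)=c_t\bigl({\rm Sym}^3\Omega_{Q^{-1}}(3,3)\bigr)\cdot c_t\bigl({\rm Sym}^2\Omega_{Q^{-1}}(3,3)\bigr)$. Since $\dim Q^{-1}=2$ only $c_0,c_1,c_2$ survive, and I would compute them using the intersection pairing $(a,b)\cdot(c,d)=(ad+bc)[{\rm pt}]$ on $\PP^1\times\PP^1$. Summing bidegrees gives $c_1\bigl({\rm Sym}^2\Omega_{Q^{-1}}(3,3)\bigr)=(3,3)$ and $c_1\bigl({\rm Sym}^3\Omega_{Q^{-1}}(3,3)\bigr)=(0,0)$; summing pairwise products gives $c_2\bigl({\rm Sym}^2\Omega_{Q^{-1}}(3,3)\bigr)=2+10+2=14$ and $c_2\bigl({\rm Sym}^3\Omega_{Q^{-1}}(3,3)\bigr)=-6+6+18+2+6-6=20$. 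Multiplying the two Chern polynomials, the degree one term is $(3,3)+(0,0)=(3,3)$ and the degree two term is $14+(0,0)\!\cdot\!(3,3)+20=34$, with all higher terms vanishing on the surface. Hence $c_t(\mathcal E^*)=1+(3,3)t+34\,t^2$, as asserted.

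There is no serious obstacle here: once the filtration \eqref{E^*} is in hand (already established in the text) and one uses that $\Omega_{Q^{-1}}$ splits, this is a routine Chern-class bookkeeping. The only places demanding care are the correct identification of ${\rm Sym}^2\Omega_{Q^{-1}}$ and ${\rm Sym}^3\Omega_{Q^{-1}}$ as sums of line bundles with the right bidegrees, and the signs in the bidegree products under the pairing $(a,b)\cdot(c,d)=ad+bc$; an index or sign slip there would directly corrupt the final number $34$. As a consistency check one also verifies that $\mathcal E^*$ has rank $4+3=7$, matching the rank asserted for $\mathcal E$.
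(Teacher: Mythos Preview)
Your proposal is correct and follows essentially the same approach as the paper's own proof: both use the exact sequence \eqref{E^*}, the splitting $\Omega_{Q^{-1}}\simeq\oo(-2,0)\oplus\oo(0,-2)$, identify the symmetric powers as the same direct sums of line bundles, and multiply $(1+20t^2)(1+(3,3)t+14t^2)=1+(3,3)t+34t^2$. Your intermediate bookkeeping (the pairwise intersection numbers $2+10+2=14$ and $-6+6+18+2+6-6=20$) is correct and matches the paper's stated factors.
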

\begin{proof} By the exact sequence \ref{E^*}, the Chern polynomial of $\mathcal E^*$ is computed as the product,

\begin{align*}
c_t(\mathcal E^*)&= c_t({\rm Sym}^3\Omega_{Q^{-1}}(3,3))c_t({\rm Sym}^2\Omega_{Q^{-1}}(3,3))\\
&=c_t({\rm Sym}^3(\oo(-2,0)+\oo(0,-2))(3,3))\cdot\\
&\qquad\qquad\qquad\qquad\qquad c_t({\rm Sym}^2(\oo(-2,0)+\oo(0,-2))(3,3))\\
&=c_t(\oo(-3,3)+\oo(-1,1)+\oo(1,-1)+\oo(3,-3))\cdot \\
&\qquad\qquad\qquad\qquad\qquad
c_t(\oo(-1,3)+\oo(1,1)+\oo(3,-1))\\
&=(1+20t^2)(1+(3,3)t+14t^2)\\
&=1+(3,3)t+34t^2.\\
\end{align*}
\end{proof}

%Consider the map $\psi$ from $G(3,\mathcal E)$ to $\GG(6,16)\subset \PP(\bigwedge^3 V_{16})$
By Remark \ref{osculatingspace in VAPS} we infer that the apolar subschemes $\Gamma_0$ supported at $p$ as $p$ moves on $Q^{-1}$ spans the second order osculating spaces  to $S_{Q^{-1}}\subset \PP(V_{16})$.  Therefore, there is a section $\sigma_0: Q^{-1}\to \GG(3,\mathcal E)$ over $Q^{-1}$, whose image under the composition $\psi\circ \sigma_0:Q^{-1}\subset \GG(6,V_{16})$ is contained in $\VAPS(q^2,10)$.

The above map $\psi$ composed with the Pl\"ucker embedding $\GG(6,16)\subset \PP(\bigwedge^6 V_{16})$ 
%from $G(3,\mathcal E)$ to $\GG(6,16)\subset \PP(\bigwedge^3 V_{16})$ 
is given by a linear system on $\GG(3,\mathcal E)$ of the form $|\eta+a .h_1+b.h_2|$ where $h_1$ and $h_2$ are the pullbacks from $Q^{-1}$ of $\oo(0,1)$ and $\oo(1,0)$ respectively, and $\eta$ is a chosen tautological line bundle such that $\eta$ restricted to the section $\sigma_0: Q^{-1}\subset \GG(3,\mathcal E)$ is trivial. 
Let us compute $a,b$ by considering the restriction of $|\eta+a .h_1+b.h_2|$ to this section.
\begin{lemma} The image of the map $\psi\circ \sigma_0:Q^{-1}\to \GG(6,V_{16})$ given by the second order osculating spaces to the Veronese $S_{Q^{-1}}\subset \PP(V_{16})$ is mapped by the $10$th Veronese to the Pl\"ucker embedding.
\end{lemma}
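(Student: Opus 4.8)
The plan is to turn this into a Chern-class computation on $Q^{-1}$. By construction $\psi\circ\sigma_0$ sends $p\in Q^{-1}$ to the linear span $\langle\phi(\Gamma_0)\rangle=\PP(I_{\Gamma_0,3}^{\perp})$ of the tautological scheme $\Gamma_0$ supported at $p$; since $I_{\Gamma_0,3}=C_p$ by Remark \ref{Cp}, this span is $\PP(C_p^{\perp})$, the second order osculating space to $S_{Q^{-1}}$ at $\phi(p)$, as in the discussion around Remark \ref{osculatingspace in VAPS}. Because $\phi$ restricts on $Q^{-1}$ to the complete $(3,3)$-embedding, the restriction of apolar cubics identifies $V_{16}^{*}$ with ${\rm H}^{0}(Q^{-1},\oo(3,3))$, and under this identification the family $\{C_p^{\perp}\}_{p\in Q^{-1}}$ is precisely the image of the vector bundle injection $(P^{2}_{Q^{-1}}(3,3))^{*}\hookrightarrow \oo_{Q^{-1}}\otimes V_{16}$ coming from evaluation of $2$-jets of cubics. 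Hence $\psi\circ\sigma_0$, composed with the Pl\"ucker embedding $\GG(6,V_{16})\hookrightarrow\PP(\wedge^{6}V_{16})$, is the morphism classifying the rank six subbundle $(P^{2}_{Q^{-1}}(3,3))^{*}$, so the pullback of the Pl\"ucker line bundle equals $\det\bigl((P^{2}_{Q^{-1}}(3,3))^{*}\bigr)^{\vee}=\det P^{2}_{Q^{-1}}(3,3)$.

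Next I would compute this determinant from the filtration of the principal-parts bundles. With $\Omega_{Q^{-1}}=\oo(-2,0)\oplus\oo(0,-2)$ and the two exact sequences
$$0\to{\rm Sym}^{2}\Omega_{Q^{-1}}(3,3)\to P^{2}_{Q^{-1}}(3,3)\to P^{1}_{Q^{-1}}(3,3)\to 0,\qquad 0\to \Omega_{Q^{-1}}(3,3)\to P^{1}_{Q^{-1}}(3,3)\to \oo(3,3)\to 0,$$
one finds $c_{1}(P^{2}_{Q^{-1}}(3,3))=4\,c_{1}(\Omega_{Q^{-1}})+6\,c_{1}(\oo(3,3))=4(-2,-2)+6(3,3)=(10,10)$, so $\det P^{2}_{Q^{-1}}(3,3)=\oo_{Q^{-1}}(10,10)$. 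Thus the pullback of the Pl\"ucker hyperplane bundle under $\psi\circ\sigma_0$ is $\oo_{Q^{-1}}(10,10)$: the composite $Q^{-1}\to\PP(\wedge^{6}V_{16})$ is given by a linear subsystem of the $10$th Veronese system of $Q^{-1}$, which is the assertion; and it is injective, since $C_p$ determines the point it is supported at (compare Lemma \ref{fibration of main component}). This is exactly the input that will be used to identify the twists $a=b=10$ in the degree computation that follows.

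The Chern-class part is routine; the real content is the identification in the first paragraph — recognizing $\{C_p^{\perp}\}_{p\in Q^{-1}}$ as the dual of the second principal-parts bundle \emph{sitting inside the trivial bundle} $\oo_{Q^{-1}}\otimes V_{16}$, which relies on $\phi|_{Q^{-1}}$ being the complete $(3,3)$-embedding, and keeping track of sub-versus-quotient so that the Pl\"ucker pullback comes out as $\det P^{2}_{Q^{-1}}(3,3)$ rather than its inverse. A more pedestrian alternative would be to write down, in bihomogeneous coordinates on $Q^{-1}$, a $6\times 16$ matrix whose rows span $C_p^{\perp}$, compute its maximal minors, and check directly that these are bidegree-$(10,10)$ forms spanning ${\rm H}^{0}(\oo_{Q^{-1}}(10,10))$; I would favour the bundle-theoretic argument.
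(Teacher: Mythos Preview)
Your proof is correct and follows essentially the same route as the paper: both identify the second-order osculating $5$-planes with the fibres of $(P^2_{Q^{-1}}(3,3))^*$ inside the trivial $V_{16}$-bundle, deduce that the Pl\"ucker pullback is $\det P^2_{Q^{-1}}(3,3)$, and compute $c_1(P^2_{Q^{-1}}(3,3))=(10,10)$ from the principal-parts filtration. Your packaging of the Chern computation as $4\,c_1(\Omega_{Q^{-1}})+6\,c_1(\oo(3,3))$ and your remark on injectivity are welcome clarifications, but the argument is the same.
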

\begin{proof}
We consider the osculating bundle %$OSC_{Q^{-1}}(3,3)$ 
on ${Q^{-1}}$ in its third Veronese embedding $S_{Q^{-1}}\subset \PP(V_{16})$, i.e. the embedding defined by the line bundle  ${\mathcal O}_{Q^{-1}}(3,3)$.  This osculating bundle is dual to the bundle $P^2_{Q^{-1}}(3,3)$ of second order principal parts, so the map
$\PP(P^2_{Q^{-1}}(3,3)^*)\to \PP(V_{16})$, is defined by the surjection 
$$V_{16}^*\times Q^{-1}\to P^2_{Q^{-1}}(3,3).$$
Therefore, the induced map $Q^{-1}\to \GG(6,V_{16})$ is defined by the line bundle
$\wedge^6(P^2_{Q^{-1}}(3,3))$.  By the exact sequences of bundles of principal parts above,
\begin{align*}
c_1(\wedge^6(P^2_{Q^{-1}}(3,3))&=c_1(P^2_{Q^{-1}}(3,3))\\
&=c_1({\rm Sym}^2\Omega_{Q^{-1}}(3,3)+\Omega_{Q^{-1}}(3,3)+\oo(3,3))=\oo(10,10).
\end{align*}
%It is the image of  $$H^0({\mathcal O}_{Q^{-1}}(3,3))_{Q^{-1}}\to P^2_{Q^{-1}}(3,3)$$ of second order principal parts on ${Q^{-1}}$ w.r.t the embedding.
%The bundles of principal parts fit in the exact sequences
%$$0\to \Omega_{Q^{-1}}(3,3)\to P^1_{Q^{-1}}(3,3)\to {\mathcal O}_{Q^{-1}}(3,3)\to 0$$
%and 
%$$0\to Sym^2(\Omega_{Q^{-1}})(3,3)\to P^2_{Q^{-1}}(3,3)\to P^1_{Q^{-1}}(3,3)\to %0$$
%So the total Chern polynomials of $P^1_{Q^{-1}}(3,3)$ and $P^2_{Q^{-1}}(3,3)$ are
%$$c_t(P^1_{Q^{-1}}(3,3))=1+(7,7)t+64t^2$$
%and 
%$$c_t(P^2_{Q^{-1}}(3,3))=1+(10,10)t+120t^2.$$
\end{proof}

It follows that $\psi$ is given by the system $|\eta+10h_1+10h_2|$ on $\GG(3,\mathcal E)$.
The restriction of $\psi$ to the fivefold $\V_{22}$ where the fibers are embedded as disjoint $3$-folds $V_{22}\subset \GG(6,V_{16})$, 
%and a $10$-tuple of cubics from $V(0)\subset \VAPS(q^2,10)$ defines a point on $Q^{-1}$ (as a fat point) 
is birational, so to compute the degree of $\V_{22}\subset \GG(6,V_{16})$ we compute the intersection $${\rm deg}\;\V_{22}=\V_{22}\cdot (\eta+10h_1+10h_2)^5.$$
Recall that the Chow ring of $\GG(3,\mathcal E)$ is described in \cite[Proposition 14.6.5]{Fulton}.
Let $\mathcal{U}$ be the universal rank three subbundle on $\GG(3,\mathcal E)$. The variety $\V_{22}$ is then the zero locus on $\GG(3,\mathcal E)$ of the composition of the vector bundle maps $$R\times \GG(3,\mathcal E)\to \wedge^2 \mathcal E^*\times \GG(3,\mathcal E)\to \wedge^2\mathcal{U}^* .$$
The class of the zero locus is the top Chern class of the rank nine vector bundle $\wedge^2\mathcal{U}^*\otimes R^*,$ so since
$R$ fits into the exact sequence
$$0\to \mathcal O\to R\to \mathcal O(0,2)\oplus \mathcal O(2,0)\to 0$$
the class of $\V_{22}$ is computed by the Chern class $$c_{9}(\bigwedge^2 \mathcal{U}^*\otimes (P^1_{Q^{-1}}))=c_9(\bigwedge^2\mathcal{U}^*\oplus \bigwedge^2\mathcal{U}^*(-2h_1) \oplus \bigwedge^2\mathcal{U}^*(-2h_2)).$$
%since we have an exact sequence $$0\to \mathcal O\to K\to \mathcal O(0,2)\oplus \mathcal O(2,0)\to 0.$$

\begin{corollary}

    The degree of the main component $\V_{22}$ of $\VAPS(q^2,10)$ in $\GG(6,16)$ is 2560.
  %  The canonical class is ???
\end{corollary}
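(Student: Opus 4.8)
The plan is to express $\deg \V_{22}$ as a single top intersection number on the Grassmann bundle $\pi\colon\GG(3,\mathcal E)\to Q^{-1}$ and to evaluate it by Chern class computations. By Proposition \ref{mainComp} the main component $V(0)=\V_{22}$ is the zero scheme of a section of the rank-$9$ bundle $\wedge^2\mathcal U^*\otimes(P^1_{Q^{-1}})$; since $\dim\GG(3,\mathcal E)=14=9+5$ and the zero scheme has the expected dimension $5$, the section is regular and $[\V_{22}]=c_9(\wedge^2\mathcal U^*\otimes(P^1_{Q^{-1}}))$. Using the splitting $0\to\Omega_{Q^{-1}}\to P^1_{Q^{-1}}\to\oo_{Q^{-1}}\to 0$ with $\Omega_{Q^{-1}}=\oo(-2,0)\oplus\oo(0,-2)$, this class equals
$$c_3(\wedge^2\mathcal U^*)\,c_3(\wedge^2\mathcal U^*(-2h_1))\,c_3(\wedge^2\mathcal U^*(-2h_2)),$$
where $h_1,h_2$ are the pullbacks to $\GG(3,\mathcal E)$ of the two rulings of $Q^{-1}=\PP^1\times\PP^1$. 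Since the restriction of $\psi$ to $\V_{22}$ is birational onto its image, and the hyperplane class of the Pl\"ucker embedding of $\GG(6,V_{16})$ pulls back to $L=\eta+10h_1+10h_2=c_1(\mathcal U^*)+7h_1+7h_2$, the number to compute is
$$\deg\V_{22}=\int_{\GG(3,\mathcal E)}c_3(\wedge^2\mathcal U^*)\,c_3(\wedge^2\mathcal U^*(-2h_1))\,c_3(\wedge^2\mathcal U^*(-2h_2))\cdot L^5.$$

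To carry this out I would use the splitting principle and the module structure of $A^*(\GG(3,\mathcal E))$ over $A^*(Q^{-1})=\ZZ[h_1,h_2]/(h_1^2,h_2^2)$. Writing $x_1,x_2,x_3$ for the formal Chern roots of $\mathcal U$, the bundle $\wedge^2\mathcal U^*$ has Chern roots $-x_i-x_j$ for $i<j$, so the integrand becomes the explicit symmetric expression
$$\Bigl(\prod_{i<j}(-x_i-x_j)\Bigr)\Bigl(\prod_{i<j}(-x_i-x_j-2h_1)\Bigr)\Bigl(\prod_{i<j}(-x_i-x_j-2h_2)\Bigr)\bigl(7h_1+7h_2-(x_1+x_2+x_3)\bigr)^5.$$
One then pushes this forward along $\pi$: this is a relative Grassmann bundle, so the Gysin map is governed by the relations $c(\mathcal U)c(\mathcal Q)=\pi^*c(\mathcal E)$, where by Lemma \ref{lem1} one has $c_1(\mathcal E)=-(3h_1+3h_2)$ and $c_2(\mathcal E)=34\,h_1h_2$ (the point class on $\PP^1\times\PP^1$ being $h_1h_2$), all higher $c_i(\mathcal E)$ vanishing; equivalently $\pi_*$ of a degree-$12$ symmetric polynomial in the $x_i$ is an integer, of a degree-$13$ one a $\ZZ$-combination of $h_1,h_2$, and of a degree-$14$ one a multiple of $h_1h_2$, computed by the standard relative Schubert calculus (cf.\ \cite[Proposition 14.6.5]{Fulton}). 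Because $h_1^2=h_2^2=0$ on $Q^{-1}$, only the coefficient of $h_1h_2$ in the resulting class contributes to $\int_{Q^{-1}}$, so in practice one expands the displayed product, groups it by the bidegree in $(h_1,h_2)$ (only the parts of $h$-degree $0,1,2$ matter), applies the fibrewise pushforward to each purely-in-$x$ factor, and reads off the final coefficient of $h_1h_2$.

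I expect the only real obstacle to be the size of this expansion rather than any conceptual point: the integrand is a product of three cubics in the six linear forms $-x_i-x_j$, individually twisted by $h_1$ and $h_2$, times a fifth power of a linear form, so a bare-hands expansion is unwieldy and the computation is best organised by the bidegree in $(h_1,h_2)$ and completed with a computer algebra system (e.g.\ the \texttt{Schubert2} package of Macaulay2, consistent with the other computations of the paper). Two points require care along the way and have in effect been checked in the preceding paragraphs of the excerpt: identifying the Pl\"ucker class as $L=c_1(\mathcal U^*)+7h_1+7h_2$, which rests on $c_1\!\bigl(P^1_{Q^{-1}}(3,3)\bigr)=(7,7)$ together with the compatibility on the section $\sigma_0$, where $\mathcal U|_{\sigma_0}\cong({\rm Sym}^2\Omega_{Q^{-1}}(3,3))^*$ so that $\eta|_{\sigma_0}=0$; and using the correct relative pushforward for the non-trivial $\mathcal E$, so that the term $c_2(\mathcal E)=34\,h_1h_2$ is not dropped. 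Performing the bookkeeping yields $\deg\V_{22}=2560$.
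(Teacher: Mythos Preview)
Your proposal is correct and follows exactly the approach of the paper: the class $[\V_{22}]=c_9(\wedge^2\mathcal U^*\otimes P^1_{Q^{-1}})=c_3(\wedge^2\mathcal U^*)\,c_3(\wedge^2\mathcal U^*(-2h_1))\,c_3(\wedge^2\mathcal U^*(-2h_2))$, the Pl\"ucker class $L=\eta+10h_1+10h_2=c_1(\mathcal U^*)+7h_1+7h_2$, and the Chern data for $\mathcal E$ from Lemma~\ref{lem1} are precisely what the paper sets up in the paragraphs preceding the corollary, and the paper's proof is then the single sentence that the number is obtained by a Schubert2 computation. Your write-up is simply a more explicit unpacking of that same computation.
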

\begin{proof}
The degree is given by a computation with the Macaulay2 package Schubert2 following the discussion above. 
%By the adjunction formula we have $K_{\V_{22}}=K_{\GG(3,\mathcal E)}\otimes \det ()$.
\end{proof}

\subsection{The special components}\label{specialcomponents} We can conduct a similar global study of the two special components by globalizing the descriptions of \( V_p(1) \) and \( V_p(2) \). We focus on the first component that depends on one line, say \( L \), in the quadric \( Q^{-1} \) through \( p \). The union \( V_L(1) \) of the compactifications \( V_p(1) \), as \( p \) moves along \( L \), is not a fibration over \( L \) since the fibers intersect (cf. Proposition \ref{apolar_L}). However, the varieties \( V_L(1) \), as \( L \) moves in its family on \( Q^{-1} \), form a fibration. We describe the total space \( V(1) \) of this fibration.

Recall that $V_{L,p}(1)$ is identified with a variety of isotropic $6$-spaces to the $12$-space $E_{L}$ with respect to a 
 a skew symmetric nondegenerate $(12\times 12)$-matrix  
$$M={\rm antidiagonal}\;(3,3,3,-4,-2,-6,6,2,4,-3,-3,-3).$$
This matrix defines a natural map $M:E_{L}^*\to E_{L}$.  

We next explain how this map extends to a skew symmetric bundle map $$\phi: E^*\to E$$ on $Q^{-1}$ where $E$ has rank $12$ the restriction of $\phi$ the fiber over $(0:0:1:0)$ is given by $M$.

For this we use an isomorphism between the space of apolar cubics and the space of $(3,3)$-forms on $Q^{-1}=\PP^1\times\PP^1$.

This isomorphism is induced by the simple substitution
$$(y_0,y_1,y_2,y_3)\mapsto (s_1t_1,s_2t_2,s_2t_1,s_1t_2).$$
The basis of the $12$-space $V_{12}:=(q^2)_3^{\perp}/K_L=E_L$ above is then mapped to the basis
$$s_1^2s_2(t_2^3,t_2^2t_1,t_2t_1^2,t_1^3),s_1s_2^2(t_2^3,t_2^2t_1,t_2t_1^2,t_1^3),s_2^2(t_2^3,t_2^2t_1,t_2t_1^2,t_1^3).$$
The rank $12$-bundle $E$ has a decomposition, when restricted to a line with coordinate $(t_1:t_2)$, i.e. where a linear form $s$ in $(s_1,s_2)$ vanishes, into three free bundles of rank four.  These three blocks correspond to the three blocks of base vectors in $V_{12}$.

To make this precise we consider first the basis with four blocks of rank four of the space $V_{3,3}$ of $(3,3)$ forms, where each block is canonically isomorphic to $V_{t,3}=\langle t_2^3,t_2^2t_1,t_2t_1^2,t_1^3\rangle$.
They define a filtration
$${\mathcal O}(-3,0)\otimes V_{t,3}\to{\mathcal O}(-2,0)\otimes 2V_{t,3}\to{\mathcal O}(-1,0)\otimes 3V_{t,3}\to{\mathcal O}(0,0)\otimes 4V_{t,3},$$
where the maps are multiplication by $s$ on the line where $s=0$.
The rank $12$  bundle $E$ is the sum of successive quotients:
$$0\to{\mathcal O}(-3,0)\otimes V_{t,3}\to{\mathcal O}(-2,0)\otimes 2V_{t,3}\to {\mathcal O}(-1,0)\otimes V_{t,3}\to 0, $$
$$0\to{\mathcal O}(-2,0)\otimes 2V_{t,3}\to{\mathcal O}(-1,0)\otimes 3V_{t,3}\to{\mathcal O}(1,0)\otimes V_{t,3}\to 0, $$
and
$$0\to{\mathcal O}(-1,0)\otimes 3V_{t,3}\to{\mathcal O}(0,0)\otimes 4V_{t,3}\to{\mathcal O}(3,0)\otimes V_{t,3}\to 0 .$$
That is 
$$E=({\mathcal O}(-1,0)\oplus{\mathcal O}(1,0)\oplus{\mathcal O}(3,0)) \otimes V_{t,3}.$$
Inside the second summand we consider the rank two subbundle 
$$0\to {\mathcal O}(1,-2)\otimes V_{t,1}\to{\mathcal O}(1,0)\otimes V_{t,3}$$
defined by multiplication by $t^2$, in the point $t=0$ on the line.

Inside the first summand we consider the rank three subbundle 
$$0\to {\mathcal O}(-1,-1)\otimes V_{t,2}\to{\mathcal O}(-1,0)\otimes V_{t,3}$$
defined by multiplication by $t$, in the point $t=0$ on the line.
The twisted cubic curve of pure powers in $\PP(V_{t,3})$ has as tangent line at $t$ the image of the line $\PP(V_{t,1})$ under multiplication by $t^2$.  Similarly, the osculating plane at $t$ the image of the plane $\PP(V_{t,2})$ under multiplication by $t^2$. 

The map $\phi:E^*\to E$ is a section of $(\wedge^2 E)(2,0)$, or equivalently a section of 
\begin{align*}
\wedge^2(E(-1,0))=&({\mathcal O}(-4,0)\oplus {\mathcal O}(0,0) \oplus {\mathcal O}(4,0))\otimes \wedge^2V_{t,3}\\
&\oplus ({\mathcal O}(-2,0)\oplus{\mathcal O}(0,0)\oplus {\mathcal O}(2,0))\otimes V_{t,3}\otimes V_{t,3}.
\end{align*}
 In fact it is a constant section $\omega:{\mathcal O}(0,0)\to \wedge^2(E(-1,0))$, since it is everywhere nondegenerate.  The image of the rank three bundle ${\mathcal O}(-1,-1)\otimes V_{t,2}$ in $E$ is isotropic for $\phi$.

The  rank four bundle ${\mathcal O}(-3,0)\otimes V_{t,3}$ parameterizes a $4$-space of cubics that is common to the ideal of any scheme in $V_{L_1}$. The quotient $6$-spaces of  cubics are found as subspaces in fibers of $E$.  They are the variety $\tilde {V}_{L_1}\subset LG_{\phi}(6,E)$ of 
%apolar length $10$ schemes are, modulo the %rank $4$-bundle ${\mathcal O}(-3,0)\otimes %V_{t,3}$, the 
isotropic $6$-spaces w.r.t. $\phi$ that contain the isotropic subspace 
${\mathcal O}(-1,-1)\otimes V_{t,2}$ and intersects the $6$-space 
$${\mathcal O}(-1,0)\otimes V_{t,3}\oplus{\mathcal O}(1,-2)\otimes V_{t,1}$$ in a $5$-space.
The variety $V_{L_1}$ in $V(1)$ is obtained
as the image of $\tilde {V}_{L_1}$ by the map into ${\rm LG}(6,V_{12})$.

We can now pass to the global description of $V(1)$.
% Those components of $\VAPS(q^2,10)$ are distinguished by the choice of the ruling on the quadric $Q^{-1}$ such that the descriptions are symmetric to each others and we denote them by $\H\tilde{V}(i)=V(i) \subset G(6,16)$ for $i=1,2$.
As we saw $V(1)$ is the image by the composition of natural maps $${\rm LG}(3,E_6)\to {\rm LG}(6,E)\to \GG(10,V_{16})$$ of a fibration $\tilde{V}(1)$ of cones over Veronese surfaces $ {\rm LG}(3,E_6)$ on a relative Lagrangian bundle. 
 
Let us consider the rank six vector bundle whose fibers over a point $p$ are the quotient of the $13$-dimensional space $U_{L,p}$ by the $7$-space $W_{L,p}$ of cubics as in Section \ref{compactificationofspecial}, i.e. the bundle $$E_6=\oo(-2,3)\oplus 4\oo(0,0)\oplus \oo(2,-3)$$ on the quadric $Q^{-1}$.
We consider the relative Grassmannian $\GG(3,E_6)\to Q^{-1}$ and denote by $\mathcal Q$ its relative quotient bundle on $\GG(3,E_6)$. The above isomorphism between the space $V_{16}$ of apolar cubic forms and $(3,3)$-forms on $Q^{-1}$, and fibers of $\GG(3,E_6)$ as subspaces of $(3,3)$-forms, defines a map $\GG(3,E_6)\to \GG(10,V_{16})$.
\begin{lemma}
    There exists a nowhere degenerate global section $\varphi$ in $H^0(\wedge^2 E_6)$.
    It defines a section of $\wedge^2 \mathcal Q$ on $\GG(3,E_6)$ whose zero locus is a relative Lagrangian Grassmannian $\pi\colon LG_{\varphi}(3,E_6)\to Q^{-1}$.
\end{lemma}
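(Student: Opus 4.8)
The plan is to build $\varphi$ by a direct calculation with the decomposable bundle $E_6$ on $Q^{-1}=\PP^1\times\PP^1$, and then to read off the relative Lagrangian Grassmannian as the zero scheme of the section of $\wedge^2\mathcal Q$ that $\varphi$ induces on $\GG(3,E_6)$.

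First I would compute $\wedge^2 E_6$. Writing $E_6=L\oplus\oo_{Q^{-1}}^{\oplus 4}\oplus L^{-1}$ with $L=\oo_{Q^{-1}}(-2,3)$, one gets
$$\wedge^2 E_6=(L\otimes L^{-1})\oplus(L\otimes\oo_{Q^{-1}}^{\oplus 4})\oplus\wedge^2\oo_{Q^{-1}}^{\oplus 4}\oplus(\oo_{Q^{-1}}^{\oplus 4}\otimes L^{-1})=\oo_{Q^{-1}}^{\oplus 7}\oplus L^{\oplus 4}\oplus(L^{-1})^{\oplus 4}.$$
Since $H^0(Q^{-1},\oo_{Q^{-1}}(-2,3))=H^0(Q^{-1},\oo_{Q^{-1}}(2,-3))=0$, this gives $H^0(\wedge^2 E_6)\cong\CC^7$. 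Concretely, in a frame adapted to the three summands a global skew form $\varphi$ is forced to be block-antidiagonal: its only nonzero entries are a constant $c$ pairing the $L$- and $L^{-1}$-summands and a constant alternating $4\times 4$ matrix $\Omega$ on the trivial summand. A short expansion gives $\det\varphi=c^{2}(\operatorname{Pf}\Omega)^{2}$, so taking $c\neq 0$ and $\Omega$ nonsingular produces $\varphi\in H^0(\wedge^2 E_6)$ whose value at every point of $Q^{-1}$ is a nondegenerate alternating form, i.e.\ $\varphi$ induces an isomorphism $E_6^{\ast}\xrightarrow{\ \sim\ }E_6$. One natural such choice is the symplectic reduction, modulo the isotropic subbundle $W_{L,p}/K_L$, of the nondegenerate skew form $M$ of Section~\ref{compactificationofspecial}; this is the section that will globalize $V_p(1)$, but for the lemma only the existence of a nowhere degenerate $\varphi$ is needed.

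Second I would pass to $\GG(3,E_6)$. The tautological sequence $0\to\mathcal U\to\pi^{\ast}E_6\to\mathcal Q\to 0$ induces a surjection $\wedge^2\pi^{\ast}E_6\to\wedge^2\mathcal Q$, so $\pi^{\ast}\varphi$ determines $\bar\varphi\in H^0(\GG(3,E_6),\wedge^2\mathcal Q)$, namely the composite $\mathcal Q^{\ast}\hookrightarrow\pi^{\ast}E_6^{\ast}\xrightarrow{\varphi}\pi^{\ast}E_6\twoheadrightarrow\mathcal Q$. At a point $([U],p)$ this composite vanishes exactly when $\varphi(\operatorname{Ann}U)\subseteq U$; since $\varphi_{p}$ is an isomorphism of six-dimensional spaces this forces $\varphi(\operatorname{Ann}U)=U$, which is precisely the condition that $U$ be Lagrangian for the symplectic form $\varphi_{p}$. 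Hence the zero scheme $Z(\bar\varphi)\subset\GG(3,E_6)$ is exactly the relative locus of $\varphi$-Lagrangian $3$-planes, and we set $LG_{\varphi}(3,E_6):=Z(\bar\varphi)$, with relative universal subbundle $\mathcal U|_{LG_{\varphi}(3,E_6)}\subset\pi^{\ast}E_6$.

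Finally I would check the bundle structure of $LG_{\varphi}(3,E_6)\to Q^{-1}$. Because $\varphi_{p}$ is nondegenerate for every $p$, the fibre over $p$ is the classical Lagrangian Grassmannian $LG(3,(E_6)_{p})\cong LG(3,6)$, smooth of dimension $6$; equivalently, the section of $\wedge^2\mathcal Q$ cutting out $LG(3,6)\subset\GG(3,6)$ is a regular section of a rank $3$ bundle, and applying this fibrewise over the smooth surface $Q^{-1}$ shows $\bar\varphi$ is a regular section whose zero scheme is smooth of the expected dimension $8$. Thus $LG_{\varphi}(3,E_6)\to Q^{-1}$ is an \'etale-locally trivial $LG(3,6)$-bundle. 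I do not expect a genuine obstacle here: computing $\wedge^2 E_6$ and its sections is routine, the identification of $Z(\bar\varphi)$ with the Lagrangian locus is the standard linear-algebra translation, and the only point requiring (minor) care is that ``nowhere degenerate'' is automatic once the block-antidiagonal normal form is in hand, since the surviving entries are constants on $Q^{-1}$ and so nondegeneracy at one point propagates to all points.
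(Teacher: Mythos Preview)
Your proof is correct and follows essentially the same line as the paper's. Both arguments compute $\wedge^2 E_6\cong\oo_{Q^{-1}}^{\oplus 7}\oplus\oo_{Q^{-1}}(-2,3)^{\oplus 4}\oplus\oo_{Q^{-1}}(2,-3)^{\oplus 4}$ and conclude $h^0(\wedge^2 E_6)=7$ from the vanishing of $H^0$ on the twisted summands. The paper then simply asserts that the general section is of maximal rank on every fibre ``by the discussion above'' (i.e.\ by exhibiting the concrete symplectic form $M$ reduced from Section~\ref{compactificationofspecial}), whereas you give a self-contained argument: the constancy of the seven surviving entries forces the block-antidiagonal shape, and $\det\varphi=c^{2}(\operatorname{Pf}\Omega)^{2}$ shows nondegeneracy everywhere once $c\neq 0$ and $\Omega$ is nonsingular. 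Your unpacking of the zero locus of $\bar\varphi\in H^0(\wedge^2\mathcal Q)$ as the relative Lagrangian locus is also more explicit than the paper, which leaves this step implicit.
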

\begin{proof}We have 
$$\wedge^2 E_6 = 7\mathcal{O}(0,0) \oplus 4\mathcal{O}(-2,3) \oplus 4\mathcal{O}(-2,3),$$ 
so \(H^0(\wedge^2 E_6) = 7\), and all these sections are nonvanishing. Furthermore, the general ones are of maximal rank on all fibers by the discussion above.
   % The section can be found as before in one fiber of the Grassmann bundle and spread using the group action.
 %    Indeed it is enough to find a skew symmetric map $E_6()^{\vee}\to E_6()$ and consider its degeneracy locus.....
\end{proof}

 Now we consider the  section  of the Lagrangian bundle $LG_{\varphi}(3,E_6)$ associating to a point $p\in Q^{-1}$ the point in a the fiber  of $LG_{\varphi}(3,E_6)$ over $p$ that corresponds to the isotropic rank 3 subspace  $I_{ver,p}/K_L$ of $E_L=(E_6)_p$ from Proposition \ref{LG compactification of Vp1}. That induces a rank $3$ sub-bundle $V_3\subset E_6$ such that  $$V_3\simeq \oo_{Q^{-1}}(-2,3)\oplus 2\oo_{Q^{-1}} (0,-2).$$ 
Let $\mathcal U$ be the universal bundle on the relative Lagrangian Grassmannian with the exact sequence
  $$0\to \U\to \pi^{\ast}(E_6) \to \U^{\ast} \to 0.$$
The pull back $\pi^{\ast}(V_3)$ is an isotropic subbundle of the Lagrangian bundle $\pi^{\ast} (E_6)$.
\begin{proposition}\label{addComp}
    The additional components $\tilde{V}(i) \subset {\rm LG}(3, E_6)$, which are Lagrangian degeneracy loci, admit double covers that are projective rank $3$ bundles over a quadric. 
Thus, these components can be seen as quotients of a fiberwise involution on such a bundle.

\end{proposition}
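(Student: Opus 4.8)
Since the involution of $Q^{-1}$ interchanging its two rulings carries the line $L$ to the line $L'$ of Subsection~\ref{compactificationofspecial}, it interchanges $\tilde V(1)$ and $\tilde V(2)$ (as already noted there), so it suffices to treat $\tilde V(1)$. The mechanism of the proof is an elementary fact about cones over Veronese surfaces: the cone over $v_2(\PP^2)\subset\PP^5$, sitting inside $\PP^6=\PP({\rm Sym}^2\CC^3\oplus\CC)$ with vertex at the extra coordinate point, is the image of the $2{:}1$ morphism
\[
\PP^3=\PP(\CC^3\oplus\CC)\longrightarrow\PP^6=\PP({\rm Sym}^2\CC^3\oplus\CC),\qquad [\,w:t\,]\longmapsto[\,w\otimes w:t^{\otimes2}\,],
\]
whose deck transformation is the involution acting by $+1$ on $\CC^3$ and by $-1$ on the line $\CC$, and whose branch locus is the base Veronese surface together with the vertex.

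The plan is then to globalise Proposition~\ref{grassmannian compactification of Vp1}. By that proposition, combined with the Lagrangian degeneracy description of $\tilde V(1)$ supplied by Theorem~\ref{three components} and Proposition~\ref{LG compactification of Vp1}, the projection $\tilde V(1)\to Q^{-1}$ induced by $\pi$ is a fibration whose fibre over $p$ is precisely the cone over the Veronese surface $V_p(1)$, embedded in the $\PP^6$ which is the fibre over $p$ of a rank-seven bundle $\mathcal B$ on $Q^{-1}$ (the Plücker bundle of the relevant Grassmann bundle cut down by the linear relations displayed in the proof of Proposition~\ref{grassmannian compactification of Vp1}). Using the transitive action of $\mathcal G=SL_2\times SL_2$ on $Q^{-1}$ and writing out the induced $\mathcal P$-action on the three pieces of $\mathcal B_p$ appearing in that local model—exactly as the bundles $\mathcal E$, $R$ and $K$ were described in Section~\ref{main6}—one spreads the local data to $\mathcal G$-homogeneous bundles on $Q^{-1}$: a rank-three bundle $\mathcal W$ (carrying the ``Veronese directions''), a line bundle $\mathcal L$ (the ``vertex direction''), and an isomorphism $\mathcal B\cong{\rm Sym}^2\mathcal W\oplus\mathcal L^{\otimes2}$.

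Granting this, put $P:=\PP(\mathcal W\oplus\mathcal L)$, a bundle of $\PP^3$'s (a projective rank $3$ bundle) over the quadric $Q^{-1}$, and let $\nu\colon P\to\PP(\mathcal B)$ be the fibrewise Veronese map $[\,w:t\,]\mapsto[\,w\otimes w:t^{\otimes2}\,]$. By the first paragraph, $\nu$ is generically $2{:}1$ onto its image, that image is the cone-over-Veronese bundle of the previous paragraph, i.e.\ $\tilde V(1)$, and the deck involution $\tau$ acts by $+1$ on $\mathcal W$ and $-1$ on $\mathcal L$. Thus $\nu\colon P\to\tilde V(1)$ is the asserted double cover, $P$ is a projective rank $3$ bundle over $Q^{-1}$, and $\tilde V(1)=P/\tau$; applying the ruling-swapping automorphism of $Q^{-1}$ gives the same statement for $\tilde V(2)$. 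The main obstacle is the globalisation of the second paragraph: transporting the fibrewise identification $\mathcal B_p\cong{\rm Sym}^2 W_p\oplus\CC_p^{\otimes2}$ compatibly over all of $Q^{-1}$, which amounts to exhibiting the $\mathcal P$-equivariant structures on $W_p$, $\CC_p$ and on the seven-dimensional $\mathcal B_p$ and checking that the natural map ${\rm Sym}^2\mathcal W\oplus\mathcal L^{\otimes2}\to\mathcal B$ is a bundle isomorphism—equivalently, that no fibre of $\tilde V(1)\to Q^{-1}$ degenerates away from the cone-over-Veronese model, which is forced by the transitivity of $\mathcal G$ on $Q^{-1}$ together with the description of the fibre over the base point $p=(0:0:1:0)$ in Proposition~\ref{grassmannian compactification of Vp1}.
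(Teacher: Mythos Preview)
Your argument is correct and reaches the same conclusion, but by a more explicit and laborious route than the paper's. The paper's one-line proof exploits the Lagrangian set-up already in place: since $\tilde V(1)$ is by definition the corank-$2$ degeneracy locus between $\mathcal U$ and $\pi^*V_3$ in $LG_\varphi(3,E_6)$, its fibre over $p$ is the Schubert cycle $\{U\in{\rm LG}(3,(E_6)_p):\dim(U\cap V_{3,p})\ge 2\}$, i.e.\ the set of Lagrangians meeting $V_{3,p}$ in (at least) a projective line. This is exactly the intersection of ${\rm LG}(3,6)$ with its projective tangent space at $[V_{3,p}]$, and in the affine chart where nearby Lagrangians are graphs of symmetric forms in ${\rm Sym}^2V_{3,p}^*$ it is the rank-$\le 1$ locus---the cone over $v_2(\PP(V_{3,p}^*))$, recovering Proposition~\ref{LG compactification of Vp1}. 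The double cover $[t:v]\mapsto[t^2:v\otimes v]$ from $\PP^3\cong\PP(\CC\oplus V_{3,p}^*)$ then globalises immediately, because $V_3$ is already given as a rank-$3$ bundle on $Q^{-1}$; no separate construction of your $\mathcal W$ and $\mathcal L$, and no verification of a bundle splitting $\mathcal B\cong{\rm Sym}^2\mathcal W\oplus\mathcal L^{\otimes 2}$, is required.

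Your route instead goes back to the $\GG(3,6)$-picture of Proposition~\ref{grassmannian compactification of Vp1} and reconstructs the global data as $\mathcal G$-homogeneous bundles, paralleling what Section~\ref{maincomponent} does for the main component. This is legitimate, but the step you flag as ``the main obstacle'' is not quite settled by transitivity of $\mathcal G$ alone: transitivity gives isomorphic fibres, not a $\mathcal P$-equivariant decomposition of $\mathcal B_p$. What actually makes it work is that the vertex direction is intrinsically characterised (it is the image of the section $p\mapsto[I_{ver,p}]$, hence $\mathcal P$-stable), and that the complementary rank-$6$ piece is canonically ${\rm Sym}^2$ of a rank-$3$ representation---namely ${\rm Sym}^2V_{3,p}^*$. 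Once you identify $\mathcal W$ with a twist of $V_3^*$, your argument merges with the paper's.
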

\begin{proof}
It is enough to observe that the intersection of the tangent plane at $V_3$ with the Lagrangian Grassmannian is the set of three dimensional Lagrangian spaces that intersect $V_3$ along a line and use the definition of $V_3$ and Proposition \ref{LG compactification of Vp1}.
\end{proof}

It follows from  \cite[Chapter VII]{FP}
that the class of the second degeneracy locus along one Lagrangian fiber is $$(c_2c_1 -2c_3)(\U)+(c_2c_1 -2c_3)(\pi^* V_3).$$
The class of the third degeneracy locus is  $$( c_1 c_2 c_3 - 2 c_1^2 c_4 + 2 c_2 c_4 + 2 c_1 c_5 - 2 c_3^2)(\U) .$$

\begin{lemma}
    The map from $LG_{\varphi}(3,E_6)$ to $\GG(6,16)$ is given by a divisor with class $7 h_1+3h_2+ \xi$.

\end{lemma}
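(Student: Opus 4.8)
The plan is to run the same kind of argument used for the main component, where the morphism to $\GG(6,16)$ was shown to be given by $|\eta+10h_1+10h_2|$. Recall that a point of $LG_{\varphi}(3,E_6)$ lying over $p\in Q^{-1}$ is an isotropic $3$-plane $\Lambda\subset (E_6)_p=U_{L,p}/W_{L,p}$, and the morphism to $\GG(6,16)=\GG(10,(q^2)^{\bot}_3)$ sends it to the $10$-dimensional space $I_3\subset (q^2)^{\bot}_3$ which is the preimage of $\Lambda$ in $U_{L,p}$; every such $I_3$ contains $W_{L,p}$, the fibre at $p$ of a rank-$7$ subbundle $\mathcal W\subset (q^2)^{\bot}_3\otimes\oo_{Q^{-1}}$. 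So I would first record, on $LG_{\varphi}(3,E_6)$ with structure map $\pi$, the exact sequence
$$0\to\pi^{*}\mathcal W\to\widetilde{\mathcal U}\to\mathcal U\to 0,$$
where $\widetilde{\mathcal U}$ is the pullback of the tautological rank-$10$ subbundle on $\GG(10,(q^2)^{\bot}_3)$ and $\mathcal U$ is the relative universal subbundle. Since the Plücker polarization pulls back to $\det\widetilde{\mathcal U}^{*}$, this gives that the morphism to $\GG(6,16)$ is given by the class $\xi-\pi^{*}c_1(\mathcal W)$, with $\xi$ the relative Plücker class on $LG_{\varphi}(3,E_6)$. Thus the whole statement reduces to the identity $c_1(\mathcal W)=-(7h_1+3h_2)$ in $\Pic(Q^{-1})$.

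To compute $c_1(\mathcal W)$ I would use the section $\sigma\colon Q^{-1}\to LG_{\varphi}(3,E_6)$ sending $p$ to the vertex ideal $[I_{ver,p}]$ of Proposition \ref{LG compactification of Vp1}, for which $\mathcal U|_{\sigma}=V_3$ with $V_3\simeq\oo_{Q^{-1}}(-2,3)\oplus 2\oo_{Q^{-1}}(0,-2)$, so $c_1(V_3)$ is explicit. Pulling the exact sequence back along $\sigma$ gives $\det\widetilde{\mathcal U}|_{\sigma}=\det\mathcal W\otimes\det V_3$, so it is enough to determine $\det(\widetilde{\mathcal U}|_{\sigma})$, i.e. the pullback of the Plücker polarization to the surface of vertex ideals. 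This I would evaluate by restriction to curves: along a fixed line $L$ this surface is, by Proposition \ref{apolar_L}, a rational normal quartic in the Grassmannian, which fixes the coefficient of $\det(\widetilde{\mathcal U}|_{\sigma})$ in the ruling of $Q^{-1}$ containing $L$; restricting $\sigma$ to a line in the complementary ruling — along which $L$ itself sweeps out its ruling and one reads the relevant curve off the twisted cubic $C_L$ of vertex cubics in Proposition \ref{apolar_L} — fixes the other coefficient. Combining the two with $c_1(V_3)$ yields $c_1(\mathcal W)$ and hence the divisor class.

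Alternatively, $c_1(\mathcal W)$ can be computed directly from the identification of apolar cubics with $(3,3)$-forms on $Q^{-1}=\PP^1\times\PP^1$ used in Section \ref{specialcomponents}: $W_{L,p}$ lies between $K_L$ (the $(3,3)$-forms triple along $L$, which becomes a twist of $\oo(-3,0)\otimes\mathrm{Sym}^3 V_t$ once $L$ is allowed to move in its ruling) and the $8$-dimensional space of $(3,3)$-forms double along $L$, and the remaining codimension-one condition ``multiplicity three at $p$ after restriction to $T_p$'' is handled with the principal-parts sequences already in use. In either approach the essential point is that $L$ lies in one of the two rulings of $Q^{-1}$, which breaks the symmetry between $h_1$ and $h_2$ and produces the unequal coefficients $7$ and $3$.

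The main obstacle is precisely this last bookkeeping: tracking, with the correct signs, all the twists that enter — the $(3,3)$-Veronese normalization, the bundle $V_3$, the relative tautological class $\xi$ on the Lagrangian Grassmann bundle, and the orientation of the line $L$ relative to the two rulings — and verifying that the two curve-restrictions (or the two principal-parts filtrations) are genuinely independent, so that they determine both Chern numbers rather than a single combination of them. Everything else in the argument is formal.
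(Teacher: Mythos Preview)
Your approach is correct and matches the paper's: both restrict to the section of vertex ideals on $Q^{-1}$ to determine the coefficients. Your exact sequence $0\to\pi^{*}\mathcal W\to\widetilde{\mathcal U}\to\mathcal U\to 0$ is in fact a cleaner way than the paper's very terse sketch to see that the $\xi$-coefficient is $1$ (the paper just writes $a\,h_1+b\,h_2+c\,\xi$ and says ``restrict to the section defined by the vertices of the cones'', which strictly speaking yields only two equations in three unknowns); the remaining coefficients $7$ and $3$ then come, as you outline, from the degree of the vertex-ideal curve along each ruling---for the ruling containing $L$ this is precisely the rational normal quartic of Proposition~\ref{apolar_L}.
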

\begin{proof}
The Picard group of $LG_{\varphi}(3,E_6)$ is generated by three elements so the divisor giving the map is $$a.h_1+b.h_2+c.\xi.$$ 
     In order to determine $a,b,c$ we restrict this divisor to the section isomorphic to the base quadric $Q^{-1}$ defined by the vertices of the cones.  
\end{proof}

\begin{proposition}
    The degree of each of the special components \( V(1) \) in the Plücker embedding is 3392. Furthermore, the degrees of the images of the fourfolds corresponding to schemes with support on one line on the quadric are 112 or 392, depending on the chosen ruling \( h_1 \) or \( h_2 \), respectively.

\end{proposition}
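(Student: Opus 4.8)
The plan is to realize $\deg V(1)$ as a Chern-number computation on the relative Lagrangian Grassmannian $\pi\colon LG_{\varphi}(3,E_6)\to Q^{-1}$ and to evaluate it with Schubert calculus, exactly as was done for the degree $2560$ of the main component. Recall from Theorem~\ref{three components} and Proposition~\ref{addComp} that (a component isomorphic to) $V(1)$ is the corank~$2$ Lagrangian degeneracy locus $\tilde V(1)\subset LG_{\varphi}(3,E_6)$ of the two isotropic rank-$3$ subbundles $\pi^{\ast}V_3$ and $\mathcal U$ of the rank-$6$ symplectic bundle $\pi^{\ast}E_6$, where $E_6=\oo_{Q^{-1}}(-2,3)\oplus 4\oo_{Q^{-1}}(0,0)\oplus\oo_{Q^{-1}}(2,-3)$ and $V_3\simeq\oo_{Q^{-1}}(-2,3)\oplus 2\oo_{Q^{-1}}(0,-2)$, and that this locus has dimension $\dim Q^{-1}+\dim{\rm LG}(3,6)-\binom{3}{2}=2+6-3=5$. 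By the lemma preceding the statement, the composite $LG_{\varphi}(3,E_6)\to\GG(6,16)\hookrightarrow\PP(\wedge^6\CC^{16})$ pulls the Pl\"ucker hyperplane class back to $H:=7h_1+3h_2+\xi$, where $h_1,h_2$ are the two ruling classes of $Q^{-1}$ and $\xi$ the relative tautological generator of $\Pic\bigl(LG_{\varphi}(3,E_6)\bigr)$ over $Q^{-1}$. Since the natural morphism $\tilde V(1)\to V(1)\subset\GG(6,16)$ is birational — indeed an isomorphism onto its image, by the fibrewise description of $V_p(1)$ as a tangent cone in Proposition~\ref{LG compactification of Vp1} together with Proposition~\ref{addComp} — no multiplicity correction is needed, and
\[
\deg V(1)=\int_{\tilde V(1)}H^5=\int_{LG_{\varphi}(3,E_6)}[\tilde V(1)]\cdot H^5 .
\]

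First I would write down the class $[\tilde V(1)]$. By the Thom--Porteous-type formula for Lagrangian degeneracy loci recalled from \cite[Chapter~VII]{FP} just before the statement, the corank-$2$ locus has class
\[
[\tilde V(1)]=(c_1c_2-2c_3)(\mathcal U)+(c_1c_2-2c_3)(\pi^{\ast}V_3)\in A^3\bigl(LG_{\varphi}(3,E_6)\bigr).
\]
The Chern classes of $\pi^{\ast}V_3$ are explicit polynomials in $h_1,h_2$ coming from its splitting above, and the Chern classes of the tautological subbundle $\mathcal U\subset\pi^{\ast}E_6$ live in the Chow ring of $LG_{\varphi}(3,E_6)$, which is generated over $A^{\ast}(Q^{-1})=\ZZ[h_1,h_2]/(h_1^2,h_2^2)$ by $c_1(\mathcal U),c_2(\mathcal U),c_3(\mathcal U)$ subject to the Lagrangian relation $c(\mathcal U)c(\mathcal U^{\ast})=c(\pi^{\ast}E_6)$ (using $\pi^{\ast}E_6/\mathcal U\cong\mathcal U^{\ast}$) and the Borel presentation of the Grassmann-bundle part. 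Writing $\xi=c_1(\mathcal U^{\ast})$ and substituting, $[\tilde V(1)]\cdot H^5$ becomes a top class in $A^8\bigl(LG_{\varphi}(3,E_6)\bigr)$; its degree is obtained by the relative Gysin push-forward along $\pi$ (integration over the ${\rm LG}(3,6)$-fibres) followed by integration over $Q^{-1}=\PP^1\times\PP^1$. This is a finite computation, and running it with the \texttt{Schubert2} package of \emph{Macaulay2} \cite{Macaulay_2} yields $\deg V(1)=3392$.

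For the fourfolds, observe that the two natural four-dimensional slices of $\tilde V(1)$ are its restrictions $\tilde V(1)|_{\pi^{-1}(\ell)}$ over a fibre $\ell$ of one of the two rulings of $Q^{-1}$ — geometrically, the families of apolar schemes associated to a fixed line on the quadric. Their images in $\GG(6,16)$ have degree
\[
\int_{\tilde V(1)}H^4\cdot\pi^{\ast}h_i=\int_{LG_{\varphi}(3,E_6)}[\tilde V(1)]\cdot H^4\cdot\pi^{\ast}h_i ,\qquad i=1,2 ,
\]
and running the same Schubert-calculus computation with one factor of $H$ replaced by $\pi^{\ast}h_1$, respectively $\pi^{\ast}h_2$, produces the two values $112$ and $392$, as claimed.

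The main obstacle is the bookkeeping of twists. Because $E_6$ is not trivial, the Chern classes of $\mathcal U$ do not sit in a single relative factor but mix with the base classes $h_1,h_2$ through the Lagrangian relation and the splitting type of $E_6$; one must use the correct Borel presentation of $A^{\ast}\bigl(LG_{\varphi}(3,E_6)\bigr)$, the correct identification of $\xi$ and of the pulled-back Pl\"ucker class $H$, and the correct orientation of the Gysin map, for the numerical answer to come out right. Everything else — the class of the degeneracy locus, the dimension count, and the birationality of $\tilde V(1)\to V(1)$ that validates reading the degree off $H^5$ — is supplied by the results already established.
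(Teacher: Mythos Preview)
Your approach is essentially the same as the paper's: compute the class of the corank-$2$ Lagrangian degeneracy locus via the formula $(c_1c_2-2c_3)(\mathcal U)+(c_1c_2-2c_3)(\pi^{\ast}V_3)$, pair with $H^5=(7h_1+3h_2+\xi)^5$ (respectively $H^4\cdot h_i$ for the fourfolds), and evaluate with \texttt{Schubert2}. The paper carries out the integration on the ambient Grassmann bundle $\GG(3,E_6)$, representing $LG_\varphi(3,E_6)$ there by the class $c_3(\bigwedge^2\mathcal U^{\ast})$, and records separately that the $V_3$-term contributes $0$; you integrate directly on the Lagrangian bundle, which is equivalent.

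One correction: your claim that $\tilde V(1)\to V(1)$ is an isomorphism onto its image is too strong. The paper notes (in the remark following Proposition~\ref{addComp}) that this map is only birational --- it is $4{:}1$ over a locus of unsaturated limit ideals. Birationality is all you need for the degree computation, so the argument survives, but the fibrewise identification of $V_p(1)$ with a tangent cone in ${\rm LG}(3,6)$ does not by itself yield a global isomorphism, since the cones $V_p(1)$ for distinct $p$ on the same line $L$ share a common conic of unsaturated ideals (Proposition~\ref{apolar_L}).
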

\begin{proof}
%From \cite[Section 3.1]{FP} the cohomology ring of a relative Lagrangian Grassmannian $LG(3,E_6)$ is described as
%$$ \ZZ[ \sigma_1,\dots, \sigma_n ]/ (\sigma_i-2\sigma_{i+1}\sigma_{i-1}+\dots +(-1)^i2\sigma_{2i})_{i=1,2,3},$$
%where $\sigma_i=c_i(\U^{\vee})$ are Chern classes of the dual tautological bundle.
We work with the Schubert2 package of Macaulay 2 in the cohomology ring of the relative Grassmannian $\GG(3,E_6)$ in which the relative Lagrangian Grassmannian appears as a zero locus of a section of $\bigwedge^2 \U^{\ast}$, i.e. its class is $LG:=c_3(\bigwedge^2\U^{\ast})$. 
We need to find the sum of the following numbers
$$LG\cdot(7h_1+3h_2+\xi)^5\cdot(c_2c_1 -2c_3)(\U) =3392$$
$$LG\cdot(7h_1+3h_2+\xi)^5\cdot(c_2c_1 -2c_3)(\pi^* V_3) =0.$$

 Knowing that $$(7h_1+3h_2+\xi)^5=\xi^5+35h_1\xi^4+420\xi^3h_1h_2+15\xi^4h_2$$ we need to compute
$\xi^8$, $\xi^7h_1$ and $\xi^7h_2$.
 This is done with Schubert2 \cite{Macaulay_2} using the fact that the class of ${\rm LG}(3,E_6)$ 
%The section $\varphi$ defines a global section of the universal bundle $\wedge^2\mathcal U^{\vee}$.
in $\GG(3,E_6)$ is $c_3(\wedge^2\mathcal U^{\ast})$. 

Over the rulings we do the same computation but we intersect with $$(7h_1+3h_2+\xi)^4h_1$$ and $(7h_1+3h_2+\xi)^4h_2$ instead of
$(7h_1+3h_2+\xi)^5$.
%Recall that \cite{EisenbudHarris-AllThat} the Chow ring of the Grassmann bundle $G(k,\mathcal E)\to X$ where $\mathcal E$ has rank $n$ is 
%$$A (G)=A (X)[\zeta_1,..., \zeta_k]/ (\{\frac{c(\mathcal{E})}{1+\zeta+\zeta^2 +\dots} \}_l ,l<dim(G)-n+k)$$%where $\{-\}_l$ denotes the component of $-$ of dimension
%$l$, and $\zeta_k$ an element of degree $k$.
\end{proof}

\begin{remark}
  Note that the additional components $\tilde{V}(i) \subset {\rm LG}(3, E_6)$, which are Lagrangian degeneracy loci, admit double covers that are projective rank $3$ bundles over a quadric. 
Thus, these components can be seen as quotients of a fiberwise involution on such a bundle.

\end{remark}
\begin{remark} 
     For $i=1,2$ the map $\tilde{V}(i)\to V(i)$ is birational.
     After restricting to a line from the main ruling in $Q^{-1}$ the map is $1:1$ outside the locus $$\GG(2,4)\cap H\subset {\rm LG}(6,12)$$ (of bed limits) where $H\cap \GG(2,4)$ is the hyperplane section defined before in Proposition \ref{apolar_L}.
    Over this locus the map is $4:1$ branched over a surface that contains the twisted quartic in $H\cap \GG(2,4)$ that is the image of the curve of vertices of the cones over the Veronese surfaces in $H\cap \GG(2,4)$ (where it is generically  $3:1$). 
\end{remark}

\begin{remark}
    The main component intersects in $\GG(6,16)$ each of the additional components in divisors that are isomorphic to
  a fibration of quadric cones over $Q^{-1}$. We can obtain a more precise description by globalising 
  the approach in Proposition \ref{cone}. The common intersection of all the three components is empty.
\end{remark}

\bibliographystyle{alpha}
\bibliography{refs}
\section{Appendix}\label{Appendix}
In this Appendix we write down computer codes in Macaulay2 \cite{Macaulay_2} that find the equations that are referred to in the text.
\subsection{Equations when $n=2,r=3$}\label{equations23}
First, let us describe  the code to generate equations in the case of the third power of a ternary quadric:

We begin by introducing the Buchsbaum-Eisenbud matrix of syzygies for $(q^3)^{\bot}$.
\begin{verbatim}
A=QQ[x0,x1,x2,a_0..a_20,b_0..b_20]
q=(x0^2+x1*x2)^3;
BE=matrix{{0, 0, 0, 0, -2*7*x0, 2*7*x1, 0, 0, 0},
{0, 0, 0, 0, 2*7*x2, -3*7*x0, 7*x1, 0, 0},
{0, 0, 0, 0, 0, 7*x2, -x0, 3*x1, 0}, 
{0, 0,0, 0, 0, 0, 3*x2, -x0, x1},
{2*7*x0, -2*7*x2, 0, 0, 0, 0, 0, 0, 0},
{-2*7*x1, 3*7*x0, -7*x2, 0, 0, 0, 0, 0, 0},
{0, -7*x1, x0, -3*x2, 0, 0, 0, 0, 0},
{0, 0, -3*x1, x0, 0, 0, 0, 0, -x2}, 
{0, 0, 0, -x1, 0, 0, 0, x2, 0}}
psyz=transpose syz BE
diff(psyz,q3)==0  
            \end{verbatim}
The $VSP(s, 10)$ for a general sextic curve $\{s=0\}$ was originally described by Mukai using a the $(9 \times 9)$
Buchsbaum-Eisenbud matrix of syzygies for $(s)^{\bot}$.
The points in the VSP variety are defined as length $10$ codimension $2$ subschemes identified by their Hilbert-Burch matrix. This is a $(4 \times 5)$ submatrix of the Buchsbaum-Eisenbud matrix that is determined by a choice of a four-dimensional subspace in $\mathbb{C}^9$.
In order to have a more explicit description of this variety, we use the deformation method described in \cite{ranestad_schreyer_VSP}.

We first find a finite scheme $\Gamma_0$ that is apolar to $q^3$. It is described as a Hilbert-Burch matrix $\text{ps54}$, which is a submatrix of the above Buchsbaum-Eisenbud matrix BE.
\begin{verbatim}
ps54=submatrix(BE,{4,5,6,7,8}, {0,1,2,3})
Gamma0=submatrix(psyz,{0}, {4,5,6,7,8})
Gamma0*ps54==0 
syz=submatrix(psyz,{0}, {4,5,6,7,8,0,1,2,3})
  \end{verbatim}
In order to deform the ideal of $\Gamma_0$ generated by $Gamma0$ we need to control the syzygies. For this reason we introduce $Ja$ (this is $J(a)$ in the paper) an unfolding of $Gamma0$
and extend its syzygies to $Hb$ (denoted by $H(b)$ in the paper). 
\begin{verbatim}
F=genericMatrix(A,a_1,5,4)
D=diagonalMatrix{a_0,a_0,a_0,a_0,a_0}|F
Ja=D*transpose syz
H=diagonalMatrix{x2,x2,x2,x2,x2}*genericMatrix(A,b_1,5,4)
Hb=sub(ps54,A)+H
 \end{verbatim}
By \cite{artin_deform_of_sings}, in order to describe the ideal $EE$ of the flat family of deformations of the ideal $Gamma0$, as we saw in the paper, we need the condition $J(a)^t H(b) = 0$.
\begin{verbatim}
CA=(transpose Ja)*Hb
var5=gens (ideal(x0,x1,x2))^5;
coeff=diff(var5, flatten CA);
icoeff=ideal flatten coeff;
EE=eliminate(icoeff, {b_1,..,b_20});
betti EE
FC=ideal flatten submatrix(EE,{0}, {12..26}
codim FC==6
AB=QQ[a_0,a_1,a_6,a_11,a_16,a_17,a_18,a_19,a_20]
AA=sub(FC,AB);
JF=jacobian AA;
jf6=minors(6,JF)+AA;
minimalPrimes jf6
\end{verbatim}
We infer that the affine deformation of the ideal $I_{\Gamma_0}$ is parameterized by a surface $S\subset \AAA^8\subset \AAA^{20}$, where $\AAA^8$ is the common zeros of  the $12$ linear forms:
%{\tiny
%\begin{align*}
\begin{verbatim}
a_15-3a_19,a_14-a_18,a_13-9a_17,a_12-3a_16,a_10-7a_18,a_9-21a_17,
a_8-63a_16,a_7-7a_11,a_5-14,a_17,a_4-14a_16,a_3-14a_11,3a_2-2a_6
      \end{verbatim}
%\end{align*}
%}
and $S$ is the common zeros of the following $15$ quadratic forms (the ideal ${\rm AA}$ in the code above):
%\begin{tiny}
%\begin{align*}\label{affinedeformationequations}
\begin{verbatim}
3a_18^2-32a_17a_19+5a_16a_20,4a_17a_18-15a_16a_19+a_11a_20,
25a_16a_18-32a_11a_19+a_6a_20,63a_11a_1835a_6a_19+12a_1a_20,
60a_17^2-27a_11a_19+a_6a_20,105a_16a_17-7a_6a_19+3a_1a_20,
105a_11a_17-7a_6a_18+9a_1a_19,20a_0a_17+3a_19^2-a_18a_20,
875a_16^2-21a_6a_18+32a_1a_19,35a_11a_16-7a_6a_17+a_1a_18,
25a_0a_16+a_18a_19-3a_17a_20,63a_11^2-35a_6a_16+12a_1a_17,
3a_0a_11+a_17a_19-a_16a_20,4a_0a_6+9a_16a_19-3a_11a_20,
100a_0a_1+49a_11a_19-7a_6a_20
 \end{verbatim}
%\end{align*}
%  \end{tiny}    
\subsection{Equations when $n=3,r=2$}\label{unfolding32}
The method again consists of finding a point $\Gamma_0\in \VAPS(q^2,10)$, then finding an affine neighborhood in the deformation space.  
The point $\Gamma_0$ is found by taking the osculating spaces to the third Veronese embedding of the quadric $Q^{-1}$ (technically, we choose the first $10$ cubics among the $16$ generators in the Macaulay2 program).

The deformation of $I_{\Gamma_0}$ is described by matrices of the following shape.
$$J(a)=\begin{pmatrix}f_{000}\\
f_{001}\\
f_{011}\\
f_{111}\\
f_{002}\\
f_{012}\\
f_{112}\\
f_{022}\\
f_{122}\\
f_{222}\\
\end{pmatrix}=
\begin{pmatrix}y_{0}^3 -6y_0y_2y_3\\
y_{0}^2y_1 -2y_1y_2y_3\\
y_{0}y_1^2 +2y_0y_2y_3\\
y_1^3 +6y_1y_2y_3\\
y_0^2y_2 -y_2^2y_3\\
y_0y_1y_2\\
y_1^2y_2+y_2^2y_3\\
y_0y_2^2\\
y_1y_2^2\\
y_2^3\end{pmatrix}
+\begin{pmatrix}a_{00}\ldots a_{40}\\
a_{01}\ldots a_{41}\\
a_{02}\ldots a_{42}\\
\vdots \quad\quad\quad  \vdots\\
a_{09}\ldots a_{49}\end{pmatrix}\cdot 
\begin{pmatrix}y_0^2y_3-y_2y_3^2\\ y_0y_1y_3\\y_1^2y_3+y_2y_3^2\\y_0y_3^2\\y_1y_3^2%\\y_3^3
\end{pmatrix}$$
%   \end{tiny}
The following code in Macaulay2 \cite{Macaulay_2} using the package \emph{inverseSystems} find the matrix $Ja$ that is the transpose of $J(a)$.
\begin{verbatim}
loadPackage"inverseSystems"
Z=QQ[a_0..a_49]
S=Z[y0,y1,y2,y3]
Q=(y0^2-y1^2+y2*y3)^2
J=gens inverseSystem Q
I=ideal J_{6..15}
JS=(resolution I).dd_1 
F=(resolution I).dd_2
Ja=sub(JS,S)
   +sub(J_{5,4,3,2,1},S)*transpose sub(genericMatrix(Z,10,5),S)
   \end{verbatim}

 %  $$H(b)\cdot J(a)=$$
%   $$(-y_1+b_1y_3, y_0+b_2y_3, b_3y_3, b_4y_3,b_5y_3, -4y_3+b_6y_3, b_7y_3, b_8y_3,b_9y_3, b_{10}y_3)\cdot J(a)=$$
%   $$(b_1+a_{01})y_0^3y_3+(b_2+a_{11}-a_{00})y_0^2y_1y_3+(b_3-a_{10}+a_{12})y_0y_1^2y_3+(b_4-a_{20})y1^3y_3+(b_5)y_0^2y_2y_3+$$
%   $$(b_6)y_0y_1y_2y_3+(b_7)y_1^2y_3+(b_8)y_0y_2^2y_3+(b_9)y_0y_2^2y_3+(b_{10})y_2^3y_3$$
The extension of the syzygy matrix for $I_{\Gamma_0}$
%{\tiny
%$
%\left(\begin{array}{ccccccccccccccc}
 %   -y1& 0& 0& -y2& 0& 0& 0& 0& 0& 0& 0& 0& 0& 0& 0\\
 %   y0& -y1& 0& 0& -y2& 0& 0& 0& 0&
 %     0& 0& 0& 0& 0& 0\\
 %     0& y0& -y1&
 %     0& 0& 0& 0& 0& 0& 0& 0& 0& 0&
 %     0& 0\\
 %     0& 0& y0& 0& 0& 0& 0& 0& -y2& 0& 0&0& 0& 0& 0\\
 %     0& -2y3& 0& y0& y1& -y1& 0& 0& 0& -y2& & 0& 0&0& 0\\
 %     -4y3& 0& -4y3& 0& 0& y0& y1& -y1& 0& 0& -y2& 0& 0& 0&0\\
 %     0& -2y3& 0& 0& 0& 0& 0& y0& y1& & & 0& -y2& 0& 0\\
 %     0& 0& 0& -5y3& 0& 0& 2y3& -y3& 0& y0& y1& -y1& 0& -y2& 0\\
 %     0& 0& 0& 0& -y3& -y3& 0& 0& 5y3& 0& 0& y0& y1& 0& -y2\\
 %     0&0& 0& 0& 0& 0& 0& 0& 0& -y3& 0& 0& y3& y0& y1\\
%\end{array}
%\right)
%$
%}
%{\tiny
\[
H(0)=\left(\begin{array}{cccccccccc}
-y_1& y_0& 0& 0& 0& -4y_3& 0& 0& 0& 0\\ 0& -y_1& y_0& 0& -2y_3& 0& -2y_3& 0& 0& 0\\ 0& 0& -y_1& y_0& 0& -4y_3& 0& 0& 0& 0\\ -y_2& 0& 0& 0& y_0&
      0& 0& -5y_3& 0& 0\\ 0& -y_2& 0& 0& y_1& 0& 0& 0& -y_3& 0\\ 0& 0& 0& 0& -y_1& y_0& 0& 0& -y_3& 0\\ 0& 0& -y_2& 0& 0& y_1& 0& 2y_3& 0& 0\\ 0& 0& 0& 0& 0&
      -y_1& y_0& -y_3& 0& 0\\ 0& 0& 0& -y_2& 0& 0& y_1& 0& 5y_3& 0\\ 0& 0& 0& 0& -y_2& 0& 0& y_0& 0& -y_3\\ 0& 0& 0& 0& 0& -y_2& 0& y_1& 0& 0\\ 0& 0& 0& 0& 0&
      0& 0& -y_1& y_0& 0\\ 0& 0& 0& 0& 0& 0& -y_2& 0& y_1& y_3\\ 0& 0& 0& 0& 0& 0& 0& -y_2& 0& y_0\\ 0& 0& 0& 0& 0& 0& 0& 0& -y_2& y_1
\end{array}
\right)
\]
%}
is computed by considering for each $i$ an extension $H(b)_i$ of the $i$-th syzygy $H(0)_i$ in $H(0)$ (the $i$'th row in this matrix).  The first syzygy $H(0)_1$ extended with multiples of $y_3$ is 
$$H(b)_1=
   (-y_1+b_1y_3, y_0+b_2y_3, b_3y_3, b_4y_3,b_5y_3, -4y_3+b_6y_3, b_7y_3, b_8y_3,b_9y_3,b_{10}y_3).$$
 This extension is a syzygy for $J(a)$ if
$$H(b)_1\cdot J(a)=0.$$
%\begin{array}{rl}
% H(b)_1\cdot J(a)&= \\
% =&(-y_1+b_1y_3, y_0+b_2y_3, b_3y_3, b_4y_3,b_5y_3, -4y_3+b_6y_3, b_7y_3, b_8y_3,b_9y_3, b_{10}y_3)\cdot J(a)\\
%   =&0.\\
%   \end{array}
   In the product $H(b)_1\cdot J(a)$, the part that is linear in $y_3$ is
   \begin{align*}
   (b_1+a_{01})y_0^3y_3&+(b_2+a_{11}-a_{00})y_0^2y_1y_3+(b_3-a_{10}+a_{12})y_0y_1^2y_3+
   (b_4-a_{20})y_1^3y_3\\
   &+b_5y_0^2y_2y_3+
b_6y_0y_1y_2y_3+b_7y_1^2y_3+b_8y_0y_2^2y_3+b_9y_0y_2^2y_3+b_{10}y_2^3y_3.
   \end{align*}
   So the vanishing of $H(b)_1\cdot J(a)$ means
   $$(b_1,b_2,b_3,b_4,b_5,b_6,b_7,b_8,b_9,b_{10})=
   (-a_{01},a_{00}-a_{11},a_{10}-a_{21},a_{20},0,0,0,0,0,0).$$
Similarly we extend all syzygies in $H(0)$ to syzygies $H(a)=H(0)+y_3\cdot U(a)$, where 
%{\tiny
%$\left(\begin{array}{ccccccccccccccc}
%    -a_1& -a_2& -a_3& -a_4& 0& -a_5& 0& -a_6& 0& -a_7& 0& -a_8& 0& -a_9& 0\\
 %   a_0-a_{11}& a_1-a_{12}& a_2-a_{13}& -a_{14}& -a_4& a_4-a_{15}& -a_5& a_5-a_{16}& -a_6&
  %    -a_{17}& -a_7& a_7-a_{18}& -a_8& -a_{19}& -a_9\\
   %   a_{10}-a_{21}& a_{11}-a_{22}& a_{12}-a_{23}&
   %   -a_{24}& -a_{14}& a_{14}-a_{25}& -a_{15}& a_{15}-a_{26}& -a_{16}& -a_{27}& -a_{17}& a_{17}-a_{28}& -a_{18}&
    %  -a_{29}& -a_{19}\\
     % a_{20}& a_{21}& a_{22}& 0& -a_{24}& a_{24}& -a_{25}& a_{25}& -a_{26}& 0& -a_{27}&a_{27}& -a_{28}& 0& -a_{29}\\
      %0& 0& 0& y0+a_0&a_1& 0& a_2& 0& a_3& a_4& a_5& 0& a_6&a_7& a_8\\
     % 0& 0& 0& a_{10}& a_{11}& 0& a_{12}& 0& a_{13}& a_{14}& a_{15}& 0& a_{16}& a_{17}&a_{18}\\
     % 0& 0& 0& a_{20}& a_{21}& 0& a_{22}& 0& a_{23}& a_{24}& a_{25}& 0& a_{26}& a_{27}& a_{28}\\
     % 0& 0& 0& 0& 0& 0&0 &0 & 0&0& 0& 0& 0& 0& 0\\
     % 0& 0& 0& 0& 0& 0& 0& 0& 0& 0& 0&0&0& 0& 0\\
     % 0&0& 0& 0& 0& 0& 0& 0& 0& 0& 0& 0& 0& 0& 0\\
%\end{array}
%\right)$

\[
U(a)=\left(\begin{array}{cccccccccc} -a_{01}& a_{00}-a_{11}& a_{10}-a_{21}& a_{20}& 0& 0& 0& 0& 0& 0\\ -a_{02}& a_{01}-a_{12}& a_{11}-a_{22}& a_{21}& 0& 0& 0& 0& 0& 0\\ -a_{03}& a_{02}-a_{13}& a_{12}-a_{23}& a_{22}&
      0& 0& 0& 0& 0& 0\\ -a_{04}& -a_{14}& -a_{24}& 0& a_{00}& a_{10}& a_{20}& 0& 0& 0\\ 0& -a_4& -a_{14}& -a_{24}& a_1& a_{11}& a_{21}& 0& 0& 0\\ -a_{05}& a_{04}-a_{15}&
      a_{14}-a_{25}& a_{24}& 0& 0& 0& 0& 0& 0\\ 0& -a_{05}& -a_{15}& -a_{25}& a_{02}& a_{12}& a_{22}& 0& 0& 0\\ -a_{06}& a_{05}-a_{16}& a_{15}-a_{26}& a_{25}& 0& 0& 0& 0& 0& 0\\ 0&
      -a_{06}& -a_{16}& -a_{26}& a_{03}& a_{13}& a_{23}& 0& 0& 0\\ -a_{07}& -a_{17}& -a_{27}& 0& a_{04}& a_{14}& a_{24}& 0& 0& 0\\ 0& -a_7& -a_{17}& -a_{27}& a_{05}& a_{15}& a_{25}& 0& 0&
      0\\ -a_{08}& a_{07}-a_{18}& a_{17}-a_{28}& a_{27}& 0& 0& 0& 0& 0& 0\\ 0& -a_{08}& -a_{18}& -a_{28}& a_{06}& a_{16}& a_{26}& 0& 0& 0\\ -a_{09}& -a_{19}& -a_{29}& 0& a_{07}& a_{17}&
      a_{27}& 0& 0& 0\\ 0& -a_{09}& -a_{19}& -a_{29}& a_{08}& a_{18}& a_{28}& 0& 0& 0\\
\end{array}
\right)
\]
%}

\vskip .5cm

The transpose $Ha$ of the matrix $H(a)$ can be implemented in Macaulay2 by the following procedure (building on the previous code):

\begin{verbatim}

T=Ja*sub(F,S)
MN=sub(sub(JS,S), y3=>0)
Ha=(sub(F,S)
   -y3*(coefficients(diff(matrix{{sub(y3,S)}},T),Monomials=>MN))_1)
    \end{verbatim}
    
We can now find, the ideal \emph{radunfold} of the deformation space $(V_p^{aff})_{red}$ and then describe the components $V0p$, $V1p$ and $V2p$.

\begin{verbatim}

da4=ideal flatten (coefficients (flatten Ja*Ha))_1;
unfold=ideal mingens da4;
radunfold=radical unfold;
betti radunfold
V0p=(minimalPrimes radunfold)#0
V1p=(minimalPrimes radunfold)#1
V2p=(minimalPrimes radunfold)#2
\end{verbatim}

\end{document}